\documentclass{article}

\addtolength{\textheight}{-\baselineskip}
\addtolength{\footskip}{\baselineskip}

\usepackage{amsmath, amssymb, amsfonts}
\usepackage{graphicx, overpic}
\usepackage{multirow, tabulary}
\usepackage{color}
\usepackage{enumitem}  
\usepackage[normalem]{ulem}
\usepackage[colorlinks=true, pdfstartview=FitV, linkcolor=blue, citecolor=blue, urlcolor=blue]{hyperref}
\hypersetup{
  pdftitle={Fractional distortion in hyperbolic groups},
  pdfauthor={Pallavi Dani and Timothy Riley}
}
\usepackage{array}

\usepackage{amsthm}
\usepackage{xpatch}
\xpatchcmd{\proof}{\itshape}{\prooflabelfont}{}{}
\newcommand{\prooflabelfont}{\bfseries}

\newcommand{\tc}[2]{\textcolor{#1}{#2}}
\definecolor{cerulean}{rgb}{0,.48,.65} 
\definecolor{magenta}{rgb}{.5,0,.5} 
\definecolor{dred}{rgb}{.5,0,0} \newcommand{\dred}[1]{\tc{dred}{#1}}
\definecolor{green}{rgb}{0,.5,0} \newcommand{\green}[1]{\tc{green}{#1}}
\definecolor{blue}{rgb}{0,0,0.5} \newcommand{\blue}[1]{\tc{blue}{#1}}
\definecolor{black}{rgb}{0,0,0} 
\definecolor{dgreen}{rgb}{0,.3,0} 
\definecolor{vdred}{rgb}{.3,0,0} 
\definecolor{red}{rgb}{1,0,0} \newcommand{\red}[1]{\tc{red}{#1}}
\definecolor{salmon}{rgb}{0.98,0.50,0.45} 
\definecolor{gray}{rgb}{.5,.5,.5} \newcommand{\gray}[1]{\tc{gray}{#1}}
\definecolor{seagreen}{rgb}{0.13,0.70,0.67} 
\definecolor{chartreuse}{rgb}{0.40,0.80,0.00}
\definecolor{cornflower}{rgb}{0.39,0.58,0.93} 
\definecolor{gold}{rgb}{0.60,0.48,0.00}
\definecolor{orange}{rgb}{1,0.25,0.05} \newcommand{\orange}[1]{\tc{orange}{#1}}
\definecolor{bcyan}{rgb}{0.15,0.59,0.75} \newcommand{\bcyan}[1]{\tc{bcyan}{#1}}

 \setlength{\marginparwidth}{21mm}

\newtheorem{mainthm}{Theorem}

\newtheorem{maincor}[mainthm]{Corollary}

\newtheorem{thm}{Theorem}[section]
\newtheorem{lemma}[thm]{Lemma}
\newtheorem{prop}[thm]{Proposition}
\newtheorem{cor}[thm]{Corollary}
\newtheorem{corollary}[thm]{Corollary}

\newtheorem{definition}[thm]{Definition}

\newtheorem{remark}[thm]{Remark}



\newcommand{\R}{\mathbb{R}}
\newcommand{\N}{\mathbb{N}}

\newcommand{\Xb}{X_{\ast}}
\newcommand{\Yb}{Y_{\ast}}

\def\ms{\medskip}

\newcommand{\bs}{\bigskip}
\newcommand{\set}[1]{\left\{#1\right\}}
\newcommand{\ssm}{\smallsetminus}
\newcommand{\abs}[1]{\left|#1\right|}
\def\Z{\mathbb{Z}}
\def\N{\mathbb{N}}

\def\CAT{\hbox{\rm CAT}}

\def\Dist{\hbox{\rm Dist}}

\newcommand{\TB}[2]{\mbox{\tiny{$\left(\!\!\!\begin{array}{c}{#1} \\
{#2}\end{array}\!\!\!\right)$}}}
\newcommand{\SB}[2]{\mbox{\footnotesize{$\left(\!\!\!\begin{array}{c}{#1} \\
{#2}\end{array}\!\!\!\right)$}}}

\newcommand{\HNN}{\mathop{\scalebox{1.5}{\raisebox{-0.2ex}{$\ast$}}}}

\def\onto{{\kern3pt\to\kern-8pt\to\kern3pt}}

\newcommand{\hide}[1]{}

\title{Fractional distortion in hyperbolic groups}
\author{Pallavi Dani 
\thanks{This work of the first author was supported by a grant from the Simons Foundation (\#426932, P.~D.) and by NSF Grant Numbers 1812061 and 2407104. A large part of this work was conducted in 2016--17, and the first author thanks the Simons Laufer Mathematical Sciences Institute (formerly MSRI) for its hospitality during the Semester Program on Geometric Group Theory (2016), as well as Cornell University Department of Mathematics and the Association for Women in Mathematics for the opportunity to visit Cornell University as a Michler Fellow in 2017.}
\and Timothy Riley 
\thanks{This work of the second author was supported by a grant from the Simons Foundation (\#318301, T.~R.).
The second author is grateful for the hospitality of Cambridge University's DPMMS 2019--20. 
}
}
\date{}
\begin{document}
\maketitle


\begin{abstract}
For all integers $p>q>0$ and $k >0$, and all non-elementary torsion-free hyperbolic groups $H$, we construct a hyperbolic group $G$ in which $H$ is a subgroup, such that
the distortion function of $H$ in $G$  
 grows like  $\exp^k({n^{p/q}})$.  Here, $\exp^k$ denotes the $k$-fold-iterated exponential function. 
\end{abstract}

\setcounter{tocdepth}{2}
\tableofcontents

\section{Introduction} 
\label{ch:intro}

\subsection{Our results}\label{sec:intro}

The landscape of subgroups of hyperbolic groups is not well understood.  Whether all one-ended hyperbolic groups have surface subgroups is a celebrated open question.  What functions are Dehn functions of subgroups of hyperbolic groups is widely open.  This article addresses another fundamental issue: What distortion can subgroups of hyperbolic groups exhibit?      Indeed, in his 1998 survey  \cite{Mitra2} Mitra (now known as Mj) asked: ``Given any increasing function $f: \N \to \N$, does there exist a hyperbolic subgroup $H$ of a hyperbolic group $G$ such that the distortion of $H$ is of the order of $\exp (f(n))$.'' 

Let $\exp^k$ denote the $k$-fold iterated exponential function $\N  \to \R$ defined by  $\exp^1(n) = \exp(n)$ and, for $k =2, 3, ...$, by  $\exp^k(n) = \exp(\exp^{k-1}(n))$.  The notation $\simeq$ will be explained in Section~\ref{sec:preliminaries}.  Our main result is:

 \begin{mainthm} \label{main}
 Given integers $p>q>0$ and $k>0$, there exists a hyperbolic group $G$ and 
 free subgroup $H \leq G$ of distortion
$\mathrm{Dist}_H^G(n) \simeq \exp^k({n^{p/q}})$.  
 \end{mainthm}

 Our $G$   are of infinite height (so do not speak to an old open question of Swarup)---see Section~\ref{sec:Height}.   In the case $k=1$ they can be made residually finite, $C'(1/6)$, $\CAT(-1)$, and virtually special---see Section~\ref{sec:the defn}.

In Section~\ref{sec:Realizing others} we leverage the examples of  Theorem~\ref{main} and of \cite{BBD, BDR, Mitra, Mitra2}  so as to make the distorted subgroup  be  any given non-elementary torsion-free hyperbolic group:

\begin{mainthm}\label{thm:distorted hyp grp}
Let $H$ be any non-elementary torsion-free hyperbolic group  and let $f$ be any of the following functions:
\begin{enumerate}
\item $f(n) = \exp^m(n^{p/q})$, for any integers $m\ge 1$ and $p \geq q \ge 1$. \label{thm part:p/q}
\item $f$ is any one of the Ackermann-function representatives of the successive levels of the Grzegorczyk hierarchy of primitive recursive functions. \label{thm part:Ackermann} 
\end{enumerate}
Then there exists a hyperbolic group $G$ with $H<G$ such that 
$\Dist_H^G \simeq f$.
\end{mainthm}

This paper also contains results we needed to prove Theorem~\ref{thm:distorted hyp grp}
which may be of independent interest. Theorem~\ref{thm:combination} assembles results of Bowditch, Dahmani, and Osin into a combination theorem for the hyperbolicity of amalgams $\Gamma = A \ast_C B$. 
Theorem~\ref{thm:amalgam distortion} relates the distortion of $C$ in $A$ and of $C$ in $B$ to that of $A$ in $\Gamma = A \ast_C B$.
Lemma~\ref{lem:there are free subgroups} states that in every non-elementary torsion-free  hyperbolic group $H$ there is, for any $k \geq 2$, a malnormal quasiconvex free subgroup $F$ of rank $k$.  It builds on the $k = 2$ case, proved by I.~Kapovich  in \cite{Kap99}.
Lemma~\ref{lem:malnornal qc} states that if a semi-direct product $G = F_l \rtimes  F_m$ of finite rank free groups  is  hyperbolic, then the  $F_m$-factor is quasiconvex and malnormal in $G$.

\subsubsection*{Background}  At first sight,  it is surprising  that    subgroups of hyperbolic groups can display any distortion given the tree-like geometry of the thin-triangle condition that defines hyperbolicity.  Every $\Z$ subgroup of a hyperbolic group is undistorted---e.g., \cite[III.$\Gamma$ Corollary 3.10]{BrH}.  Finitely generated subgroups $H$ of hyperbolic groups $G$ are undistorted (meaning linear distortion, $\mathrm{Dist}_H^G(n) \simeq n$) if and only if they are quasi-convex, and in that event they are themselves hyperbolic.   Above linear there is a gap in the spectrum of possible distortion functions: a consequence of the exponential divergence property of hyperbolic spaces is that if a   finitely generated subgroup of a hyperbolic group is  subexponentially distorted, then it is quasi-convex \cite[Proposition~2.6]{Ksubexp}.  Theorem~\ref{main} sweeps out much of the landscape of possibilities above exponential.

Prior to Theorem~\ref{main}, only sporadic examples of distortion functions for subgroups of hyperbolic groups were known.   Subgroups of finite-rank free groups and of hyperbolic surface groups are undistorted \cite{Pittet2, Short2}.  Wise~\cite{WiseSectional} generalized this result to fundamental groups of  non-positively curved, piecewise Euclidean $2$-complexes which enjoy a suitable negative sectional curvature condition. The free factor in any hyperbolic free-by-cyclic group is exponentially distorted \cite{BF, BFa, Brinkmann}.   Mitra~\cite{Mitra, Mitra2}  constructed,  for each integer $k \ge 1$, 
a  hyperbolic group with  a free subgroup  distorted like $n \mapsto \exp^k(n)$, and an example 
with distortion growing faster than any iterated exponential. 
Barnard, Brady  and Dani~\cite{BBD} developed Mitra's constructions into more explicit examples that are also $\textup{CAT}(-1)$.  Baker and Riley \cite{BaR2} exhibited a finite-rank free subgroup of a hyperbolic group that is distorted like $n \mapsto \exp^2(n)$ and  is also pathological in that there is no   Cannon--Thurston map.
   Brady,  Dison, and Riley~\cite{BDR} constructed, for every primitive recursive function, a hyperbolic `hydra' group with a finite-rank free subgroup whose distortion outgrows that function.   The Rips construction produces examples displaying yet more extreme distortion.  
Applied to a finitely presentable group with unsolvable word problem the construction yields a hyperbolic ($C'(1/6)$ small-cancellation) group $G$ with a finitely generated subgroup $N$ such that $\Dist^G_N$ is not bounded from above by a recursive function---see  \cite[\S3.4]{AO}, \cite[Corollary~8.2]{Farb}, \cite[\S3, $3.K_3''$]{Gromov} and \cite{PittetThesis}.  

The subgroup $N$ in the Rips construction is not finitely presentable.  In fact, it follows from a theorem of  Bieri in \cite{Bieri} that $N$ is finitely presented if and only if the quotient $Q$ is finite.  So the Rips construction cannot be used to construct examples such as those in Theorem~\ref{main}.  Instead, we use a modification of the Rips construction: starting with a particular finitely presented group $Q$, we realize it as the quotient of a group   presentation that satisfies  $C'(1/6)$ and other small-cancellation conditions, and find a free subgroup which is distorted, but not normal. Several additional nuances in our construction guarantee that we get the desired distortion estimates.  We  outline  this in Section~\ref{sec:motivation}.

In contrast to the situation with hyperbolic groups, a broad family of functions are known to  be  distortion functions of subgroups of $\text{CAT}(0)$ groups.  Indeed,  Olshanskii and Sapir  \cite[Theorem 2]{OS}  used a Mihailova-style construction to show that  the set of distortion functions of finitely generated subgroups of $F_2 \times F_2$ coincides with the set of Dehn functions of finitely presented groups.  Such functions are known to have wide scope thanks to the $S$-machines of \cite{SBR, Sapir2}. 
 
 In finitely presented groups, even   $\Z$-subgroups can exhibit essentially any distortion:   
 Olshanskii \cite{Ol3}   showed that every computable function $\N \to \N$, satisfying some straight-forwardly necessary conditions, is $\simeq$-equivalent to the distortion function of such as subgroup.

\subsubsection*{Application to Dehn functions.}  What functions can be $\simeq$-equivalent to Dehn functions is understood in detail thanks to \cite{BB,BBFS,Ol3,SBR}. However, because the most comprehensive results depend on deeply involved constructions, we note that our examples give some explicit examples as follows.

\begin{maincor} \label{Dehn function examples}
	Our groups $G$ yield explicit examples, for integers $p>q>0$ and $k>0$, of groups with Dehn functions growing $\simeq \exp^k({n^{p/q}})$, namely the free product with amalgamation  $G \ast_H G$ of two copies of $G$ along $H$, and the HNN-extension $G \ast_{\tau}$ of $G$ with stable letter $\tau$ that commutes with all elements of $H$.   
\end{maincor}

\begin{proof}
Theorem~6.20 in Chapter III.$\Gamma$ of \cite{BrH} gives upper and lower bounds on the Dehn functions of $G \ast_H G$ and $G \ast_{\tau}$  in terms of the Dehn function of $G$ (which is $\simeq n$ because $G$ is hyperbolic) and $\mathrm{Dist}_H^G$.  Up to $\simeq$, these  bounds agree with each other and with $\mathrm{Dist}_H^G$ since $\mathrm{Dist}_H^G$ is super-exponential.   
\end{proof}

\subsubsection*{Innovations}  
Constructing $H < G$ that realize the  subgroup distortion functions of Theorem~\ref{main} while staying within the universe of hyperbolic groups requires 
 some delicacy.  For example,  a standard strategy of achieving $f \circ g$ distortion by amalgamating a pair realizing distortion $f$ with one realizing distortion $g$ is not available due to 
 the gap between linear and exponential distortion in the hyperbolic group setting. Instead, we develop new tools and techniques. 
We seed the ``$p/q$-distortion'' with a single free-group automorphism from which we extract two growth rates that we play off against each other.  We look to Wise's version of the Rips construction \cite{Wise1} for small-cancellation (hence hyperbolicity) and for an HNN-structure (which facilitates analysis), but we limit the defining relations employed in a way that sacrifices the normality of the subgroup, but gains crucial control on the ``flow of noise'' through van~Kampen diagrams. We further this control by using two families of  ``Rips noise words'' instead of one. And to analyze this flow, we introduce \emph{tracks} which are branching structures that generalize corridors.  Under appropriate hypotheses tracks display rigidity which constrains diagrams sufficiently to allow distortion estimates.     

We explain these novelties more fully in Section~\ref{sec:motivation}.

 \subsubsection*{Next steps}  
Sapir's $S$-machines emulate general computing machines in appropriately constructed (and always non-hyperbolic) finitely presented groups.  One might view the techniques we introduce here as groundwork for doing the same within appropriately constructed hyperbolic groups.
 
Another potential application of our examples is to constructing subgroups of $\CAT(0)$ groups or hyperbolic groups exhibiting a range of Dehn functions.  One might, for example, look to embed the doubles of Corollary~\ref{Dehn function examples}  in $\CAT(0)$ groups in the manner of \cite{BrT}.  However, our distorted subgroups not being  normal is an obstacle to making this work.

\subsubsection*{The organization of this article}  

The remainder of this section contains preliminaries on words, hyperbolicity, distortion, and the equivalence relation $\simeq$ on functions  $\mathbb{R}_{\geq 0} \to \mathbb{R}_{\geq 0}$ (Section~\ref{sec:preliminaries}), and then an overview of our construction (Section~\ref{sec:motivation}). 
 Section~\ref{ch:our groups} contains the definition of our groups $G$ used to prove Theorem~\ref{main} in the case $m=1$ and catalogs their small-cancellation conditions (Section~\ref{sec:the defn}),  some immediate consequences of those  conditions (Section~\ref{sec:consequences of small-cancellation}), a review of the definition of a corridor in a van~Kampen diagram and an introdrucution to a more general dual notion we call \emph{tracks}, which may branch,  unlike corridors (Section~\ref{sec:tracks}), and then two HNN-structures for $G$ and a proof that $H$ is free  (Section~\ref{sec:hnn}).  
Section~\ref{ch:the lower bound} gives our proof of the lower bound on the distortion of $H$ in $G$.  
Section~\ref{ch:tracks and diagram rigidity} establishes results on the rigidity of van~Kampen diagrams that will facilitate our proof of the upper bound.  We examine   how a van~Kampen diagram $\Delta$ over $G$ being reduced limits the patterns of tracks within it (Section~\ref{sec:tracks in reduced diagrams}).  We give  general results about paths across discs, which we will apply to tracks in $\Delta$ (Section~\ref{sec: intersection patterns}).  We argue that tracks are further constrained in what we call a \emph{distortion diagram}, meaning a $\Delta$ exhibiting how a word in the generators of $H$ equals a shorter word in the generators of $G$ (Section~\ref{sec: tracks in distortion diagrams}).   We introduce and analyse $(a_2, b_q)$-tracks, which are a device we use to connect  growth within $\Delta$  to the presence of certain edges in its boundary (Section~\ref{sec: a2bq tracks}).
Section~\ref{ch:upper} concerns estimates which are made possible by this rigidity and which culminate in an upper bound on the distortion of $H$ in $G$ (Section~\ref{sec:upper}) when combined with    calculations in a free-by-cyclic quotient $Q$ of $G$ where the fraction $p/q$ ultimately enters (Section~\ref{sec:p/q}).   
Section~\ref{ch:leveraging} promotes our examples to iterated exponential functions, and so completes our proof of Theorem~\ref{main} (Section~\ref{sec:general k}), and then  explains how we leverage our examples to prove Theorem~\ref{thm:distorted hyp grp} (Section~\ref{sec:Realizing others}).     
Section~\ref{ch:Height} contains a proof that our examples have infinite height.

\subsection{Preliminaries}  \label{sec:preliminaries}

A \emph{word $w$ on} a set of letters $\mathcal{A}$ is an expression $a_1^{\varepsilon_1} \cdots a_m^{\varepsilon_m}$     where $m \geq 0$, $a_i \in \mathcal{A}$, and $\varepsilon_i = \pm 1$ for all $i$.  It is \emph{positive} when $\varepsilon_i =1$ for all $i$.    Its length $|w|$  is $m$.  
The word metric $d_S(g, h)$ on $G$ gives the length of a shortest word on $S$ that represents $g^{-1}h$.   
We use  
$d_G$ or $d$ in place of $d_S$ when the generating set is understood from the context.   

A finitely generated group is \emph{hyperbolic} when its Cayley graph has the property  that there exists $\delta >0$ such that all geodesic triangles are \emph{$\delta$-thin}: that is, each of its three sides is in the $\delta$-neighbourhood of the other two.  The existence of such a $\delta$ does not depend on the finite generating set (but the values of $\delta$ for which the condition holds generally will).  See, for example, \cite{BrH, Gromov4} for further background.

Suppose $S$ and $T$ are  finite generating sets for a group $G$ and subgroup $H$,  respectively.     The \emph{distortion  function} $\Dist^{G}_{H} : \N \to \N$  measures how $H$ sits as a metric space in $G$ by comparing the  restriction  of the word   metric $d_S$ on  $G$ associated to $S$ to the word metric $d_T$ 
on $H$ associated to~$T$:
$$\Dist^{G}_{H}(n) \ := \  \max \set{  \  d_T(e,g)  \  \mid  \   g \in {H} \textup{ with }  d_S(e,g)  \leq n   \ }.$$
Replacing $S$ and $T$ by other finite generating sets will produce a distortion function that is  $\simeq$-equivalent in the following sense.  For    $f,g: \mathbb{R}_{\geq 0} \to \mathbb{R}_{\geq 0}$ write $f \preceq g$ when there exists $C>0$ such that $f(n) \leq Cg(  Cn+C ) +Cn+C$ for all $n \geq 0$, and $f  \simeq g$ when $f \preceq g$ and $g \preceq f$.  Apply these relations to  functions   $\mathbb{N}  \to \mathbb{R}_{\geq 0}$ by extending the domains to $\mathbb{R}_{\geq 0}$ and having  the functions be  constant on the intervals $[n, n+1)$.  

The following two lemmas concern features of the $\simeq$-relation that will be important for us.  The first is routine and we present it without proof. 

 \begin{lemma} \label{lem:exponents equiv} ${}$
 \begin{enumerate}
 	\item For $\alpha, \beta \geq 1$, $2^{n^{\alpha}} \simeq 2^{n^{\beta}}$ if and only if $\alpha = \beta$.
 	\item For $\alpha \geq 1$ and $C>1$, $C^{n+ n^{\alpha}} \simeq C^{n^{\alpha}} \simeq 2^{n^{\alpha}}$.  
 \end{enumerate}	
 \end{lemma}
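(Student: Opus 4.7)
The plan is to unpack the definition of $\preceq$ in each case and reduce to elementary size comparisons between exponents; everything then comes down to the behavior of $\log_2$ applied to both sides.

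For part (1), the direction $\alpha = \beta \Rightarrow 2^{n^\alpha} \simeq 2^{n^\beta}$ is immediate since the two functions are identical. For the contrapositive of the converse, I would assume without loss of generality that $\alpha < \beta$ and show $2^{n^\beta} \not\preceq 2^{n^\alpha}$. If such a constant $C$ existed with $2^{n^\beta} \leq C \cdot 2^{(Cn+C)^\alpha} + Cn + C$ for all $n \geq 0$, then for $n$ sufficiently large the exponential term dominates the linear correction, so taking $\log_2$ of both sides gives $n^\beta \leq (Cn+C)^\alpha + O(1)$. The right-hand side is $O(n^\alpha)$, which contradicts $\beta > \alpha$ as $n \to \infty$.

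For part (2), I would first establish $C^{n^\alpha} \simeq 2^{n^\alpha}$ by rewriting $C^{n^\alpha} = 2^{k n^\alpha}$ where $k = \log_2 C > 0$, and noting that $2^{k n^\alpha} = 2^{(k^{1/\alpha} n)^\alpha}$, so the two functions agree after a linear rescaling of the variable by $k^{1/\alpha}$; depending on whether $k \geq 1$ or $k < 1$, one of the two $\preceq$ inequalities is immediate and the other uses this rescaling, which fits into the multiplicative $C$ of the definition of $\preceq$. For $C^{n+n^\alpha} \simeq C^{n^\alpha}$, the hypothesis $\alpha \geq 1$ gives $n \leq n^\alpha$ for $n \geq 1$, hence $n^\alpha \leq n + n^\alpha \leq 2n^\alpha = (2^{1/\alpha} n)^\alpha$. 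So the two functions are sandwiched between each other after at most a linear rescaling of the variable by $2^{1/\alpha}$, and both $\preceq$ inequalities follow.

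There is no real obstacle here; the only point requiring mild care is the additive $Cn + C$ summand in the definition of $\preceq$, but since all functions involved are super-linear for $\alpha \geq 1$, this term is absorbed into the multiplicative constant by enlarging $C$. The authors' choice to omit the proof as routine is justified: the whole argument is a two-line application of taking $\log_2$ and keeping track of how monomial rescalings interact with the constants allowed in the definition of $\preceq$.
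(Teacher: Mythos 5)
Your proof is correct and is exactly the routine log-and-compare argument the paper alludes to when it says the lemma is "routine and we present it without proof." The only slight imprecision is terminological: the rescalings $n \mapsto k^{1/\alpha}n$ and $n \mapsto 2^{1/\alpha}n$ are absorbed by the inner $Cn + C$ (the composition part) of the $\preceq$ definition, not by "the multiplicative $C$," but the substance of the argument is right and complete.
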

 
 For our proof of the lower bound in Theorem~\ref{main}, we will exhibit a sequence of words that represent elements of $H$, but can  only be expressed by long words on the generators of $H$.  The force of the following lemma is that, despite the lengths of our words forming  a sparse sequence, we can draw the desired conclusion.   
 
 \begin{lemma} \label{lem:sparsity}
 Suppose $H$ is a subgroup of $G$ and both are finitely generated.  
 Suppose $p >  q > 0$ are integers, $C_1, C_2, C_3 > 0$ are constants, and  $w_1, w_2, \ldots$ is a sequence of words on the generators of $G$.  Suppose that $w_n$ represents an element of $H$ for all $n$, and $$C_1 n^q \leq |w_n| \leq C_2 n^q \ \text{ but } \ d_H(e,w_n) \geq C_3 2^{n^p}.$$  Then $\Dist^{G}_{H}(n) \succeq 2^{n^{p/q}}$.  
 \end{lemma}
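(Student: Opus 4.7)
The plan is to exploit the monotonicity of the distortion function to interpolate the given sparse bounds into a bound valid for all large arguments, and then apply Lemma~\ref{lem:exponents equiv} to absorb multiplicative constants inside the exponent.

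First I would observe that $\Dist^G_H$ is non-decreasing in its argument: the set $\set{g \in H : d_S(e,g) \leq n}$ only grows with $n$, so the maximum of $d_T(e,\cdot)$ over it cannot decrease. By hypothesis, for each $n$, the element of $H$ represented by $w_n$ lies in this set with parameter $\lfloor C_2 n^q \rfloor$, and satisfies $d_T(e, w_n) \geq C_3 2^{n^p}$. Hence
\[
\Dist^G_H(C_2 n^q) \ \geq \ C_3 2^{n^p}\quad\text{for all }n\geq 1.
\]

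Next I would convert this sparse bound into a pointwise bound. Given any integer $m$ large enough, let $n = n(m)$ be the largest positive integer with $C_2 n^q \leq m$, so that $n \geq (m/C_2)^{1/q} - 1$. Monotonicity gives
\[
\Dist^G_H(m) \ \geq \ \Dist^G_H(C_2 n^q) \ \geq \ C_3 \, 2^{n^p} \ \geq \ C_3 \, 2^{\,((m/C_2)^{1/q}-1)^p}.
\]
A straightforward binomial estimate shows that for $m$ sufficiently large, $((m/C_2)^{1/q}-1)^p \geq \tfrac{1}{2}(m/C_2)^{p/q}$, hence
\[
\Dist^G_H(m) \ \geq \ C_3 \, 2^{\,c\, m^{p/q}}\quad\text{for a constant }c>0\text{ and all large }m.
\]

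Finally I would invoke Lemma~\ref{lem:exponents equiv}(2) with $\alpha = p/q \geq 1$ and $C = 2^c > 1$ to conclude that $2^{c m^{p/q}} \simeq 2^{m^{p/q}}$, so $\Dist^G_H(m) \succeq 2^{m^{p/q}}$, as required. The only mild subtlety is that the bound only holds for $m$ larger than some threshold, but the $\simeq$-relation is insensitive to values on any bounded initial segment (being absorbed by the additive and multiplicative slack in its definition), so this causes no issue. There is no serious obstacle; the content of the lemma is really just the observation that monotonicity converts the sparse sequence of lower bounds into a continuous one with only a polynomial loss in the argument, which disappears once pulled inside the exponent.
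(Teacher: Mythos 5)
Your proof is correct, and its core is the same interpolation idea as the paper's: a sparse family of lower bounds $\Dist^G_H(\cdot)\geq C_3 2^{n^p}$ at the arguments $\lfloor C_2 n^q\rfloor$ is upgraded to a pointwise bound for all large arguments, and then Lemma~\ref{lem:exponents equiv}(2) absorbs the multiplicative constants inside the exponent. The difference is in how the interpolation is carried out. The paper cites Remark~2.1 of \cite{BBFS}, which requires checking that the sample points $m_n=|w_n|$ satisfy $m_{n+1}\leq Cm_n$; that verification is where the hypothesis $C_1 n^q\leq |w_n|$ is used. You instead perform the interpolation directly from the monotonicity of $\Dist^G_H$, defining $n(m)$ as the largest $n$ with $C_2n^q\leq m$ and working out the resulting exponent with a binomial estimate. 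This route is more self-contained (no external citation) and, notably, never invokes the lower bound $C_1 n^q\leq|w_n|$ at all, so it proves a marginally stronger statement. The small technical points you flag (non-integer arguments, the bound holding only for large $m$) are handled exactly as you say, using the interval-constant extension and the slack built into $\succeq$.
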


\begin{proof}
	Remark~2.1 in \cite{BBFS} is  that to verify $g \succeq f$ for $f,g: \N \to \N$, it suffices to have $g(m_n) \geq f(m_n)$ on a sequence $(m_n)$ of integers such that $m_n \to \infty$ as $n \to \infty$ and such that there exists $C>0$ with  $m_{n+1} \leq C m_n$ for all  $n$.  If $C_4 = (q+1) \displaystyle{\max_{i=0, \ldots,q}} \TB{q}{i}$, then $(n+1)^q \leq C_4 n^q$  for all $n$.   So there is a $C$ such that the sequence $m_n = |w_n|$ satisfies this condition. Now 
	$$\Dist^{G}_{H}(|w_n|) \ \geq \  d_H(e,w_n) \ \geq \  C_3 2^{n^p} \  \geq  \ C_3 2^{\left(\frac{1}{C_2} |w_n|\right)^{p/q}}.$$ So $\Dist^{G}_{H}(n) \succeq 2^{\left(\frac{n}{C_2}\right)^{p/q}}$, and the result then follows from Lemma~\ref{lem:exponents equiv}(2) (by taking $C=2^{(C_2^{-p/q})}$ and $\alpha = p/q$). 	
	\end{proof}

We will work extensively with van~Kampen diagrams.  There are many introductory accounts in the literature.

\subsection{Motivation for our construction}\label{sec:motivation}

In this section, we offer some insights into the origins of our construction.
The formal definition of our group-pair $H<G$, used to prove Theorem~\ref{main} in the case $k=1$,  follows in Section~\ref{sec:the defn}.

Our construction begins with the free-by-cyclic group
\begin{equation} 
Q  \ = \ \langle \  a_1,  b_0, \ldots, b_p  \mid a_1^{-1}b_i a_1 = \varphi(b_i)  \ \, \forall i  \  \rangle \label{eqn QQQ}
\end{equation}
where $F = F(b_0, \ldots, b_p)$ is a free group of rank $p+1$ and $\varphi$ is the polynomially-growing automorphism of 
$F $  mapping  $b_i \mapsto b_{i+1}b_i$ for $i\neq p$ and $b_p \mapsto b_p$.

\begin{figure}[ht]
\centering
\begin{overpic}  [scale=0.85]
{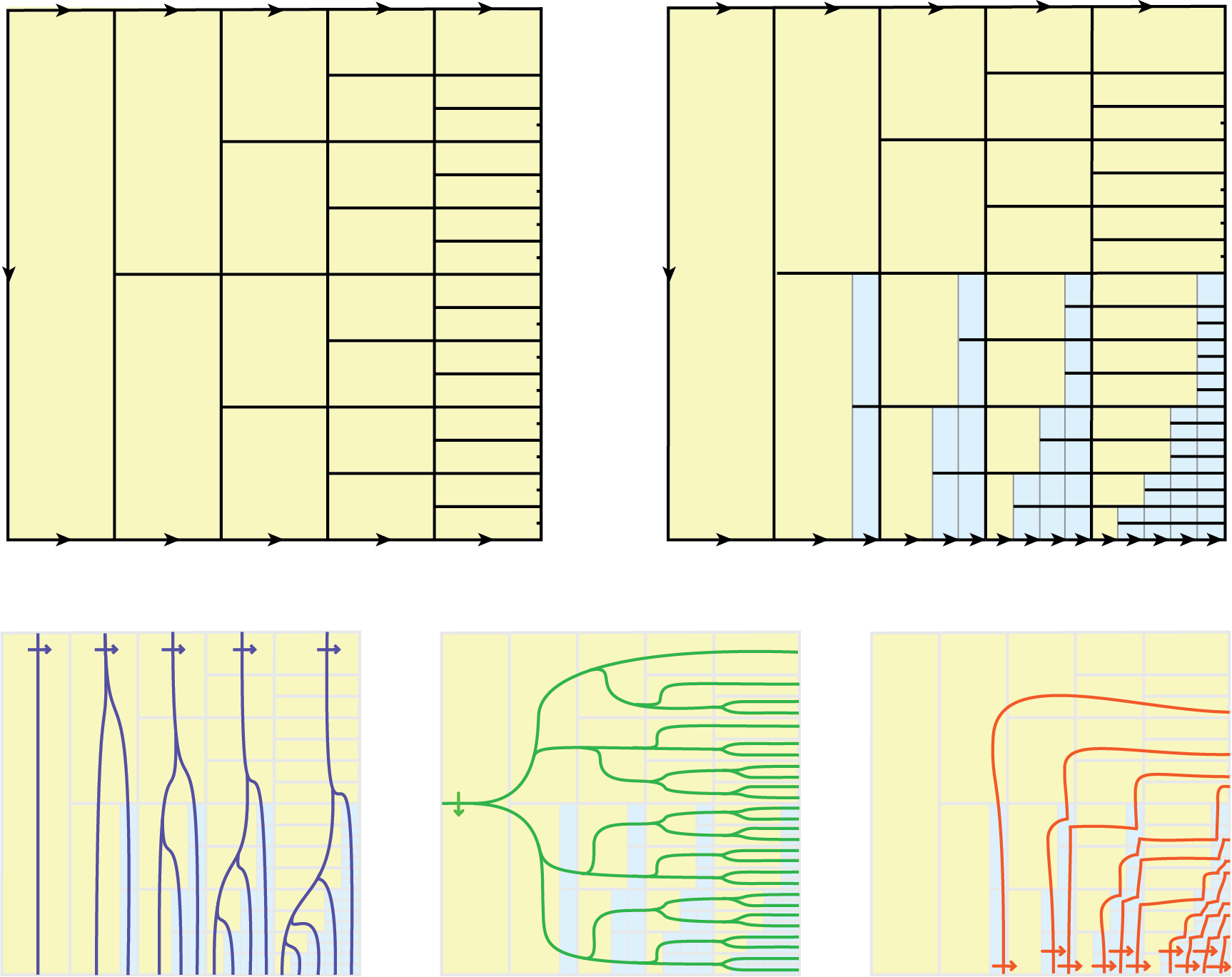}
\put(-2.2, 57){\tiny{$b_0$}}
\put(4, 34){\tiny{$a_1$}}
\put(13, 34){\tiny{$a_1$}}
\put(21, 34){\tiny{$a_1$}}
\put(30, 34){\tiny{$a_1$}}
\put(38.5, 34){\tiny{$a_1$}}
\put(4, 79.5){\tiny{$a_1$}}
\put(13, 79.5){\tiny{$a_1$}}
\put(21, 79.5){\tiny{$a_1$}}
\put(30, 79.5){\tiny{$a_1$}}
\put(38.5, 79.5){\tiny{$a_1$}}
\put(33.2, 75.8){\tiny{$b_3$}}
\put(33.2, 71.8){\tiny{$b_3$}}
\put(33.2, 69.1){\tiny{$b_2$}}
\put(33.2, 66.2){\tiny{$b_3$}}
\put(33.2, 63.7){\tiny{$b_2$}}
\put(33.2, 60.8){\tiny{$b_2$}}
\put(33.2, 58.3){\tiny{$b_1$}}
\put(33.2, 55.4){\tiny{$b_3$}}
\put(33.2, 52.9){\tiny{$b_2$}}
\put(33.2, 50){\tiny{$b_2$}}
\put(33.2, 47.5){\tiny{$b_1$}}
\put(33.2, 44.6){\tiny{$b_2$}}
\put(33.2, 42.1){\tiny{$b_1$}}
\put(33.2, 39.2){\tiny{$b_1$}}
\put(33.2, 36.5){\tiny{$b_0$}}
\put(24.5, 75.8){\tiny{$b_3$}}
\put(24.5, 70.2){\tiny{$b_2$}}
\put(24.5, 65){\tiny{$b_2$}}
\put(24.5, 59.5){\tiny{$b_1$}}
\put(24.5, 54.1){\tiny{$b_2$}}
\put(24.5, 48.7){\tiny{$b_1$}}
\put(24.5, 43.2){\tiny{$b_1$}}
\put(24.5, 37.9){\tiny{$b_0$}}
\put(15.7, 73.1){\tiny{$b_2$}}
\put(15.7, 62.7){\tiny{$b_1$}}
\put(15.7, 51.5){\tiny{$b_1$}}
\put(15.7, 40.2){\tiny{$b_0$}}
\put(7.0, 67.8){\tiny{$b_1$}}
\put(7.0, 45.9){\tiny{$b_0$}}
\put(44.7, 75.5){\tiny{$b_3$}}
\put(44.7, 71.55){\tiny{$b_3$}}
\put(44.7, 69.55){\tiny{$b_3$}}
\put(44.7, 68.2){\tiny{$b_2$}}
\put(44.7, 66.25){\tiny{$b_3$}}
\put(44.7, 64.05){\tiny{$b_3$}}
\put(44.7, 62.7){\tiny{$b_2$}}
\put(44.7, 61.35){\tiny{$b_3$}}
\put(44.7, 60){\tiny{$b_2$}}
\put(44.7, 58.65){\tiny{$b_2$}}
\put(44.7, 57.3){\tiny{$b_1$}}
\put(44.7, 55.4){\tiny{$b_3$}}
\put(44.7, 53.4){\tiny{$b_3$}}
\put(44.7, 51.95){\tiny{$b_2$}}
\put(44.7, 50.7){\tiny{$b_3$}}
\put(44.7, 49.25){\tiny{$b_2$}}
\put(44.7, 47.9){\tiny{$b_2$}}
\put(44.7, 46.55){\tiny{$b_1$}}
\put(44.7, 45.2){\tiny{$b_3$}}
\put(44.7, 43.85){\tiny{$b_2$}}
\put(44.7, 42.5){\tiny{$b_2$}}
\put(44.7, 41.15){\tiny{$b_1$}}
\put(44.7, 39.8){\tiny{$b_2$}}
\put(44.7, 38.45){\tiny{$b_1$}}
\put(44.7, 37.1){\tiny{$b_1$}}
\put(44.7, 35.75){\tiny{$b_0$}}
\put(51.8, 57){\tiny{$b_0$}}
\put(58, 34){\tiny{$a_1$}}
\put(66, 34){\tiny{$a_1$}}
\put(73.5, 34){\tiny{$a_1$}}
\put(80.7, 34){\tiny{$a_1$}}
\put(89.5, 34){\tiny{$a_1$}}
\put(69.8, 34){\tiny{$a_2$}}
\put(76.5, 34){\tiny{$a_2$}}
\put(78.6, 34){\tiny{$a_2$}}
\put(82.9, 34){\tiny{$a_2$}}
\put(85.1, 34){\tiny{$a_2$}}
\put(87.2, 34){\tiny{$a_2$}}
\put(91.6, 34){\tiny{$a_2$}}
\put(93.6, 34){\tiny{$a_2$}}
\put(95.8, 34){\tiny{$a_2$}}
\put(98.5, 34){\tiny{$a_2$}}
\put(58, 79.5){\tiny{$a_1$}}
\put(67, 79.5){\tiny{$a_1$}}
\put(75, 79.5){\tiny{$a_1$}}
\put(84, 79.5){\tiny{$a_1$}}
\put(92.5, 79.5){\tiny{$a_1$}}
\put(100.4, 75.5){\tiny{$b_3$}}
\put(100.4, 71.55){\tiny{$b_3$}}
\put(100.4, 69.55){\tiny{$b_3$}}
\put(100.4, 68.2){\tiny{$b_2$}}
\put(100.4, 66.25){\tiny{$b_3$}}
\put(100.4, 64.05){\tiny{$b_3$}}
\put(100.4, 62.7){\tiny{$b_2$}}
\put(100.4, 61.35){\tiny{$b_3$}}
\put(100.4, 60){\tiny{$b_2$}}
\put(100.4, 58.65){\tiny{$b_2$}}
\put(100.4, 57.3){\tiny{$b_1$}}
\put(100.4, 55.4){\tiny{$b_3$}}
\put(100.4, 53.4){\tiny{$b_3$}}
\put(100.4, 51.95){\tiny{$b_2$}}
\put(100.4, 50.7){\tiny{$b_3$}}
\put(100.4, 49.25){\tiny{$b_2$}}
\put(100.4, 47.9){\tiny{$b_2$}}
\put(100.4, 46.55){\tiny{$b_1$}}
\put(100.4, 45.2){\tiny{$b_3$}}
\put(100.4, 43.85){\tiny{$b_2$}}
\put(100.4, 42.5){\tiny{$b_2$}}
\put(100.4, 41.15){\tiny{$b_1$}}
\put(100.4, 39.8){\tiny{$b_2$}}
\put(100.4, 38.45){\tiny{$b_1$}}
\put(100.4, 37.1){\tiny{$b_1$}}
\put(100.4, 35.75){\tiny{$b_0$}}
\end{overpic}
\vspace{4mm}
\caption{Top left: the van~Kampen diagram $D_1$ over $Q$ for $a_1^{-n}b_0 a_1^n = \varphi^{n} (b_0)$ when $n=5$ and $p=3$.  Top right: the corresponding  diagram $D_2$ over $G_2$ when $q=2$. Lower left, middle and right: $a$-tracks, $b$-tracks, and $(a_2, b_q)$-tracks through $D_2$.}
\label{fig:QDiagram}
\end{figure}

The van~Kampen diagram $D_1$ over $Q$ pictured top-left in Figure~\ref{fig:QDiagram} (for the case $n=5$ and $p=3$) shows how  $a_1^{-n}b_0 a_1^n = \varphi^{n} (b_0)$ equals a positive word $\lambda$ on $b_0, b_1, \dots, b_p$ which contains $\simeq n^i$ letters $b_i$ for $i=0, \ldots, p$ (Lemma~\ref{lem:Bgrowth}). The contribution of $b_p$ dominates, so the length of $\lambda$ is $N = |\lambda| \simeq n^p$.

\begin{figure}[htbp]
\centering
\begin{overpic} [scale=0.84]  
{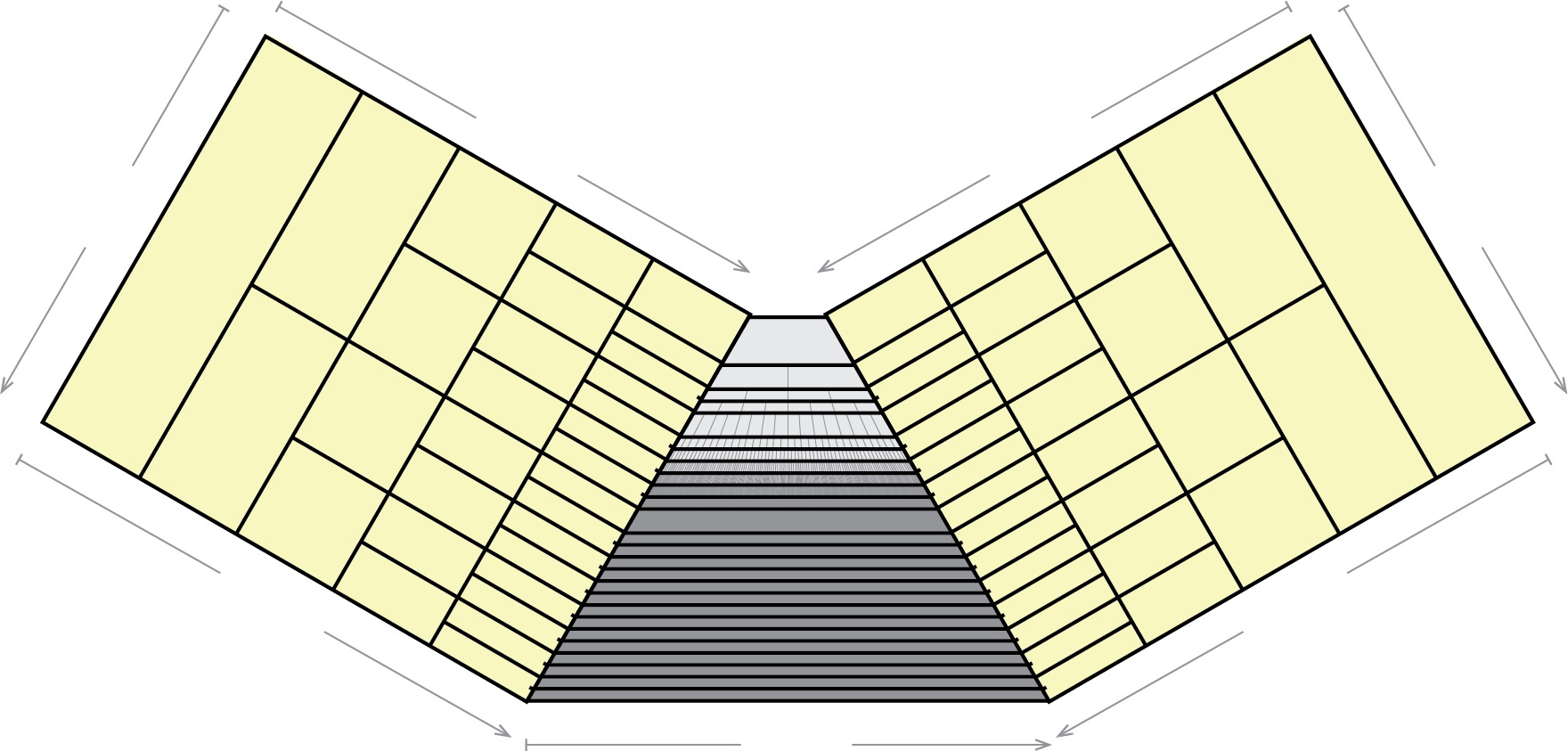}
\put(49, -0.5){\small{$x^{2^N}$}}
\put(49.8, 28.5){\small{$x$}}
\put(16, 9){\small{$a_1^{n}$}}
\put(33, 38){\small{$a_1^{n}$}}
\put(82, 9){\small{$a_1^{n}$}}
\put(65.5, 38){\small{$a_1^{n}$}}
\put(6, 34){\small{$b_0$}}
\put(92.5, 34){\small{$b_0$}}
\end{overpic}
\vspace{4mm}
\caption{A van~Kampen diagram $\Delta_1$ over $G_1$ for $a_1^{-n} b_0^{-1} a_1^{n} x  a_1^{-n} b_0a_1^{n}  =  x^{2{^N}}$ when $n=5$, $p=3$, and $N = | \varphi^n(b_0) | =  26$.   }
\label{fig:G1Diagram}
\end{figure}

Next, we define $$G_1  \ = \ \langle \  Q, x   \mid   b_j^{-1} x b_j  = x^2 \ \,  \forall j \ \rangle.$$  
As shown in Figure~\ref{fig:G1Diagram},  attaching a  copy of $D_1$ and a copy of its mirror image to a diagram for $\lambda^{-1} x \lambda = x^{2^N}$ along its two paths labelled $\lambda$ gives a van Kampen diagram $\Delta_1$ over $G_1$ for the relation 
\begin{equation} \label{eqn:G_1relation}
a_1^{-n} b_0^{-1} a_1^{n} x  a_1^{-n} b_0a_1^{n} \ = \ x^{2{^N}}.
\end{equation}
  This diagram illustrates that there is a word of  length $\simeq 2^{n^p}$ in $H_1 = \langle x \rangle$, whose length in $G_1$ is $\simeq n$.   As there is a family of such diagrams indexed by $n$, this shows that $\Dist^{G_1}_{H_1}(n) \succeq 2^{n^{p}}$. 

Next we elaborate on this construction in a way that plays off  the   $\simeq n^p$ letters $b_p$  against the $\simeq n^q$ letters $b_q$ in $\lambda$.  We introduce a new generator $a_2$ and we modify the relation $a_1^{-1}b_{q-1} a_1 = b_q b_{q-1}$ of $G_1$ to 
$a_1^{-1}b_{q-1} a_1a_2 = b_q b_{q-1}$, so that for every  new $b_q$ created by $\varphi$ within $D_1$, an $a_2$ is created as well.  Furthermore, we add the relations that $a_2$ commutes with $b_j$ for all $j$, allowing these newly created edges to \emph{flow to the boundary} as shown in the diagram on the right in Figure~\ref{fig:QDiagram}. The resulting diagrams $D_2$ can be mapped onto $D_1$ by suitably collapsing all the $a_2$-edges and the commutator 2-cells in which  they occur.  As for the construction of $\Delta_1$, assemble $D_2$, its mirror-image, and our diagram for $\lambda^{-1} x \lambda = x^{2^N}$ to get a diagram $\Delta_2$ that  demonstrates that  $x^{2{^N}}$ equates in a group $G_2$ to a word $a_1^{-n} b_0^{-1} a_1^{n} x  a_1^{-n} b_0a_1^{n}$ with $\simeq n^q$ letters $a_2^{\pm 1}$ inserted.  This construction suggests that the distortion function of $\langle x \rangle$ in $G_2$ grows like $n^q \mapsto 2^{n^{p}}$, and therefore like $n \mapsto 2^{n^{p/q}}$.

Now, $G_2$ is not hyperbolic. So next we  \emph{hyperbolize} its presentation using an approach similar to Wise's version of the Rips construction \cite{Wise1}.  We add \emph{noise} to each relation so that the resulting presentation satisfies small-cancellation conditions including  $C'(1/6)$.  This is achieved by replacing $x$ by three letters $t, x_1, x_2$, and introducing a \emph{noise word} on $t, x_1, x_2$ to each relation. We then add   relations to allow the noise to \emph{flow} to the boundary of the diagram and then (in the two triangles at the bottom of Figure~\ref{fig:G2Diagram}) be moved past the $a_1^{\pm 1}, a_2^{\pm 1}$ and collected together.  These additional relations play a similar role to the commuting relations involving $a_2$ introduced above; they allow noise to move past $a$- and $b$-letters (but only in one direction) at the expense of introducing additional noise.  The  resulting group $G_3$ admits diagrams $\Delta_3$  which map onto $\Delta_2$ on suitably collapsing the edges labelled by noise letters and suitably collapsing the 2-cells that allow the noise to \emph{flow}. We take $H_3 =   \langle t, x_1, x_2 \rangle$.  

\begin{figure}[htbp]
\centering
\begin{overpic}  [scale=0.8]  
{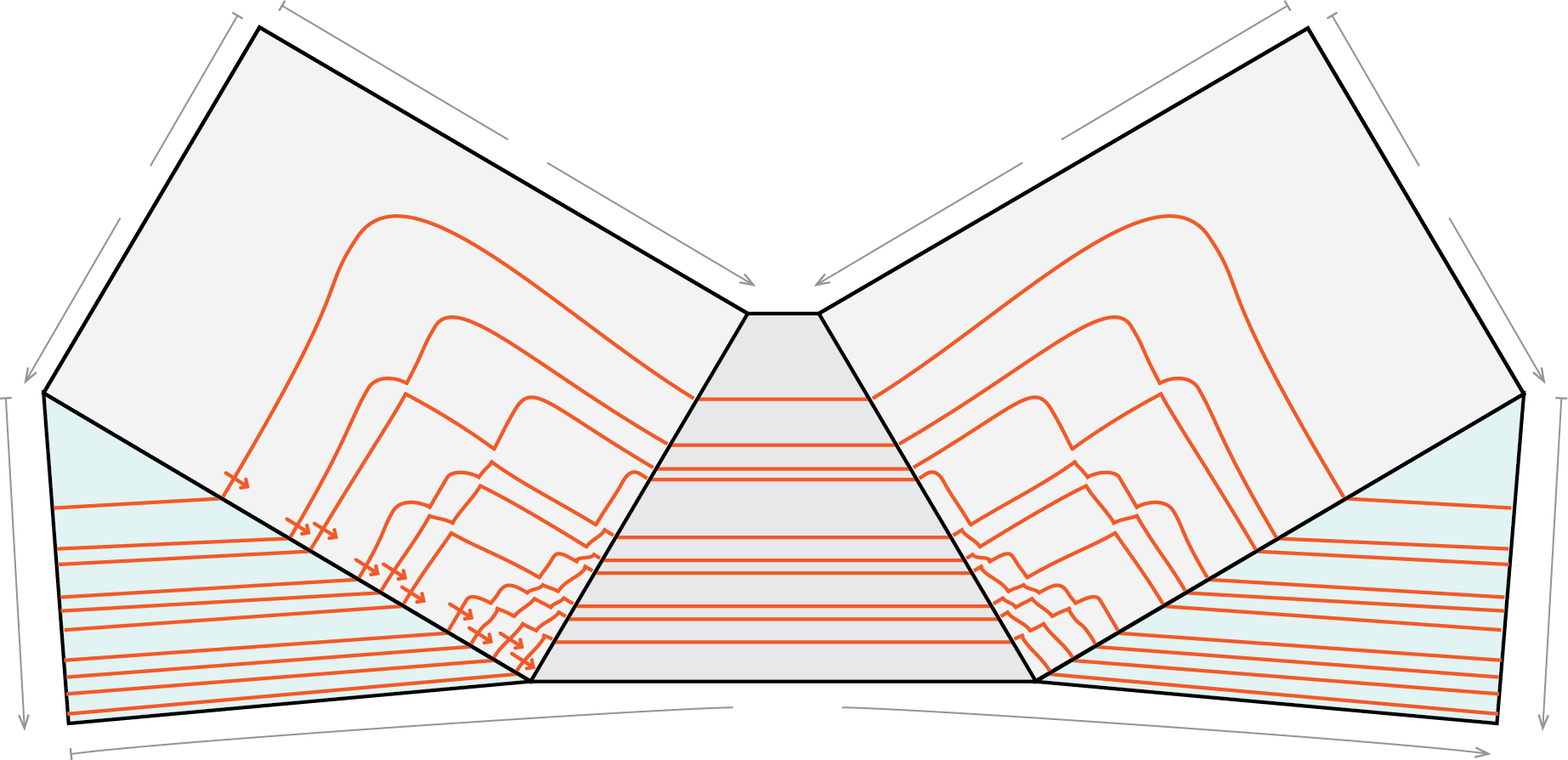}
\put(7.7, 35.8){\small{$b_0$}}
\put(1, 21.5){\tiny{$a_1$}}
\put(1.15, 18.5){\tiny{$a_1$}}
\put(1.25, 16.3){\tiny{$a_2$}}
\put(1.3, 14.8){\tiny{$a_1$}}
\put(1.4, 13.4){\tiny{$a_2$}}
\put(1.45, 12.3){\tiny{$a_2$}}
\put(1.5, 11.2){\tiny{$a_1$}}
\put(1.60, 10){\tiny{$a_2$}}
\put(1.65, 9){\tiny{$a_2$}}
\put(1.70, 8){\tiny{$a_2$}}
\put(1.75, 7){\tiny{$a_1$}}
\put(1.8, 6){\tiny{$a_2$}}
\put(1.85, 5){\tiny{$a_2$}}
\put(1.9, 3.7){\tiny{$a_2$}}
\put(2, 2.5){\tiny{$a_2$}}
\put(97.2, 21.5){\tiny{$a_1$}}
\put(96.95, 18.5){\tiny{$a_1$}}
\put(96.85, 16.3){\tiny{$a_2$}}
\put(96.7, 14.8){\tiny{$a_1$}}
\put(96.6, 13.4){\tiny{$a_2$}}
\put(96.55, 12.3){\tiny{$a_2$}}
\put(96.5, 11.2){\tiny{$a_1$}}
\put(96.4, 10){\tiny{$a_2$}}
\put(96.35, 9){\tiny{$a_2$}}
\put(96.3, 8){\tiny{$a_2$}}
\put(96.25, 7){\tiny{$a_1$}}
\put(96.2, 6){\tiny{$a_2$}}
\put(96.15, 5){\tiny{$a_2$}}
\put(96.1, 3.7){\tiny{$a_2$}}
\put(96, 2.5){\tiny{$a_2$}}
\put(49.2, 29.5){\small{$x_1$}}
\put(49.2, 3){\small{$\chi_5$}}

\put(91, 35.5){\small{$b_0$}}

\put(32.5, 38.5){\small{$a_1^{5}$}}
\put(65.3, 38.5){\small{$a_1^{5}$}}

\end{overpic}
\vspace{4mm}
\caption{A schematic of a van~Kampen diagram $\Delta$ over $G$ showing that, if we define $\nu_5 = a_1 \, a_1 a_2 \, a_1 a_2^2 \, a_1 a_2^3   \, a_1 a_2^4$, then the word   
$w_5 = \nu_5^{-1} \ b_0^{-1} a_1^5 \ x_1 \  a_1^{-5} b_0 \ \nu_5$ on the generators of $G$   equals a word $\chi_5$ on the \emph{noise} letters.  The diagram's $(a_2, b_q)$-tracks are shown. Each meets the boundary at a pair $a_2$-edges.}
\label{fig:G2Diagram}
\end{figure}

The diagram of Figure~\ref{fig:G2Diagram} shows the $n=5$ instance of a family of diagrams demonstrating how words $w_n$ on $a_1, a_2, b_0, a_1, x_1$  represent the same elements of $G_3$ as words $\chi_n$ on $t, x_1, x_2$.  Because the effect is so pronounced,   the figure cannot do justice to the exponential expansion in the direction of $\chi_n$.

While this family of diagrams provides the desired $2^{n^{p/q}}$ lower bound on the distortion of $H_3$ in $G_3$, some issues remain. 
Firstly, with the presentation described,  we cannot get a matching $2^{n^{p/q}}$ upper bound on distortion.  If we replace the two $b_0$ letters in \eqref{eqn:G_1relation} with $b_i$, where $i < q$, and then construct diagrams $\Delta_3$ as described above, then they will exhibit $n \mapsto 2^{n^{(p-i)/(q-i)}}$ distortion of $H_3$, which is greater than $2^{n^{p/q}}$. Secondly, allowing the noise letters to interact with both $a$- and $b$-letters prevents us from establishing an HNN-structure on the group (the iterated HNN-structure of Proposition~\ref{prop:hnn}) which 
will allow us to prove that  our distorted subgroup $H$ is free.   
 
Both issues are solved by making the role of the noise more nuanced.  We introduce two pairs of noise letters, $x_1, x_2$ and $y_1, y_2$ (in addition to the noise letter $t$).  For $i>0$,   $b_i$ interacts with $x_1$ and $x_2$ but not $y_1$ and $y_2$, while  $a_1$ and $a_2$  interact with $y_1$ and $y_2$, and not $x_1$ and $x_2$.  Conjugation by $b_0$ converts $x_1$ and $x_2$ to words on $y_1$ and $y_2$.  This way we arrive at our group $G$ whose defining relations are set out in  Figure~\ref{fig:relations}. We take $H$ to be the subgroup generated by $t,y_1, y_2$.  

Over $G$ there are diagrams $\Delta$ of the form shown in Figure~\ref{fig:G2Diagram} exhibiting $2^{n^{p/q}}$-distortion.  This construction is the heart of our proof in Section~\ref{sec:lower} that  $\Dist_H^G(n) \succeq 2^{n^{p/q}}$.

As for the reverse bound $\Dist_H^G(n) \preceq 2^{n^{p/q}}$, the aforementioned diagrams yielding  larger distortion no longer exist because if we replace  $b_0$ with $b_i$ where $i>0$ in the construction of $\Delta$, then $\partial \Delta$ has a long word in $a_1, a_2, t$ along with $x_1, x_2$  rather than along with $y_1,y_2$. We have  long words on letters that are not all generators for $H$ and we can no longer attach the triangular subdiagrams that  separate the $a_1, a_2$ from the the noise letters. 

However, to establish the upper bound we must prove that no other ``bad'' diagrams exist. To achieve this we study what we call \emph{distortion diagrams}---reduced diagrams $\Delta$, subject to natural simplifying assumptions, which exhibit how a word $\chi$ on $t, y_1, y_2$ can be represented by a shorter word $w$ on the generators of $G$.  We show in Sections~\ref{sec:tracks in reduced diagrams}--\ref{sec: a2bq tracks}  that such a $\Delta$ is subject to considerable rigidity.  Our argument  shows that $\Delta$ is so constrained that it strongly resembles the diagrams  described above and is thereby subject to estimates that yield the $2^{n^{p/q}}$ upper bound.

Three features of $G$  impose this rigidity.

\begin{figure}[htbp]
\centering
\begin{overpic}  
{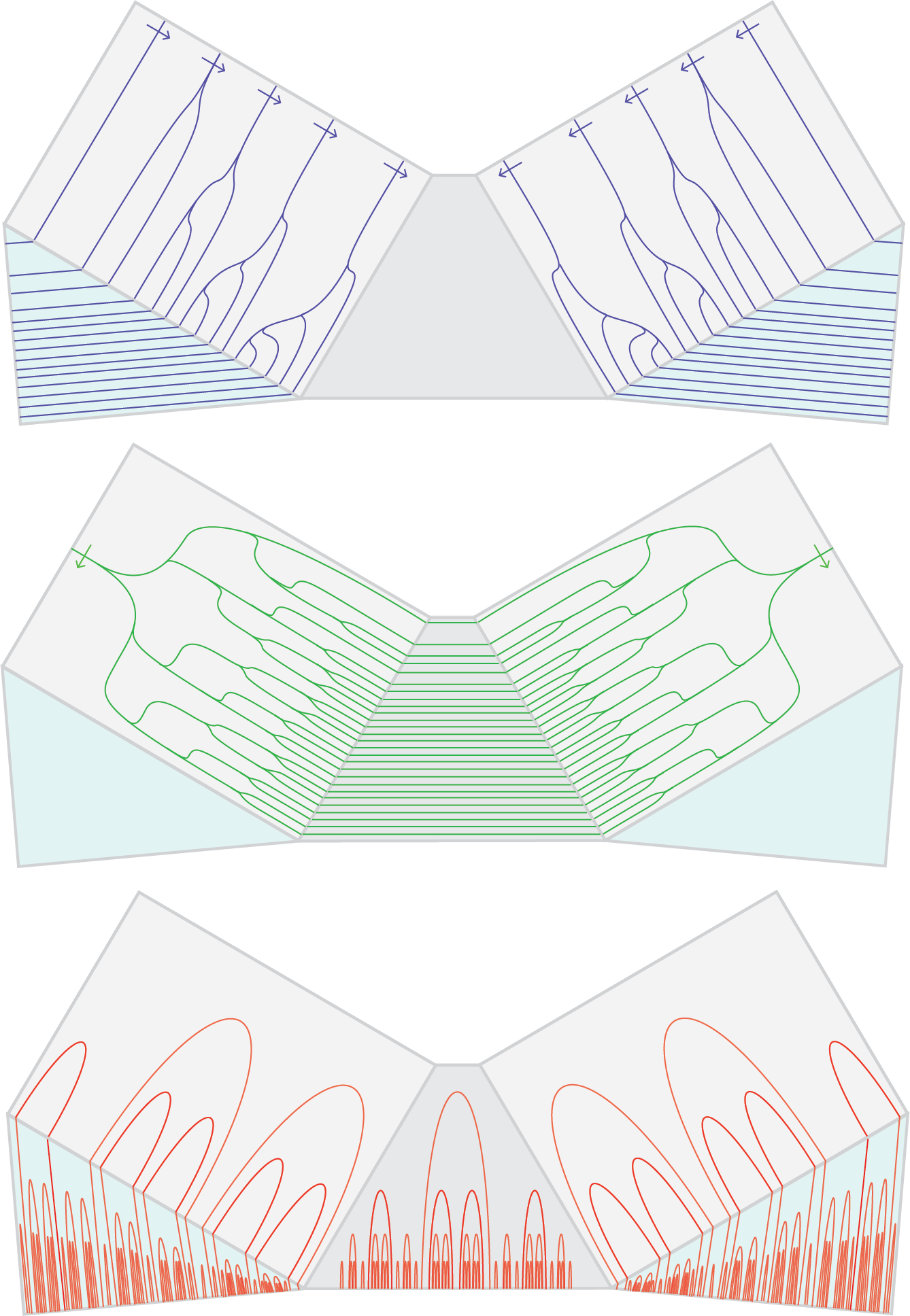}
\end{overpic}
\vspace{4mm}
\caption{Top, middle, lower: $a$-tracks, $b$-tracks, and $t$-tracks through the diagram $\Delta$ of  Figure~\ref{fig:G2Diagram}. The lower diagram is intended only to convey the nesting pattern of the  $t$-tracks. The pattern expands too rapidly towards $\chi$ to be displayed accurately. }
\label{fig:tracks_examples}
\end{figure}

\begin{enumerate}
\item \emph{Noise in $\Delta$ must flow towards $\chi$ and orthogonally to tracks.} This refers to the propagation of (``noise'') letters $t$,  $x_1$, $x_2$, and $y_1$, and $y_2$  through $\Delta$.    Figures~\ref{fig:QDiagram}, \ref{fig:G2Diagram} and~\ref{fig:tracks_examples} show \emph{tracks} through the various diagrams we constructed above.    Introduced in Section~\ref{sec:tracks}, tracks are generalizations of \emph{corridors}.  We will be concerned with four types: $a$-tracks, $b$-tracks, $t$-tracks, and $(a_2,b_q)$-tracks. 

An $a$-track is a path in the dual of $\Delta$ that crosses successive edges labelled by $a$-letters (meaning $a_1$ and $a_2$).  A $b$-track is the same, but for edges labelled by $b_0, \ldots, b_p$.  A $t$-track crosses $t$-edges---the use of $t$ is a distinctive feature of Wise's version of the Rips construction;  it renders the group an HNN-extension of a free group, with $t$ the stable letter (see Proposition~\ref{prop:HNNWise}).  This extra structure, manifested in the geometry of $t$-tracks, facilitates analysis of $G$.  We will describe $(a_2,b_q)$-tracks in (2) below.   As there are three $a$-letters or three $b$-letters in some of the defining relators, $a$-tracks and $b$-tracks can branch.    

	As noise advances  across successive tracks it increases exponentially in length.  
	A consequence of the small-cancellation condition enjoyed by the Rips words used in the defining relators  is that noise cannot substantially cancel within a diagram---it must instead emerge on the boundary.
	  	Therefore, if we assume that $w$ is of minimal length among all words on the generators of $G$ that equal $\chi$ in $G$, then almost all this noise must emerge in $\chi$.  If many noise letters emerge in $w$, then their blow up en route there would result in it being possible to cut a subdiagram out of $\Delta$ to get a new diagram that demonstrated a shorter word than $w$ equals $\chi$ in $G$.    

	This also has helpful consequences for the orientation of tracks---in ways made precise in Lemma~\ref{lem: Layout lemma}. In short, they must be oriented towards $\chi$ because otherwise they would act as blockades for the flow of noise.

\item  \emph{$(a_2, b_q)$-tracks.} These are paths through van~Kampen diagrams that cross successive $a_2$- and $b_q$-edges.  They are the subject of Section~\ref{sec: a2bq tracks}.  Examples are found in Figures~\ref{fig:QDiagram} and \ref{fig:G2Diagram}.  In most defining relators of $G$ there are either zero or two   $a_2$-letters, and ditto for $b_q$-letters.  If an $(a_2, b_q)$-track enters a 2-cell labelled by such a relator across  an $a_2$-edge, then it exists across the other $a_2$-edge, and ditto for $b_q$-edges.        
However our presentation for $G$ has a defining relator ($r_{1, q-1}$ of Figure~\ref{fig:relations}) with   one $a_2$-letter and one $b_q$-letter, and a defining relator  ($r_{2, q}$ of Figure~\ref{fig:relations}) that has two $a_2$-letters and two $b_q$-letters.  On entering the 2-cell of the former type across its $a_2$-edge it exits across its $b_q$-edge (or vice versa). On entering a 2-cell of the latter type  across an $a_2$-edge (resp.\ $b_q$-edge), it exits across the $b_q$-edge (resp.\ $a_2$-edge) that is oriented the same way. These conventions ensure that every $a_2$- and $b_q$-edge in a van~Kampen diagram over $G$ is crossed by exactly one $(a_2, b_q)$-track,  no $(a_2, b_q)$-track can cross itself, and  no two $(a_2, b_q)$-tracks can cross each other.  So $(a_2, b_q)$-tracks  associate to every $b_q$-edge in a diagram $\Delta$ a pair of edges labelled by $a_2$ or $b_q$ on the boundary.  

If the automorphism $\varphi$  gives $\sim \! n^{p}$ growth within $\Delta$, then it creates $\succeq \! n^{q}$ $b_q$-edges within $\Delta$.  It turns out it does so in such a way that  $\succeq \! n^{q}$ of these $b_q$-edges have distinct  $(a_2, b_q)$-tracks through them.  And because those   $(a_2, b_q)$-tracks all run to the boundary,    the length of $w$ must be $\succeq \! n^{q}$.           

\item   \emph{$x$- versus $y$-noise, and $b_0$-tracks.} It is significant that our generating set for $H$ consists of the noise letters $t$, $y_1$, $y_2$ but omits $x_1$ and $x_2$.  It is possible for $x$-noise  to flow across $b$-tracks but impossible for $y$-noise.  And $x$-noise becomes $y$-noise when (and only when) it crosses $b_0$-tracks (particular examples of $b$-tracks).  This means that stacks of nested $b$-tracks must include at most one $b_0$-track and that $b_0$-track  must be the closest to $\chi$.

\end{enumerate}

In Section~\ref{sec:upper} we use these ideas to reduce the problem of bounding $|\chi|$ from above to establishing an inequality concerning the quotient $Q$ of \eqref{eqn QQQ} (specifically, we reduce it to Lemma~\ref{lem:Qp/q}), and this is where the ``$n^{p/q}$'' in our distortion functions is ultimately established, as we explain in Section~\ref{sec:p/q}. Combined with the blow-up that comes from the flow of noise through $\Delta$, it gives our $2^{n^{p/q}}$ upper bound on the distortion of $H$ in $G$.

We leverage our examples to get iterated exponential distortion functions and complete our proof of  Theorem~\ref{main} in Section~\ref{sec:general k}.  The strategy is to  amalgamate $G$ with a chain of hyperbolic free-by-free groups following Brady and Tran \cite{BrT}, and then prove and apply a combination theorem for the hyperbolicity of amalgams.

In Section~\ref{sec:Realizing others} we show  that  the distorted subgroup $H$ need not be free of rank 3, but rather can be taken to be any torsion-free non-elementary hyperbolic group, proving Theorem~\ref{thm:distorted hyp grp}. For this we establish the existence (in Lemma~\ref{lem:there are free subgroups}, after \cite{Kap99}) of undistorted free subgroups of any rank in torsion-free non-elementary hyperbolic groups, apply the same combination theorem to amalgamate these with our examples in a new hyperbolic group, and then we prove the estimates on the distortion function by means of an appropriate general theorem (Theorem~\ref{thm:amalgam distortion}) concerning distortion in amalgams.

\subsection{Acknowledgements} We are grateful to Ilya~Kapovich and Mahan~Mj for suggesting that we promote Theorem~\ref{main} to Theorem~\ref{thm:distorted hyp grp}, and to Jason~Manning for guidance on the associated literature.   We also thank an anonymous referee for a generously thoughtful and detailed reading.

\section{Our groups} \label{ch:our groups}
 
\subsection{The definition} \label{sec:the defn}

Here we will define the group $G$ which will prove Theorem~\ref{main} in the case  $k=1$.  In Section~\ref{sec:general k} we will explain how the case $k=1$ leads to the result for other $k$.  

We fix  integers $p>q>0$. Then $G$ has presentation 
$$
\mathcal{P} \ = \ \langle \; a_1, a_2,   b_0, \ldots, b_p, t, x_1, x_2, y_1, y_2 \;\mid\; \mathcal R 
 \; \rangle
$$
where $\mathcal R$ is the set of $5p+11$ defining relators displayed in Figure~\ref{fig:relations}.  Our notation $\Xb$ and $\Yb$ is intended to indicate indexing that we have chosen to suppress. 
   Every element of $\mathcal R$ is a word of the form $t^{-1} u t  v^{-1}$ where  $u$ and $v$ are words on generators other than $t$.  Each has two or three \emph{Rips subwords},  
    denoted $\Xb$ or $\Yb$,
   from sets $\mathcal{X}    =   \set{X_1, X_2, \ldots, X_{14p}}$  and  $\mathcal{Y}   =    \set{Y_1, Y_2, \ldots, Y_{30}}$ of pairwise disjoint subwords  of  the infinite Rips words
$x_1 x_2^1 \, x_1 x_2^2 \,  x_1 x_2^3  \cdots$ 
and
$y_1 y_2^1 \, y_1 y_2^2 \,  y_1 y_2^3  \cdots$, respectively, chosen in a manner we will explain momentarily.
   We stress that each $\Xb$ and $\Yb$ occurs  once in $\mathcal{P}$  and does so as a subword of one defining relator.  So,  if an $\Xb$ or $\Yb$ can be read around a portion of the boundary circuit of  a 2-cell in a van~Kampen diagram (see Section~\ref{sec:tracks}) over $\mathcal{P}$, then that Rips word uniquely determines the defining relator that 2-cell corresponds to.   
This use of $t$ and  Rips words is  a variation  on Wise's  \cite{Wise1} HNN-version  of Rips' Construction \cite{Rips}.  (Our example $G$ departs in some respects from Wise's framework. Wise has two  $\Xb$ subwords in each defining relator, has only two `noise' generators $x_1$ and $x_2$, and has   additional defining relators that ensure that $\langle t, x_1, x_2  \rangle$ is a normal subgroup.)

\begin{figure}[htbp]
\centering
 \begin{overpic}  
{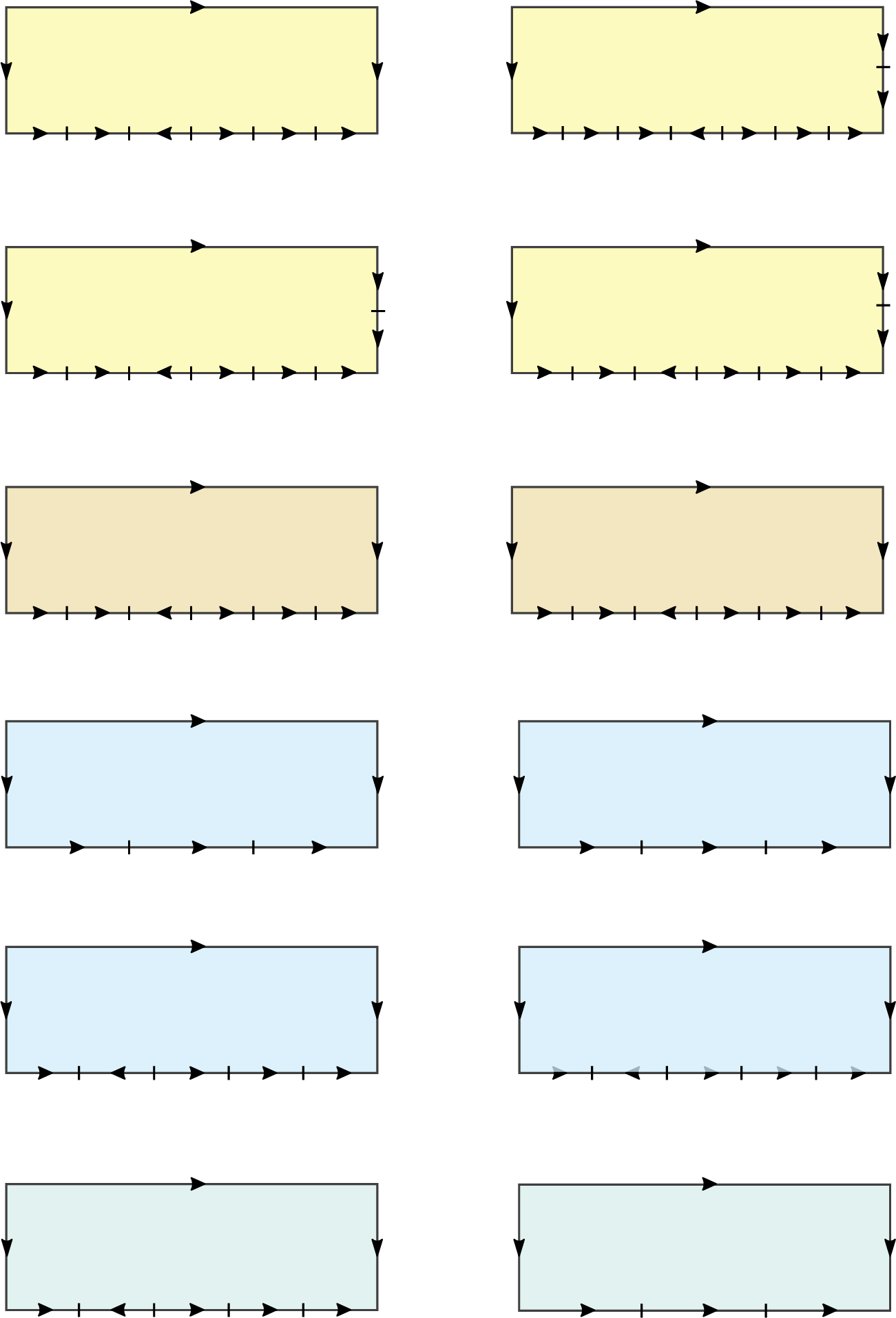}

\put(6, 94){\small{\gray{$r_{1,p}$}}}
\put(47, 94){\small{\gray{$r_{1,q-1}$}}}
\put(6, 76){\small{\gray{$r_{1,i}$}}}
\put(47, 76){\small{\gray{$r_{1,0}$}}}
\put(6, 58){\small{\gray{$r_{2,i}$}}}
\put(47, 58){\small{\gray{$r_{2,0}$}}}
\put(6, 40){\small{\gray{$r_{3,i}$}}}
\put(47, 40){\small{\gray{$r_{3,0}$}}}
\put(6, 23){\small{\gray{$r_{3,i,j}$}}}
\put(47, 23){\small{\gray{$r_{3,0,j}$}}}
\put(6, 5){\small{\gray{$r_{4,i,j}$}}}
\put(47, 5){\small{\gray{$r_{4,i}$}}}

\put(14, 75.5){\parbox{25mm}{\tiny{\gray{$ i =    1, \ldots, p-1$}} \\  \tiny{\gray{$i \neq q-1$}}}}
\put(14, 40){\tiny{\gray{$i= 1, \ldots, p$}}}
\put(14, 23){\parbox{25mm}{\tiny{\gray{$i =1, \ldots, p$}} \\  \tiny{\gray{$j =1, 2$}}}}
\put(14, 5){\parbox{25mm}{\tiny{\gray{$i =1, 2$}} \\ \tiny{\gray{$j =1, 2$}}}}
\put(54, 5){\tiny{\gray{$i =1, 2$}}}
\put(54, 23){\tiny{\gray{$j =1, 2$}}}
\put(14, 58){\tiny{\gray{$i= 1, \ldots, p$}}}

\put(14, 100.5){\small{$a_1$}}
\put(52, 100.5){\small{$a_1$}}
\put(14, 82.5){\small{$a_1$}}
\put(52, 82.5){\small{$a_1$}}
\put(2, 87.5){\small{$a_1$}}
\put(2, 69.5){\small{$a_1$}}
\put(40, 69.5){\small{$a_1$}}
\put(2, 51){\small{$a_2$}}
\put(-2, 5.5){\small{$a_i$}}
\put(29.5, 5.5){\small{$a_i$}}
\put(40, 51){\small{$a_2$}}
\put(14, 64){\small{$a_2$}}
\put(52, 64){\small{$a_2$}}
\put(36.5, 5.5){\small{$a_i$}}
\put(68.3, 5.5){\small{$a_i$}}
\put( 40, 87.5){\small{$a_1$}}
\put( 44, 87.5){\small{$a_2$}}
\put(14.5, 46.5){\small{$t$}}
\put(53, 46.5){\small{$t$}}
\put(14, 29.5){\small{$x_j$}}
\put(53, 29.5){\small{$x_j$}}
\put(14, 12){\small{$y_j$}}
\put(53, 11.5){\small{$t$}}
\put(-2, 94){\small{$b_p$}}
\put(29.5, 94){\small{$b_p$}}
\put(68, 96.5){\small{$b_q$}}
\put(29.5, 78){\small{$b_{i+1}$}}
\put(-1.5, 76){\small{$b_i$}}
\put(-1.5, 40){\small{$b_i$}}
\put(36.5, 40){\small{$b_0$}}
\put(68.3, 40){\small{$b_0$}}

\put(67.7, 78){\small{$b_1$}}
\put(67.7, 73){\small{$b_0$}}

\put(34, 94){\small{$b_{q-1}$}}
\put(68, 92){\small{$b_{q-1}$}}
\put(29.5, 40){\small{$b_{i}$}}
\put(29.5, 73){\small{$b_{i}$}}
\put(36, 58){\small{$b_0$}}
\put(68, 58){\small{$b_0$}}

\put(29.5, 58){\small{$b_i$}}
\put(-2, 58){\small{$b_i$}}
\put(-1.5, 23){\small{$b_i$}}
\put(29.5, 23){\small{$b_i$}}
\put(36.5, 23){\small{$b_0$}}
\put(36.5, 76){\small{$b_0$}}
\put(68.3, 23){\small{$b_0$}}
\put(7, 87.5){\small{$X_{\ast}$}}
\put(12, 87.5){\small{$t$}}
\put(16, 87.5){\small{$X_{\ast}$}}
\put(22, 87.5){\small{$t$}}
\put(26, 87.5){\small{$X_{\ast}$}}
\put(48, 87.5){\small{$X_{\ast}$}}
\put(52, 87.5){\small{$t$}}
\put(56, 87.5){\small{$X_{\ast}$}}
\put(61, 87.5){\small{$t$}}
\put(64, 87.5){\small{$X_{\ast}$}}
\put(7, 69){\small{$X_{\ast}$}}
\put(12, 69){\small{$t$}}
\put(16, 69){\small{$X_{\ast}$}}
\put(22, 69){\small{$t$}}
\put(26, 69){\small{$X_{\ast}$}}
\put(45, 69){\small{$Y_{\ast}$}}
\put(51, 69){\small{$t$}}
\put(55, 69){\small{$Y_{\ast}$}}
\put(60, 69){\small{$t$}}
\put(64, 69){\small{$Y_{\ast}$}}
\put(7, 50.5){\small{$X_{\ast}$}}
\put(12, 50.5){\small{$t$}}
\put(16, 50.5){\small{$X_{\ast}$}}
\put(22, 50.5){\small{$t$}}
\put(26, 50.5){\small{$X_{\ast}$}}
\put(45, 50.5){\small{$Y_{\ast}$}}
\put(51, 50.5){\small{$t$}}
\put(55, 50.5){\small{$Y_{\ast}$}}
\put(60, 50.5){\small{$t$}}
\put(64, 50.5){\small{$Y_{\ast}$}}
\put(5, 33){\small{$X_{\ast}$}}
\put(14.5, 33){\small{$t$}}
\put(23, 33){\small{$X_{\ast}$}}
\put(43.5, 33){\small{$Y_{\ast}$}}
\put(53, 33){\small{$t$}}
\put(62, 33){\small{$Y_{\ast}$}}
\put(2, 16){\small{$X_{\ast}$}}
\put(9, 16){\small{$t$}}
\put(14, 16){\small{$X_{\ast}$}}
\put(20, 16){\small{$t$}}
\put(25, 16){\small{$X_{\ast}$}}
\put(41, 16){\small{$Y_{\ast}$}}
\put(48, 16){\small{$t$}}
\put(53, 16){\small{$Y_{\ast}$}}
\put(59, 16){\small{$t$}}
\put(64, 16){\small{$Y_{\ast}$}}
\put(2, -2){\small{$Y_{\ast}$}}
\put(9, -2){\small{$t$}}
\put(14, -2){\small{$Y_{\ast}$}}
\put(20, -2){\small{$t$}}
\put(25, -2){\small{$Y_{\ast}$}}
\put(43.5, -2){\small{$Y_{\ast}$}}
\put(53, -2){\small{$t$}}
\put(62, -2){\small{$Y_{\ast}$}}

\end{overpic}
\vspace{4mm}
\caption{Defining relators for our group $G$}
\label{fig:relations}
\end{figure}

Suppose  $S$ is a set of words on $A \cup A^{-1}$ for some alphabet $A$.  A cyclic conjugate of a word $w$ is a word  $s_2 s_1$ such that $s_1$ is a prefix of $w$ and $s_2$ a suffix such that $s_1 s_2 = w$.   Let $\mathcal{C}(S)$ be the set of all cyclic conjugates of words in $S^{\pm 1}$.  Assume that all elements of $\mathcal{C}(S)$  are reduced.     A \emph{piece} is a common prefix $\pi$ of a pair of distinct words $\pi u$ and $\pi v$ in $\mathcal{C}(S)$.

We choose the Rips  subwords  $\Xb$ and $\Yb$ so that each has length at least 100 and we have:  
 \begin{itemize}
\item[i.] \emph{The uniform $C'(1/6)$-condition for $\mathcal{R}$.}  Every piece has length  strictly less than a sixth of the length of the shortest relator in $\mathcal R$.  

\item[ii.] \emph{The $C(3)$-condition for  the union $S$ of the 3-  and 5-element  generating sets of the     terminal vertex groups of Table~\ref{table:hnn}.} No element of $\mathcal{C}(S)$ is a concatenation of fewer than $3$ pieces.

\item[iii.] \emph{The $C'(1/4)$ condition for the set of Rips words $\mathcal{X} \cup \mathcal{Y}$}.  Every piece has length  strictly less than a quarter of the length of each element of  $\mathcal{C}(\mathcal{X} \cup \mathcal{Y})$ in which it occurs.

\item[iv.]  \emph{The $C(5)$-condition for $\mathcal{U} =  \set{ \, u, v \;\mid\; t^{-1} u t  v^{-1}  \in \mathcal{R} \, }$}. 
No element of  $C(\mathcal{U})$ is a concatenation of fewer than $5$ pieces.   
\end{itemize}

This can be achieved for instance by adapting the example of \cite[Remark~3.2]{Wise1} so that $\mathcal{X}$ is the set of words  $$X_{i} \  : =  \ x_1 x_2^{200ip}x_1 x_2^{200ip +1} \cdots x_1 x_2^{200ip +200p -1}$$  
for $1 \leq i \leq    14p$  
and $\mathcal{Y}$ is the set  of words $$Y_{i} \  : =  \ y_1 y_2^{200ip}y_1 y_2^{200ip +1} \cdots y_1 y_2^{200ip +200p -1}$$   for $1 \leq i \leq   30$.    
 Then $\mathcal{R}$ satisfies  $C'(1/6)$ because the longest pieces in $\mathcal R$ have the form $x_2^{\alpha-1} x_1 x_2^{\alpha}$ or $y_2^{\alpha-1} y_1 y_2^{\alpha}$ (or the inverse thereof) for some $\alpha \in \N$. 
The longest piece appears either in $X_{14p}$ with $\alpha = 200 (14p)+200p-2$ or in $Y_{30}$ with $\alpha = 200 (30) +200p -2$.  Its length is $2\alpha$, which (in either case, since $p >1$) is strictly less than $12,400p$.   On the other hand, the shortest defining relator has length at least $2|X_1|$ (see Figure~\ref{fig:relations}) which is certainly bigger than $80,000p^2$, and this number is already bigger than six times $12,400p$. Conditions~ii--iv hold similarly.

Condition~i is used in the next paragraph and will be used to achieve $\textup{CAT}(-1)$ in Remark~\ref{lem:CAT0 and CAT-1 structures}.  Condition~ii will be used in Lemma~\ref{lem: C(3)} towards establishing HNN-structures for $G$.  Condition~iii will restrict cancellation in Section~\ref{sec:lower}, where we prove a lower bound on distortion, and in Sections~\ref{sec:consequences of small-cancellation} and \ref{sec:tracks in reduced diagrams}, towards showing certain configurations of tracks do not arise in reduced diagrams.  Condition~iv achieves residual finiteness as we now explain.

All $C'(1/6)$ groups satisfy a linear isoperimetric inequality and so are hyperbolic \cite{Gersten9}. By \cite{WiseCubulating} they are cubical, and then, by \cite{Agol}, they are  virtually special, and so are residually finite. Their residually finiteness is more directly apparent via \cite[Theorem~2.1]{Wise1}, given the $C(5)$-condition for $\mathcal{U}$.

Our distorted subgroup is  $$H \ = \ \langle t, y_1, y_2 \rangle.$$

\subsection{Consequences of small-cancellation} \label{sec:consequences of small-cancellation}

Here we give three lemmas that are proximate consequences of the small-cancellation conditions in Section~\ref{sec:the defn}. 

Part~\eqref{lem part:vertex groups free S} of the first of these lemmas will be used in our proof of   Proposition~\ref{prop:hnn}. Part~\eqref{lem part:vertex groups free U}  will imply  Proposition~\ref{prop:HNNWise}. We prove it using the $C(3)$-condition for $\mathcal{U}$, which is weaker than the $C(5)$-condition we have for $\mathcal{U}$ in Section~\ref{sec:the defn}.  It is a special case of \cite[Theorem~2.11]{Wise2}, but we  include our own proof  here because the result is central to our argument and the following short argument is available in our context.

\begin{lemma} (Cf.\ \cite[Theorem~2.11]{Wise2})  \label{lem: C(3)} ${}$ \\[-12pt]  
\begin{enumerate}
\item \label{lem part:vertex groups free S} 
Let $S$ be the union of the  3- and 5-element  generating sets of the terminal vertex groups of Table~\ref{table:hnn} (that is, $S$ is  the set of all words appearing in the final column). Then $S$  freely generates a free subgroup of the free group $F = F(\mathcal{A})$, where $\mathcal{A} = \set{a_1, a_2, t, x_1, x_2, y_1, y_2}$.   	
\item \label{lem part:vertex groups free U} The set $$\mathcal{U} \ = \ \set{ \, u, v \;\mid\; t^{-1} u t  v^{-1}  \in \mathcal{R} \, }$$ freely generates a free subgroup in the free group $$F \ = \  F(a_1, a_2,   b_0, \ldots, b_p, x_1, x_2, y_1, y_2).$$ 
\end{enumerate}
\end{lemma}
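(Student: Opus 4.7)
Both parts follow from a single small-cancellation principle: if $W$ is a set of cyclically reduced words in a free group $F$, pairwise distinct from one another and from their inverses, and if $\mathcal{C}(W)$ satisfies $C(3)$ in the sense of Section~\ref{sec:the defn}, then $W$ freely generates a free subgroup of $F$. Condition~ii of Section~\ref{sec:the defn} gives $C(3)$ for $S$ directly, and condition~iv gives $C(5)$---which is strictly stronger---for $\mathcal{U}$. So the task will reduce to proving this principle and checking the preliminary hypotheses: that the elements of $S$, respectively $\mathcal{U}$, are cyclically reduced and pairwise distinct from one another and from their inverses. These preliminaries are a routine inspection of Figure~\ref{fig:relations} and Table~\ref{table:hnn}, using in particular that each Rips subword $\Xb, \Yb$ occurs exactly once across all of $\mathcal{R}$, so the $u$'s and $v$'s (and the generators of the terminal vertex groups) are uniquely labelled by their Rips content.

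The argument for the principle is the classical piece-counting of small-cancellation theory. Let $w = s_1^{\varepsilon_1} \cdots s_n^{\varepsilon_n}$ with $s_i \in W$ be freely reduced as a word in the alphabet $W$ (that is, $s_i = s_{i+1}$ forces $\varepsilon_i = \varepsilon_{i+1}$); the plan is to show that the free reduction of $w$ in $F$ is nontrivial whenever $n \geq 1$. At each junction between $s_i^{\varepsilon_i}$ and $s_{i+1}^{\varepsilon_{i+1}}$, any segment $\beta^{-1}$ that cancels is a common prefix of $s_{i+1}^{\varepsilon_{i+1}}$ and $s_i^{-\varepsilon_i}$; by the reducedness assumption these two elements of $W^{\pm 1} \subset \mathcal{C}(W)$ are distinct, so $\beta^{-1}$ is a piece. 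Consequently each $s_i^{\varepsilon_i}$ loses at most one piece from its left end and at most one piece from its right end. If those two pieces together exhausted $s_i^{\varepsilon_i}$, then $s_i^{\varepsilon_i}$---an element of $\mathcal{C}(W)$---would be a concatenation of at most two pieces, contradicting $C(3)$. Hence at least one letter of each $s_i^{\varepsilon_i}$ survives, and the reduction of $w$ in $F$ is nonempty.

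The one step requiring care is the final decomposition: if the prefix-piece and suffix-piece overlap in the interior of $s_i^{\varepsilon_i}$, one must truncate the suffix-piece to the portion disjoint from the prefix-piece to obtain an honest concatenation $s_i^{\varepsilon_i} = p_1 \cdot p_2'$ of two pieces. This truncation preserves being a piece because any prefix of a piece (read on the inverse side) is itself a prefix of the same two distinct elements of $\mathcal{C}(W)$. This is the only delicate point; once it is in place, the argument is complete and both parts~\eqref{lem part:vertex groups free S} and~\eqref{lem part:vertex groups free U} follow immediately.
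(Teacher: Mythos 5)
Your proof has a genuine gap at the crucial piece-counting step. The claim ``each $s_i^{\varepsilon_i}$ loses at most one piece from its left end and at most one piece from its right end'' is not correct, and cannot be, in the situation you are trying to rule out. If the product $s_1^{\varepsilon_1}\cdots s_n^{\varepsilon_n}$ freely reduces to the empty word with $n\ge 3$, then some syllable must be \emph{wholly} consumed in the course of the reduction, and as soon as that happens the two surviving neighbours of that syllable become adjacent and begin to cancel with each other. This cascading means a fixed syllable can shed material into several non-adjacent syllables: e.g.\ in $s_1 s_2 s_3 s_4$, the syllable $s_1$ may lose a suffix to $s_2$, then (after $s_2$ vanishes) a further suffix to $s_3$, then (after $s_3$ vanishes) more to $s_4$. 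The material that disappears from the right end of $s_1$ is then the union of these successive cancelled segments; it is not a common prefix of $s_1^{-\varepsilon_1}$ and any single element of $\mathcal{C}(W)$, so there is no reason for it to be a piece, and $s_1$ need not decompose into two pieces. Your ``truncation'' paragraph addresses a different (and real, but minor) issue---overlap of prefix- and suffix-cancellation within one syllable---not this one.

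What \emph{is} true, and what the paper actually proves, is that there exists at least one syllable which is consumed purely by its two initial junction pieces, but locating it takes work. The paper's proof is fundamentally different in mechanism: it encodes the free reduction by a planar labelled tree $T$ whose perimeter reads $W$, observes that each syllable traces a geodesic segment of $T$ (so leaves of $T$ occur only at syllable boundaries), and then chooses a pair of leaves $v_1, v_2$ whose connecting geodesic $\rho$ passes through exactly one branching vertex $b$. For that specific $\rho$, the $C(3)$ condition forces $\rho$ to carry a single syllable $w_j$, and the two arcs of $\rho$ from $v_1$ to $b$ and from $b$ to $v_2$ are each pieces (shared with $w_{j-1}^{-1}$ and $w_{k+1}^{-1}$ respectively, both of which also reach $b$, or else one of them is already a piece outright). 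That produces the contradiction. In short: the paper selects, via the tree, the one syllable for which your local piece-counting is valid; your argument asserts the local bound for every syllable, which is false. To repair your proof you would need something equivalent to the paper's branch-vertex selection, at which point you have essentially reproduced their argument.
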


\begin{proof} 
Both parts are instances of the same general result, which we will prove here in the notation of part \ref{lem part:vertex groups free S}. Suppose $w_1, \ldots, w_m \in S^{\pm 1}$  are such that $W = w_1 \cdots w_m$ is a non-empty reduced word on $S$ but $W$ freely reduces to the empty word when viewed as a word on the generators of $F$.  We will show that the existence of this $W$ contradicts $C(3)$.   

There is a planar tree $T$ whose edges are directed and are labelled by generators of $F$ so that around the perimeter of $T$ we read $W$.  As each $w_i$ is a  reduced  word on $\mathcal{A}$, the portion of the perimeter of $T$ along which one reads $w_i$ can only include a leaf of $T$ at its start or end.  It follows that if $T$ is a line, then the shorter of $w_1$ and $w_m^{-1}$ is subword of the other, and so is a piece,   contrary to $C(3)$. 

Assume, then, that $T$ is not a line.  There must be a pair of leaves $v_1$ and $v_2$ in $T$ such that the geodesic $\rho$ from $v_1$ to $v_2$ visits exactly one branching (i.e.\ valence at least 3) vertex $b$.    So the word $u$ one reads along $\rho$ is $w_j \cdots w_k$ for some $1 \leq j \leq k \leq m$.  In the remainder of our argument, read indices modulo $m$. The portion of $\rho$ along which we read $w_j$ must pass $b$ else whichever of $w_{j-1}$ and $w_j$ is shorter would be a piece.  And, in fact, then $w_j$ must be $u$, else $w_k$ or $w_{k+1}$ would be a piece.  So $j=k$.           But then, as neither  $w_{j-1}^{-1}$ nor $w_{k+1}^{-1}$ can be a subword of $w_j$ (else they would be pieces),  $w_j$ must be concatenation of two pieces: one that it shares with $w_{j-1}^{-1}$ and one that it shares with $w_{k+1}^{-1}$.  Again, this is    contrary to $C(3)$.   
  \end{proof}

In our next lemma, a stronger small-cancellation hypothesis allows the same conclusion for further subsets of free groups.  We will  call on  it in Lemma~\ref{K0Li} en route to our proof of  Proposition~\ref{prop:hnn}.

\begin{lemma} \label{lem: free subgroup C'(1/4)}
Suppose 
$Z_1, Z_2, Z_3,  Z'_1, Z'_2, Z'_3,Z_{p1}, Z_{p2}, Z_{p3}, Z_{p4}, Z_{p5}$
are words of the form $\Yb t^{-1} \Yb t \Yb$  or $\Yb t \Yb$ and each is a subword of a different 
defining relation from Figure~\ref{fig:relations} (so no $\Yb$ appears twice).  We will refer to these as $Z$-words. Then  
$$\mathcal{S}_1 \ = \   \set{t, x_1, x_2, Z_1, Z_2, Z_3,  Z'_1, Z'_2, Z'_3}$$
freely generate a free subgroup of   $F=  F(t, x_1, x_2, y_1, y_2)$.
The same is true of  
 $$\mathcal{S}_2   \ = \   \set{Z_1, Z_2, Z_3,  Z'_1, Z'_2, Z'_3, Z_{p1}, Z_{p2}, Z_{p3}, Z_{p4}, Z_{p5}}.$$
\end{lemma}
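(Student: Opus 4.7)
The plan is to adapt the combinatorial argument from the proof of Lemma~\ref{lem: C(3)}. Suppose for contradiction that $W = w_1 \cdots w_m$ with $m \geq 1$ and each $w_i \in \mathcal{S}_j^{\pm 1}$ is reduced as a word on $\mathcal{S}_j$ (for $j=1$ or $2$) but freely reduces to the empty word in the free group $F$. I will derive a contradiction using the $C'(1/4)$-condition on $\mathcal{X}\cup\mathcal{Y}$ (item iii of Section~\ref{sec:the defn}).

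The first step is to identify where cancellation can occur at a junction $w_i \mid w_{i+1}$. Every $Z$-word begins and ends with some $Y_* \in \mathcal{Y}$, a word on $\{y_1,y_2\}$, so every $Z^{-1}$-word begins and ends with a $Y_*^{-1}$. The singleton elements $t, x_1, x_2$ of $\mathcal{S}_1$ live in an alphabet disjoint from $\{y_1^{\pm 1}, y_2^{\pm 1}\}$, so no cancellation can take place at a junction where exactly one of $w_i, w_{i+1}$ is a singleton; and at a junction between two singletons either no cancellation occurs or $w_i = w_{i+1}^{-1}$, contradicting reducedness on $\mathcal{S}_j$. Checking the four sign patterns for a junction between two $Z^{\pm 1}$-letters, cancellation is possible only at junctions of the shape $Z \cdot Z'^{-1}$ or $Z^{-1} \cdot Z'$, where the boundary Rips subwords of consecutive letters meet in opposite orientation.

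At such a junction, let $Y_a$ be the Rips subword at the end of $w_i$ and $Y_b$ the one at the start of $w_{i+1}$. If $Y_a = Y_b$, the hypothesis that no $Y_*$ occurs twice across the $Z$-words pins down the identity and sign of both enclosing $Z^{\pm 1}$-letters, since a boundary $Y_*$ must be the first or last $Y_*$ of its $Z$-word (the middle $Y_*$ of a length-$5$ word $Y t^{-1} Y t Y$ is internal); this forces $w_i = w_{i+1}^{-1}$, contradicting reducedness. When $Y_a \neq Y_b$, the cancellation is a common suffix of $Y_a$ and $Y_b$, which after reversal is a common prefix of two distinct elements of $\mathcal{C}(\mathcal{X}\cup\mathcal{Y})$ and hence a piece; by $C'(1/4)$ its length is strictly less than $\tfrac14 \min(|Y_a|,|Y_b|)$. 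Thus strictly less than one quarter of each boundary $Y_*$ is removed, and the central core of each $Z$-letter --- the $t$ sandwiched in $Y t Y$, or the block $t^{-1} Y t$ in $Y t^{-1} Y t Y$ --- is untouched, with no possibility of cancellation propagating through one $Z$-letter into the next.

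Concatenating these local estimates, the free reduction of $W$ in $F$ still contains at least one letter from the core of each $Z^{\pm 1}$-factor and every singleton factor, so $W \neq 1$ in $F$, which is the desired contradiction. I expect the delicate step to be the ``$Y_a = Y_b$ forces $w_i = w_{i+1}^{-1}$'' deduction: one must exploit both the no-repetition hypothesis on $Y_*$'s and the observation that the boundary $Y_*$ of a $Z^{\pm 1}$-letter must be the first or last $Y_*$ of the underlying $Z$-word, these two facts together allowing the enclosing $Z^{\pm 1}$-letter to be reconstructed from the boundary Rips subword alone. Everything else is bookkeeping along the lines already used in Lemma~\ref{lem: C(3)}.
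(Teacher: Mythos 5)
Your proof is correct, but it takes a somewhat different route from the paper's.  The paper's proof is shorter and uses a trick that recurs elsewhere in the paper (e.g.\ in Lemma~\ref{lem: cancellation from C'(1/4)} and Lemma~\ref{lem:corridor overlap}): split each Rips word $Y_\ast$ into two halves $P_\ast S_\ast$ of roughly equal length, run any free-reduction sequence on $W$, and look at the \emph{first} half $P_\ast^{\pm 1}$ or $S_\ast^{\pm 1}$ that is fully cancelled.  Since that half cannot cancel past a $t^{\pm 1}$, it must have cancelled into the neighbouring half of the adjacent $Z^{\pm 1}$-letter, producing an overlap of at least half a Rips word; $C'(1/4)$ then forces the two Rips words to coincide, and the no-repetition hypothesis forces the two $Z^{\pm 1}$-letters to be inverse, contradicting reducedness.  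Your argument instead analyses each junction $w_i \mid w_{i+1}$ directly, shows (by the same $C'(1/4)$ plus no-repetition reasoning) that at most a quarter of each boundary $Y_\ast$ can disappear there, and observes that the internal $t^{\pm 1}$'s block any cancellation from cascading through a $Z$-letter, so a core survives in every factor.

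Both arguments hinge on the same two ingredients --- the $C'(1/4)$ condition on $\mathcal X \cup \mathcal Y$ and the ``each $Y_\ast$ occurs once'' hypothesis --- and your case analysis (junctions $Z \cdot Z'^{-1}$ and $Z^{-1} \cdot Z'$ are the only dangerous ones; equal boundary Rips words force $w_i = w_{i+1}^{-1}$) is sound.  The one place to be a little careful is the step from ``local bounds at each junction'' to ``a core survives after \emph{any} free reduction''; you handle it implicitly by noting the $t$-barriers prevent cancellation from propagating through a $Z$-letter, which is correct since free reduction in $F$ is confluent and no $Z$-letter can be fully absorbed.  The paper's ``first fully-cancelled half'' framing sidesteps this point altogether, which is why the authors favour it and reuse it later; your version is a fine alternative, just a bit more bookkeeping.
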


 \begin{proof}  Suppose for a contradiction that $w$ is a reduced word on $\mathcal{S}_1$ or $\mathcal{S}_2$ that represents the identity in $F$ and includes at least one of the $Z$-words. Express each   $Y_{\ast}$ as the concatenation $P_{\ast} S_{\ast}$ of a prefix and a suffix whose lengths differ by at most one.

Consider a first $P_{\ast}^{\pm 1}$ or $S_{\ast}^{\pm 1}$ that is completely cancelled away on freely reducing $w$ in $F$ by removing successive inverse pairs of adjacent letters. It must have cancelled into a neighbouring $P_{\ast}^{\pm 1}$ or $S_{\ast}^{\pm 1}$. But then, because of the  $C'(1/4)$-condition on the set of Rips words $\mathcal{X} \cup \mathcal{Y}$, some neighbouring pair of $Z$-words are inverses, contrary to $w$ being reduced as a word on $\mathcal{S}_1$ or $\mathcal{S}_2$.    
      \end{proof}

We will use the following variation on Lemma~\ref{lem: free subgroup C'(1/4)} in our proof of Lemma~\ref{lem: trapped x-noise}.

\begin{lemma} \label{lem: cancellation from C'(1/4)}
Suppose $$\overline{v} \ = \  x_{\lambda_0}^{\epsilon_0} X_{\xi_1}^{\mu_1} x_{\lambda_1}^{\epsilon_1} \cdots  X_{\xi_m}^{\mu_m} x_{\lambda_m}^{\epsilon_m}$$ is a word on $\mathcal{X} \cup \set{x_1, x_2}$   in which $m \geq 1$, each  $X_i \in  \mathcal{X}$, each $\lambda_i \in \set{1,2}$, each $\mu_i \in \set{\pm 1}$, and each $\epsilon_i \in \set{0, \pm 1}$. If $\overline{v}$  freely equals the empty word in $F(x_1, x_2)$, then for any sequence $\Sigma$ of free-reduction moves (successive removals of $x_j^{\pm 1} x_j^{\mp 1}$ subwords) that takes $\overline{v}$ to the empty word, there is some $i$ such that a subword consisting of at least a quarter of the letters of $X_{\xi_i}^{\mu_i}$   cancels with subword consisting of at least a quarter of the letters of    $X_{\xi_{i+1}}^{\mu_{i+1}}$.   
\end{lemma}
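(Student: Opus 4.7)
The plan is to model the reduction sequence $\Sigma$ as a non-crossing perfect matching $\sigma$ of the letters of $\overline{v}$ into inverse pairs, then combine this combinatorial structure with the $C'(1/4)$ condition on $\mathcal{X}\cup\mathcal{Y}$. Write $B_i:=X_{\xi_i}^{\mu_i}$ and $L_i:=|X_{\xi_i}|$. A first observation is that $\sigma$ cannot have both endpoints of a single pair inside the same $B_i$: non-crossing would then enclose a non-empty contiguous sub-word of $B_i$ that must itself $\sigma$-reduce to the identity, but each $X_{\xi_i}$ is already freely reduced in $F(x_1,x_2)$ and so has no such sub-word. Hence every block letter is matched to a letter outside its block, and non-crossing partitions the letters of each $B_i$ into a contiguous left-going prefix (matched strictly left of $B_i$) and a contiguous right-going suffix (matched strictly right of $B_i$).

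For any $i<j$, the letters of $B_i$ matched with $B_j$ form a contiguous sub-word of $B_i$ of some length $c_{ij}$, and the nested pairing forces the length-$c_{ij}$ sub-word of $B_j$ to be the reverse-inverse of the length-$c_{ij}$ sub-word of $B_i$. Read as prefixes of the appropriate cyclic conjugates of $X_{\xi_i}^{\mu_i}$ and $X_{\xi_j}^{-\mu_j}$, these sub-words form a common prefix of two elements of $\mathcal{C}(\mathcal{X}\cup\mathcal{Y})$. By the $C'(1/4)$ condition, either $c_{ij}<\min(L_i,L_j)/4$, or those two cyclic conjugates coincide. In the second case, positivity of every word in $\mathcal{X}$ on $\{x_1,x_2\}$ together with the strict monotonicity of the $x_2$-exponents in $X_\ell=x_1 x_2^{200\ell p}x_1 x_2^{200\ell p+1}\cdots$ forces $\mu_i=-\mu_j$, $X_{\xi_i}=X_{\xi_j}$, and pins down the matched positions in both blocks.

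The conclusion would then follow by induction on $m$. The base case $m=2$ is elementary: at most three middle letters can divert letters of $B_1$ or $B_2$ away from mutual cancellation, so $c_{12}\geq\max(L_1,L_2)-3$, and the contiguity established above promotes this immediately to the desired contiguous sub-word cancellation. For $m\geq 3$, suppose no $c_{i,i+1}\geq\max(L_i,L_{i+1})/4$; if no non-adjacent pair occurs, then the identity $L_k=c_{k,k-1}+c_{k,k+1}+m_k$ for an extreme or maximal block, together with the trivial bound on $m_k$, directly produces the required $i$. Otherwise, some non-adjacent arc encloses a strictly shorter sub-problem to which induction applies, with the truncated portions of its two boundary blocks playing the role of boundary Rips words. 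The main obstacle is ensuring the inductive sub-problem continues to satisfy the lemma's hypotheses: here the rigidity produced by $C'(1/4)$ in the bad case, plus the fact that truncated end-pieces remain sub-words of Rips words, lets the induction go through and force the local adjacent cancellation the lemma claims.
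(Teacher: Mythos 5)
Your approach---modeling $\Sigma$ as a non-crossing matching, deriving mass-balance constraints from $C'(1/4)$, and inducting on $m$---is genuinely different from the paper's, and your setup (the no-self-matching argument, contiguity of the $B_i$-to-$B_j$ arcs, and the $C'(1/4)$ dichotomy) is correct. But the induction step has a real gap at the ``non-adjacent arc'' branch. The paper instead splits each $X_{\xi_i}^{\mu_i}$ into two nearly equal halves $P_iS_i$ and takes the first of these $2m$ halves to be fully cancelled in the course of $\Sigma$, say $S_i$. Minimality of $i$ forces $S_i$ to cancel only into $x_{\lambda_i}^{\epsilon_i} X_{\xi_{i+1}}^{\mu_{i+1}}$ (reaching any further right would make $P_{i+1}$ cancel away strictly sooner), so at least $|S_i|-1 \geq |X_{\xi_i}|/2 - 1$ letters of $X_{\xi_i}^{\mu_i}$ cancel into $X_{\xi_{i+1}}^{\mu_{i+1}}$. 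This is already more than a quarter of $|X_{\xi_i}|$, and it so badly violates $C'(1/4)$ that the two relevant cyclic conjugates must coincide, hence $|X_{\xi_i}| = |X_{\xi_{i+1}}|$, delivering the quarter bound for $X_{\xi_{i+1}}^{\mu_{i+1}}$ as well. This trick localizes the problem to a single adjacent pair with no induction.

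The gap in your induction: a non-adjacent arc $B_i \leftrightarrow B_j$ (with $j > i+1$) encloses a sub-word
$$\text{(suffix of $B_i$)}\ x_{\lambda_i}^{\epsilon_i}\ B_{i+1}\ \cdots\ B_{j-1}\ x_{\lambda_{j-1}}^{\epsilon_{j-1}}\ \text{(prefix of $B_j$)},$$
whose terminal blocks are \emph{proper} sub-words of Rips words. Applying the inductive hypothesis to this sub-word gives a cancellation of a quarter of the \emph{truncated} length of the relevant boundary block, not a quarter of $|X_{\xi_i}|$ or $|X_{\xi_j}|$. For instance, taking the outermost non-adjacent arc out of $B_1$ (whose left-going prefix has at most one letter), the truncated $B_1$ has length $\geq 3L_1/4 - 1$ after $C'(1/4)$ limits the $B_1$-to-$B_j$ overlap, and a quarter of that is roughly $3L_1/16 < L_1/4$. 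The ``rigidity'' you invoke in the coincident-cyclic-conjugate case pins down $X_{\xi_i}$ and $X_{\xi_j}$ but does nothing to upgrade the fraction; to close the argument you would need to strengthen the inductive hypothesis to carry the truncation data, or replace the induction outright with the paper's ``first half to cancel'' selection. (Separately, your base-case bound should read $c_{12}\geq\min(L_1,L_2)-3/2$ with $|L_1-L_2|\leq 3$ forced, rather than $c_{12}\geq\max(L_1,L_2)-3$, though the $m=2$ conclusion is unaffected.)
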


 \begin{proof}
Express each word $X_{\xi_i}^{\mu_i}$ as the concatenation $P_i S_i$ of a prefix and a suffix whose lengths differ by at most one.    Let $i$ be the index of a first $P_i$ or $S_i$ to be completely cancelled away in the course of $\Sigma$.  Assume it is $S_i$. (The argument for $P_i$ is essentially the same.)  Then $S_i$ cancels with a prefix of $x_{\lambda_i}^{\epsilon_i}  X_{\xi_{i+1}}^{\mu_{i+1}}$.     But then, $C'(1/4)$ and the fact that the $X_{\ast}$ all have length at least $100$ together imply the result. 
   \end{proof}

\subsection{Van~Kampen diagrams, corridors, and tracks} \label{sec:tracks}

Suppose $w$ is a word on the generators of a group which is given by a presentation.  A \emph{van~Kampen diagram}  for $w$ with respect to that presentation is a finite planar 2-complex in which every edge is directed and labelled by a generator in such a way that around the perimeter of the diagram (in some direction from some starting vertex) one reads $w$ and around the perimeter of each 2-cell (in some direction from some starting vertex) one reads a defining relator. A word $w$ admits a van~Kampen diagram if and only  if it represents the identity in the group.   Many introductory  texts discuss van~Kampen diagrams---e.g., \cite{BrH}.   

\begin{definition} \textbf{(Reduced diagrams)} A van~Kampen diagram is \emph{reduced} when it does not contain a pair of   back-to-back cancelling cells---that is, a pair of cells with a common edge $e$ such that the word read clockwise around the perimeter of one of these cells starting from $e$  is the same as that read anticlockwise around  the other starting from $e$.  
\end{definition}

 \begin{definition} \textbf{(Corridors)}
 Suppose $z$ is  a generator.  Suppose $C_1$, \ldots, $C_m$ is a maximal set of   distinct 2-cells in a van~Kampen diagram $\Delta$ such that for all $i$, around $\partial C_i$ one reads a word $u_iz v_i^{-1} z^{-1}$ and
 the $z$ in $\partial C_i$ is the $z^{-1}$ in $\partial C_{i+1}$.  
 Then the $C_1$, \ldots, $C_m$ concatenate in $\Delta$ to form an    \emph{$z$-corridor} $\mathcal{C}$, as shown in Figure~\ref{fig:corridor}.   A $z$-edge in $\partial \Delta$ that is not part of the boundary of a 2-cell is a corridor with no 2-cells.

\begin{figure}[ht]
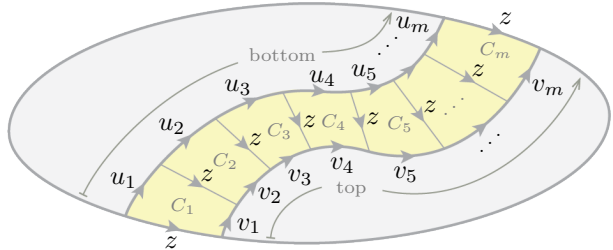

\centering
\centering
\begin{overpic} 
{Figures/corridor}
\put(26, -0.5){\small{$z$}}
\put(32, 10.5){\small{$z$}}
\put(40, 16.5){\small{$z$}}
\put(49, 19.5){\small{$z$}}
\put(59.5, 20){\small{$z$}}
\put(69, 22){\small{$z$}}
\put(77, 28){\small{$z$}}
\put(81.5, 36.7){\small{$z$}}
\put(27, 5.5){\gray{\tiny{$C_1$}}}
\put(34, 13){\gray{\tiny{$C_2$}}}
\put(43, 18){\gray{\tiny{$C_3$}}}
\put(52, 19.5){\gray{\tiny{$C_4$}}}
\put(63, 19.2){\gray{\tiny{$C_5$}}}
\put(72, 21){\gray{\tiny{\reflectbox{$\ddots$}}}}
\put(61, 31){{\tiny{\reflectbox{$\ddots$}}}}
\put(78, 15){{\tiny{\reflectbox{$\ddots$}}}}
\put(78.3, 31.5){\gray{\tiny{$C_m$}}}
\put(17, 10){\small{$u_1$}}
\put(25, 19){\small{$u_2$}}
\put(36, 25.5){\small{$u_3$}}
\put(50, 27){\small{$u_4$}}
\put(57, 28){\small{$u_5$}}
 \put(64.4, 36){\small{$u_m$}}
\put(38, 2.5){\small{$v_1$}}
\put(41.5, 7.5){\small{$v_2$}}
\put(46.5, 10.5){\small{$v_3$}}
\put(53.5, 12.5){\small{$v_4$}}
\put(64, 11){\small{$v_5$}}
 \put(87, 25.5){\small{$v_m$}}
 \put(40, 30.5){\gray{\tiny{bottom}}}
 \put(54.5, 8.5){\gray{\tiny{top}}}
\end{overpic}
\caption{A corridor in a van~Kampen diagram.}
\label{fig:corridor}
\end{figure}

An assumption commonly made when defining corridors is that every defining relator containing a $z$ or $z^{-1}$, contains exactly one $z$ and one $z^{-1}$. Then  $z$-corridors cannot cross or self-intersect, and each one either connects a pair of $z$-edges on $\partial \Delta$ or  closes up to form a $z$-\emph{annulus}. In our presentation $\mathcal{P}$ for $G$ this assumption is met by the letters $a_1$, $b_0$, and $t$, but not, for example, by $a_2$, $b_1$, \ldots, or $b_p$: an $a_2$-corridor can terminate at an $r_{1, q-1}$-cell and a $b_i$-corridor, for $i \neq 0$, can terminate at an $r_{1, i-1}$-cell.  
 
The \emph{words along the top and bottom} of  $\mathcal{C}$ are $v_1 \cdots v_m$  and $u_1 \cdots u_m$, respectively. 
 \end{definition}

 We will reframe and generalize the definition of a corridor via the dual of a van Kampen diagram.   
  Let $\Delta^+$ be $\Delta$ with one additional  $2$-cell $e_{\infty}$ ``\emph{at infinity}'' attached along its boundary cycle.  So  $\Delta^+$ is homeomorphic to a 2-sphere.   
    Let $\mathcal{G}^+$ be the 1-skeleton of the 2-complex dual to $\Delta^+$.  Let $\mathcal{G}$ be the graph obtained from ${\mathcal G}^+$ by removing the interior of $e_{\infty}$.  So    the vertex dual to $e_{\infty}$ is absent from $\mathcal{G}$ and instead $\mathcal{G}$ has   a vertex in the middle of every edge in $\partial e_{\infty} = \partial \Delta$.  

While the following definition could be presented in more general terms, we prefer to specialize to van~Kampen diagrams $\Delta$ over our presentation $\mathcal{P}$ for $G$.

\begin{definition} \label{def: tracks}
\textbf{(Tracks, subtracks, and compound tracks)}  An \emph{$a$- or $b$-edge} in a van~Kampen diagram $\Delta$ over $\mathcal{P}$ is an edge labelled by $a_i$ or $b_i$, respectively, for some $i$.  
An \emph{$s$-subtrack} is a path $\rho: [0, k] \to \mathcal{G}$, where $k>0$ is an integer, with the following properties:
\begin{enumerate}
\item For each integer $i$ in $[0, k-1]$, the image 
$\rho([i, i+1])$ is an edge of $\mathcal G$ dual to an $s$-edge of $\Delta$.
\item All $s$-edges of $\Delta$ dual to $\rho$ are oriented the same way as one travels along $\rho$ (i.e., cross $\rho$ all right-to-left or all  left-to-right). 
\item The map $\rho$ is injective on $(0,k)$. 
\end{enumerate}
 An \emph{$s$-track} is an $s$-subtrack that is maximal---i.e., it cannot be extended to a longer path with properties (1)--(3).  
   For $s = a_1, b_0, \ldots, b_p, t$ an $s$-track traverses the 2-cells of an $s$-corridor.  When $s$ is $a$ or $b$, it gives a more general notion.    
   Figures~\ref{fig:QDiagram}, \ref{fig:G2Diagram} and ~\ref{fig:tracks_examples} show examples of tracks.  As seen in these figures, $a$- or $b$-tracks could merge. We impose a smoothness condition on these merges, which we now discuss.

\begin{figure}[htbp]
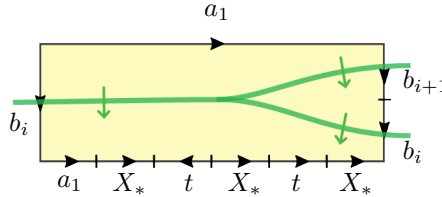

\centering
\begin{overpic} 
{Figures/junction}
\put(48, 38){\small{$a_1$}}
\put(11, -5){\small{$a_1$}}
\put(-1, 9){\small{$b_i$}}
\put(98, 2){\small{$b_i$}}
\put(98, 20){\small{$b_{i+1}$}}
\put(25, -6){\small{$X_{\ast}$}}
\put(43, -6){\small{$t$}}
\put(54, -6){\small{$X_{\ast}$}}
\put(70, -6){\small{$t$}}
\put(82, -6){\small{$X_{\ast}$}}
\end{overpic}
\caption{A train-track junction.}
\label{fig:junction}
\end{figure}

Let  $\mathcal{G}_a$ and $\mathcal{G}_b$   be the subgraphs of $\mathcal{G}$ made up of all edges dual to $a$- and $b$-edges, respectively.   We give $\mathcal{G}_a$ and $\mathcal{G}_b$ ``train-track'' structures by rendering some paths in them smooth and others not.  As the defining relators in $\mathcal{P}$ each have zero, two or three $b$-letters, the valence-$1$ vertices of $\mathcal{G}_b$  are precisely those in the interior of $e_{\infty}$.  The valence-$2$ vertices are those dual to 2-cells of $\Delta$ that have (for some $i$) one $b_i$ and one $b_i^{-1}$ in their boundary word. We term the valence-$3$ vertices \emph{junctions}.  They are the vertices dual to 2-cells of $\Delta$ that have (for some $i$) one $b_{i+1}$, one $b_i$, and one $b_i^{-1}$ in its boundary word.   Paths $\gamma$ in $\mathcal{G}_b$ can only fail to be smooth at junctions: per Figure~\ref{fig:junction} we make  $\gamma$ smooth at a junction if and only if the orientations of the $b$-edges  it crosses before and after $v$ agree.  So a $b$-track is a maximal path $\rho: [0, k] \to \mathcal{G}_b$ that is injective and smooth on $(0,k)$.  We will see below that if $\rho $ closes up, then $\rho$ must in fact be a smooth map of a circle into $\mathcal G$.
Corresponding statements apply to  $\mathcal{G}_a$.

\begin{figure}[htbp]
\centering
\begin{overpic}  [scale=0.9] 
{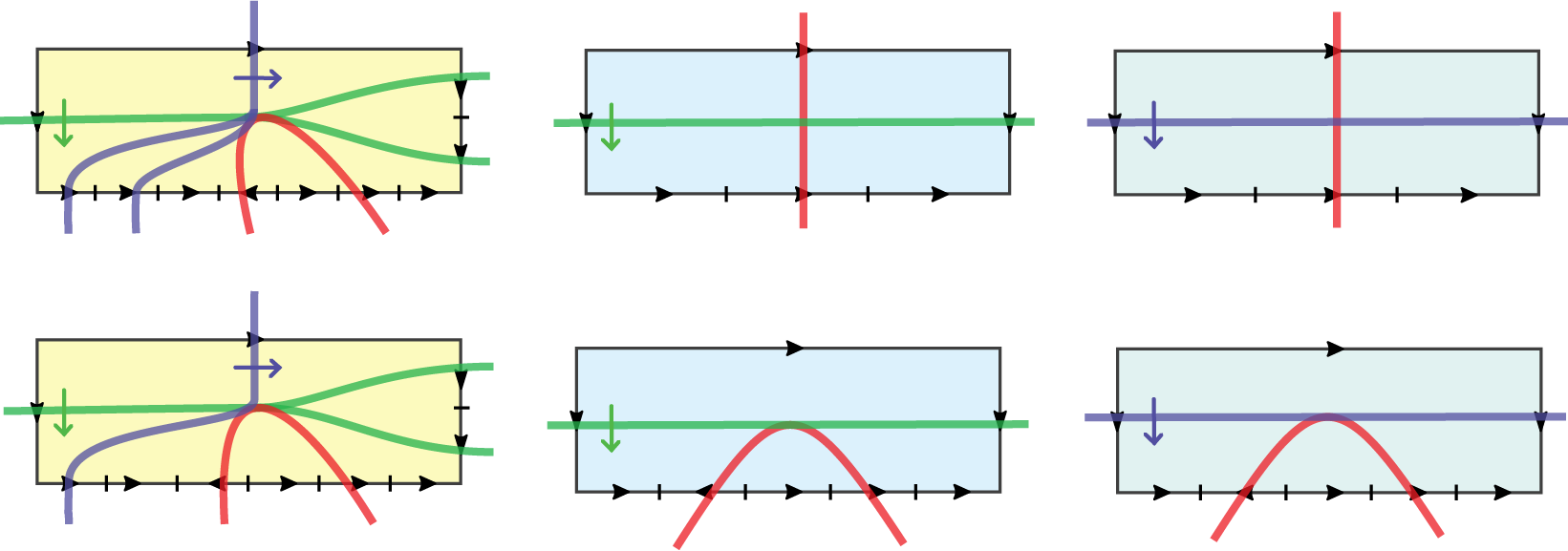}
\put(13, 34){\small{$a_{\ast}$}}
\put(13, 15){\small{$a_{\ast}$}}
\put(21.5, 1){\small{$t$}}
\put(21.5, 20){\small{$t$}}
\put(-1, 10){\small{$b_{\ast}$}}
\put(34, 10){\small{$b_{\ast}$}}
\put(-1, 29){\small{$b_{\ast}$}}
\put(34, 29){\small{$b_{\ast}$}}
\put(68, 10){\small{$a_{\ast}$}}
\put(68, 29){\small{$a_{\ast}$}}
\put(45, 0){\small{$t$}}
\put(48.5, 33){\small{$t$}}
\put(82.5, 33){\small{$t$}}
\put(80, 0){\small{$t$}}
\end{overpic}
\caption{How $a$-tracks, $b$-tracks, and  $t$-tracks  intersect in a 2-cell.  In four of the six cases, the $t$-track through the cell touches but does not cross the other tracks.}
\label{fig:cell_track_intersections}
\end{figure}

Figure~\ref{fig:cell_track_intersections} shows how we consider $a$-, $b$-, and  $t$-tracks to intersect when they traverse the same 2-cell. 

 A \emph{compound track} is a concatenation of $a$-, $b$-, and $t$-subtracks (the orientations of which are not required to agree).  
The corridor or annulus  associated to a (compound) track $\rho$
 in a van~Kampen diagram $\Delta$  is the subcomplex made up of all the 2-cells through which $\rho$ passes.     There are words along its top and bottom as for a standard corridor as explained above.  
  \end{definition}

 We will see in Section~\ref{sec:tracks in reduced diagrams} that the hypothesis that a  van~Kampen diagram  $\Delta$ over $\mathcal{P}$ is reduced  significantly restricts the behaviours of its tracks.  Then  in Section~\ref{sec: tracks in distortion diagrams} the tracks are yet more sharply restricted in diagrams pertinent to establishing upper bounds on the distortion of $H$ in $G$. Here is a  first observation in that direction.

\begin{lemma}\label{lem:teardrop} \textup{\textbf{(No teardrops)}} 
An $s$-track cannot be a teardrop---i.e., if  $\rho:[0,k]\to \mathcal G$ is an $s$-track with $\rho(0) = \rho(k)$, then $\rho$ induces a smooth map from $S^1$ to $\mathcal G$. 
\end{lemma}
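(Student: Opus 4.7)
The plan is to derive the lemma directly from Property~(2) in the definition of an $s$-subtrack, after a brief case split on the valence of the closure vertex.

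First I would observe that $v := \rho(0) = \rho(k)$ cannot lie on $\partial\Delta$: the vertices of $\mathcal{G}$ sitting on $\partial\Delta$ are midpoints of boundary edges of $\Delta$ and have valence $1$ in $\mathcal{G}$, so they cannot be visited by two distinct incident edges of a nontrivial edge-path. Hence $v$ is the vertex of $\mathcal{G}$ dual to some interior $2$-cell $C$ of $\Delta$, and its valence in the relevant $s$-restricted subgraph of $\mathcal{G}$ equals the number of $s$-edges on $\partial C$. Property~(2) further rules out $e_1 := \rho([0,1])$ and $e_k := \rho([k-1,k])$ being the same edge traversed in opposite directions, since that would force two incompatible handednesses of crossing.

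For $s \in \{a_1, b_0, \ldots, b_p, t\}$, inspection of Figure~\ref{fig:relations} shows that every defining relator in $\mathcal{P}$ contains either no $s$-letter or exactly one $s$ and one $s^{-1}$, so $v$ has valence exactly $2$; its two incident $s$-edges are forced to be $e_1$ and $e_k$, and smoothness at a valence-$2$ vertex is vacuous. The closed loop $\rho$ then simply traces the boundary of an $s$-annulus.

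For $s \in \{a, b\}$, $v$ has valence $2$ or $3$; the valence-$2$ case is handled as above. When $v$ is a junction (valence $3$), $\rho$ uses two of the three incident edges, namely $e_1$ and $e_k$. The train-track smoothness criterion at $v$---that the orientations of the $\Delta$-edges crossed by $\rho$ before and after $v$ agree---is exactly the local assertion of Property~(2) applied to the pair $(e_1, e_k)$. Since $\rho$ is a subtrack, (2) holds globally, so in particular the $s$-edges dual to $e_1$ and $e_k$ are crossed with the same handedness. Identifying $\rho(0)\sim\rho(k)$, the resulting map $S^1\to\mathcal{G}$ is therefore smooth at $v$. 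I expect no substantive obstacle: the content of the lemma is to recognize that the smoothness criterion at a junction is precisely the local version of the uniform-orientation condition~(2), and to exclude closure at the valence-$1$ vertices of $\mathcal{G}$ on $\partial\Delta$.
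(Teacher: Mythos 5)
Your proof is correct and follows essentially the same approach as the paper: the key observation in both is that the train-track smoothness criterion at the closure vertex $v$ is precisely the uniform-orientation condition of Property~(2), which holds globally for an $s$-subtrack --- in particular for the non-consecutive pair $e_1, e_k$ --- and therefore forces smoothness at $v$ when the ends are identified. Your additional observations (ruling out $v \in \partial\Delta$ and the degenerate $e_1 = \overline{e_k}$ case) make explicit some details the paper's two-sentence proof leaves to the reader.

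One small imprecision worth flagging: the claim that for every $s \in \{a_1, b_0, \ldots, b_p, t\}$ each defining relator contains either no $s$-letter or exactly one $s$ and one $s^{-1}$ is false for $s = b_1, \ldots, b_p$ (and for $a_2$). The paper says so explicitly when introducing corridors: a $b_i$-corridor with $i \neq 0$ can terminate at an $r_{1,i-1}$-cell, which contains a single $b_i$-edge, so the dual subgraph $\mathcal{G}_{b_i}$ has valence-$1$ vertices. What is true, and what your argument actually needs, is only that no relator contains three or more $b_i$-edges for a fixed $i$, so valence never exceeds $2$ and there are no junctions in $\mathcal{G}_{b_i}$. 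The conclusion of your valence-$2$ case therefore still stands, but the justification should be stated as a bound on valence rather than as the ``exactly one $s$ and one $s^{-1}$'' property.
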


\begin{proof}
Were the image of $\rho$ a teardrop, the point $\rho(0) = \rho(k)$ would be a junction.  However, as all the $s$-edges along an $s$-track are oriented the same way (in this case, either into or out of the teardrop) this would violate the orientation condition at the junction; see Figure~\ref{fig:junction}
\end{proof}

\begin{definition} \textbf{(Tracks forming loops)} 
A track that closes up is a \emph{loop}. In light of Lemma~\ref{lem:teardrop}, a track closes up without introducing a corner, and so loops are smooth.  
\end{definition}

\subsection{HNN-structures} \label{sec:hnn}

We will give two HNN-structures for $G$.  The first is an immediate consequence of Lemma~\ref{lem: C(3)}\eqref{lem part:vertex groups free U}.

 \begin{prop} \label{prop:HNNWise}
$G$ is an HNN-extension:
 $$G  \ = \ F \HNN_{t} \  \ \text{ where } \ \  F = F(a_1, a_2,   b_0, \ldots, b_p, x_1, x_2, y_1, y_2)$$
and the $r = 5p+11$ defining relators displayed in Figure~\ref{fig:relations} dictate the  isomorphism between the associated groups, both of which are  rank-$r$ free subgroups of $F$.    
   \end{prop}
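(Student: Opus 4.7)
The plan is to read off the HNN-structure directly from the shape of the defining relators, using Lemma~\ref{lem: C(3)}\eqref{lem part:vertex groups free U} as the one nontrivial input. Every element of $\mathcal{R}$ has the form $t^{-1} u_i t\, v_i^{-1}$ with $u_i, v_i$ words not involving $t$, so a Tietze transformation rewrites the presentation $\mathcal{P}$ of $G$ as
\[
G \ \cong\ \langle\, F,\, t \ \mid\ t^{-1} u_i\, t \,=\, v_i,\ \ i = 1, \ldots, r \,\rangle,
\]
where $r = 5p+11$ indexes the relators and $(u_i, v_i)$ is the pair attached to the $i$th relator.

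Next I would invoke Lemma~\ref{lem: C(3)}\eqref{lem part:vertex groups free U} to conclude that $\mathcal{U} = \{u_1, v_1, \ldots, u_r, v_r\}$ freely generates a free subgroup of $F$. In particular the $2r$ listed words are pairwise distinct, so the subsets $U = \{u_i\}$ and $V = \{v_i\}$ are each a free basis for a rank-$r$ free subgroup of $F$. The bijection $u_i \mapsto v_i$ then extends uniquely to an isomorphism $\phi \colon \langle U\rangle \to \langle V\rangle$ of these two rank-$r$ free subgroups.

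The presentation displayed above is now manifestly the standard presentation of the HNN-extension $F *_\phi$ with base group $F$, associated subgroups $\langle U\rangle$ and $\langle V\rangle$, stable letter $t$, and identifying isomorphism $\phi$. Hence $G = F \HNN_t$ in the sense claimed, and the two vertex groups are the rank-$r$ free groups $\langle U\rangle$ and $\langle V\rangle$. The only nontrivial content of the argument is Lemma~\ref{lem: C(3)}\eqref{lem part:vertex groups free U}, which comes from the $C(3)$-condition on $\mathcal{U}$; once that freeness is in hand, the remainder is a formal verification that the presentation is in HNN-Tietze form, so there is no additional obstacle to anticipate here.
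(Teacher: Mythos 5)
Your proof is correct and is essentially the argument the paper has in mind: the paper declares Proposition~\ref{prop:HNNWise} to be ``an immediate consequence'' of Lemma~\ref{lem: C(3)}\eqref{lem part:vertex groups free U}, and you have simply spelled out the formal steps (Tietze rearrangement into HNN form, restriction of the free basis $\mathcal{U}$ to the two halves $\{u_i\}$ and $\{v_i\}$ to get rank-$r$ free associated subgroups, and the induced isomorphism $u_i \mapsto v_i$) that make that ``immediate'' precise.
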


We will call on the following corollary 
in our proof of Lemma~\ref{lem: no self-osculating t-corridors}.  It holds because the elements of $\mathcal{U}$ are \emph{reduced} words with no $t$-letters.    

\begin{cor} \label{cor:subwords non-trivial}
Non-trivial subwords of elements of $\mathcal{U}$  represent non-identity elements in $G$.
\end{cor}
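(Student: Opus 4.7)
The plan is to deduce the corollary immediately from two facts: the elements of $\mathcal{U}$ are reduced words in the free group on the non-$t$ generators, and $F$ embeds in $G$ by Proposition~\ref{prop:HNNWise}.

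First I would verify that every $u \in \mathcal{U}$ is a reduced word on $\set{a_1, a_2, b_0, \ldots, b_p, x_1, x_2, y_1, y_2}$. Inspecting Figure~\ref{fig:relations}, each such $u$ is either of the form $\Xb \, t \, \Xb$ or $\Xb \, t \, \Xb \, t \, \Xb$ (or the corresponding $\Yb$-version), or one of a handful of short words such as $\Yb$ or $a_1 \Xb$, $a_1 a_2 \Xb$, etc.; after removing $t$ one is left with a word on the remaining generators. Since each Rips word $X_{\ast}$ or $Y_{\ast}$ is (by choice) a subword of a reduced infinite Rips string, and the single adjacent generators cannot create cancellation (they are $a$- or $b$-letters abutting $x$- or $y$-letters, or else end Rips subwords with differing indexed letters), each $u$ is a reduced word in $F = F(a_1, a_2, b_0, \ldots, b_p, x_1, x_2, y_1, y_2)$. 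Any non-empty subword $w$ of $u$ is then also reduced in $F$, and therefore represents a non-trivial element of $F$.

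Next I would invoke Proposition~\ref{prop:HNNWise}, which presents $G$ as the HNN-extension $F \HNN_t$. By the standard normal form theorem for HNN-extensions (equivalently, Britton's Lemma), the canonical homomorphism $F \hookrightarrow F \HNN_t = G$ is injective. Therefore the non-trivial element $w$ of $F$ maps to a non-identity element of $G$, which is exactly the claim.

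The only potentially non-routine step is the verification of reducedness of the elements of $\mathcal{U}$, and this is a direct inspection of the defining relators; nothing deeper than Proposition~\ref{prop:HNNWise} itself is needed. Consequently I do not anticipate any substantive obstacle to the proof.
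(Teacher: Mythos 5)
Your overall argument is correct and is exactly the one the paper has in mind: the paper's one-line justification reads ``It holds because the elements of $\mathcal{U}$ are \emph{reduced} words with no $t$-letters,'' after which it is immediate that a non-empty subword is reduced in $F$, hence non-trivial in $F$, hence non-trivial in $G$ via the embedding from Proposition~\ref{prop:HNNWise}. So you and the paper take the same route.

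However, there is a factual slip in your ``verification'' step, and it is worth flagging because it touches exactly the hypothesis the argument rests on. You describe the elements of $\mathcal{U}$ as having the forms $\Xb\,t\,\Xb$ or $\Xb\,t\,\Xb\,t\,\Xb$, and say ``after removing $t$ one is left with a word on the remaining generators.'' That cannot be right: by definition the elements $u,v$ of $\mathcal{U}$ are words on the generators \emph{other than} $t$ (the paper states this explicitly when introducing $\mathcal{R}$), and they need to be so for the proof to work at all---if $u$ contained $t$ it would not be an element of $F=F(a_1,a_2,b_0,\ldots,b_p,x_1,x_2,y_1,y_2)$ and the appeal to $F\hookrightarrow G$ would be vacuous. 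The words $\Yb\,t\,\Yb$, $\Yb\,t^{-1}\,\Yb\,t\,\Yb$, etc.\ that you are presumably recalling are generators of vertex groups in the \emph{other} HNN structure from Proposition~\ref{prop:hnn} (with stable letters $a_1,a_2,b_i$ and vertex group $F(t,x_1,x_2,y_1,y_2)$), not elements of $\mathcal{U}$. Also note that ``removing $t$'' would not preserve reducedness in general (two Rips words $\Xb\Xb$ could in principle collide), so even as a heuristic that step is not sound. The correct statement---which is what actually needs checking, and is what Lemma~\ref{lem: C(3)}\eqref{lem part:vertex groups free U} is secretly doing heavy lifting to ensure is useful---is simply that each $u,v$ is already a reduced word over $F$. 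With that substitution your proof is right.
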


We will learn later (in Corollary~\ref{cor:HNN undistorted}) that $F$ is undistorted in $G$, and it will follow that the same is true of the two vertex subgroups.

\bigskip

Our second HNN-structure for $G$ is: 
$$G  \ =  \    \left(  \cdots \left(  \left(  F(t, x_1, x_2, y_1, y_2) \HNN_{a_1,a_2} \right) \HNN_{b_p} \right)  \cdots  \right)  \HNN_{b_0}$$       
in the manner detailed in Proposition~\ref{prop:hnn} and Table~\ref{table:hnn} below.  

We use the notation $K\HNN_{s_1, \dots, s_l}$ to denote an $l$-fold HNN-extension with vertex group $K$, stable letters $s_1, \dots, s_l$ and subgroups $I_i, T_i<K$ for $i=1, \dots, l$, such that $
s_i^{-1} I_is_i = T_i$.  We call $I_i$ and $T_i$ the \emph{initial} and \emph{terminal} groups respectively, and say that the stable letter $s_i$ \emph{conjugates} $I_i$ to $T_i$.

\begin{definition} \label{def:Gi}
Let
$F$ be the free group on $\{t, x_1, x_2, y_1, y_2\}$. Note that this is a departure from our definition of $F$ in Proposition~\ref{prop:HNNWise}.   Let $G_{-1}$ be the group generated by 
$\{t, x_1, x_2, y_1, y_2, a_1 ,a_2\} $ 
subject to the two  $r_{4, \ast}$- and four $r_{4, \ast, \ast}$-defining-relators of Figure~\ref{fig:relations}.  Then, for      $i=0, \ldots, p$, define $G_i$ to be the group generated by $\{t, x_1, x_2, y_1, y_2, a_1 ,a_2, b_{p-i}, \ldots, b_p\}$ subject to all the relators of Figure~\ref{fig:relations} in which only these letters appear. 
In particular, $G = G_p$.
\end{definition}

We will establish that $G_{-1} =     F \HNN_{a_1 ,a_2}$ and $G_i  =  G_{i-1} \HNN_{b_{p-i}}$
for      $i\ge0$, where the  initial and terminal groups at each stage are as shown in 
 Table~\ref{table:hnn}; the words listed in the table are subwords of defining relators in $\mathcal{R}$.   
 More precisely: 
\begin{prop}
 \label{prop:hnn} 
 For $G_{-1}, G_0, \ldots, G_p$ as per Definition~\ref{def:Gi}:
\begin{enumerate} \item   \label{prop part:G1HNN} 
 $G_{-1}$ is  a double HNN-extension over $F$ with stable letters $a_1$ and $a_2$  
 conjugating the initial group $\langle t, y_1, y_2\rangle$ to the first and second terminal groups listed in Row 1 of Table~\ref{table:hnn}, respectively.

\item \label{prop part:Gi an HNN} For $i\ge0$, the group $G_i$ is an HNN-extension over $G_{i-1}$ with stable letter $b_{p-i}$ conjugating the group $K_{i}<G_{i-1}$ from Table~\ref{table:hnn} to the group $L_{i}<G_{i-1}$ from Table~\ref{table:hnn}. 

\end{enumerate}

\end{prop}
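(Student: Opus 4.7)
The plan is to recognize the defining relators of each $G_i$ as HNN conjugation relations for the prescribed stable letters, and to verify the associated subgroups are freely generated by the sets specified in Table~\ref{table:hnn}.

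For part~(\ref{prop part:G1HNN}), note that each $r_{4,i,j}$ and $r_{4,i}$ defining $G_{-1}$ contains the generator $a_i$ exactly twice with opposite signs and no $b$-letters. Each can therefore be cyclically rewritten in the form $a_i^{-1} u_{i,s}\, a_i\, w_{i,s}^{-1}$ with $s$ ranging through $\{t,y_1,y_2\}$ (one relator per choice of $i\in\{1,2\}$ and $s$), where $u_{i,t}=t$, $u_{i,y_j}=y_j$, and $w_{i,s}$ is a word on $\{t,x_1,x_2,y_1,y_2\}$ assembled from $Y_\ast$-subwords, $t$, and $x_j$. The six resulting relations are precisely the defining relations of the double HNN-extension $F\HNN_{a_1,a_2}$ with initial subgroup $\langle t,y_1,y_2\rangle$ and the two listed terminal subgroups, provided the latter embed freely in $F$ on the given generators. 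Freeness of $\langle t,y_1,y_2\rangle$ in $F$ is automatic; for each terminal subgroup, the generators $w_{i,s}$ form a subset of the family $\mathcal{S}_1$ of Lemma~\ref{lem: free subgroup C'(1/4)}, and hence freely generate by that lemma.

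For part~(\ref{prop part:Gi an HNN}), I induct on $i\ge 0$. The relators of $G_i$ not already present in $G_{i-1}$ are exactly those relators of Figure~\ref{fig:relations} that mention $b_{p-i}$. Inspection shows each such relator contains $b_{p-i}^{\pm 1}$ in a balanced way (either once positively and once negatively, or as two signed occurrences paired by the relator structure) and can be rewritten in the form $b_{p-i}^{-1}\, u\, b_{p-i}\, v^{-1}$ with $u,v$ words in the generators of $G_{i-1}$. The resulting relations are precisely the HNN-conjugation data of $G_{i-1}\HNN_{b_{p-i}}$ with $K_i,L_i\le G_{i-1}$ as in the table, conditional on $K_i$ and $L_i$ being freely generated in $G_{i-1}$ by those generators.

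This freeness verification is the main obstacle, and is where Lemma~\ref{K0Li} (flagged in Section~\ref{sec:consequences of small-cancellation}) does the work. Unlike in part~(\ref{prop part:G1HNN}), here $K_i$ and $L_i$ live not in a free group but inside a tower of HNN-extensions, so Lemma~\ref{lem: free subgroup C'(1/4)} cannot be applied directly. My strategy is to take any reduced word in the given generators that equals the identity in $G_{i-1}$, apply Britton's lemma down the inductive HNN-tower to reduce to an equality inside a lower vertex group, and observe that any non-trivial collapse at the bottom would force two adjacent Rips-bearing generators to share a long common subword, contradicting the $C'(1/4)$-condition as in the proof of Lemma~\ref{lem: free subgroup C'(1/4)}. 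With Lemma~\ref{K0Li} in hand, the hypotheses of an HNN-extension are met at each inductive stage, and the full stacked structure for $G=G_p$ follows.
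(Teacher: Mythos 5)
Your treatment of part~\eqref{prop part:G1HNN} is essentially the paper's, modulo invoking Lemma~\ref{lem: free subgroup C'(1/4)} where the paper uses Lemma~\ref{lem: C(3)}\eqref{lem part:vertex groups free S}; either works, since both establish that the relevant terminal-group generating sets are free bases inside $F(\mathcal A)$.

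For part~\eqref{prop part:Gi an HNN} there is a genuine gap. You state that ``Lemma~\ref{K0Li} does the work'' for the freeness verification. But Lemma~\ref{K0Li} (which, incidentally, is in Section~\ref{sec:hnn}, not Section~\ref{sec:consequences of small-cancellation}) only establishes that $K_0$ and the $L_i$, $i=0,\dots,p$, are rank-$5$ free subgroups of $G_{-1}$; it says nothing about $K_i$ for $i>0$. The freeness of $K_i = \langle a_1 b_{p-i+1}, a_2, t, x_1, x_2\rangle$ inside $G_{i-1}$ is the hard part of the induction, and the paper devotes three preparatory lemmas (Lemmas~\ref{lem:bj free}, \ref{no junctions}, \ref{lem:technical}) plus the bulk of Lemma~\ref{indn} to it, using a van~Kampen diagram/corridor argument: one locates an innermost $a_1$-corridor, controls its bottom boundary using the technical lemmas, and reaches a contradiction with reducedness of the diagram.

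Your proposed replacement --- ``apply Britton's lemma down the inductive HNN-tower to reduce to an equality inside a lower vertex group, and observe that any non-trivial collapse at the bottom would force two adjacent Rips-bearing generators to share a long common subword'' --- does not come close to doing this. The generator $a_1 b_{p-i+1}$ of $K_i$ already contains the stable letter $b_{p-i+1}$ of the topmost HNN level of $G_{i-1}$, so a freely reduced word over the five generators of $K_i$ is not, as written, in Britton normal form, and passing to such a form changes the word in ways you haven't tracked. More substantively, the Britton pinches that arise are conjugations by $b$-letters, not collisions between Rips subwords, so the $C'(1/4)$-flavored contradiction you anticipate simply does not manifest at the step where you need it; the genuine obstruction involves the interaction of $a_1$-corridors with $b$-letters, which is precisely what Lemmas~\ref{lem:bj free}--\ref{lem:technical} control and what your sketch omits. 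As written, your inductive step asserts the conclusion rather than proving it.
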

Recall that, per Section~\ref{sec:the defn}, we have chosen to suppress the indexing in our notation for the small cancellation words appearing in our construction.  Thus, the collection $\mathcal{X} \cup \mathcal{Y}$ of all the $\Xb$ or $\Yb$ satisfies $C'(1/4)$.

\begin{table}[h!] \caption{Iterated HNN structure of $G$.  The words listed are subwords of defining relators in $\mathcal{R}$.  The different instances of $\Xb$ or $\Yb$ represent different small cancellation words.} \label{table:hnn}

\ms

\begin{tabular}{|p{3.7cm}|p{7cm}|}
  \hline
    \multicolumn{2}{|l|}{\rule{0mm}{5mm}$G_{-1}$: stable letters $a_1$ and $a_2$, vertex group $F$ }\\[2mm]   \hline
  \multicolumn{1}{|c}{Initial group} & \multicolumn{1}{|c|}{Terminal groups}  \\  \hline
  \multicolumn{1}{|c}{\parbox{3.7cm}{\begin{center}\rule{0mm}{0mm}$\langle t,   y_1, y_2 \rangle$ \end{center}}} 
  & \multicolumn{1}{|c|}{\parbox{7cm}{\rule{0mm}{8mm} \ $\begin{array}{ll} a_1: & \langle \Yb t \Yb, \  \Yb t^{-1} \Yb t \Yb, \  \Yb t^{-1} \Yb t \Yb \rangle,   \\  a_2: & \langle \Yb t \Yb, \  \Yb t^{-1} \Yb t \Yb,  \ \Yb t^{-1} \Yb t \Yb \rangle \end{array}$ \\[3mm]   }}   \\
  \hline
\end{tabular}

\bs
 
\begin{tabular}{|p{3.7cm}|p{7cm}|}
  \hline
    \multicolumn{2}{|l|}{\rule{0mm}{5mm}$G_i$ for $i \geq 0$: stable letter $b_{p-i}$, vertex group $G_{i-1}$ }\\[2mm]   \hline
  \multicolumn{1}{|c}{Initial group} & \multicolumn{1}{|c|}{Terminal group}  \\  \hline
  \multicolumn{1}{|c}{\parbox{3.7cm}{\begin{center}\rule{0mm}{0mm}$K_0=\langle  a_1, a_2, t, x_1, x_2 \rangle$ \\ \rule{0mm}{10mm} $K_{i}=\langle  a_1b_{p-i+1}, a_2,$  \\ \quad  $t, x_1, x_2 \rangle$ \\ ${\scriptstyle i \ > \  0}$ \end{center}}} 
  & \multicolumn{1}{|c|}{\parbox{7cm}{\begin{center} \rule{0mm}{5mm}$L_i =\langle a_1\Xb t^{-1} \Xb t \Xb, \ a_2\Xb t^{-1} \Xb t \Xb, $ $  \Xb t \Xb, \  \Xb t^{-1} \Xb t \Xb, \ \Xb t^{-1} \Xb t \Xb  \rangle$ \\ ${\scriptstyle i \ \neq \  p,  \    p-q+1}$ \\  
\rule{0mm}{10mm}$L_{p-q+1} = \langle a_1a_2\Xb t^{-1} \Xb t \Xb,$  $a_2\Xb t^{-1} \Xb t \Xb,  \ \Xb t \Xb$, $ \Xb t^{-1} \Xb t \Xb, \ \Xb t^{-1} \Xb t \Xb  \rangle$ \\ 
\rule{0mm}{10mm}$L_p = \langle a_1\Yb t^{-1} \Yb t \Yb, \ a_2\Yb t^{-1} \Yb t \Yb,$ \\  $\Yb t \Yb, \ \Yb t^{-1} \Yb t \Yb, \ \Yb t^{-1} \Yb t \Yb  \rangle$ \\[3mm]
\end{center}}}   \\
  \hline
\end{tabular} 
 \end{table}

Before we prove Proposition~\ref{prop:hnn}, we observe that it  yields:

\begin{cor} \label{H is free}
The subgroup $H=\langle t, y_1, y_2\rangle$ of $G$ is a free group of rank 3.  
\end{cor}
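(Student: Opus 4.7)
The plan is to trace $H$ back through the iterated HNN-structure of Proposition~\ref{prop:hnn} to the base free group $F = F(t, x_1, x_2, y_1, y_2)$, which has rank $5$. Since $t, y_1, y_2$ are three of the five free generators of $F$, they freely generate a rank-$3$ free subgroup inside $F$. So the problem reduces to verifying that the homomorphism $F \to G$ induced by sending each of these five letters to its namesake in the presentation of $G$ is injective; then its restriction to $\langle t, y_1, y_2\rangle$ exhibits $H$ as free of rank $3$.

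To verify that injectivity, I would invoke Proposition~\ref{prop:hnn} to write down the chain
$$F \;\hookrightarrow\; G_{-1} \;\hookrightarrow\; G_{0} \;\hookrightarrow\; G_{1} \;\hookrightarrow\; \cdots \;\hookrightarrow\; G_{p} \,=\, G,$$
in which $F$ is the vertex group of $G_{-1}$ and each $G_{i-1}$ is the vertex group of $G_i$. Britton's Lemma (i.e.\ the normal form theorem for HNN-extensions) guarantees that the vertex group embeds into an HNN-extension, so each inclusion in the chain is injective. Composing them produces the desired embedding $F \hookrightarrow G$. Restricting to $\langle t, y_1, y_2\rangle \leq F$ then identifies $H$ with a rank-$3$ free subgroup of $F$, which is what we want.

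I do not expect a genuine obstacle here: all the substantive work has been packaged into Proposition~\ref{prop:hnn}, which asserts precisely that each $G_i$ is an HNN-extension over its predecessor with the specified edge and vertex groups. The only care needed is to observe that the generator $F$ in the base of the tower (namely $F(t, x_1, x_2, y_1, y_2)$) is a \emph{different} free group from the $F$ used in Proposition~\ref{prop:HNNWise}, so we are taking advantage of the second (iterated) HNN-structure rather than the first.
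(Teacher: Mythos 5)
Your proposal is correct and follows essentially the same route as the paper: identify $\langle t, y_1, y_2\rangle$ as a rank-$3$ free subgroup of the vertex group $F(t,x_1,x_2,y_1,y_2)$, then compose the vertex-group injections $F \hookrightarrow G_{-1} \hookrightarrow G_0 \hookrightarrow \cdots \hookrightarrow G_p = G$ supplied by Proposition~\ref{prop:hnn}. The remark about which $F$ is being used is a sensible caution and matches the paper's own note in Definition~\ref{def:Gi}.
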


\begin{proof}
Since $F$ is free on $t, x_1, x_2, y_1, y_2$, it is clear that   $\langle t, y_1, y_2\rangle$ is rank-3 free in $F$.  As vertex groups inject into HNN-extensions, Proposition~\ref{prop:hnn} yields:
$H \hookrightarrow F \hookrightarrow G_{-1} \hookrightarrow  G_0 \hookrightarrow  \cdots \hookrightarrow G_p =G$.
\end{proof}

\begin{proof}[Proof of Proposition~\ref{prop:hnn}\eqref{prop part:G1HNN}]
The group
$\langle t, y_1, y_2 \rangle<F$
 is  free of rank 3.  The two terminal vertex groups in the $G_{-1}$ row of Table~\ref{table:hnn} are free of rank 3 by 
Lemma~\ref{lem: C(3)}\eqref{lem part:vertex groups free S}. Thus the described HNN-structure follows from the definition of $G_{-1}$.
\end{proof}

To establish the HNN-structure of $G_i$ for $i\ge 0$  (thereby completing the proof of Proposition~\ref{prop:hnn}\eqref{prop part:Gi an HNN}), we must show that the groups $K_i$ and $L_i$ listed in Table~\ref{table:hnn} are free of rank 5 in $G_{i-1}$.  As a first step, we show: 

\begin{lemma}\label{K0Li}
The groups $K_0$ and $L_i$ for $i= 0, \dots, p$ are rank-5 free subgroups of $G_{-1}$.
\end{lemma}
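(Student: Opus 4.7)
The plan is to work in an auxiliary HNN decomposition of $G_{-1}$ analogous to Proposition~\ref{prop:HNNWise}, with $t$ as the stable letter. Each of the six defining relators of $G_{-1}$ (the two $r_{4,i}$ and four $r_{4,i,j}$) has the form $t^{-1} u t v^{-1}$ with $u,v$ lying in the free group $F_0 := F(a_1, a_2, x_1, x_2, y_1, y_2)$. Restricting Lemma~\ref{lem: C(3)}\eqref{lem part:vertex groups free U} to the corresponding subset of $\mathcal U$, the six $u$'s (respectively the six $v$'s) freely generate a rank-six free subgroup $I$ (respectively $T$) of $F_0$, and so $G_{-1} \cong F_0 \HNN_t$ with edge groups $I$ and $T$. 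The proof of Lemma~\ref{K0Li} then reduces to an application of Britton's Lemma in this $t$-HNN decomposition.

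For $K_0 = \langle a_1, a_2, t, x_1, x_2\rangle$, any non-empty reduced word $W$ on the five generators decomposes as $W = u_0 \, t^{\epsilon_1} u_1 \cdots t^{\epsilon_n} u_n$ with each $u_j$ a reduced word on $\{a_1, a_2, x_1, x_2\}^{\pm 1}$. A Britton pinch would force some $u_j$ to lie in $I \cup T$; but each generator of $I$ and $T$ contains at least one Rips subword from $\mathcal X \cup \mathcal Y$, and by a $C'(1/4)$ argument identical in spirit to Lemma~\ref{lem: free subgroup C'(1/4)}, every non-trivial reduced product of these generators retains the distinguishing letters of such a subword in its reduced form as an element of $F_0$. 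These letters are not available to $u_j$, so $u_j = 1$, contradicting reducedness of $W$. Hence $W$ is already in Britton normal form, so $W \neq 1$ in $G_{-1}$, whence $K_0$ is free of rank five.

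For each $L_i$, the five generators are themselves already expressed in $t$-HNN normal form, with in-between $F_0$-parts built from Rips subwords ($\Xb \in \mathcal X$ when $i \neq p$, $\Yb \in \mathcal Y$ when $i = p$) distinct from those appearing in the defining relators of $G_{-1}$. A reduced word $W$ on the five generators expands to a $t$-HNN expression $u'_0 \, t^{\epsilon_1} u'_1 \cdots t^{\epsilon_m} u'_m$. By $C'(1/4)$ (as in Lemma~\ref{lem: cancellation from C'(1/4)}), cancellations at the concatenation points of consecutive generators consume less than a quarter of any single Rips subword, so each internal $u'_j$ retains $L_i$-specific Rips content disjoint from the Rips content in the generators of $I$ and $T$. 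A second $C'(1/4)$ argument then rules out $u'_j \in I \cup T$, since any non-trivial reduced product of $I$- or $T$-generators must retain Rips content coming from the defining relators---incompatible with the $L_i$-specific Rips content in $u'_j$. So the expansion of $W$ is Britton-reduced and hence non-trivial in $G_{-1}$, giving that $L_i$ is free of rank five. The hardest part will be the bookkeeping in this $L_i$ argument, particularly for the exceptional forms of $L_{p-q+1}$ (with its $a_1 a_2$ factor) and $L_p$ (whose first two generators involve $\Yb$ rather than $\Xb$): one must verify carefully that the partial cancellations at generator boundaries leave sufficient distinguishing Rips content in every internal $u'_j$ to preclude every possible pinch.
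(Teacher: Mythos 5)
Your route is genuinely different from the paper's. You use the $t$-HNN structure $G_{-1} = F_0 \HNN_t$ (the restriction of Proposition~\ref{prop:HNNWise} to the $r_{4,*}$-relators) and Britton's Lemma, whereas the paper works inside the $a_1,a_2$-HNN structure of Proposition~\ref{prop:hnn}\eqref{prop part:G1HNN}: it takes a shortest counterexample, builds a reduced van~Kampen diagram, finds a sink vertex of the tree dual to the non-crossing $a_1$- and $a_2$-corridors, and applies Lemma~\ref{lem: free subgroup C'(1/4)} to the word read around that sink region. The paper also dispatches the case $i\neq p$ much more quickly: once $K_0$ is known to be free of rank $5$, Lemma~\ref{lem: C(3)}\eqref{lem part:vertex groups free S} shows the $L_i$-generators form a free basis inside $F(a_1,a_2,t,x_1,x_2)\cong K_0$, with no further diagram or Britton argument needed. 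Your approach has the virtue of avoiding van~Kampen diagrams entirely, at the cost of having to verify the ``no pinch'' condition by hand in each case.

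Your argument is clean and correct for $K_0$ and for $L_i$ with $i\neq p$ (this includes $L_{p-q+1}$, which is not actually exceptional from your point of view: its $a_1a_2$ prefix changes nothing, since the Rips words are still $\Xb$'s and every internal $u'_j$ is a $y$-free word, while every non-trivial element of $I$ or $T$ carries $y$-content by a $C'(1/4)$ argument). Also note that \emph{all five} generators of $L_p$ are built from $\Yb$'s, not just the first two. The one place your sketch falls short of a proof is the $L_p$ case, and you flag it yourself. ``Incompatible Rips content'' is not by itself a contradiction: a single reduced word in $F_0$ could in principle contain a long chunk of an $L_p$-$\Yb$ and, disjointly, a long chunk of an $r_{4,*}$-$\Yb$. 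What you actually need is that the reduced form of an internal $u'_j$---which, by the controlled cancellation, is covered by $a$-letters and $\geq 3/4$-length portions of $L_p$-$\Yb$'s---cannot contain a $>1/4$-length portion of \emph{any} $r_{4,*}$-$\Yb$, because such a portion would be a piece shared between two distinct Rips words and would violate the $C'(1/4)$-condition (iii). Combined with the complementary fact that a non-trivial element of $I$ or $T$ \emph{must} contain such a portion (by the same Lemma~\ref{lem: free subgroup C'(1/4)}-style reduction applied to the $I$- or $T$-generators), this rules out the pinch. In effect you need a joint free-basis statement for the twelve $r_{4,*}$-pieces $\{u_1,\dots,u_6,v_1,\dots,v_6\}$ together with the $F_0$-chunks of the $L_p$-generators, analogous to but distinct from the $\mathcal S_2$ case of Lemma~\ref{lem: free subgroup C'(1/4)}; that is where the bookkeeping you foresee actually lives, and it should be proved rather than asserted.
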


\begin{proof}
We begin with $K_0$.  If $a_1, a_2, t, x_1, x_2$ do not generate a free subgroup 
of $G_{-1}$, then there is a non-empty freely reduced word on these letters which represents the identity in $G_{-1}$.  Let $w$ be a shortest such word and let
 $\Delta$ be a reduced van Kampen diagram with boundary label $w$.  

Observe that the group $F$ injects into $G_{-1}$, as it is  the vertex group in the HNN-structure for $G_{-1}$, by Proposition~\ref{prop:hnn}\eqref{prop part:G1HNN}. 
Thus  $\langle t, x_1, x_2\rangle<F<G_{-1}$ is free, and so no non-empty freely reduced word on these letters represents the identity. Thus
we may assume that $w$ has at least one $a_1$- or $a_2$-letter, and so $\Delta$ has at least one $a_1$- or $a_2$-corridor.    Moreover, we can assume $\Delta$ is homeomorphic to a 2-disc, because otherwise it could be broken into two subdiagrams for two words which are shorter than $w$ and represent the identity, and cannot both be freely reduced to the empty word   (since $w$ cannot be).  In particular, every $a_1$- and $a_2$-corridor is \emph{non-degenerate}, by which we mean that it is not a single $a_1$- or $a_2$-edge that is part of a 1-dimensional portion of $\Delta$.

Let $\langle  Z_1, Z_2,  Z_3  \rangle$ and $\langle  Z'_1,  Z'_2, Z'_3 \rangle$ denote the two terminal groups in the construction of $G_{-1}$ as shown in Table~\ref{table:hnn}.
No two   $a_1$- or $a_2$-corridors can cross or branch in $\Delta$, so dual to them there is an oriented tree $\mathcal{T}$ which has a vertex for each complimentary region and an edge for each corridor. Give the  edges of $\mathcal{T}$  orientations that match the directions of the $a_1$- or $a_2$-corridors they cross.   Then $\mathcal{T}$ necessarily has a sink vertex (a vertex with the property that all its incident edges are oriented towards it), 
and the boundary of the subdiagram $\Delta_0$ of $\Delta$ corresponding to this vertex consists of parts of $\partial \Delta$ between $a_1$- or $a_2$-edges at the ends of corridors and the top boundaries of $a_1$- or $a_2$-corridors.  Thus, read around $\partial \Delta_0$ is a   word $v$ on 
$$t, x_1, x_2, 
  Z_1, Z_2, Z_3,   Z'_1, Z'_2, Z'_3.$$  By Lemma~\ref{lem: free subgroup C'(1/4)}  these elements form a basis for a free subgroup $F'$ of $F$ and therefore of $G_{-1}$.   Now $v$ is non-empty (since every corridor is non-degenerate) and represents the identity in $G_{-1}$, and therefore in the free group $F'$ (since $F'\hookrightarrow G_{-1}$).  So $v$ is not freely reduced, i.e., it has a subword of the form $uu^{-1}$ for some letter or inverse letter $u$.  
  Since the subwords of $v$ on $t, x_1, x_2$ come from $w$, which is freely reduced, $u$ is one of the remaining generators of $F'$.  Then $uu^{-1}$ must be a subword of the top boundary of a single $a_1$- or $a_2$-corridor (because, if $u$ and $u^{-1}$ came from different corridors, $w$ would have a subword $a_1^{\pm1}a_1^{\mp 1}$ or $a_2^{\pm1}a_2^{\mp1}$, contradicting the fact that it is freely reduced).  This means the corridor has adjacent cells that are identical and oppositely oriented, contradicting the fact that $\Delta$ is reduced.   Thus $K_0$ is a free subgroup of $G_{-1}$.  

A near  identical proof shows that $L_p< G_{-1}$ is free.  Denoting the generators of $L_p$ by $$a_1 Z_{p1}, a_2Z_{p2}, Z_{p3}, Z_{p4}, Z_{p5},$$ let $w$ be a shortest non-empty freely reduced word on these generators which represents the identity in $G_{-1}$. Let $\Delta$ be a reduced van Kampen diagram over $G_{-1}$ with boundary label $w$.  
Since $\langle Z_{p3}, Z_{p4}, Z_{p5}\rangle < F < G_{-1}$ is free (using Lemma~\ref{lem: C(3)}\eqref{lem part:vertex groups free S}), we may assume as before that 
$w$ has at least one $a_1Z_{p1}$ or $a_2Z_{p2}$. Hence $\Delta$ has at least one $a_1$- or $a_2$-corridor.  Furthermore, we conclude as before that all  
 $a_1$- or $a_2$-corridors are non-degenerate.  Considering a sink region of the oriented dual tree 
  as above, we see that the boundary label of the sink region is a word $v$ on 
  $$Z_1, Z_2, Z_3,  Z'_1, Z'_2, Z'_3, Z_{p1}, Z_{p2}, Z_{p3}, Z_{p4}, Z_{p5}$$ which represents the identity in $G_{-1}$.   (The first six of these words appear along top boundaries of $a$-corridors while the last five appear in parts of $v$ coming from $w$.) By Lemma~\ref{lem: free subgroup C'(1/4)}, these elements form a basis for a free subgroup of $F$, and therefore of $G_{-1}$ (since $F \hookrightarrow G_{-1}$).  Then we argue as in the previous paragraph to arrive at a contradiction.   

Finally, for $i \neq p$, Lemma~\ref{lem: C(3)}\eqref{lem part:vertex groups free S} implies that $L_i$ is a rank-5 free subgroup of $K_0$.  Thus $L_i$ is a rank-5 free subgroup of $G_{-1}$ as $K_0 \hookrightarrow G_{-1}$.  
\end{proof}

In order to prove that $K_i$ is free for $i > 0$ and complete the proof of Proposition~\ref{prop:hnn} we  need three technical lemmas.  
\begin{lemma} \label{lem:bj free} In $G_i$ of Definition~\ref{def:Gi}, 
$b_p, b_{p-1}, \dots, b_{p-i}$ freely generate a free subgroup.
\end{lemma}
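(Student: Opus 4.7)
The plan is to pull back freeness from the free-by-cyclic quotient $Q$ of~\eqref{eqn QQQ}. First there is a tautological homomorphism $\iota_i : G_i \to G$ sending each generator of $G_i$ to the same-named generator of $G$; this is well-defined because, by Definition~\ref{def:Gi}, every defining relator of $G_i$ is a defining relator of $G$.

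Next I would verify that the assignment $a_2, t, x_1, x_2, y_1, y_2 \mapsto 1$, $a_1 \mapsto a_1$, and $b_j \mapsto b_j$ (for $0 \le j \le p$) extends to a homomorphism $\rho : G \to Q$. This is a case check over the relator families in Figure~\ref{fig:relations}: once the noise and $a_2$ are killed, the $r_{1,\ast}$ relators become exactly the defining relations $a_1^{-1} b_j a_1 = \varphi(b_j)$ of $Q$ (the exceptional $r_{1,q-1}$ simply loses its extra $a_2$), while the $r_{2,\ast}$, $r_{3,\ast}$, $r_{3,\ast,\ast}$, $r_{4,\ast,\ast}$, and $r_{4,\ast}$ relators all collapse to trivial identities since outside of their noise and $a_2$ content their structural $a$- and $b$-letters appear in cancelling pairs.

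The composition $\rho \circ \iota_i : G_i \to Q$ then sends each $b_j$ with $p-i \le j \le p$ to itself in $Q$. Since $\varphi$ is triangular in the generators of $F(b_0,\ldots,b_p)$, it is an automorphism (one constructs $\psi = \varphi^{-1}$ recursively from $\psi(b_p) = b_p$ and $\psi(b_j) = \psi(b_{j+1})^{-1} b_j$, noting $\varphi(\psi(b_j)) = b_j$ by induction). Thus $Q$ is a genuine semi-direct product $F(b_0,\ldots,b_p) \rtimes_\varphi \langle a_1 \rangle$ in which $b_0, \ldots, b_p$, and in particular the subset $b_{p-i}, \ldots, b_p$, freely generate.

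Finally, any non-empty freely reduced word $w$ on $b_p,\ldots,b_{p-i}$ is carried by $\rho \circ \iota_i$ to itself as a non-trivial element of the free subgroup $F(b_0,\ldots,b_p) < Q$, proving $w \ne 1$ in $G_i$. The main obstacle is the relator-by-relator check that $\rho$ is well-defined; this is routine given that $\mathcal{R}$ is designed precisely so that the noise letters and $a_2$ vanish from each relator in a consistent way, but it requires care for the exceptional relator $r_{1,q-1}$ and for the noise-propagation relators of type $r_4$.
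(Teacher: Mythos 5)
Your proof is correct and takes essentially the same approach as the paper: pull back freeness of the $b_j$'s via the quotient obtained by killing $a_2$ and all noise letters, landing in a free-by-cyclic group where the $b_j$'s freely generate the fiber. The only cosmetic difference is that you factor through $G$ and land in the full $Q$, whereas the paper maps $G_i$ directly onto a truncated free-by-cyclic group $Q_i$ on the generators $a_1, b_p, \ldots, b_{p-i}$.
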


\begin{proof}  By examining the relators of $G_{i}$, we see that there is a quotient 
homomorphism $$G_{i} \twoheadrightarrow Q_i = \langle \, b_p, b_{p-1}, \dots, b_{p-i}, a_1 \,\mid \,
a_1^{-1} b_j a_1 = b_{j+1}b_j \text{ for } j<p; \, a_1^{-1} b_p a_1 = b_p \,
\rangle$$ 
mapping $b_j \mapsto b_j$, $a_1 \mapsto a_1$ and  killing every other generator. 
This quotient $Q_i$ is free-by-cyclic: the generator $a$ of the cyclic part acts by conjugation on a free group generated by $b_p, \dots, b_{p-i}$ by an automorphism.  Moreover, the restriction of this homomorphism to the subgroup $\langle b_p, \dots, b_{p-i}\rangle < G_{i}$ is a surjection onto the rank-$(i+1)$ free subgroup $\langle b_p, \dots, b_{p-i}\rangle < Q$.  The result follows. 
\end{proof}

The next lemma restricts the possible $b$-track systems in certain van Kampen diagrams over $G_i$.

\begin{lemma}\label{no junctions}
For $i=0, \dots p-1$, let $\Delta$ be a reduced van~Kampen diagram over the group $G_i$ of Definition~\ref{def:Gi} with boundary labelled by a word on $a_1, a_2, t, x_1, x_2,$ $ b_p, b_{p-1}, \dots, b_{p-i}$.  Then
\begin{enumerate}
\item \label{no red}
$\Delta$ has no $r_{4, \ast, \ast}$- or $r_{4, \ast}$-cells (per Figure~\ref{fig:relations}).
\item \label{no annuli} 
$\Delta$ has no $a_1$-annuli. 
\item \label{no junc} If the word read around  $\partial \Delta$ contains no letters $a_1^{\pm 1}$, then
the track system $\mathcal G_b$ of $\Delta$ has no junctions.  Thus $\mathcal G_b$ consists of a collection of disjoint tracks, each dual to a $b_j$-corridor for some $j$ such that  $0<p-i\le j \le p$. 
\end{enumerate}

\end{lemma}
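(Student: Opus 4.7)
I would establish the three parts in the order they are stated, each using the preceding.

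For Part~\eqref{no red}, the key observation is that $b_0$ is not a generator of $G_i$ when $i\le p-1$, so every defining relator of Figure~\ref{fig:relations} involving $b_0^{\pm 1}$ is excluded from the presentation of $G_i$. Inspecting the figure, the only surviving relators of $G_i$ that contain any letter $y_1^{\pm1}$ or $y_2^{\pm1}$ (either isolated or as part of a $Y_\ast$ Rips subword) are $r_{1,q-1}$ (present only when $p-i \le q-1$), $r_{4,\ast,\ast}$, and $r_{4,\ast}$. Suppose for contradiction that $\Delta$ contains a cell of type $r_{4,\ast,\ast}$ or $r_{4,\ast}$. Because $\partial\Delta$ has no $y$-letters, every $y$-edge of $\Delta$ is interior and must cancel across a shared edge with a neighbouring 2-cell. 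I would trace how the $Y_\ast$ Rips subwords of such a cell pair up with those of adjacent $y$-containing cells, and invoke the $C'(1/4)$-condition on $\mathcal X \cup \mathcal Y$ (in the spirit of Lemmas~\ref{lem: free subgroup C'(1/4)} and \ref{lem: cancellation from C'(1/4)}) to show that any substantial Rips-word overlap forces two adjacent cells to be identically labelled with opposite orientations, i.e., a back-to-back cancelling pair, contradicting the reducedness of $\Delta$. I expect this case analysis across the possible neighbour types to be the main obstacle of the whole proof.

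For Part~\eqref{no annuli}, since every relator of $G_i$ containing $a_1^{\pm 1}$ has exactly one $a_1$ and one $a_1^{-1}$, the $a_1$-corridors in $\Delta$ cannot branch, making the notion of an $a_1$-annulus unambiguous. Suppose for contradiction $\Delta$ has one, and let $\mathcal A$ be innermost, bounding a subdisc $\Delta_0$ free of $a_1$-cells (and, by Part~\eqref{no red}, also free of $r_{4,\ast,\ast}$ and $r_{4,\ast}$ cells). Let $V$ and $U$ be the cyclic words along the inner and outer sides of $\mathcal A$; then $V = \partial \Delta_0$ represents the identity in $G_i$, and the corridor structure gives $U = a_1^{-1} V a_1$ in $G_i$. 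Combined with the exclusion of $y$-letters from the interior cells of $\Delta_0$, the word $V$ lies in a letter-restricted subgroup; composing with the quotient $G_i \twoheadrightarrow Q_i$ of Lemma~\ref{lem:bj free} (under which the $b$-letters freely generate and $a_1$ acts as the polynomially-growing automorphism $\varphi$), and combining this with a sink-region analysis on the $a_1$-corridor patterned after the proof of Lemma~\ref{K0Li}, I expect to force a back-to-back cancelling pair of cells in $\mathcal A$, contradicting the reducedness of $\Delta$.

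For Part~\eqref{no junc}, assume $\partial\Delta$ contains no letter $a_1^{\pm1}$. Then every $a_1$-corridor in $\Delta$ must close up into an $a_1$-annulus (since it cannot terminate on $\partial\Delta$), but Part~\eqref{no annuli} forbids that. Hence $\Delta$ has no $a_1$-edges at all, and in particular no cells of type $r_{1,i}$ for $i=0,\ldots,p-1$. These are the only relators of $G_i$ whose boundary words contain three $b$-letters, and so they are the only possible sources of junctions in $\mathcal G_b$. Therefore $\mathcal G_b$ decomposes into a disjoint union of tracks, each dual to an ordinary $b_j$-corridor. The range $0 < p-i \le j \le p$ on the index $j$ follows from $b_{p-i}, \ldots, b_p$ being the $b$-generators of $G_i$ and from the hypothesis $i \le p-1$.
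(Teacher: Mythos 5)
Your Part~\eqref{no junc} matches the paper's argument, and the core idea for Part~\eqref{no annuli}—invoke Lemma~\ref{lem:bj free} to show the boundary word on $b$-letters freely reduces, forcing a cancelling pair in the annulus—is also the paper's. But your Part~\eqref{no annuli} is overcomplicated: you needn't take an innermost annulus or pass through the quotient $Q_i$, because the paper just observes directly that one side of the annulus is labelled by a word on $b_{p-i},\dots,b_p$ representing the identity in $G_i$. (Also note that $U=a_1^{-1}Va_1$ as you wrote it only holds after killing the Rips letters; in $G_i$ the corridor sides differ by Rips-word garbage, which is precisely why one needs to pick the $b$-only side.)

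The real gap is in Part~\eqref{no red}. Your proposed strategy—trace how each interior $y$-edge pairs with a neighbouring $y$-containing cell and apply $C'(1/4)$ to manufacture a cancelling pair—is not worked out and, as you note yourself, is the obstacle. The difficulty it glosses over is that the relators $r_{4,i,j}$ contain a lone letter $y_j$ not inside any Rips subword, so a $y$-edge need not lie in a $Y_\ast$-overlap at all, and the $C'(1/4)$ condition gives you nothing for it. (You also misidentify $r_{1,q-1}$ as containing $y$-letters; its Rips subwords are $X_\ast$'s, not $Y_\ast$'s—the $y$-bearing relators surviving in $G_i$ for $i<p$ are only $r_{4,\ast,\ast}$ and $r_{4,\ast}$.) The paper avoids this entirely: it takes a maximal subdiagram $\Delta_0$ with no $b$-edges (necessarily consisting only of $r_{4,\ast,\ast}$- and $r_{4,\ast}$-cells), shows $\partial\Delta_0$ has no $y$-edges, and then works inside the HNN extension $F(a_1,a_2,y_1,y_2)\HNN_t$, ruling out $t$-annuli via Lemma~\ref{lem: C(3)}\eqref{lem part:vertex groups free U} and $t$-corridors by noting their sides must carry $y$-letters. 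The lever there is the $C(3)$ freeness of $\mathcal{U}$ and the $t$-corridor structure, not Rips-word overlap of $y$-edges, so the lone-$y_j$ problem never arises.
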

\begin{proof}
For \eqref{no red}, we suppose  $\Delta_0$ is a maximal subdiagram of $\Delta$ that contains no $b$-edges and is homeomorphic to a 2-disc.   Any $r_{4, \ast, \ast}$- or  $r_{4, \ast}$-cell must be in some such $\Delta_0$.  All its 2-cells must be of type  $r_{4, \ast, \ast}$ or  $r_{4, \ast}$  since every other type of 2-cell has a $b$-edge.   So, arguing that there are no 2-cells in  $\Delta_0$ will establish \eqref{no red}.

There can be no $y$-edges in $\partial \Delta_0$ because such a $y$-edge would have to be either in $\partial \Delta$ (contrary to hypothesis) or in the boundary of a 2-cell of $\Delta$ that is not of type $r_{4, \ast, \ast}$- or  $r_{4, \ast}$ (impossible because the only such 2-cells from Figure~\ref{fig:relations} have $b_0$-edges, and $b_0 \notin G_i$ when $i <p$).  So  $\partial \Delta_0$ is labelled by a word $v$ on    $a_1, a_2, t$.  Now,  $v$ represents the identity in 
$$\langle  a_1, a_2, t, y_1, y_2 \mid r_{4, i, j}, r_{4, i}; \  i,j = 1,2 \rangle \ = \ F(a_1, a_2,  y_1, y_2) \HNN_t.$$  There can be no $t$-annulus in $\Delta_0$ since the word read around the inner boundary of an innermost $t$-annulus would be a word on $\mathcal{U}$ that freely equals the empty word,   and  Lemma~\ref{lem: C(3)}\eqref{lem part:vertex groups free U} would imply that  there must be cancellation of a pair of 2-cells, contrary to $\Delta$ being reduced.  And if there is a $t$-corridor in $\Delta_0$, then there is one that is outermost in that the freely reduced form of the word along its top or bottom follows a path in $\partial \Delta_0$.  But (since $\Delta_0$ is reduced and homeomorphic to a 2-disc) the word along the top or bottom  any  $t$-corridor in  $\Delta_0$ must contain $y$-letters, so this contradicts there being no $y$-letters in $\partial \Delta_0$.

Next we deduce \eqref{no annuli}.  Were there such an $a_1$-annulus, in  light of \eqref{no red}, one of its boundaries would be labelled by a word on $b_p, b_{p-1}, \dots, b_{p-i}$ representing the identity in $G_i$.  It would then follow from Lemma~\ref{lem:bj free} that this word would freely reduce to the empty word.  This would imply that the annulus would have adjacent 2-cells that are identical but with opposite orientation, contrary to   $\Delta$ being reduced. 

Finally, for \eqref{no junc}, suppose the word read around  $\partial \Delta$ contains no letters $a_1^{\pm 1}$.  If the track system $\mathcal G_b$ had a junction, that junction
would be in a 2-cell of $\Delta$ with an $a_1$ on its boundary, and this 2-cell would be part of an $a_1$-corridor or $a_1$-annulus.  However, there are no $a_1$-corridors since the label of $\partial \Delta$ has no $a_1$ and there are no $a_1$-annuli by~\eqref{no annuli}. 
\end{proof}

\begin{lemma} \label{lem:technical}
For $i=0, \dots p-1$, in the group $G_i$ of Definition~\ref{def:Gi}, we have 
$$\langle b_p, b_{p-1}, \dots, b_{p-i} \rangle \cap \langle a_2, x_1, x_2, t\rangle  \ =  \ \set{1}.$$  
\end{lemma}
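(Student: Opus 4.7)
The plan is to invoke the quotient homomorphism $\pi \colon G_i \twoheadrightarrow Q_i$ built inside the proof of Lemma~\ref{lem:bj free}: the map $\pi$ fixes each $b_j$ and $a_1$ while killing every other generator, in particular $a_2$, $t$, $x_1$, and $x_2$. Hence the entire subgroup $\langle a_2, x_1, x_2, t \rangle \leq G_i$ lies in $\ker \pi$.

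I would next observe that $\pi$ restricts to an isomorphism from $\langle b_p, \ldots, b_{p-i}\rangle \leq G_i$ onto $\langle b_p, \ldots, b_{p-i}\rangle \leq Q_i$. Both are free of rank $i+1$---the former by Lemma~\ref{lem:bj free}, and the latter because $Q_i$ is free-by-cyclic with these same generators forming a basis of the free base group---and $\pi$ sends basis to basis. Equivalently, $\pi$ is a left inverse for the inclusion $F_{i+1} \hookrightarrow \langle b_p, \ldots, b_{p-i}\rangle \leq G_i$, so this inclusion is injective and $\pi$ restricted to its image is bijective.

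Given these two facts, the lemma follows at once. For any $\gamma$ in the intersection, its expression as a word on $a_2, x_1, x_2, t$ yields $\pi(\gamma) = 1$, while its expression as a word on $b_p, \ldots, b_{p-i}$ places $\pi(\gamma)$ inside $\langle b_p, \ldots, b_{p-i}\rangle \leq Q_i$, where $\pi$ is injective. Therefore $\gamma = 1$ in $G_i$.

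No step presents a real obstacle, since the quotient $\pi$ and the two freeness statements are already in hand. For completeness, the well-definedness of $\pi$ is the same verification as in the proof of Lemma~\ref{lem:bj free}: the relator families $r_{3,\ast}$, $r_{3,\ast,\ast}$, $r_{4,\ast}$, $r_{4,\ast,\ast}$ are words entirely on killed letters; in each $r_{1,\ast}$ and $r_{2,\ast}$ the $t$-noise collapses to the trivial word and what remains is the corresponding free-by-cyclic defining relation of $Q_i$ (the extra $a_2$ in $r_{1,q-1}$ being killed as well). Since $i \leq p-1$, no relator involving $b_0$ occurs, so nothing in the $r_{\ast,0}$ or $r_{\ast,0,\ast}$ families needs to be checked.
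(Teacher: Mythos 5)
Your proof is correct, but it takes a genuinely different route from the paper's. The paper argues via a reduced van~Kampen diagram for $u v^{-1}$, where $u = u(a_2, x_1, x_2, t)$ and $v = v(b_p, \dots, b_{p-i})$ are non-empty freely reduced words: Lemma~\ref{no junctions}\eqref{no junc} applies (since $\partial\Delta$ has no $a_1^{\pm1}$), so the $b$-track system is a disjoint union of $b_j$-corridors, each of which must have both ends on the $v$-part of the boundary; an innermost such corridor then forces a $b_j^{\pm1}b_j^{\mp1}$ subword of $v$, contradicting reducedness. You instead use the retraction-like quotient $\pi \colon G_i \twoheadrightarrow Q_i$ already built in the proof of Lemma~\ref{lem:bj free}: it kills $\langle a_2, x_1, x_2, t\rangle$ yet restricts to an isomorphism on $\langle b_p, \dots, b_{p-i}\rangle$ (sending a free basis to a free basis), so the two subgroups meet trivially. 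Both proofs ultimately rest on Lemma~\ref{lem:bj free}—the paper's via Lemma~\ref{no junctions}\eqref{no annuli}, yours directly—but yours is shorter, more algebraic, and bypasses the diagram-theoretic analysis of junctions and annuli. (Indeed you could avoid invoking Lemma~\ref{lem:bj free} at all: if $\gamma = w(b_p,\dots,b_{p-i})$ lies in $\ker\pi$ then $w$ is freely trivial in the free group $\langle b_p,\dots,b_{p-i}\rangle_{Q_i}$, and so $\gamma = 1$ in $G_i$.) One small inaccuracy in your ``for completeness'' aside: the families $r_{2,\ast}$, $r_{3,\ast}$, $r_{3,\ast,\ast}$, $r_{4,\ast}$, $r_{4,\ast,\ast}$ are not ``entirely on killed letters''—each has a $b_j$ or $a_1$—but those surviving letters occur with exponent sum zero, so the relator still trivializes under $\pi$; similarly $r_{2,\ast}$ collapses to a trivial relation, not a $Q_i$ relation. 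Neither slip affects your argument, and in any case the well-definedness of $\pi$ is already supplied by the proof of Lemma~\ref{lem:bj free}.
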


\begin{proof}
 Suppose for a contradiction that there is a non-trivial element in 
$$\langle b_p, b_{p-1}, \dots, b_{p-i} \rangle \cap \langle a_2, x_1, x_2, t\rangle.$$
 Then there are non-empty freely reduced words $u=u(a_2, x_1, x_2, t)$ and 
 $v=v(b_p, b_{p-1}, \dots, b_{p-i})$ such that $u= v$ in $G_{i}$,  and there is a reduced van Kampen diagram $\Delta$ with boundary label $uv^{-1}$.  
 Observe that $\Delta$ satisfies the hypotheses of Lemma~\ref{no junctions}\eqref{no junc} since the word read around  $\partial \Delta$ has no instances of $a_1^{\pm 1}$.  Thus the track system $\mathcal G_b$ of $\Delta$ consists of a union of disjoint tracks, each dual to a $b_j$-corridor for some $j$.  Since $u$ has no instances of $b_j$ for any $j$, each of these tracks has both ends on the part of $\partial \Delta$ labelled $v$.  Since these $b$-tracks cannot cross each other, there must be at least one that is innermost  in that it begins and ends at consecutive letters in $v$.  This implies that $v$ 
has a subword  $b_j^{\pm1}b_j^{\mp1}$, which contradicts  $v$ being freely reduced. 
\end{proof}

We can now prove the following lemma, which establishes Proposition~\ref{prop:hnn}\eqref{prop part:Gi an HNN}.

\begin{lemma} \label{indn}
 For $i=0, \ldots, p$, 
\begin{enumerate}
\item \label{subgroups free of rank 5}  the subgroups $K_i, L_i \leq G_{i-1}$  are free of rank 5,   
\item \label{Gi an HNN}  the group $G_i$ is an HNN-extension over $G_{i-1}$ with stable letter $b_{p-i}$ conjugating $K_{i}$  to $L_{i}$.
 \end{enumerate}
\end{lemma}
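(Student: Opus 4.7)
The plan is to establish both parts of Lemma~\ref{indn} simultaneously by induction on $i$.

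For the base case $i=0$: Lemma~\ref{K0Li} furnishes $K_0$ and $L_0$ as rank-$5$ free subgroups of $G_{-1}$. The defining relators of $G_0$ not already present in $G_{-1}$ are precisely the five relators of Figure~\ref{fig:relations} that contain $b_p$, namely $r_{1,p}, r_{2,p}, r_{3,p}, r_{3,p,1}, r_{3,p,2}$. Each such relator rearranges to $b_p^{-1} k_j b_p\,\ell_j^{-1}$, where $k_j$ and $\ell_j$ are respectively the $j$th generators of $K_0$ and $L_0$ as listed in Table~\ref{table:hnn}. Since $K_0$ and $L_0$ are each free of rank $5$ on these generators, the assignment $k_j\mapsto \ell_j$ extends to an isomorphism $K_0\to L_0$, realizing $G_0=G_{-1}\HNN_{b_p}$ with stable letter $b_p$.

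For the inductive step, assume the lemma for $i-1$. Then $G_{i-1}=G_{i-2}\HNN_{b_{p-i+1}}$, its vertex group $G_{i-2}$ embeds, and composing embeddings gives a chain $G_{-1}\hookrightarrow G_{i-1}$; hence $L_i\le G_{-1}$ remains rank-$5$ free in $G_{i-1}$ by Lemma~\ref{K0Li}. The substantive task is to prove that $K_i=\langle a_1 b_{p-i+1}, a_2, t, x_1, x_2\rangle$ is rank-$5$ free in $G_{i-1}$. Paralleling the proofs in Lemma~\ref{K0Li}, I would take $W$ to be a shortest non-empty freely reduced word on these abstract generators mapping to the identity in $G_{i-1}$, and let $\Delta$ be a reduced van~Kampen diagram whose boundary reads the expansion of $W$ into $\{a_1, b_{p-i+1}, a_2, t, x_1, x_2\}^{\pm 1}$, in which every $a_1^{\pm 1}$ is flanked by an adjacent $b_{p-i+1}^{\pm 1}$. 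Parts~\eqref{no red}--\eqref{no annuli} of Lemma~\ref{no junctions} remove $r_4$-cells and $a_1$-annuli from $\Delta$, and an analogous argument using Lemma~\ref{lem:bj free} rules out $b_{p-i+1}$-annuli; consequently every $b_{p-i+1}$-corridor in $\Delta$ connects two boundary $b_{p-i+1}$-edges.

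I would then select an innermost $b_{p-i+1}$-corridor and consider the subdiagram $\Delta_0$ it bounds, whose boundary consists of an arc of $\partial\Delta$ labelled on $\{a_2, t, x_1, x_2\}^{\pm 1}$ (innermostness forbids further $b_{p-i+1}^{\pm 1}$ on the arc, and the corresponding flanking $a_1^{\pm 1}$s then also cannot appear) concatenated with the top side of the corridor, which reads a product of generators of $L_{i-1}$ arising from the HNN conjugation by $b_{p-i+1}$. The $C'(1/4)$-condition on the Rips alphabet together with Lemma~\ref{lem: free subgroup C'(1/4)} and the inductive rank-$5$ freeness of $L_{i-1}$ force this boundary word to be freely reduced in a common free subgroup of $G_{i-2}$; its representing the identity there then forces two adjacent cells of the innermost corridor to cancel, contradicting the reducedness of $\Delta$.

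With part~(1) in hand, part~(2) is formal: the defining relators of $G_i$ not in $G_{i-1}$ are exactly the relators of Figure~\ref{fig:relations} involving $b_{p-i}$, and they encode five equations $b_{p-i}^{-1} k_j b_{p-i}=\ell_j$ pairing the generators of $K_i$ with those of $L_i$ from Table~\ref{table:hnn} (with the exceptional forms at $p-i=q-1$ and $p-i=0$ reflected in the listings of $L_{p-q+1}$ and $L_p$). Since both $K_i$ and $L_i$ are rank-$5$ free on these generators, this pairing extends to an isomorphism, realizing $G_i=G_{i-1}\HNN_{b_{p-i}}$. The principal obstacle is the middle step, proving rank-$5$ freeness of $K_i$: the pairing of $a_1$'s with $b_{p-i+1}$'s along $\partial\Delta$ and the presence of other $b_j$-cells inside $\Delta$ require a careful interplay of the small-cancellation conditions, the preceding technical lemmas, and the inductive HNN-structure in order to force the reducedness contradiction.
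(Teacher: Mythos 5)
Your base case, your handling of $L_i$, and your derivation of part~(2) from part~(1) all match the paper. The divergence is in how you prove $K_i$ is rank-$5$ free in $G_{i-1}$, and here your plan has genuine gaps.

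You choose an innermost $b_{p-i+1}$-corridor and analyze the subdiagram $\Delta_0$ it cuts off. The paper instead takes an innermost \emph{$a_1$-corridor} $C$, which it can then show is composed entirely of $r_{1,\ast}$-cells, and separately analyzes the two possible orientations of $C$ relative to the complementary region $\Delta'$. These are genuinely different decompositions, and the difference matters.

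First, you do not address orientation. One side of your $b_{p-i+1}$-corridor is labelled by generators of $K_{i-1}$ (which, for $i-1>0$, include the generator $a_1 b_{p-i+2}$); the other side is labelled by generators of $L_{i-1}$ (which include $a_1 \Xb t^{-1}\Xb t\Xb$ and $a_2 \Xb t^{-1}\Xb t \Xb$, or, in the $b_0$ step, the $\Yb$-analogues). Depending on orientation, $\Delta_0$ sees one or the other. You only discuss the $L_{i-1}$-side and do not say why the other case cannot occur or how to handle it if it does.

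Second, the free-subgroup input you invoke does not apply. Lemma~\ref{lem: free subgroup C'(1/4)} is about $Z$-words of the form $\Yb t \Yb$ or $\Yb t^{-1}\Yb t \Yb$, possibly augmented by $\{t, x_1, x_2\}$; the alphabets stay disjoint because $t, x_1, x_2$ cannot cancel against $\Yb$-Rips subwords. In your $\Delta_0$ the boundary word is on $\{a_2, t, x_1, x_2\}^{\pm 1}$ coming from the arc of $\partial\Delta$, together with $L_{i-1}$-generators carrying $a_1$- and $a_2$-prefixes and $\Xb$-Rips subwords (or $K_{i-1}$-generators containing $a_1 b_{p-i+2}$). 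These alphabets overlap in $a_2, t, x_1, x_2$, so a free cancellation can straddle the arc-corridor junction without forcing two corridor cells to cancel, and nothing in the paper's small-cancellation lemmas certifies that this combined set is a free basis in $G_{i-2}$. You would need a new lemma, which you have not supplied.

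Third, the paper's proof reveals that a pure cancellation argument is not enough. After passing to an innermost $a_1$-corridor, one of the two orientation cases leads to a word on $b_{p-i},\dots,b_p$ equated in $G_i$ to a word on $t, x_1, x_2, a_2$; the contradiction there comes from Lemma~\ref{lem:technical} ($\langle b_p,\dots,b_{p-i}\rangle \cap \langle a_2, x_1, x_2, t\rangle = \{1\}$ in $G_i$), not from small cancellation. In the other case the paper must analyze the $b$-track system of $\Delta'$ via Lemma~\ref{no junctions}\eqref{no junc}, count tracks with endpoints on each boundary piece, and pin down the corridor as two oppositely-oriented $r_{1,p}$-cells. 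Your sketch skips both mechanisms. Unless you produce a substitute for Lemma~\ref{lem:technical} and a worked-out small-cancellation argument tailored to the arc/$L_{i-1}$ (and arc/$K_{i-1}$) alphabets, your approach does not close.
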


\begin{proof} 
We induct on $i$.    In the case   $i=0$, Lemma~\ref{K0Li}  gives \eqref{subgroups free of rank 5}, and then \eqref{Gi an HNN} follows by definition of $G_0$.  
We now prove the induction step.   Assume the result holds up to some value of the index $i <p$.    We will show that  \eqref{subgroups free of rank 5} and \eqref{Gi an HNN} hold with the index $i$ elevated by $1$.

In Lemma~\ref{K0Li} we showed that $L_{i+1}$ is a free subgroup of $G_{-1}$ of rank 5.  By statement~\eqref{Gi an HNN} of the induction hypothesis, $G_{-1}$, $G_{0}$, \ldots,  $G_{i}$ are successive HNN extensions.  So $G_{-1} \hookrightarrow G_0 \hookrightarrow \cdots \hookrightarrow G_i$ are injective inclusions and $L_{i+1}$ is a rank-5 free subgroup of $G_i$ as well.  

Likewise, $K_0$ is   a rank-5 free subgroup of $G_i$.   
We will show    that $K_{i+1}=\langle a_1 b_{p-i}, a_2, t,  x_1, x_2\rangle$ is  also a rank-5 free subgroup of $G_i$.  This will prove \eqref{subgroups free of rank 5}, and then \eqref{Gi an HNN} will immediately follow. 

Let $w$ be a non-empty freely reduced word on the generators of $K_{i+1}$ such that $w=1$ in $G_i$.  Assume that $w$ is minimal in the sense that no shorter non-empty freely  reduced word on the generators of $K_{i+1}$  represents the identity in $G_i$.
Let $\Delta$ be a reduced van~Kampen diagram for $w$ over $G_i$.  
It contains no 2-cells of type $r_{4, \ast, \ast}$ or  $r_{4, \ast}$ by Lemma~\ref{no junctions}\eqref{no red}.

The word $w$ must include at least one instance of $a_1b_{p-i}$, as otherwise $w$ would be a non-empty freely reduced word representing the identity in the free group $K_0<G_i$, a contradiction.  Consequently, $\Delta$ has at least one $a_1$-corridor.  Moreover, every $a_1$-corridor is non-degenerate, as a degenerate corridor would cut $\partial \Delta$ into two loops (both non-trivial as $w$ is non-empty and freely reduced) and one of these would be labelled by a shorter freely reduced word on the generators of $K_0$, contradicting the minimality of $w$.  
As $\Delta$ has no 2-cells of type $r_{4, \ast, \ast}$ or  $r_{4, \ast}$, every $a_1$-corridor is made up of $r_{1, i}$-cells, where $1 \leq i \leq p$. (We exclude $r_{1, 0}$ since $i<p$.)

Let $C$ be an innermost $a_1$-corridor in $\Delta$, i.e.~an $a_1$-corridor whose complement in $\Delta$ has a component $\Delta'$ without $a_1$-corridors. Then $\partial \Delta'$ is composed of two paths between the same pair of points: a top or bottom boundary $\gamma$  of $C$ with label $v$ (which is non-empty since $C$ is non-degenerate) and a path $\delta$ along $\partial \Delta$. 
The labels $\gamma$ and $\delta$ represent the same element of $G_i$.  

There are two cases, depending on the orientation of $C$.  
If $C$ points away from $\Delta'$, then $\gamma$ is its bottom boundary and $v$ 
is a non-empty word on $b_{p-i}, \dots, b_p$, which is freely reduced since $\Delta$ is reduced.  In this case $\delta$ is labelled by a freely reduced word $u$ 
on $t, x_1, x_2,  a_2$, which is non-empty since otherwise $w$ would have an $a_1^{-1}a_1$ subword and not be freely reduced. 
Now $u=v$ in $G_i$, which contradicts  Lemma~\ref{lem:technical}.

On the other hand, if $C$ points towards $\Delta'$, then 
$\gamma$ is its top boundary and $v$  is a word on elements of the form $b_{j+1}b_j\Xb^{-1}t^{-1}\Xb^{-1}\epsilon^{-1}$, where $b_{j+1}=1$ if $j=p$, and $\epsilon = a_2$ if $j=q-1$ and 1 otherwise.  
In this case 
$\delta$ is labelled by a word  of the form $b_{p-i} u b_{p-i}^{-1}$, where $u$ is a 
word on $t, x_1, x_2,  a_2$.

We consider the track system $\mathcal G_b'$ of $\Delta'$.  
Lemma~\ref{no junctions}\eqref{no junc} applies to $\Delta'$, because it has no $a_1$-corridors,  and we conclude that $\mathcal G_b'$ 
is a disjoint union of tracks.  Each of these tracks is dual to a $b_j$-corridor for some $j$ such that $0 < p-j \le j \le p$ and inherits its label.

Suppose there exists a $b$-track with both ends on $\gamma$.  Consider an innermost such track, i.e.~one for which the subword of $v$ between its endpoints has no $b$-letters, and 
 suppose it is labelled $b_m$ for some $m$. 
Since each 2-cell of $C$ has at least one $b$-letter and at most one $b_m$, this track must begin and end at neighboring cells of $C$.   Examining the $r_{1, \ast}$-cells of  Figure~\ref{fig:relations} we see that the only possibility is that these are identical cells with opposite orientation, which contradicts $\Delta$ being reduced.  Thus tracks of $\mathcal G_b'$ have at most one end on $\gamma$.  

Since $\delta$ is labelled by $b_{p-i} u b_{p-i}^{-1}$, where $u$ has no $b$-letters, there are at most two tracks ending on $\delta$.  Since $C$ is non-degenerate, there is at least one track starting at $\gamma$, which rules out the possibility of a track with both endpoints on $\delta$.  We conclude that $\mathcal G_b'$ has exactly two tracks, each with one end on $\delta$ and one on $\gamma$, and both with label $b_{p-i}$.   It follows that $C$ has exactly two 2-cells, both of type $r_{1,p}$, and $i=0$ (as every other possible 2-cell has both $b_j$ and $b_{j+1}$ for some $j$ in its top boundary). Moreover, since tracks preserve orientation, and the two edges of $\delta$ labelled $b_p$ are oppositely oriented, it follows that the two 2-cells of $C$ are oppositely oriented.  This contradicts $\Delta$ being reduced. 

We have arrived at contradictions in all cases.  
  It follows that no such $w$ can exist, and that $K_{i+1}$ is free of rank 5, completing the induction. 
\end{proof}

\section{The lower bound} \label{ch:the lower bound}

\subsection{The lower bound on distortion}\label{sec:lower}

In this section, we will  establish  the lower bound on distortion of Theorem~\ref{main} in the case $k=1$.  In the manner outlined by the figures in this section, we prove that for all $n \in \N$, there is a 
freely reduced  word $\chi_n$ on $t^{\pm 1}$, $y_1^{\pm 1}$, and $y_2^{\pm 1}$ of length  $\simeq \! 2^{n^p}$
 which  represents the same group element  as a word $w_n$ in the generators of $G$ of length $\simeq \! n^q$.  These length estimates emerge from calculations tracing through the construction, with small-cancellation arguments ensuring that $\chi_n$ does not lose too much length through free reduction.  As  $t$,   $y_1$ and $y_2$  freely generate  $H$ (Corollary~\ref{H is free}), no shorter word than $\chi_n$ on  $t^{\pm 1}$, $y_1^{\pm 1}$ and $y_2^{\pm 1}$ equals $w_n$ in $G$. Via Lemma~\ref{lem:sparsity}, this  will establish that $\Dist_H^G(n) \succeq 2^{n^{p/q}}$.

For $w$  a word, $\abs{w}$ denotes the number of letters in $w$ and $|w|_{x}$ the exponent sum of the $x$ in $w$.  So, if $w$ is a \emph{positive} word, which is to say it contains no inverse letters, then $|w|_{x}$ is   the number of $x$ in $w$.

Recall that killing $a_2, t, x_1, x_2, y_1, y_2$  maps $G$ onto the free-by-cyclic group $$Q \  =  \   \langle \, a_1, b_0, b_1, \dots, b_p  \, \mid \, a_1^{-1} b_j a_1 = \varphi(b_j) \text{ for all } j \, \rangle,$$ where $\varphi$ is the automorphism of $F(b_0, \ldots, b_p)$  mapping  $b_j \mapsto b_{j+1}b_j$ for $j=0, \ldots, p-1$ and $b_p \mapsto b_p$.  The following lemma describes a lift of an equality $a_1 \varphi(u b_0)  = u b_0 a_1$ in $Q$ to an equality $a_1  \varphi(u b_0)    =    u b_0 a_1 \tau$ in $G$.

\begin{lemma} \label{lemma i}
Given a positive word $u=u(b_1, \ldots, b_p)$,  there is a freely reduced word $\tau = \tau(a_2, t^{\pm 1}, y_1^{\pm 1}, y_2^{\pm 1})$ such that  
\begin{align}
a_1  \varphi(u b_0)  & \ = \  u b_0 a_1 \tau  \text{ in } G \label{i1} \\ 
 | \varphi(u b_0) |_{b_i} & \ = \  | u b_0 |_{b_i} + | u b_0 |_{b_{i-1}}  \qquad \text{ for  } i = 1, \ldots, p  \label{i2} \\  
  | \varphi(u b_0) |_{b_0} & \ = \ | u b_0 |_{b_0} \label{i3}  \ = \ 1 \\ 
 | \tau |_{a_2} & \ = \  | \varphi(u b_0) |_{b_q}  - | u b_0 |_{b_{q}}.  \label{i4}   
\end{align}
Moreover,  $\tau$ has a suffix $\kappa$ that is also a \emph{long} suffix of one of the Rips words $Y_{\ast}$ used in the presentation $\mathcal{P}$ of $G$---by \emph{long} we mean that $|\kappa|$ is at least $(3/4)|Y_{\ast}|$.   
\end{lemma}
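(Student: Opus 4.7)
I would prove the lemma by induction on $|u|$. The combinatorial identities \eqref{i2} and \eqref{i3} concerning $b$-letter counts in $\varphi(ub_0)$ are purely consequences of the definition of $\varphi$ (namely $b_j \mapsto b_{j+1}b_j$ for $j<p$ and $b_p \mapsto b_p$) applied to the positive word $ub_0$, which by hypothesis contains exactly one $b_0$ (at the end) and no inverse letters; both identities follow immediately from counting. Specializing \eqref{i2} at $i=q$ gives $|\varphi(ub_0)|_{b_q} - |ub_0|_{b_q} = |ub_0|_{b_{q-1}}$, so \eqref{i4} reduces to showing $|\tau|_{a_2} = |ub_0|_{b_{q-1}}$, i.e.\ that the number of $a_2$'s in $\tau$ equals the number of $b_{q-1}$'s in $ub_0$.

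For the main equation \eqref{i1}, I would construct a van~Kampen diagram realizing it. In the base case $u = \emptyset$, the equation $a_1 b_1 b_0 = b_0 a_1 \tau$ is a direct rearrangement of the defining relator $r_{1,0}$ from Figure~\ref{fig:relations}: reading around an $r_{1,0}$-cell and isolating the portion corresponding to $\tau$ produces a word on $\{a_2, t^{\pm1}, y_1^{\pm1}, y_2^{\pm1}\}$ containing a full Rips word $Y_\ast$ as a suffix (and thus trivially a suffix $\kappa$ with $|\kappa|\ge (3/4)|Y_\ast|$). The $a_2$-count is $1$ if $q=1$ (so that $r_{1,0}$ coincides with $r_{1,q-1}$) and $0$ otherwise, matching $|b_0|_{b_{q-1}} = \delta_{q,1}$. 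For the inductive step, write $u = b_j u'$ for some $j \in \{1,\ldots,p\}$ and apply the inductive hypothesis to $u'$ to get $a_1\varphi(u'b_0) = u'b_0\, a_1\tau'$ with $\tau'$ having the stated properties. Expand $\varphi(ub_0) = \varphi(b_j)\varphi(u'b_0)$ and apply the appropriate relator among $r_{1,p}$, $r_{1,q-1}$, $r_{1,j}$ to push $a_1$ past $\varphi(b_j)$, producing a noise word $\eta_j$ (containing exactly one $a_2$ iff $j=q-1$) immediately to the right of $a_1$. Then use the auxiliary relators $r_{2,\ast}$, $r_{3,\ast}$, $r_{3,\ast,j}$, $r_{4,\ast}$, $r_{4,\ast,j}$ to commute $\eta_j$ past $\varphi(u'b_0)$ and accumulate it at the right end, producing a transformed noise word $\eta_j'$ so that $\tau = \eta_j'\tau'$. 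Because $b_0$ is the rightmost $b$-letter of $ub_0$, any $x$-noise in $\eta_j$ is converted to $y$-noise upon crossing $b_0$ via the $r_{3,0,j}$ relators, so $\tau$ is a word on $\{a_2, t^{\pm1}, y_1^{\pm1}, y_2^{\pm1}\}$. The $Y_\ast$-suffix of $\tau'$ is preserved as a suffix of $\tau$, and the $a_2$-count is tracked precisely since the flow relators commute $a_2$ past other letters without altering its count; summing over the inductive steps gives $|\tau|_{a_2} = |ub_0|_{b_{q-1}}$.

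The main obstacle will be the noise-flow step: one must verify carefully that $\eta_j$ can indeed be propagated past $\varphi(u'b_0)$ using the auxiliary relators and emerge as a word containing only $a_2, t, y_1, y_2$ (and no $x_1, x_2$). This hinges on the precise forms of $r_{2,\ast}$, $r_{3,\ast}$, $r_{3,\ast,j}$, $r_{4,\ast}$, $r_{4,\ast,j}$ and on the $x$-to-$y$ conversion at $b_0$. One must also check that free reduction of the concatenated $\tau$ does not shorten the inherited $Y_\ast$-suffix below the $(3/4)|Y_\ast|$ threshold: here the $C'(1/4)$ condition on $\mathcal{X}\cup\mathcal{Y}$ from Section~\ref{sec:the defn} ensures that cancellation between the tail of $\eta_j'$ and the head of $\tau'$ cannot encroach deeply into the terminal Rips word. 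The combinatorial bookkeeping is delicate but routine given these structural inputs.
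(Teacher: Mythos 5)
Your induction framework, the decomposition $u=b_j u'$, and the treatment of the base case and of~\eqref{i2}--\eqref{i4} all match the paper's proof. However, there is a concrete error in your inductive step that propagates into a genuine gap in the argument for the suffix property.

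Carry out your own sequence of moves. Pushing $a_1$ past $\varphi(b_j)$ gives $a_1\varphi(ub_0)=b_j\,a_1\,\eta_j\,\varphi(u'b_0)$. Commuting $\eta_j$ rightward past $\varphi(u'b_0)$ gives $b_j\,a_1\,\varphi(u'b_0)\,\eta_j'$, and then the induction hypothesis converts $a_1\varphi(u'b_0)$ to $u'b_0\,a_1\,\tau'$, so that $a_1\varphi(ub_0)=ub_0\,a_1\,\tau'\eta_j'$. Hence $\tau=\tau'\eta_j'$, \emph{not} $\eta_j'\tau'$ as you wrote: the newly generated noise $\eta_j'$ ends up as the \emph{suffix} of $\tau$, not as a prefix prepended to $\tau'$. (This matches the paper's $\tau=\tau_0\sigma_0$ read off the diagram in Figure~\ref{fig:i}.)

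This reversal is not cosmetic, because your argument for the long $Y_\ast$-suffix is attached to the wrong factor. You claim the suffix of $\tau$ is inherited from $\tau'$ and invoke $C'(1/4)$ to protect it from cancellation with $\eta_j'$. With the correct concatenation, the suffix of $\tau$ comes entirely from $\eta_j'$, and the heart of the matter is to show that $\eta_j'$ itself has a long $Y_\ast$-suffix. This needs an argument you never give: one must track the evolution of $\eta_j$ under conjugation by the successive $b$-letters of $\varphi(u'b_0)$, using $C'(1/4)$ at each stage to show that the word retains a positive first letter and ends in $x_1$ or $x_2$, and that the final conjugation by $b_0$ (via $r_{2,0}$, $r_{3,0}$, $r_{3,0,\ast}$) yields a word over $a_2,t,y_1,y_2$ with a long $Y_\ast$-suffix. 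The inductive suffix property of $\tau'$ is then used only to see that $\tau'$ ends in a positive letter, so that there is no cancellation at the $\tau'\eta_j'$ junction -- a much weaker role than you assign it.

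A minor further inaccuracy: the flow relators you list include $r_{4,\ast}$ and $r_{4,\ast,j}$, which conjugate noise past $a$-letters. Since $\varphi(u'b_0)$ is a word over the $b$-letters only, only the $r_{2,\ast}$, $r_{3,\ast}$, and $r_{3,\ast,\ast}$ relators are needed for this step.
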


\begin{proof}
The statements \eqref{i1}--\eqref{i4} are easily verified when $u$ is empty. 
Assuming $|u| \geq 1$, express $u$ as $b_i u_0$ where $b_i$ is the first letter of $u$ and $u_0$ is the remainder of the word.   The structure of a van~Kampen diagram for \eqref{i1} is displayed in Figure~\ref{fig:i}.  It is constructed inductively, the base step being provided by the case where $u$ is empty.
The top cell in Figure~\ref{fig:i} encodes the relation $a_1 \varphi(b_i) = b_i a_1 \sigma$, where $\sigma$ is a word on 
$a_2$, $t$, $x_1$ and $x_2$ that contains no $a_2^{-1}$.  The bottom left block comes from applying the induction hypothesis to $u_0$, so $\tau_0 = \tau_0(a_2, t^{\pm 1}, y_1^{\pm 1}, y_2^{\pm 1})$. 
The bottom right block encodes the result of moving $\phi(u_0b_0)$ past $\sigma$.  
	That $\sigma_0$, and therefore $\tau$, contains letters $a_2, t^{\pm 1}, y_1^{\pm 1}, y_2^{\pm 1}$ but not $x_1^{\pm 1}, x_2^{\pm 1}$ is due to   $b_0$ conjugating $a_2$, $t^{\pm 1}$, $x_1^{\pm 1}$ and $x_2^{\pm 1}$ to words on  $a_2$, $t^{\pm 1}$, $y_1^{\pm 1}$ and $y_2^{\pm 1}$. (See the $r_{2, \ast}$-, $r_{3, \ast}$-, and $r_{3, \ast, \ast}$-cells of Figure~\ref{fig:relations}.)  

\begin{figure}[htbp]
\centering
\begin{overpic}  [scale=0.9] 
{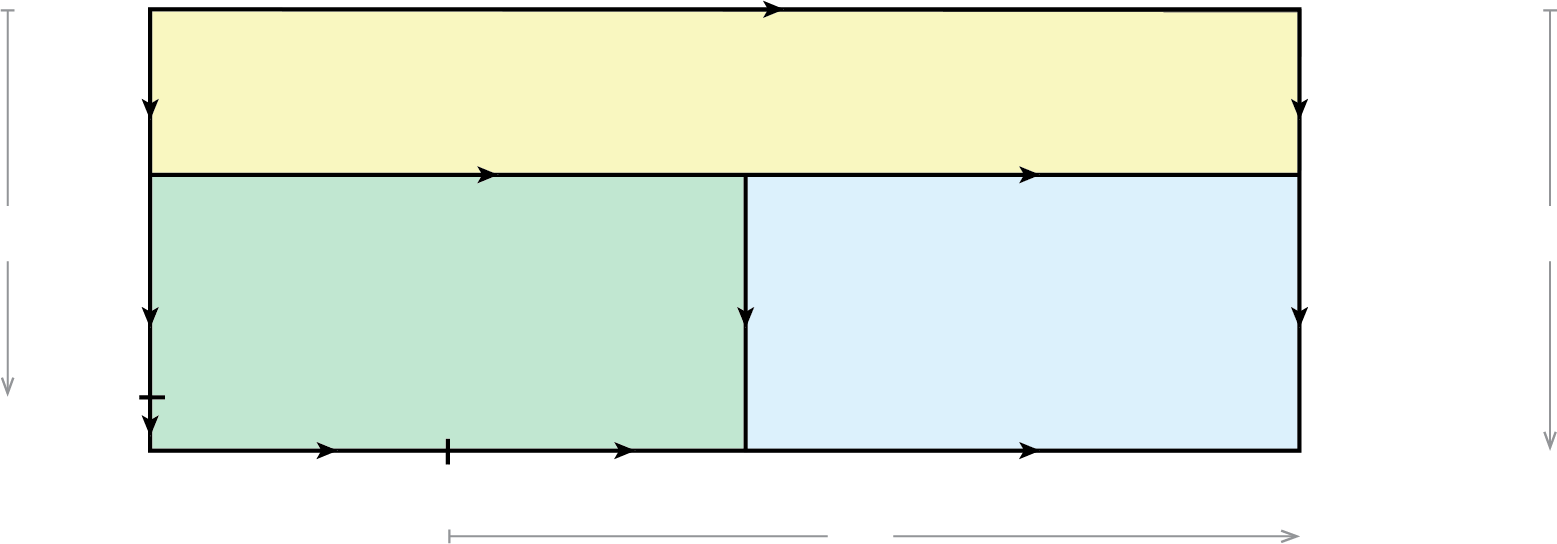}
\put(48, 36){\small{$a_1$}}
\put(30, 25){\small{$a_1$}}
\put(20, 3.5){\small{$a_1$}}
\put(6, 7){\small{$b_0$}}
\put(6, 28){\small{$b_i$}}
\put(5, 15){\small{$u_0$}}
\put(38, 3.5){\small{$\tau_0$}}
\put(64.5, 3.5){\small{$\sigma_0$}}
\put(65.5, 25){\small{$\sigma$}}
\put(84.5, 14){\small{$\varphi(u_0 b_0)$}}
\put(49, 14){\small{$\varphi(u_0 b_0)$}}
\put(84.5, 29){\small{$\varphi(b_i)$}}
\put(-0.5, 19.5){\small{$u$}}
\put(96, 19.5){\small{$\varphi(u b_0)$}}
\put(54, 0){\small{$\tau$}}
\put(42, 28){\parbox{60mm}{\tiny{\emph{Relators $r_{1, \ast}$}}}}
\put(24, 15){\parbox{30mm}{\tiny{\emph{Induction}}}}
\put(62, 15){\parbox{25mm}{\tiny{\emph{By relators $r_{2, \ast}$, \\ $r_{3, \ast}$, and $r_{3, \ast, \ast}$ }}}}
\end{overpic}
\caption{A diagram for $a_1  \varphi(u b_0) = u b_0 a_1 \tau$ in $G$.}
\label{fig:i}
\end{figure}

The equalities \eqref{i2} and \eqref{i3} follow from the definition of $\varphi$.  

We get \eqref{i4} by induction, as follows.  Assume~\eqref{i4} holds for $u_0$.  
Examining the $r_{1, \ast}$-defining relators of  Figure~\ref{fig:relations}, we see that 
$ | \sigma |_{a_2} =|\varphi(b_i)|_{b_q} - |b_i|_{b_q}$ for any $i$.  Moreover,  $ | \sigma_0 |_{a_2} = | \sigma|_{a_2} $ in the bottom right block of Figure~\ref{fig:i} as each $r_{2, \ast}$-, $r_{3, \ast}$-, and $r_{3, \ast, \ast}$-defining relator of Figure~\ref{fig:relations} satisfies this property.  
 Combining these observations with the induction hypothesis, we get:
$| \tau|_{a_2} = | \tau_0 |_{a_2} + | \sigma_0 |_{a_2} =  | \varphi(u_0b_0)|_{b_q} -  |u_0b_0|_{b_{q}}  +  | \sigma  |_{a_2} =| \varphi(u_0b_0)|_{b_q} -  |u_0b_0|_{b_{q}} + |\varphi(b_i)|_{b_q} - |b_i|_{b_q}  =
| \varphi(b_iu_0b_0)|_{b_q} -  |b_iu_0b_0|_{b_{q}} $, which completes the inductive step (since $u=b_iu_0$) and 
proves~\eqref{i4}.

 When $u$ is empty, Figure~\ref{fig:i} is a single $r_{1,0}$-cell and $\tau$ is $\Yb t^{-1} \Yb t \Yb$, which satisfies the suffix condition by construction.  
For $u$ non-empty
 we may assume by induction that  $\tau_0$ is reduced and its final letter is positive (since the  $\Yb$ are positive words).  
Now $\sigma$ is one of the subwords $\Xb t^{-1}  \Xb t \Xb$ of an $r_{1, \ast}$-defining relator of Figure~\ref{fig:relations} (as $\Yb t^{-1}  \Yb t \Yb$ is excluded since $b_i \neq b_0$).  Thus 
$\sigma$ has positive first letter and ends with $x_1$ or $x_2$.  
It follows, via the $C'(1/4)$-condition for $\mathcal{X} \cup \mathcal{Y}$ of Section~\ref{sec:the defn}, that the successive words we obtain from $\sigma$ by conjugating by a $b_i$ with $i \neq 0$ and then freely reducing have positive first letters and end with $x_1$ or $x_2$.
 Finally $\sigma_0$ is obtained by conjugating by $b_0$ and freely reducing, so it has a positive first letter and a suffix that is a \emph{long}  suffix of some 
 $\Yb t^{-1}  \Yb t \Yb$ 
 (again by $C'(1/4)$ for $\mathcal{X} \cup \mathcal{Y}$).
Therefore there is  no cancellation between $\tau_0$ and $\sigma_0$, and so 
 $\sigma_0$ gives $\tau$ the required \emph{long} suffix.  
\end{proof}

For all $j \geq 0$, define $u_j$ to be the positive word on $b_1, \ldots, b_q$ such that $u_j b_0 = \varphi^j(b_0)$ as words.  
In particular $u_0$ is the empty word $\varepsilon$, and $ u_{j+1}b_0=\varphi(u_j b_0)$. 
Now let $n \ge 1$.
For $j = 0, \ldots, n-1$, let $\tau_{j+1}$ be as per Lemma~\ref{lemma i} so that 
$a_1 u_{j+1} b_0= u_j b_0 a_1 \tau_{j+1}$ 
in $G$.  Let $v_n = a_1 \tau_1 \cdots a_1 \tau_n$.

For our next lemma, we understand the binomial coefficient $\SB{n}{i}$ to be zero when $i >n$.

\begin{lemma} \label{lemma ii}
  For all $n \geq 1$, the word $v_n$ is freely reduced and
\begin{align}
a_1^n  u_n  b_0 & \ = \  b_0  v_n  
 \text{ in } G \label{ii1} \\ 
 | v_n |_{a_1} & \ = \  n      \label{ii2} \\ 
   | u_n b_0 |_{b_i}  & \ =  \   \SB{n}{i} \text{ for } i=0, \ldots, p  \label{ii4} \\  
  | u_n b_0 | & \ = \  \SB{n}{0} + \cdots +  \SB{n}{p}.    \label{ii5} \\ 
  | v_n |_{a_2} & \ = \ | u_nb_0 |_{b_q} \ =  \   \SB{n}{q}      \label{ii3}
\end{align}
\end{lemma}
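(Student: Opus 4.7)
The plan is to obtain each of the five assertions by induction on $n$, with Lemma~\ref{lemma i} providing the one-step upgrade at every stage. I would begin with the equality (\ref{ii1}) by iterating the identity $a_1 u_{j+1}b_0 = u_j b_0 a_1 \tau_{j+1}$ (which is Lemma~\ref{lemma i}(i1) applied to $u=u_j$, noting $\varphi(u_j b_0) = u_{j+1}b_0$). Starting from $a_1^n u_n b_0$, I peel off the innermost $a_1 u_n b_0$ as $u_{n-1}b_0 \, a_1\tau_n$, then push $a_1^{n-1}$ rightward and repeat; after $n$ steps all the $a_1$'s have been converted into blocks $a_1 \tau_j$, and the residual $u_0$ is empty, leaving exactly $b_0 v_n$.

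Assertion (\ref{ii2}) is immediate from the definition $v_n = a_1\tau_1\cdots a_1\tau_n$, since the $\tau_j$ contain no $a_1$-letters. For (\ref{ii4}) and (\ref{ii5}), I would set $c_i^{(n)} := |u_n b_0|_{b_i}$. Because $u_{n+1}b_0 = \varphi(u_n b_0)$, parts (i2) and (i3) of Lemma~\ref{lemma i} translate directly into the recurrence
\[
c_0^{(n+1)} = 1, \qquad c_i^{(n+1)} = c_i^{(n)} + c_{i-1}^{(n)} \ \text{ for } i=1,\ldots,p,
\]
with $c_0^{(0)} = 1$ and $c_i^{(0)} = 0$ for $i \ge 1$. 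This is Pascal's recursion, so $c_i^{(n)} = \binom{n}{i}$, and summing over $i = 0,\ldots,p$ yields (\ref{ii5}).

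For (\ref{ii3}), Lemma~\ref{lemma i}(i4) gives $|\tau_{j+1}|_{a_2} = |u_{j+1}b_0|_{b_q} - |u_j b_0|_{b_q}$. Since $|v_n|_{a_2} = \sum_{j=0}^{n-1} |\tau_{j+1}|_{a_2}$, the sum telescopes to $|u_n b_0|_{b_q} - |u_0 b_0|_{b_q} = \binom{n}{q}$ (using $q \ge 1$, so the constant term vanishes). Finally, to see that $v_n$ is freely reduced I would argue that each $\tau_j$ is freely reduced by Lemma~\ref{lemma i} and is a word on $a_2, t^{\pm 1}, y_1^{\pm 1}, y_2^{\pm 1}$ and so contains no $a_1^{\pm 1}$; consequently the $a_1$-letters inserted between successive $\tau_j$ blocks cannot meet any inverse at the seams, and no cancellation can occur across block boundaries.

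The only nontrivial bookkeeping is verifying the recurrence $c_p^{(n+1)} = c_p^{(n)} + c_{p-1}^{(n)}$ at the top index $i=p$, where $\varphi$ behaves atypically (fixing $b_p$ rather than producing $b_{p+1}b_p$); but this is already subsumed by Lemma~\ref{lemma i}(i2), whose range includes $i=p$. So there is no genuine obstacle, and I expect the proof to be a short induction organized around these five steps.
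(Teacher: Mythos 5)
Your proof is correct and takes essentially the same approach as the paper, which dispatches (\ref{ii1}) by assembling $n$ copies of the Figure~\ref{fig:i} diagram (your iteration of $a_1 u_{j+1}b_0 = u_j b_0 a_1\tau_{j+1}$) and notes that (\ref{ii2})--(\ref{ii3}) follow "straightforwardly" from Lemma~\ref{lemma i}. Your write-up simply makes explicit the Pascal recursion and the telescoping sum that the paper leaves implicit, and the free-reducedness argument (each $\tau_j$ freely reduced and $a_1$-free, so no cancellation at seams) is word-for-word the paper's reasoning.
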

\begin{proof}
The reason $v_n$ is freely reduced is that each $\tau_i$ is freely reduced and contains no $a_1^{\pm 1}$ letters by Lemma~\ref{lemma i}.  Then \eqref{ii1} holds as per Figure~\ref{fig:ii} and \eqref{ii2}--\eqref{ii3} all follow straightforwardly from Lemma~\ref{lemma i}.  
\end{proof}

\begin{figure}[htbp]
\centering
\begin{overpic}  [scale=0.75] 
{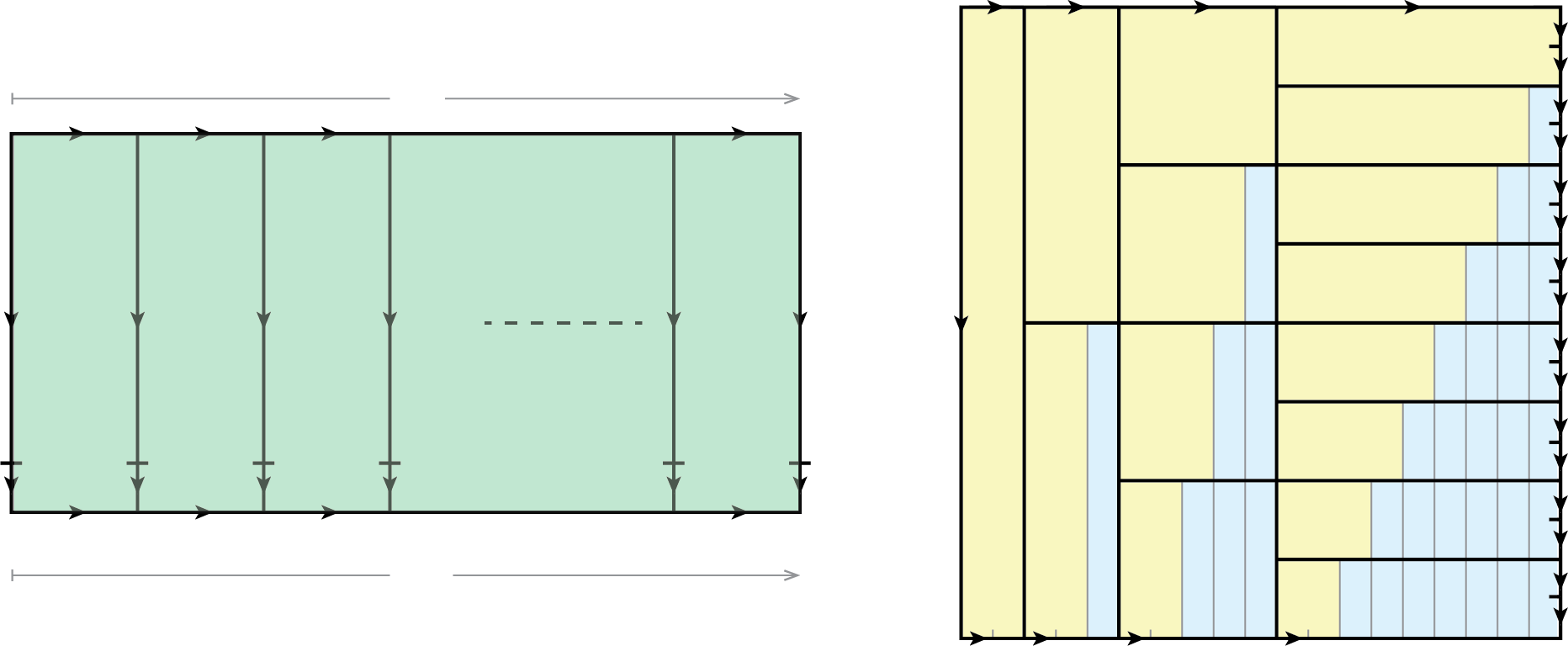}
\put(25.5, 4.5){\small{$v_n$}}
\put(3, 6.5){\small{$a_1\tau_1$}}
\put(11, 6.5){\small{$a_1\tau_2$}}
\put(19, 6.5){\small{$a_1\tau_3$}}
\put(45, 6.5){\small{$a_1\tau_n$}}
\put(25.5, 34){\small{$a_1^n$}}
\put(-2.7, 20){\small{$u_0$}}
\put(-2.7, 17.7){\small{$= \! \varepsilon$}}
\put(-2, 9.5){\small{$b_0$}}
\put(9.5, 20){\small{$u_1$}}
\put(9.5, 9.5){\small{$b_0$}}
\put(18, 20){\small{$u_2$}}
\put(18, 9.5){\small{$b_0$}}
\put(26, 20){\small{$u_3$}}
\put(26, 9.5){\small{$b_0$}}
\put(44, 20){\small{$u_{n-1}$}}
\put(44, 9.5){\small{$b_0$}}
\put(52, 20){\small{$u_n$}}
\put(52, 9.5){\small{$b_0$}}
\put(58.5, 20){\small{$b_0$}}
\put(61, -2){\small{$a_1$}}
\put(63.5, -2){\small{$\tau_1$}}
\put(65.5, -2){\small{$a_1$}}
\put(68.5, -2){\small{$\tau_2$}}
\put(71.5, -2){\small{$a_1$}}
\put(76, -2){\small{$\tau_3$}}
\put(81, -2){\small{$a_1$}}
\put(90.5, -2){\small{$\tau_4$}}
\put(62, 42){\small{$a_1$}}
\put(67.5, 42){\small{$a_1$}}
\put(75, 42){\small{$a_1$}}
\put(89, 42){\small{$a_1$}}
\put(100.5, 1){\small{$b_0$}}
\put(100.5, 3.5){\small{$b_1$}}
\put(100.5, 6){\small{$b_1$}}
\put(100.5, 8.5){\small{$b_2$}}
\put(100.5, 11){\small{$b_1$}}
\put(100.5, 13.5){\small{$b_2$}}
\put(100.5, 16){\small{$b_2$}}
\put(100.5, 19){\small{$b_3$}}
\put(100.5, 22){\small{$b_1$}}
\put(100.5, 24.5){\small{$b_2$}}
\put(100.5, 27){\small{$b_2$}}
\put(100.5, 29.5){\small{$b_3$}}
\put(100.5, 32){\small{$b_2$}}
\put(100.5, 34.5){\small{$b_3$}}
\put(100.5, 36.7){\small{$b_3$}}
\put(100.5, 39){\small{$b_4$}}

\end{overpic}
\vspace{4mm}
\caption{Why  
$a_1^n  u_n b_0   =  b_0  v_n$   in $G$.  
The  diagram on the left is assembled from $n$ instances of the diagram from Figure~\ref{fig:i}.  That on the right shows it in finer detail in the case $n=4$ and $q \geq 4$.}
\label{fig:ii}
\end{figure}

Let $\hat{v}_n$ be $v_n$ with all $t^{\pm 1}$,  $y_1^{\pm 1}$ and $y_2^{\pm 1}$ deleted.  

\begin{lemma} \label{lemma iii}
For all $n \geq 1$, there is a freely reduced word $\mu_n = \mu_n(t^{\pm 1}, y_1^{\pm 1}, y_2^{\pm 1})$, whose final letter is positive,  and such that 
\begin{align}
v_n  & \ = \   \hat{v}_n \mu_n  \text{ in } G. 
\end{align}
\end{lemma}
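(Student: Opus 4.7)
The plan is to prove this by induction on $n$, pushing every noise letter (i.e., $t, y_1, y_2$) rightward past every $a_1$- and $a_2$-letter, using the HNN-structure $G_{-1} = F \HNN_{a_1, a_2}$ of Proposition~\ref{prop:hnn}\eqref{prop part:G1HNN}. Since, by Table~\ref{table:hnn}, both stable letters $a_1$ and $a_2$ conjugate the initial group $\langle t, y_1, y_2 \rangle$ into subgroups of itself, there are injective homomorphisms $\phi_1, \phi_2 \colon \langle t, y_1, y_2 \rangle \to \langle t, y_1, y_2 \rangle$ such that $\mu \, a_i = a_i \, \phi_i(\mu)$ in $G$ for every word $\mu$ on $t^{\pm1}, y_1^{\pm1}, y_2^{\pm1}$ and every $i \in \{1,2\}$. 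Since every $a$-letter of $v_n$ is positive---the $a_1$'s by construction and the $a_2$'s within each $\tau_j$ by Lemma~\ref{lemma i}---only these forward identities (and not their $a_i^{-1}$ analogues) are needed.

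For the inductive step, the hypothesis supplies $v_{n-1} = \hat v_{n-1}\, \mu_{n-1}$ with $\mu_{n-1}$ freely reduced on $t, y_1, y_2$. Concatenating with $a_1 \tau_n$ gives $v_n = \hat v_{n-1}\, \mu_{n-1}\, a_1\, \tau_n$. Applying $\mu_{n-1}\, a_1 = a_1\, \phi_1(\mu_{n-1})$ and then iteratively pushing the accumulating noise past each $a_2$-letter of $\tau_n$ (left to right) yields $v_n = \hat v_{n-1}\, a_1\, \hat\tau_n\, \mu_n'$, where $\mu_n'$ is a word on $t, y_1, y_2$. Taking $\mu_n$ to be the free reduction of $\mu_n'$ and observing that $\hat v_n = \hat v_{n-1}\, a_1\, \hat\tau_n$ establishes the desired equality $v_n = \hat v_n \mu_n$; the base case $n = 1$ is handled the same way, starting from the empty word in place of $\mu_{n-1}$.

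The remaining issue is the positive-final-letter condition, which will be the main delicate point. By Lemma~\ref{lemma i}, $\tau_n$ ends with a long positive suffix $\kappa$---a suffix of some Rips word $Y_* \in \mathcal Y$ of length at least $(3/4)|Y_*|$, necessarily ending in $y_1$ or $y_2$. Because $\kappa$ sits to the right of every $a_2$ in $\tau_n$, it is untouched by the pushing process and appears as the rightmost block of $\mu_n'$. The block immediately preceding $\kappa$ in $\mu_n'$ has the form $\phi_2(\,\cdot\,)$, which ends with some word in the Rips family, so by the $C'(1/4)$ condition on $\mathcal X \cup \mathcal Y$ (applied in the spirit of Lemma~\ref{lem: cancellation from C'(1/4)}) any cancellation at the junction consumes fewer than a quarter of the letters of $\kappa$. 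Hence more than half of $\kappa$, and in particular its positive final letter, survives free reduction in $\mu_n$, completing the argument.
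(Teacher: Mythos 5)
Your overall strategy is the same as the paper's: shuffle all $a$-letters of $v_n$ to the left (your maps $\phi_1,\phi_2$ are exactly the conjugations coming from the $r_{4,\ast}$- and $r_{4,\ast,\ast}$-relators that the paper uses), note that $\kappa$ is a suffix of $v_n$ sitting to the right of every $a$-letter, and then argue via $C'(1/4)$ that free reduction cannot destroy the last letter of $\kappa$. The inductive framing of the shuffling is a cosmetic difference.

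There is, however, a genuine gap in the final step. You assert that ``the block immediately preceding $\kappa$ in $\mu_n'$ has the form $\phi_2(\cdot)$, which ends with some word in the Rips family.'' This is not accurate in general. Writing $\tau_n = \sigma_0 a_2 \sigma_1 \cdots a_2 \sigma_m$ with each $\sigma_j$ a word on $t,y_1,y_2$, the pushing process gives $\mu_n' = \phi_2(\cdots)\,\sigma_m$, and $\kappa$ is only a \emph{suffix} of $\sigma_m$; the part of $\sigma_m$ that precedes $\kappa$ (which is typically nonempty, e.g.\ it ends with a $t^{\pm1}$ coming from a $\Yb t \Yb$ or $\Yb t^{-1} \Yb t \Yb$ pattern of Lemma~\ref{lemma i}) separates $\kappa$ from the $\phi_2$-image, and it is not itself a $\phi_2$-image. (In the base case $n=1$ with $\tau_1$ containing no $a_2$, there is no $\phi_2$-image at all.) More importantly, even where $\kappa$ does abut a Rips word, invoking $C'(1/4)$ alone does not bound the cancellation: you must also know that the adjacent Rips words are \emph{distinct} elements of $\mathcal{C}(\mathcal{X}\cup\mathcal{Y})$ (otherwise an exact inverse pair could annihilate completely). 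This is precisely the point the paper makes explicit: the $\Yb^{\pm1}$ created during the shuffling come from $r_{4,\ast}$- and $r_{4,\ast,\ast}$-relators, while $\kappa$ comes from a Rips word appearing in a different relator, so they cannot be equal or inverse, and only then does $C'(1/4)$ give the claimed bound. Without that observation your final step does not quite close.
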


\begin{proof}  Use the $r_{4, \ast, \ast}$- and  $r_{4, \ast}$-defining relators of Figure~\ref{fig:relations} to shuffle the $a_1$ and $a_2$ through $v_n$  to its start to make a prefix $\hat{v}_n$.  In the process, the intervening letters $t^{\pm 1}, y_1^{\pm 1}, y_2^{\pm 1}$ become various $(\Yb t \Yb)^{\pm 1}$ and $(\Yb t^{-1} \Yb t \Yb)^{\pm 1}$.  

By Lemma~\ref{lemma i}, $\tau_n$, and therefore $v_n$, has a suffix $\kappa$ that is a  \emph{long} suffix of some $\Yb$.  The $\Yb^{\pm 1}$ that are created in the shuffling process are different from any that arise in Lemmas~\ref{lemma i}--\ref{lemma iii} (those lemmas do not use the relators $r_{4, \ast}$ or $r_{4, \ast, \ast}$). So, by $C'(1/4)$ for $\mathcal{X} \cup \mathcal{Y}$ (see Section~\ref{sec:the defn}), cancellation with these $\Yb^{\pm 1}$ cannot erode all of $\kappa$. So the final letter of $\mu_n$ is the final letter of $\kappa$, and so of some  $\Yb$,  and so is positive. 
\end{proof}

\begin{lemma} \label{lemma iv}  
There exists $K_1>1$ with the following property.   For all $n \geq 1$, there is a reduced word $Z_n$ on  $t$, $y_1$, and $y_2$, whose first letter is positive, such that    
 \begin{align}
(u_n b_0)^{-1} \, x_1 \, u_n b_0    & \ = \  Z_n \text{ in } G \\ 
K_1^{|u_n b_0|}  & \ \leq \  | Z_n |.  \label{iv2}
\end{align}
\end{lemma}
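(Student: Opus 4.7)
The plan is to construct $Z_n$ by iteratively conjugating from the inside out, controlling length growth via the $C'(1/4)$ condition on $\mathcal{X} \cup \mathcal{Y}$. Writing $u_n = b_{i_1} b_{i_2} \cdots b_{i_m}$ (with $m = |u_n|$ and each $i_j \geq 1$), set $W_0 := x_1$ and, for $j = 1, \ldots, m$, let $W_j$ be the freely reduced form of $b_{i_j}^{-1} W_{j-1} b_{i_j}$ evaluated via the defining relators $r_{3, i_j, \ast}$ and $r_{3, i_j}$: each occurrence of a letter $\alpha \in \{x_1^{\pm 1}, x_2^{\pm 1}, t^{\pm 1}\}$ in $W_{j-1}$ is replaced by a word $R(\alpha, i_j)$ on $\{x_1, x_2, t\}$ of the form $X_\ast t X_\ast$ (for positive $\alpha$) or its inverse (for negative $\alpha$), whose outermost letters come from Rips words $X_\ast \in \mathcal{X}$. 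Finally, let $Z_n$ be the freely reduced form of $b_0^{-1} W_m b_0$ obtained analogously from the relators $r_{3, 0, \ast}$ and $r_{3, 0}$: each substitution now has the form $Y_\ast t Y_\ast$ (or its inverse), producing a word on $\{t, y_1, y_2\}$, as required.

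The main step, and the main obstacle, is to establish a uniform multiplicative length bound: there is a constant $C > 1$, independent of $n$, such that $|W_j| \geq C\,|W_{j-1}|$ for $1 \leq j \leq m$ and $|Z_n| \geq C\,|W_m|$. Before free reduction, the substitution produces the concatenation $R(\alpha_1, i_j) \cdots R(\alpha_\ell, i_j)$ of length at least $L_0 |W_{j-1}|$, where $L_0 \geq 2|X_\ast| + 1 \geq 201$ is a lower bound on $|R(\alpha, i)|$. At each of the $\ell - 1$ internal junctions, two cases arise. When both neighboring substitutions have the same sign, the boundary consists of two positive or two negative $x$-letters, so no cancellation occurs. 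When the signs differ, any cancelling run at the junction reads as the end of one Rips word $X_\ast$ and, when inverted, as the end of another; such a common segment is a piece of $\mathcal{X}$ and by $C'(1/4)$ has length strictly less than $|X_\ast|/4$ on each side. Thus each $R(\alpha_k, i_j)$ retains at least half its letters, yielding $|W_j| \geq (L_0/2)|W_{j-1}|$. The same argument, with $\mathcal{Y}$ in place of $\mathcal{X}$, delivers $|Z_n| \geq (L_0/2)|W_m|$; this is essentially the reasoning already used in Lemma~\ref{lem: cancellation from C'(1/4)}.

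Positivity of the first letter of $Z_n$ follows by tracking the first letter inductively: the first letter of $W_j$ equals the first letter of $R(\alpha_1, i_j)$, a positive $x$-letter, and cancellation at the first internal junction attacks only the right end of $R(\alpha_1, i_j)$, so this initial letter is preserved. Iterating, $W_m$ has positive first letter, and the same reasoning applied to the final conjugation by $b_0$ shows that $Z_n$ begins with a positive $y$-letter. Combining the $m + 1$ multiplicative bounds yields $|Z_n| \geq (L_0/2)^{m+1}$; since $|u_n b_0| = m + 1$, the required estimate holds with $K_1 = L_0/2$.
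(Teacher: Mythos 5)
Your proof is correct and follows essentially the same approach as the paper: iterated conjugation by the letters of $u_n b_0$ via the relators $r_{3,\ast}$ and $r_{3,\ast,\ast}$, with the $C'(1/4)$ condition on $\mathcal{X}\cup\mathcal{Y}$ bounding cancellation at junctions to control the multiplicative length growth and preserve the positive first letter. Your write-up is more detailed (and your normal form for the substitutions is stated slightly imprecisely, since $R(x_k,i) = X_\ast t^{-1} X_\ast t X_\ast$ rather than $X_\ast t X_\ast$, but you correct course by working with the outermost Rips subwords), and your choice of $K_1$ differs mildly from the paper's, but the underlying argument is the same.
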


\begin{proof} The word $Z_n$ is the result of successive conjugations of $x_1$ by the letters of $u_n$ (which are $b_1, \ldots, b_p$) and then by $b_0$.   The relators $r_{3, \ast}$ and $r_{3, \ast, \ast}$ describe the  effect: conjugation produces successive words on $t$ and the $\Xb$  (so on  $t$, $x_1$ and $x_2$) until the final conjugation by  $b_0$, which results in a  word on $t$ and the $\Yb$  (so on $t$, $y_1$ and $y_2$). In any one of these words, free reduction between adjacent $\Xb^{\pm 1}$  (or adjacent $\Yb^{\pm 1}$) can only reduce the word's length by at most a half on account of the $C'(1/4)$ condition on $\mathcal{X} \cup \mathcal{Y}$ (see Section~\ref{sec:the defn}).  So, if we take $K_1$ to be half the  length of the shortest of the $\Xb$ and   $\Yb$, then each conjugation increases reduced length by a factor of at least $K_1$. The $C'(1/4)$-condition for $\mathcal{X} \cup \mathcal{Y}$ also implies that free reduction cannot erode the first letter of the word at every stage, and as the initial $x_1$ is positive and so are first letters of each $\Xb$ and $\Yb$, it follows that the first letter of $Z_n$ is positive.  
\end{proof}

\begin{lemma} \label{lemma v}
    There exist $K_2>0$ and  $K_3 >1$ with the following properties.  For all $n \geq 1$, the word  $$w_n \  = \ \hat{v}^{-1}_nb_0^{-1} a_1^n x_1 a_1^{-n} b_0 \hat{v}_n$$ 
has length at most $K_2 n^q$ and equals in $G$ a word $\chi_n = \chi_n(t^{\pm 1}, y_1^{\pm 1}, y_2^{\pm 1})$.  Moreover, freely reducing $\chi_n$ gives a word of length at least $K_3^{(n^p)}$.   
 \end{lemma}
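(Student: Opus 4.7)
The plan is to express $\chi_n$ explicitly as $\mu_n Z_n \mu_n^{-1}$ and then bound its free reduction from below by a conjugacy argument in the free group $H$ (Corollary~\ref{H is free}).

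For the length bound: $|w_n| = 2|\hat v_n| + 2n + 3$, and by Lemma~\ref{lemma ii} we have $|\hat v_n| = |v_n|_{a_1} + |v_n|_{a_2} = n + \binom{n}{q}$, giving $|w_n| = 4n + 2\binom{n}{q} + 3 \leq K_2 n^q$ for an appropriate $K_2 > 0$.

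To construct $\chi_n$, I would combine the preceding lemmas. From $a_1^n u_n b_0 = b_0 v_n$ (Lemma~\ref{lemma ii}) one obtains $b_0^{-1} a_1^n = v_n (u_n b_0)^{-1}$ and $a_1^{-n} b_0 = u_n b_0 \, v_n^{-1}$, so Lemma~\ref{lemma iv} yields $b_0^{-1} a_1^n x_1 a_1^{-n} b_0 = v_n Z_n v_n^{-1}$ in $G$. Substituting $v_n = \hat v_n \mu_n$ (Lemma~\ref{lemma iii}) into $w_n = \hat v_n^{-1} v_n Z_n v_n^{-1} \hat v_n$ cancels the $\hat v_n$-factors and leaves
\[ w_n \ = \ \mu_n Z_n \mu_n^{-1} \]
in $G$; this word on $t^{\pm 1}, y_1^{\pm 1}, y_2^{\pm 1}$ is the desired $\chi_n$.

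For the lower bound, the key observation is that in a free group every element attains its minimum length within its conjugacy class at its cyclically reduced form, so $|\chi_n|_{\mathrm{red}} \geq |Z_n|_{\mathrm{cyc-red}}$ when computed in $H$. I would then show that $Z_n$ is already cyclically reduced. Lemma~\ref{lemma iv} gives that $Z_n$ is freely reduced and begins with a positive letter; the symmetric argument for the last letter goes through because the outermost conjugation step in building $Z_n$ is by $b_0$, whose conjugation relators $r_{3,0,j}$ and $r_{3,0}$ replace the rightmost intermediate letter by a block whose rightmost Rips factor is $\Yb$ (ending in a positive power of $y_2$), and the $C'(1/4)$-condition on $\mathcal X \cup \mathcal Y$ again prevents free reduction at block-boundaries from eroding that final letter. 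Hence the first and last letters of $Z_n$ are both positive, not mutual inverses, so $Z_n$ is cyclically reduced and $|Z_n|_{\mathrm{cyc-red}} = |Z_n|$. Finally, $|Z_n| \geq K_1^{|u_n b_0|} \geq K_1^{\binom{n}{p}} \geq K_1^{n^p/(2^p p!)}$ for $n \geq 2p$ (since $|u_n b_0| \geq \binom{n}{p}$ and $\binom{n}{p} \geq n^p/(2^p p!)$), so taking $K_3 := K_1^{1/(2^p p!)} > 1$ (shrunk slightly to absorb the finitely many small $n$) yields $|\chi_n|_{\mathrm{red}} \geq K_3^{n^p}$.

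The main technical point is verifying the cyclic-reducedness of $Z_n$, which is a modest extension of the first-letter claim in Lemma~\ref{lemma iv} using the same small-cancellation mechanism. The cyclic-reduction trick is what sidesteps the more delicate alternative of directly bounding cancellation between $\mu_n$ and $Z_n \mu_n^{-1}$, which would be problematic since both $|\mu_n|$ and $|Z_n|$ are doubly-exponential in $n$ with comparable rates.
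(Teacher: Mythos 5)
Your proof is correct, and the length bound and the derivation $\chi_n = \mu_n Z_n \mu_n^{-1}$ match the paper's exactly, but your lower-bound argument takes a genuinely different route.  The paper keeps the decomposition $\mu_n Z_n \mu_n^{-1}$ and argues directly: no cancellation occurs at the $\mu_n | Z_n$ seam (last letter of $\mu_n$ and first of $Z_n$ are both positive, by Lemmas~\ref{lemma iii} and~\ref{lemma iv}), and for every letter of $Z_n$ that cancels into $\mu_n^{-1}$ a letter of $\mu_n$ survives, so $|\chi_n|_{\mathrm{red}} \geq |Z_n|$ by a simple count.  Your route instead passes to the conjugacy invariant of cyclically reduced length, needing only that $Z_n$ is cyclically reduced, which you correctly observe follows by extending the first-letter argument of Lemma~\ref{lemma iv} to the last letter (the conjugation relators all end, as they begin, with a Rips word $\Xb$ or $\Yb$, whose terminal letter $x_2$ or $y_2$ is positive and which $C'(1/4)$ protects from erosion).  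Both arguments work.  Your approach buys elegance by invoking a standard invariant and dispensing with Lemma~\ref{lemma iii}'s positivity of $\mu_n$'s final letter, at the cost of proving a fact about $Z_n$ that the paper nowhere states.  Your closing worry that the ``direct cancellation'' route would be ``problematic'' because $|\mu_n|$ and $|Z_n|$ have comparable doubly-exponential rates is unfounded: the paper's count --- each cancelled $Z_n$-letter is matched by a surviving $\mu_n$-letter --- is insensitive to the relative sizes of the two words, so the paper's version is in fact the less delicate of the two.
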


\begin{figure}[htbp]
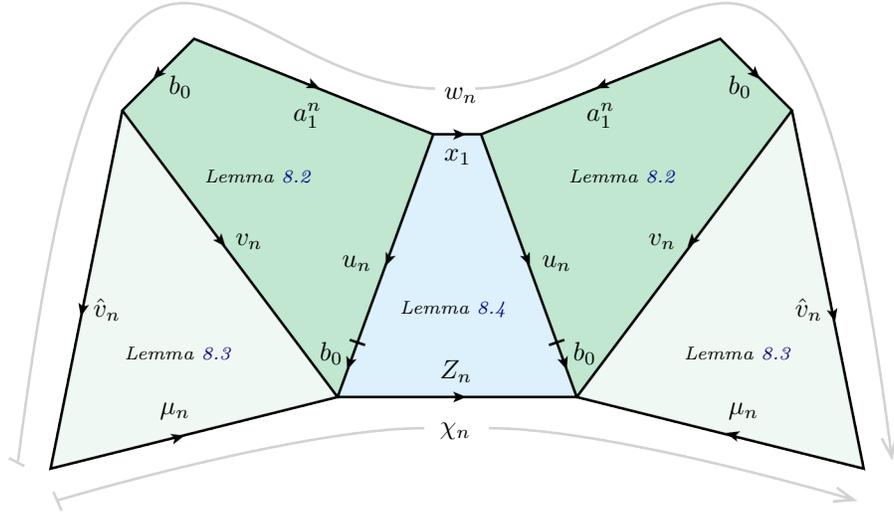

\centering
\begin{overpic} 
{Figures/figv}
\put(18, 47){\small{$b_0$}}
\put(81, 47){\small{$b_0$}}
\put(32, 44){\small{$a_1^n$}}
\put(65, 44){\small{$a_1^n$}}
\put(37.5, 27.5){\small{$u_n$}}
\put(60, 27.5){\small{$u_n$}}
\put(35, 17){\small{$b_0$}}
\put(63.5, 17){\small{$b_0$}}
\put(25.5, 30){\small{$v_n$}}
\put(72, 30){\small{$v_n$}}
\put(9.5, 22){\small{$\hat{v}_n$}}
\put(88.5, 22){\small{$\hat{v}_n$}}
\put(17, 11){\small{$\mu_n$}}
\put(81, 11){\small{$\mu_n$}}
\put(49, 46.5){\small{$w_n$}}
\put(49, 39.5){\small{$x_1$}}
\put(48.5, 15){\small{$Z_n$}}
\put(48.5, 8.7){\small{$\chi_n$}}
\put(22, 37){\parbox{30mm}{\tiny{\emph{Lemma~\ref{lemma ii}}}}}
\put(63, 37){\parbox{30mm}{\tiny{\emph{Lemma~\ref{lemma ii}}}}}
\put(13, 17){\parbox{30mm}{\tiny{\emph{Lemma~\ref{lemma iii}}}}}
\put(76, 17){\parbox{30mm}{\tiny{\emph{Lemma~\ref{lemma iii}}}}}
\put(44, 22){\parbox{30mm}{\tiny{\emph{Lemma~\ref{lemma iv}}}}}
 \end{overpic}
\vspace{4mm}
\caption{A diagram demonstrating that the word $w_n = \hat{v}^{-1}_nb_0^{-1} a_1^n x_1 a_1^{-n} b_0 \hat{v}_n$ on the generators of $G$ and word $\chi_n = \mu_n  Z_n \mu_n^{-1}$ on the generators of $H$ represent the same element of $G$. } \label{fig:v}
\end{figure}

\begin{proof}  
We have $|\hat{v}_n|  = |\hat{v}_n|_{a_1} + |\hat{v}_n|_{a_2}$, which equals $|v_n|_{a_1} + |v_n|_{a_2}  =n + \SB{n}{q}$ by \eqref{ii2} and \eqref{ii3}.  So  $|w_n|  = 2   \SB{n}{q}  + 2n + (2n + 3)$, which is at most   $K_2 n^q$  for a suitable constant $K_2 >0$.

Figure~\ref{fig:v} sets out why $\chi_n = \mu_n  Z_n \mu_n^{-1}$ equals $w_n$ in $G$.  Consider freely reducing $\chi_n$ by freely reducing    $\mu_n$, $Z_n$, and  $\mu_n^{-1}$, and then performing all available cancellations where they meet.  As the final letter of the freely reduced form of $\mu_n$ and the first letter of the freely reduced form of $Z_n$ are both positive (by Lemmas~\ref{lemma iii} and \ref{lemma iv}), there is no cancellation between $\mu_n$ and $Z_n$. There may be cancellation between $Z_n$ and  $\mu_n^{-1}$ (indeed, a priori, all of $Z_n$ could cancel into $\mu_n^{-1}$). But for every letter of $Z_n$ that cancels into $\mu_n^{-1}$, there is a letter of $\mu_n$ that survives in the freely reduced form of $\chi_n$.  Therefore the length of the freely reduced form of $\chi_n$ is at least  the length of the freely reduced form of $Z_n$.      So the existence of a suitable $K_3 >1$  follows from \eqref{iv2} and the fact that, by \eqref{ii5}, $|u_n b_0|$ is a least a constant times $n^p$.   
\end{proof}

\section{Tracks and diagram rigidity} \label{ch:tracks and diagram rigidity}

\subsection{Tracks in reduced van~Kampen diagrams}  \label{sec:tracks in reduced diagrams}

As explained in Section~\ref{sec:tracks}, a van~Kampen diagram is \emph{reduced} when it does not contain a pair of   back-to-back cancelling 2-cells.  If a van~Kampen diagram is reduced, then so are its subdiagrams.  Here, we will explore the restrictions this hypothesis leads to on the arrangement of tracks in van~Kampen diagrams over our presentation $\mathcal{P}$ for $G$ of Section~\ref{sec:the defn}.

\begin{definition} \label{def:region}
A \emph{region} in a van~Kampen diagram $\Delta$ is a closed  subset that is homeomorphic to a 2-disc.  We will consider regions that have boundary circuits comprised of portions of $\partial \Delta$, other paths in the 1-skeleton $\Delta^{(1)}$,  and  subtracks.  Figure~\ref{fig:bad examples} shows two examples.  Because tracks pass through the interiors of 2-cells,   regions need not be subdiagrams.   When we say a 1-cell or 2-cell of $\Delta$ is \emph{in} $R$, we mean that it is a subset of $R$.  
\end{definition}

Before we give our first lemma, here is an   overview of this section. Every 2-cell in  a reduced van~Kampen diagram $\Delta$ over $\mathcal{P}$ has some $x$- or $y$-letters (we call these ``noise'' letters) in its boundary word.  We find it helpful to think of this noise to be \emph{flowing though the diagram and expanding} in that,  for the 2-cells to fit together,  the adjacent cells must  have  more noise (in total), and those in the next layer further beyond those have yet more noise. This continues until the noise spills out into the boundary of the diagram.  

Tracks in $\Delta$ mediate this flow of noise and provide a structure via which we can put this intuition on a firm foundation.  All $x$-noise flows across $b$-tracks in the direction of their orientations, except that on crossing a $b_0$-track, the noise is converted  to $y$-noise.  And $y$-noise flows across $a$-tracks in the direction of their orientations.  So, when a region has boundary that prevents the escape of noise, that region  cannot occur in a reduced diagram.  Lemmas~\ref{lem: trapped x-noise},  \ref{lem: trapped y-noise} and \ref{lem: no junctions no loops} are results of this nature.   As for $t$-tracks, they have noise on both sides and reflect the HNN-structure $G = F \ast_t$. Lemma~\ref{lem: no t-loop} is a consequence.  It exemplifies the following idea,  which reappears in Lemma~\ref{lem: bigons} in a more complicated guise.  If a certain feature is present (in this case, a $t$-loop), then there is an innermost instance, but an innermost instance must include cancelling 2-cells, contrary to the hypothesis that the diagram is reduced.    
 
  Lemmas~\ref{lem:corridor overlap} and \ref{lem: no self-osculating t-corridors} dig further into the structure of $t$-corridors and provide groundwork for Lemmas~\ref{lem: vertical t-corridors} and  \ref{lem:t-corridors on boundary}, which detail circumstances in which tracks and corridors show diagrams  to flare out towards a portion of their boundary. 
  These results will let us  (in Lemma~\ref{lem: Layout lemma}) simplify diagrams that demonstrate distortion.

\begin{lemma} \label{lem: no t-loop}
Reduced van~Kampen diagrams $\Delta$  over $\mathcal{P}$ contain no $t$-loops.
\end{lemma}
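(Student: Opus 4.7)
The plan is to argue by contradiction, taking an innermost $t$-loop and showing it forces a pair of back-to-back cancelling $2$-cells, contrary to $\Delta$ being reduced. Every defining relator in $\mathcal{R}$ has exactly one $t$ and one $t^{-1}$ (each has the form $t^{-1}utv^{-1}$), so a $t$-track is the same thing as a $t$-corridor, and a $t$-loop is a $t$-annulus $\mathcal{A}$. Among all $t$-annuli in $\Delta$, pick one whose inner disc $D$ contains no other $t$-annulus; such an innermost choice exists since two $t$-annuli are either nested or disjoint.

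Next I would analyze the inner boundary cycle of $\mathcal{A}$. Each cell $C_i$ in $\mathcal{A}$ has boundary word of the form $t^{-1}u_it v_i^{-1}$ with $u_i,v_i\in\mathcal{U}$, and consecutive cells $C_i,C_{i+1}$ of $\mathcal{A}$ share a $t$-edge. Reading along the inner boundary of $\mathcal{A}$ in a consistent direction picks up, for each $C_i$, one of $u_i^{\pm 1}$ or $v_i^{\pm 1}$. So the inner boundary label is a cyclic word $w=s_1s_2\cdots s_m$ with each $s_i\in\mathcal{U}^{\pm 1}$. Since $D$ is a genuine subdiagram of $\Delta$, the word $w$ represents the identity in $G$, and hence (via the embedding $F\hookrightarrow G$ of Proposition~\ref{prop:HNNWise}) in $F$.

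By Lemma~\ref{lem: C(3)}\eqref{lem part:vertex groups free U}, $\mathcal{U}$ freely generates a free subgroup of $F$. Therefore the cyclic word $w$ on $\mathcal{U}^{\pm 1}$ must fail to be cyclically reduced, so there is an index $i$ (mod $m$) with $s_i=s_{i+1}^{-1}$ in $\mathcal{U}^{\pm 1}$. I would then verify that this equality, together with the shared $t$-edge between $C_i$ and $C_{i+1}$, forces $C_i$ and $C_{i+1}$ to be a back-to-back cancelling pair: the shared $t$-edge pins down the starting vertex on the boundary cycles of both cells, and the relation $s_i=s_{i+1}^{-1}$ then says that the boundary labels of $C_i$ and $C_{i+1}$, read starting from that $t$-edge in the two opposite directions around the cycle, agree letter by letter. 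This contradicts $\Delta$ being reduced, completing the proof.

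The only genuine subtlety—which I expect to be the main obstacle to write carefully—is the last bookkeeping step: translating the $\mathcal{U}^{\pm 1}$-level cancellation $s_i s_{i+1}=1$ into a genuine cell-level cancellation. One needs to be careful about orientations, namely that consecutive cells in the annulus are oriented compatibly with the shared $t$-edge so that the inverse relation on the $\mathcal{U}$-side really does reverse the whole boundary word. Once that is spelled out, the argument is essentially the one already invoked in the proof of Lemma~\ref{no junctions}\eqref{no red}.
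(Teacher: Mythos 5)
Your proof is correct and takes essentially the same route as the paper's: pass to an innermost $t$-annulus, observe the inner boundary word lies in the free subgroup generated by $\mathcal{U}$ (the paper packages this as the vertex-group side of the HNN structure of Proposition~\ref{prop:HNNWise}, you invoke Lemma~\ref{lem: C(3)}\eqref{lem part:vertex groups free U} directly, which amounts to the same thing), force an adjacent inverse pair $s_i s_{i+1}$, and conclude the corresponding cells cancel. The orientation bookkeeping you flag at the end is indeed the only delicate point, and it resolves exactly as you expect because each element of $\mathcal{U}$ appears in a unique defining relator.
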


\begin{proof}

Were there a $t$-loop in $\Delta$, there would be one with no  $t$-loop in its interior.  The 2-cells it traverses would form an annular corridor.  Around its inner boundary we read a word which, viewed as a word on the generators of the appropriate vertex group of the HNN-structure $G = F \ast_t$ of Proposition~\ref{prop:HNNWise}, would freely equal  the empty word.  So some adjacent pair of those generators would cancel.  As those generators uniquely determine the 2-cells along whose sides they are read, a pair of 2-cells in the annulus would cancel, contrary to the diagram being reduced.  
\end{proof}

Our next lemma sets out circumstances in which $x$-edges   being absent from the boundary of a region $R$ forces there to be no $x$-edge  anywhere in $R$.  
The lemma further explains that  regions that do not contain a $y$-edge and are bounded only by $a$-subtracks, inward-oriented $b$-subtracks, and  $t$-subtracks take a highly constrained form, examples of which are shown in Figure~\ref{fig:bad examples}.

\begin{lemma} \label{lem: trapped x-noise} \textup{\textbf{(Trapped $x$-noise)}} Suppose $R$ is  a region in a reduced van~Kampen diagram $\Delta$  over $\mathcal{P}$ such that  $R$ contains no $y$-edges and is bordered by  $a$-subtracks, inward-oriented $b$-subtracks, $t$-subtracks, and paths in $\Delta^{(1)}$.   

\begin{enumerate}
	\item \label{lem part: no x-edges}  If there is an $x$-edge in $R$, then there is an $x$-edge in $\partial R$.

	\item \label{lem part: no x-edges cor} 
	If there is no $x$-edge  in $\partial R$ (in particular, if 
	$\partial R$ is made up of only  $a$-subtracks, inward-oriented $b$-subtracks, and $t$-subtracks), then 	 
\begin{enumerate}     
	\item \label{lem part part: ts} Each $t$-subtrack in $\partial R$ crosses only a single edge; indeed, it crosses between an $r_{4,1}$-cell and an $r_{4,2}$-cell as in the example in Figure~\ref{fig:bad examples} (right) and must transition to an outward-oriented $a_1$-subtrack  in  the $r_{4,1}$-cell and to an outward-oriented $a_2$-subtrack in  the $r_{4,2}$-cell.   
	\item \label{lem part part: bs} Each $b$-subtrack in $\partial R$ only crosses a single edge.  It transitions to an outward-oriented $a_1$-subtrack at one end and to an outward-oriented $a_2$-subtrack at the other.   
	\item  \label{lem part part: at least one} There is at least one $b$- or $t$-subtrack in $\partial R$. 
	\item \label{lem part part: as} The $a$-subtracks in $\partial R$ are all outward oriented. Together, they cross at least one $a_1$-edge  and at least one $a_2$-edge \end{enumerate} 
 \end{enumerate}
\end{lemma}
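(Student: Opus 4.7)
My plan is to prove both statements by a careful analysis of which 2-cells can appear in $R$ and how subtracks on $\partial R$ cut through them. I begin with the observation that $\partial R$ has no $y$-edges either: subtracks on $\partial R$ cross only $a$-, $b$-, or $t$-edges by definition, and any $y$-edge in the 1-skeleton portions of $\partial R$ would lie in $R$, contradicting the hypothesis. So throughout the argument we can treat $R$ as entirely free of $y$-edges.

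For part~\eqref{lem part: no x-edges}, I would assume for a contradiction that $R$ contains an $x$-edge but $\partial R$ contains none. The key intuition is that $x$-noise in $R$ can propagate across inward-oriented $b$-subtracks (converting to $y$-noise only at a $b_0$-track, which would produce $y$-edges forbidden in $R$), but has no way to leave $R$: it cannot cross the $a$-subtracks (since $r_{4,\ast,\ast}$- and $r_{4,\ast}$-cells, which would be required for an $x/y$ or $x/t$ interaction at an $a$-edge, contain $y$-edges and so cannot sit in $R$), and it cannot exit through the 1-skeleton parts of $\partial R$ without appearing as an $x$-edge on $\partial R$. I would formalize this by tracking successive $X_\ast$ subwords across adjacent cells of a maximal subdiagram of $R$ containing no $y$-edges on its boundary, using Lemma~\ref{lem: cancellation from C'(1/4)} (and the fact that $\Delta$ is reduced) to preclude massive internal cancellation of $x$-noise among $X_\ast$ subwords in $R$.

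For part~\eqref{lem part: no x-edges cor}, once~\eqref{lem part: no x-edges} is in hand, $R$ contains neither $x$- nor $y$-edges. Since every defining relator in $\mathcal{P}$ has an $X_\ast$ or $Y_\ast$ noise subword, every 2-cell of $\Delta$ has $x$- or $y$-edges on its boundary. Hence no 2-cell lies entirely in $R$: every 2-cell meeting $R$ must be cut by $\partial R$, and the part in $R$ must be a sub-disc of the cell containing no $x$- or $y$-edges. For \eqref{lem part part: ts}, a $t$-subtrack on $\partial R$ through a cell $C$ divides $\partial C$ into two arcs between the two $t$-edges, and the arc on the $R$-side must be noise-free; an inspection of $\mathcal R$ shows this is only possible when $C$ is an $r_{4,1}$- or $r_{4,2}$-cell and the $t$-subtrack terminates at $C$'s dual vertex, transitioning there to an $a_i$-subtrack that exits through one of the $a_i$-edges of $C$, with the small triangular corner bounded by the top $t$-edge, an $a_i$-edge, and the two subtrack-portions sitting in $R$ (this is exactly the configuration shown in Figure~\ref{fig:bad examples}(right)); the $a_i$-subtrack is outward-oriented precisely so that this corner lies in $R$. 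A symmetric inspection gives \eqref{lem part part: bs} for $b$-subtracks, by checking which relators admit a single $b$-edge being cut so as to produce a noise-free corner bounded by outward-oriented $a_1$- and $a_2$-subtracks.

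For \eqref{lem part part: at least one}, note that if $\partial R$ contained only $a$-subtracks and paths in $\Delta^{(1)}$, then every cell meeting $R$ would have to be cut only by $a$-subtracks; but since each relator has $x$- or $y$-edges on both arcs cut off by any single $a$-edge, no nontrivial noise-free piece of any cell can sit in $R$, so $R$ would have no 2-cells and could not be a 2-disc. Finally, \eqref{lem part part: as} follows because each cell-cutting configuration from \eqref{lem part part: ts} and \eqref{lem part part: bs} requires outward-oriented $a_1$- and $a_2$-subtracks, so inspecting any single $b$- or $t$-subtrack in $\partial R$ produces the required $a_1$- and $a_2$-crossings. The main obstacle I anticipate is the rigorous execution of part~\eqref{lem part: no x-edges}: the intuitive "trapped $x$-noise" argument must be turned into a concrete combinatorial statement ruling out all cancellation patterns of $X_\ast$ subwords internal to $R$, which is where the reducedness of $\Delta$ together with the $C'(1/4)$-condition on $\mathcal{X} \cup \mathcal{Y}$ must do the real work.
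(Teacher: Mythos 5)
Your high-level strategy for part~(1) is the right one — reduce the question to a cancellation statement about $X_\ast$ subwords, invoke Lemma~\ref{lem: cancellation from C'(1/4)}, and use reducedness to derive a contradiction. However, you do not say how to handle the 2-cells that may lie in $R$. The paper first shows that the two $t$-edges of any 2-cell in $R$ give a $t$-subtrack subdividing $R$, so that it suffices to treat the case where $R$ contains no 2-cell, whereupon the 1-cells in $R$ form a forest $\mathcal{F}$ and the word around a component of $\mathcal{F}$ freely reduces to the empty word; this is what makes Lemma~\ref{lem: cancellation from C'(1/4)} applicable. Without a reduction of this kind, your ``tracking $X_\ast$ subwords across adjacent cells'' does not have a clean combinatorial target to which that lemma can be applied.

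For part~(2) there are two concrete gaps. First, for~(2a), your claim that inspecting the relators shows the only possibility is an $r_{4,1}$- or $r_{4,2}$-cell is too fast. A $t$-subtrack in $\partial R$ could also enter through the top $t$-edge of an $r_{3,i}$-cell, and the corner on the $R$-side of that cell \emph{is} noise-free; this configuration is not excluded by noise considerations alone. The paper rules it out because the transition there would be to a $b$-subtrack oriented \emph{out} of $R$, contradicting the hypothesis that all $b$-subtracks in $\partial R$ are inward-oriented. Second, and more seriously, for~(2d) you argue only that the $a$-subtracks adjacent to $b$- or $t$-subtracks are outward-oriented. You omit the case of an $a_1$-subtrack transitioning directly to an $a_2$-subtrack inside an $r_{1,q-1}$-cell: an inward-oriented $a_1$-subtrack $\alpha$ could a priori run between two such $r_{1,q-1}$-cells $c$ and $c'$ with outward-oriented $a_2$-subtracks at either end. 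Excluding this requires a separate argument — the paper notes that any 2-cell $\alpha$ passed through between $c$ and $c'$ would put an $x$- or $y$-edge in $R$, so $c$ and $c'$ are adjacent and hence form a cancelling pair. Finally, your argument for~(2c) (``no 2-cells, so $R$ cannot be a 2-disc'') does not hold as stated: a region homeomorphic to a disc is permitted to contain no 2-cells. The paper's approach there is direct: an innermost outward-oriented $a$-loop has an inner boundary labelled by a word in $b_0, \dots, b_p$ that freely reduces to the empty word, forcing a cancelling pair of 2-cells.
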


\begin{figure}[htbp]
\centering
\begin{overpic}  [scale=0.95]  
{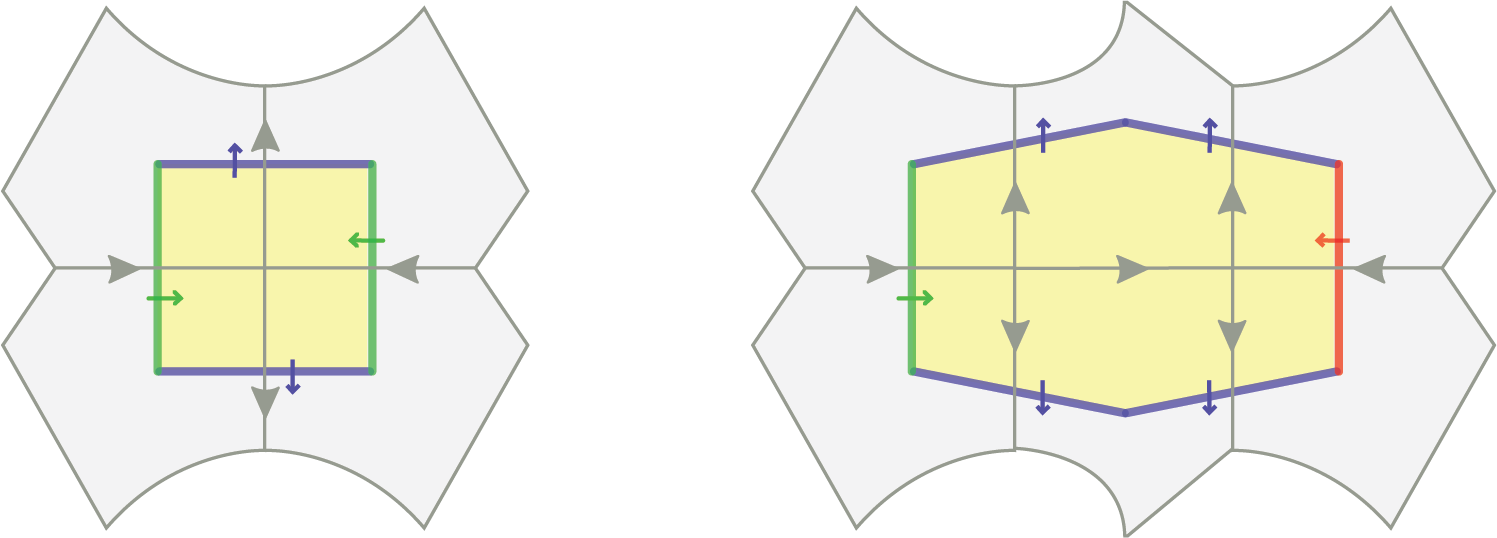}
\put(7, 15){\small{$b_1$}}
\put(26.5, 19.5){\small{$b_2$}}
\put(19, 26){\small{$a_1$}}
\put(14, 8){\small{$a_2$}}
\put(57, 15){\small{$b_0$}}
\put(69, 12){\small{$a_2$}}
\put(69, 23){\small{$a_1$}}
\put(73, 20){\small{$b_{q-1}$}}
\put(79, 12){\small{$a_2$}}
\put(79, 23){\small{$a_1$}}
\put(91, 19.5){\small{$t$}}
\put(7, 28){\gray{\tiny{$r_{1,1}$}}}
\put(7, 8){\gray{\tiny{$r_{2,1}$}}}
\put(25, 28){\gray{\tiny{$r_{1,2}$}}}
\put(25, 8){\gray{\tiny{$r_{2,2}$}}}
\put(57, 28){\gray{\tiny{$r_{1,0}$}}}
\put(57, 8){\gray{\tiny{$r_{2,0}$}}}
\put(73, 30){\gray{\tiny{$r_{1,q-1}$}}}
\put(73, 5){\gray{\tiny{$r_{2,q-1}$}}}
\put(90, 28){\gray{\tiny{$r_{4,1}$}}}
\put(90, 8){\gray{\tiny{$r_{4,2}$}}}

\end{overpic}
\caption{Examples of regions satisfying the conditions of Lemma~\ref{lem: trapped x-noise}\eqref{lem part: no x-edges cor}}
\label{fig:bad examples}
\end{figure}

\begin{proof}  For \eqref{lem part: no x-edges},  first suppose that there is a 2-cell $c$ in $R$. By Lemma~\ref{lem: no t-loop}, there  is no $t$-loop in $\Delta$, and so the two $t$-edges in $\partial c$ are part of a $t$-subtrack that subdivides $R$ into two regions $R_1$ and $R_2$. 
If \eqref{lem part: no x-edges}  holds true for $R_1$ and $R_2$, then it holds true for $R$.  Thus, via  repeated such subdivisions, we reduce to the case where $R$ contains no 2-cell.    In that event, the subgraph $\mathcal{F}$ of $\Delta^{(1)}$ formed by the 1-cells in $R$ is a forest: were it to  contain an embedded circle, there would be a 2-cell within that circle and so in $R$.  
(In the examples of Figure~\ref{fig:bad examples}, $\mathcal{F}$ is a single vertex in the left diagram and it is the single central edge labelled $b_{q-1}$ in the right diagram.)

Assume there is  no $x$-edge in $\partial R$.  
Suppose, for a contradiction, that there is an $x$-edge in $R$, and so in some connected component $\mathcal{F}_0$ of $\mathcal{F}$.  Let $v$ be the word one reads around $\mathcal{F}_0$.    Let $\overline{v}$ be $v$ with all letters other than $x_1^{\pm 1}$ and $x_2^{\pm 1}$ deleted.

By hypothesis, there are no $y$-edges in $R$.  So $v$ is a word on $a_1, a_2, b_0, \ldots,$ $b_p, t, x_1, x_2$.  Any $x_1$ or $x_2$ in $v$ is the label of an edge $e_x$  of a 2-cell and so is either part of a  Rips subword from $\mathcal{X}$ in a defining relation, or is the lone $x_j$ at the top  (in the sense of  Figure~\ref{fig:relations}) of an $r_{3,i,j}$-cell $c$ 
(for some $i \in \set{0, \ldots, p}$).  In the latter event, no part of the $b$-track through $c$ 
can be part of $\partial R$ because then there would be an \emph{outward}-oriented $b$-subtrack, contrary to hypothesis. 
It follows that $\partial R$ contains the $t$-track of $c$ (as $c$ is not in $R$)  and that $\mathcal F_0$ contains a portion of $\partial c$ containing $e_x$  so that $x_j$ is 
part of a subword $\Xb^{-1} b_i^{-1} x_j b_i  \Xb^{-1}$ of $v^{\pm 1}$.  
So, after replacing $\overline{v}$ with a cyclic conjugate if necessary, $\overline{v}$ is a word on the $\Xb$, $\Xb^{-1}  x_1  \Xb$, and  $\Xb^{-1}  x_2  \Xb$.

Now, $v$  freely reduces to the empty word since it is read around the tree $\mathcal{F}_0$.  Therefore $\overline{v}$ also  freely reduces to the empty word. Lemma~\ref{lem: cancellation from C'(1/4)} applies to $\overline{v}$. Folding up an edge-loop labelled by $\overline{v}$ to get the  tree $\mathcal{F}_0$ equates to freely reducing $\overline{v}$.  So the lemma tells us that  parts of the   boundary cycles of some pair of 2-cells is a common path in $\mathcal{F}_0$ labelled by a subword  of some $X_{\ast}$  of at least a quarter-length.  These 2-cells are a back-to-back cancelling  pair, contrary to the diagram being reduced. So we have the contradiction we seek.

To prove \eqref{lem part: no x-edges cor}, we assume there are no $x$-edges in $\partial R$, and therefore none  in $R$ by  \eqref{lem part: no x-edges}.

For \eqref{lem part part: ts}, suppose $\tau$ is a  $t$-subtrack in $\partial R$.  It cannot intersect a $t$-edge that is part of a subword $\Yb t \Yb$  or $\Yb t^{-1} \Yb t \Yb$ in the boundary of a 2-cell, for then an adjacent $y$-edge would be in $R$, contrary to hypothesis.  It also cannot intersect a $t$-edge that is part of a subword $\Xb t \Xb$  or $\Xb t^{-1} \Xb t \Xb$ in the boundary of a 2-cell, for then an adjacent $x$-edge would be in $R$.  
The remaining possibility is that it intersects a $t$-edge  at the top of an $r_{3,i}$- or $r_{4,i}$-cell.  It cannot intersect the other $t$-edge in that cell, so $\partial R$ has to switch from a $t$-subtrack to, respectively, a $b_i$- or $a_i$- subtrack within that  cell. The former case cannot occur, as it would lead to an outward oriented $b$-track.  In the latter case, the
2-cell on the other side of that top $t$-edge must also be an $r_{4,i}$-cell.    
As the diagram is reduced, we deduce that $\tau$ crosses from an $r_{4,1}$-cell to an $r_{4,2}$-cell across their common `top' $t$-edge.  Moreover, to avoid any  $y$-edge being in $R$, $\partial R$ must exit the $r_{4,1}$-cell across an $a_1$-edge and exit the $r_{4,2}$-cell across $a_2$-edge, and these $a_1$- and $a_2$-edges must have a common end-vertex in $R$ and must both be oriented out of $R$.

For \eqref{lem part part: bs}, suppose $\beta$ is a $b$-subtrack in $\partial R$. It is impossible that $\beta$ enters and then exits a 2-cell: by hypothesis $\beta$ is inward-oriented and so 
$R$ would contain $x$- or $y$-edges from the bottom of the 2-cell (in the sense of Figure~\ref{fig:relations}).
So $\beta$ crosses only a single $b$-edge, and when doing so it travels from one 2-cell to another.  (It cannot start and end in the same 2-cell, as then two   $b$-edges in the boundary of one 2-cell would be identified in $\Delta$ and that would imply that some subword of the boundary word represents $1$ in $G$ in such a way as to contradict the HNN-structure established in Proposition~\ref{prop:HNNWise}.)   From our analysis of $t$-subtracks, we know that $\beta$ cannot transition in $\partial R$ to a $t$-subtrack, and so it  must transition to $a$-subtracks at each end.  Indeed, it must transition to outward-oriented $a$-subtracks, since the $x$- or $y$-edges of a 2-cell in which a  transition to an inward-oriented $a$-subtrack occurred would be in $R$.  And $\beta$ must connect an $a_1$-subtrack at one end and to an $a_2$-subtrack at the other, because otherwise the two 2-cells it passes through would be a cancelling pair, contrary to $\Delta$ being reduced.

For \eqref{lem part part: at least one}, all that remains is to verify that $\partial R$ is not an $a$-loop.  
It cannot be an  inward-oriented $a$-loop, for then there would be $x$- or $y$-letters in $R$.     
 Consider an inner-most outward-oriented $a$-loop $\alpha$.  The orientations on junctions in $\mathcal G_a$ force $\alpha$ to be an $a_1$- or $a_2$-loop, and the associated $a_1$- or $a_2$-annulus has inner boundary
 labelled by a non-empty word $w$ on $b_0, \ldots, b_p$, which freely reduces to the empty word.  The 2-cells in the annulus are $r_{1,i}$-cells ($i=1, \ldots, p$) in the $a_1$ case and are $r_{2,i}$-cells ($i=1, \ldots, p$) in the $a_2$ case.  In either case cancellation of an inverse-pair of  letters in $w$ implies    cancellation of a pair of 2-cells in $\Delta$, contrary to the diagram being reduced. 

We conclude that $\partial R$ has at least one $a_1$-subtrack and at least one $a_2$-subtrack, and any $a$-subtrack transitioning to a $b$- or $t$-track is outward oriented.  Were there an inward-oriented $a$-subtrack, it would have to be an $a_1$-subtrack $\alpha$ transitioning at either end to an outward oriented $a_2$-subtrack in distinct $r_{1, q-1}$-cells $c$ and $c'$.  Any 2-cell that $\alpha$ passed through between $c$ and $c'$ would lead to an $x$-or $y$-edge in $R$, so $c$ and $c'$ must be adjacent, which would be a contradiction because they are oppositely oriented.  Thus \eqref{lem part part: as} follows. 
   \end{proof}

Here is the corresponding lemma for $y$-letters. It forgoes  hypotheses excluding any particular type of edges from $R$, and it requires the $a$-subtracks,  instead of $b$-subtracks, in $\partial R$  to be inward-oriented.

\begin{lemma} \label{lem: trapped y-noise} \textbf{(Trapped $y$-noise)} Suppose $R$ is a region  in a reduced van~Kampen diagram $\Delta$  over $\mathcal{P}$, bordered by    $b$-subtracks, $t$-subtracks, inward-oriented $a$-subtracks, and   paths in $\Delta^{(1)}$. 

\begin{enumerate}
	\item \label{lem part: no y-edges}  If there is a $y$-edge in $R$, then there is a  $y$-edge in $\partial R$.   

	\item  \label{lem part: no y-edges cor} If the  $b$-subtracks in  $\partial R$ are inward oriented, then $\partial R$ must include at least one $x$-edge or $y$-edge.  In particular, in a reduced  diagram there is no region $R$ such that $\partial R$ is comprised of inward-oriented $a$-subtracks, inward-oriented $b$-subtracks, and $t$-subtracks.  (Figure~\ref{fig:forbidden examples} shows some examples of regions this precludes.)
\end{enumerate}
\end{lemma}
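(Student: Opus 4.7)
The plan is to prove (1) by adapting the forest-and-cancellation argument of Lemma~\ref{lem: trapped x-noise}\eqref{lem part: no x-edges} to $y$-letters, and then to derive (2) quickly from (1) combined with Lemma~\ref{lem: trapped x-noise}.

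For (1), I suppose for contradiction that $R$ contains a $y$-edge while $\partial R$ contains none. Whenever $R$ has a 2-cell $c$, its two $t$-edges lie on a $t$-track which, by Lemma~\ref{lem: no t-loop}, is not a loop; the $t$-subtrack running between its first exits from $R$ on either side of $c$ therefore cuts $R$ into two smaller regions that inherit the hypotheses (the new boundary arc carries no $y$-edges) and one of which still contains a $y$-edge. Iterating, I reduce to the case where $R$ has no 2-cells, so the 1-cells of $R$ form a forest $\mathcal{F}$ and a $y$-edge lies in some connected component $\mathcal{F}_0$. Let $v$ be the word read around $\mathcal{F}_0$ and let $\overline v$ denote its subword on $y_1^{\pm 1}, y_2^{\pm 1}$. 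Since $v$ freely reduces to the empty word, so does $\overline v$ in $F(y_1,y_2)$.

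Next I classify how a $y$-edge can appear on the boundary of a 2-cell: it is either part of a Rips subword $\Yb$ or a lone $y_j$ at the top of an $r_{4,i,j}$-cell. In the latter case the $a_i$-track through the cell cannot be part of $\partial R$, for the cell's layout puts the lone $y_j$ on the side of that track opposite from the $\Yb$ Rips words, and the inward orientation of $a$-subtracks in $\partial R$ would then place $y_j$ outside $R$, contradicting $y_j \in \mathcal{F}_0 \subseteq R$. Hence every lone $y_j$ in $\mathcal{F}_0$ is flanked in $v^{\pm 1}$ by Rips $\Yb$-subwords contributed by neighbouring 2-cells, just as in the $x$-analog. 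After cyclic conjugation, $\overline v$ takes exactly the form treated by a $y$-version of Lemma~\ref{lem: cancellation from C'(1/4)}; that lemma and its proof transfer verbatim, as their only small-cancellation input is the $C'(1/4)$-condition on $\mathcal{X}\cup\mathcal{Y}$ and the bound $|\Yb|\geq 100$. The resulting cancellation of at least a quarter of some $|\Yb|$ between adjacent Rips words in $\overline v$ exhibits two 2-cells sharing a long subpath of their boundaries; $C'(1/4)$ pins these down as a back-to-back cancelling pair, contradicting the reducedness of $\Delta$.

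For (2), I assume additionally that the $b$-subtracks in $\partial R$ are inward-oriented and, for contradiction, that $\partial R$ contains no $x$- or $y$-edges. Part (1) gives $R$ no $y$-edges, so the standing hypotheses of Lemma~\ref{lem: trapped x-noise} are met; since $\partial R$ has no $x$-edges, \eqref{lem part: no x-edges cor}, and in particular \eqref{lem part part: as}, applies and forces the $a$-subtracks in $\partial R$ to be outward-oriented, contradicting the inward orientation assumed.

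The main obstacle is the classification step for (1): verifying rigorously that the inward orientation of $a$-subtracks, combined with the local structure of the $r_{4,i,j}$-cell and the fact that $\mathcal{F}_0 \subseteq R$, really does ensure that each lone $y_j$ in $\mathcal{F}_0$ appears in $\overline v$ flanked by Rips $\Yb$, so that the $y$-analog of Lemma~\ref{lem: cancellation from C'(1/4)} applies.
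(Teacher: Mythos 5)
Your proof is correct and follows essentially the same approach as the paper: for (1) you reduce to the no-2-cell case using non-looping $t$-subtracks, classify $y$-edges in a component of the 1-cell forest as lying in Rips $\Yb$-subwords or as the lone $y_j$ of an $r_{4,i,j}$-cell, use the inward orientation of $a$-subtracks (equivalently, that $\partial R$ cannot cross the cell via an outward-oriented $a$-subtrack) to force the whole subword $\Yb^{-1}a_i^{-1}y_ja_i\Yb^{-1}$ into $\mathcal F_0$, and then apply the $y$-version of Lemma~\ref{lem: cancellation from C'(1/4)} to obtain a cancelling pair of 2-cells; and (2) is the same quick combination of (1) with Lemma~\ref{lem: trapped x-noise}\eqref{lem part: no x-edges cor}\eqref{lem part part: as}. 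The "main obstacle" you flag at the end is handled exactly as you suggest, and your orientation argument at the $r_{4,i,j}$-cell is the same observation the paper makes (stated there as: the $a$-subtrack across that cell would be outward-oriented).
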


\begin{figure}[htbp]
\centering
\begin{overpic}[width=300pt] 
{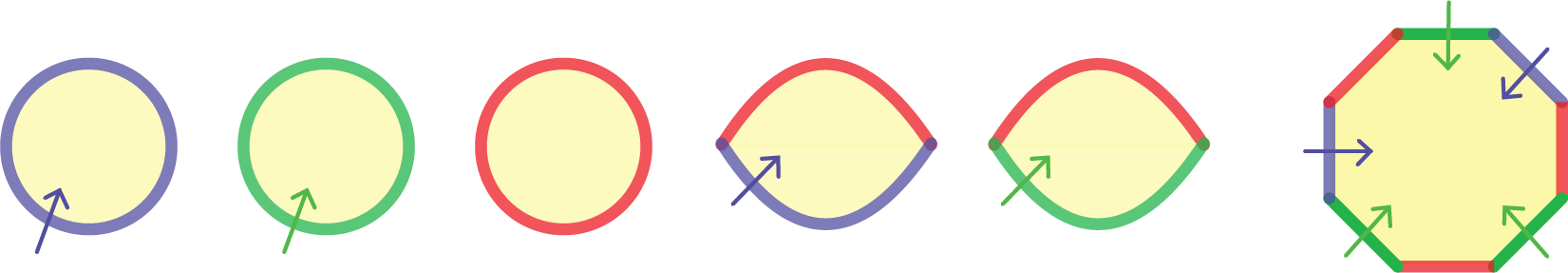}
\put(-0.4, 2.1){\small{$a$}}
\put(15.5, 1){\small{$b$}}
\put(32, 0.5){\small{$t$}}
\put(47, 1){\small{$a$}}
\put(65, 1){\small{$b$}}
\put(64.5, 12){\small{$t$}}
\put(47, 12){\small{$t$}}
\put(84, 1){\small{$b$}}
\put(81, 7){\small{$a$}}
\put(85, 13.4){\small{$t$}}
\put(99.5, 1){\small{$b$}}
\put(101, 7){\small{$t$}}
\put(99, 14){\small{$a$}}
\put(92, -2.5){\small{$t$}}
\put(93.3, 16.5){\small{$b$}}

\end{overpic}
\caption{Examples of regions precluded by Lemma~\ref{lem: trapped y-noise}\eqref{lem part: no y-edges cor}}
\label{fig:forbidden examples}
\end{figure}

\begin{proof} For \eqref{lem part: no y-edges}, we follow the same approach as our proof of Lemma~\ref{lem: trapped x-noise}\eqref{lem part: no x-edges}.  As there, it suffices to prove the result in the case where there is no $2$-cell in $R$.  In that case, if there is a $y$-edge in $R$, then it appears in some connected component $\mathcal{F}_0$ of the forest of $1$-cells in $R$, and around $\mathcal{F}_0$ we read a word $v$ which freely reduces to the empty word.      This $v$ is a word on $a_1, a_2, b_0, \ldots, b_p, x_1, x_2, t$, and the Rips words $\mathcal{Y}$ (arising in the $\Yb t \Yb$ or $\Yb t^{-1} \Yb t \Yb$ per our presentation $\mathcal{P}$), and the $\Yb^{-1} a_i^{-1} y_j a_i  \Yb^{-1}$ around $r_{4,i,j}$-cells---the key point here is that $y_1$ and $y_2$ do not appear on their own in this list and this is because if the $y_j$ of $\Yb^{-1} a_i^{-1} y_j a_i  \Yb^{-1}$ is in $v^{\pm 1}$, then the whole of that subword is in $v^{\pm 1}$ as an $a$-subtrack across that $r_{4,i,j}$-cell would be outwards-oriented, contrary to hypothesis.    Let $\overline{v}$ be $v$ with all letters other than $y_1^{\pm 1}$ and $y_2^{\pm 1}$ deleted. Then $\overline{v}$ is a word on the $\Yb$,  $\Yb^{-1}  y_1  \Yb$, and  $\Yb^{-1}  y_2  \Yb$ which  freely reduces to the empty word.   Lemma~\ref{lem: cancellation from C'(1/4)}, translated to $y$-letters instead of $x$-letters, applies to $\overline{v}$, so as to imply that a pair of 2-cells cancel, contrary to the diagram being reduced.

For \eqref{lem part: no y-edges cor}, assume, for a contradiction, that there is no $x$- or $y$-edge in $\partial R$. Then, by \eqref{lem part: no y-edges}, there is no $y$-edge in $R$.  This, together with the hypothesis that the $b$-subtracks in  $\partial R$ are inward oriented  and the assumption that $\partial R$ has no $x$-edges, means Lemma~\ref{lem: trapped x-noise}\eqref{lem part: no x-edges cor} applies, and part \eqref{lem part part: as} tells us that $\partial R$ has non-trivial outward-oriented $a$-tracks, contradicting the hypothesis that $a$-subtracks in $\partial R$ are inward-oriented.    
\end{proof}

\begin{remark}
The analogue of Lemma~\ref{lem: trapped y-noise}\eqref{lem part: no y-edges} for $x$-edges fails.  For an example, take the van~Kampen diagram that demonstrates that $b_0^{-1}b_1^{-1} t b_1 b_0$ equals a word on $y_1$, $y_2$, and $t$, which is comprised of one $r_{3,1}$-cell and a $b_0$-corridor made up of $r_{3,0,1}$- and $r_{3,0,2}$-cells and   one $r_{3,0}$-cell.  

Lemma~\ref{lem: trapped y-noise}\eqref{lem part: no y-edges cor}  fails in the absence of  the hypothesis that the $b$-tracks be inward-orientated. A ``button'' (Definition~\ref{def: button}) provides an example.  
\end{remark}

Lemma~\ref{lem: trapped y-noise}\eqref{lem part: no y-edges cor} rules out $a$- and $b$-loops that are inward oriented.   At this stage we can also rule out outward oriented $a$-and $b$-loops in some situations: 

\begin{lemma} \label{lem: no junctions no loops}
Let  $\Delta$ be a reduced van~Kampen diagram over $\mathcal{P}$.
\begin{enumerate}

\item 
\label{lem part: r4 diagrams}
If $\Delta$ has only $r_{4, \ast}$-cells, then $\Delta$ has no $a$-loops.

\item \label{lem part: r23 diagrams}
If $\Delta$ has only $r_{2, \ast}$- and $r_{3, \ast}$-cells, then 
 $\Delta$ has no $b$-loops

\end{enumerate}
\end{lemma}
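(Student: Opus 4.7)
The plan is to handle both parts uniformly. Assume the relevant loop exists, pass to an innermost instance bounding a disc region $R$, show that $R$ can contain no 2-cells, deduce that the inner-boundary label of the associated annulus freely reduces to the empty word, and extract from this a back-to-back cancelling pair of 2-cells contradicting that $\Delta$ is reduced.

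For part~\eqref{lem part: r4 diagrams}, every $r_{4,*}$-cell contains exactly two $a$-edges and both bear a common label $a_i$, so $\mathcal{G}_a$ has no junctions in $\Delta$ and each $a$-track is an $a$-corridor of constant label. Suppose for contradiction that an $a$-loop exists, and take an innermost such loop, bounding a region $R$. The inner boundary $\partial R$ of the associated $a_i$-annulus contains no $a$-edges, so any $a$-corridor meeting the interior of $R$ must either close up (impossible, by the innermost assumption) or terminate on $\partial R$ (impossible, since $\partial R$ has no $a$-edges). Hence $R$ contains no $a$-edges at all; and since every $r_{4,*}$-cell contains an $a$-edge, $R$ contains no 2-cells. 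Then $R$ deformation-retracts onto its 1-skeleton, so the cycle $\partial R$ is null-homotopic in a graph and its label $w$ freely reduces to the empty word. An adjacent inverse pair of letters in $w$ cannot lie inside a single cell's contribution, since relators in $\mathcal R$ are cyclically reduced; it must straddle the interface between two consecutive cells $C_j, C_{j+1}$ of the $a_i$-annulus. Combined with the shared $a_i$-edge between them, this exhibits $C_j$ and $C_{j+1}$ as back-to-back cancelling, contradicting that $\Delta$ is reduced.

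For part~\eqref{lem part: r23 diagrams} the argument is structurally identical with $a$ replaced by $b$: every $r_{2,*}$- or $r_{3,*}$-cell has exactly two $b$-edges of a common label $b_i$, so $b$-tracks are $b$-corridors; an innermost $b$-loop yields a region $R$ whose boundary carries no $b$-edges, so $R$ has no $b$-edges and hence no 2-cells; the inner boundary label $w$ therefore freely reduces, and the same reasoning extracts a back-to-back cancelling pair from the $b$-annulus.

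The main subtlety to handle carefully is the step from ``$R$ contains no 2-cells'' to producing a pair of back-to-back cancelling cells. Two sub-steps need to be confirmed: first, that $R$ having no 2-cells really does force $\partial R$'s label $w$ to freely reduce to the empty word (this uses that $R$ then deformation-retracts onto its planar 1-skeleton, where null-homotopic loops correspond exactly to freely reducible edge words); and second, that the inverse pair produced by the free reduction of $w$ straddles the interface of two consecutive cells of the annulus rather than lying within a single cell's contribution (this uses that relators in $\mathcal R$ are cyclically reduced, so no intra-cell cancellation is possible).
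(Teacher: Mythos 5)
Your overall strategy — pass to an innermost loop, show the enclosed region has no 2-cells, conclude that the inner boundary word freely reduces, and extract a cancelling pair of cells — is the same as the paper's. The steps up to ``$R$ contains no 2-cells'' are fine, and from there you correctly deduce that the inner boundary label freely reduces and that the first cancelling pair must straddle the interface between two consecutive annulus cells $C_j$, $C_{j+1}$. The gap is in the very last inference: that a shared $a_i$-edge plus one more shared edge at the interface ``exhibits $C_j$ and $C_{j+1}$ as back-to-back cancelling.'' Two 2-cells that share a length-two path in their boundaries are \emph{not} automatically a cancelling pair --- that conclusion requires knowing the two cells bear the \emph{same} defining relator. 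And whether the label of the interface edge pins down the relator type depends on the orientation of the loop, which you never address.

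Concretely, an $r_{4,i}$-, $r_{4,i,1}$-, or $r_{4,i,2}$-cell has on one side of its $a_i$-corridor a single edge labelled $t$, $y_1$, or $y_2$ respectively, and on the other side a long Rips-word $\Yb t \Yb$ or $\Yb t^{-1}\Yb t \Yb$. If the $a$-loop $\alpha$ is outward-oriented, the inner boundary consists of those single $t$-, $y_1$-, $y_2$-edges, one per cell, and the label of the shared interface edge then \emph{does} determine the relator type uniquely (given the fixed $a_i$), so your conclusion holds. But if $\alpha$ is inward-oriented, the inner boundary carries the Rips sides, whose first/last letters are just $y_1$ or $y_2$ from \emph{distinct} $\Yb$-words; two different cell types such as $r_{4,i,1}$ and $r_{4,i,2}$ can perfectly well share an $a_i$-edge and an adjacent $y_2$-edge without being a cancelling pair, so a single backtracking pair proves nothing. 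The paper closes this hole by first invoking Lemma~\ref{lem: trapped y-noise}\eqref{lem part: no y-edges cor} (``trapped $y$-noise'') to show the innermost loop must be outward-oriented; only then does it read off the labels $t$, $y_1$, $y_2$ and conclude cancellation. (One could instead, in the inward case, argue that the inner boundary is a word on the free generators of Lemma~\ref{lem: free subgroup C'(1/4)} that represents the identity, forcing two adjacent Rips sides to be mutually inverse; but some substitute for the single-pair argument is needed.) The same issue recurs verbatim in your part~\eqref{lem part: r23 diagrams}.
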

\begin{proof}
In both cases there are no $r_{1, \ast}$-cells.  Thus   
the dual graphs $\mathcal G_a$ and $\mathcal G_b$ of $\Delta$  have no junctions, so every $a$-track is an $a_i$-track and every $b$-track is a $b_j$-track, for some $i$ and $j$.   

To prove~\eqref{lem part: r4 diagrams}, suppose for a contradiction that $\Delta$ has an $a$-loop.  Then there is an innermost one $\alpha$, which is an $a_i$-loop for $i=1$ or $2$, such that the region $R$ enclosed $\alpha$ has no $a$-subtracks (as there are no junctions).   As $\Delta$ has only $r_{4,\ast}$-cells, this means that the inner boundary of the annulus associated to $\alpha$  is a closed path in $\Delta^{(1)}$  that encloses no $2$-cells, so traverses some edge $e$ twice (in opposite directions). 
Lemma~\ref{lem: trapped y-noise}\eqref{lem part: no y-edges cor} implies that $\alpha$ is outward-oriented and this, together with the fact that $\alpha$ is an $a_i$-track for a fixed $i$, means that the possible labels $y_1, y_2, t$ of $e$ determine unique $r_{4,\ast}$-cells.  It follows that 
there is an adjacent pair of oppositely oriented identical cells, contradicting the fact that $\Delta$ is reduced.

The proof of~\eqref{lem part: r23 diagrams} is identical, noting that, for an innermost $b_j$-loop in a
$\Delta$ as in~\eqref{lem part: r23 diagrams},  the possible labels  $x_1, x_2, t, a_2$ of the edge $e$   each determine a unique cell (given the orientation of $b_j$).
\end{proof}

We now define two types of diagrams containing bigons of subtracks which can occur in reduced diagrams over $\mathcal P$. 
\begin{definition} \label{def: badge} \textup{\textbf{(Badge)}}
	A \emph{badge} is a subdiagram consisting of a path with label $t^n$, where $n>0$, with $2n+2$ cells arranged around it as shown in Figure~\ref{fig:button}(left) for $n=4$.  Specifically, it has two $r_{i,j}$-cells that are connected by an $a_i$-corridor made up of $n$ $r_{4, i}$-cells and a $b_j$-corridor made of $n$ $r_{3, j}$-cells, such that the $a_i$-corridor and $b_j$-corridor are identified along their boundaries labelled $t^n$.  
	  \end{definition}

\begin{definition} \label{def: button}
\textup{\textbf{(Button)}}
	A \emph{button} is a pair of 2-cells, specifically an $r_{1,p-1}$-cell and an $r_{1,p}$-cell, in a van~Kampen diagram that are joined along the common $a_1 b_p$ subwords in their boundary word.  Figure~\ref{fig:button}(center) shows a button.  The mirror image of a button is also a button, so there are two buttons in the diagram in Figure~\ref{fig:button}(right).    
	  \end{definition}

\begin{figure}[htbp]
\centering
\begin{overpic} [scale=0.9] 
{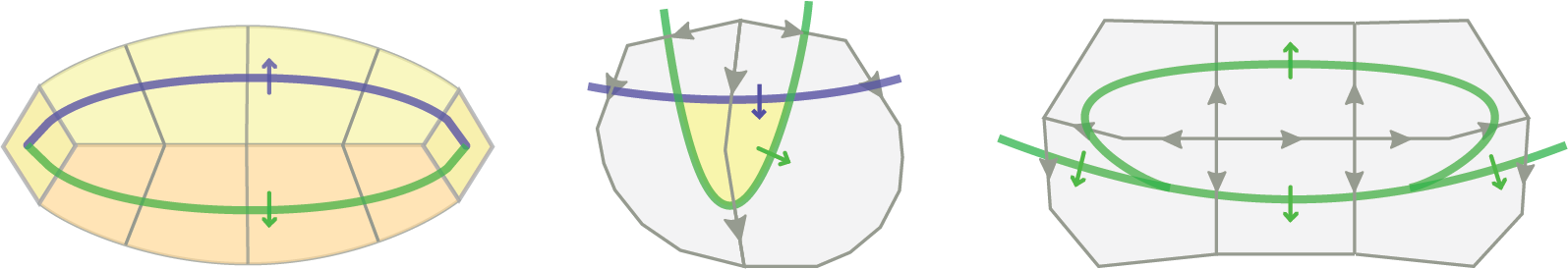}
\put(6.5, 8.4){\tiny{$t$}}
\put(13, 8.4){\tiny{$t$}}
\put(18.5, 8.4){\tiny{$t$}}
\put(24, 8.4){\tiny{$t$}}
\put(19, 1.5){\tiny{$b_{\ast}$}}
\put(20, 12.7){\tiny{$a_{\ast}$}}
\put(44, 2){\tiny{$b_p$}}
\put(43.5, 16.5){\tiny{$b_p$}}
\put(49, 16.5){\tiny{$b_{p-1}$}}
\put(43.5, 12.5){\tiny{$a_1$}}
\put(36.3, 13){\tiny{$a_1$}}
\put(56, 13){\tiny{$a_1$}}
\put(39.5, 6){\gray{\tiny{$r_{1,p}$}}}
\put(50, 4){\gray{\tiny{$r_{1,p-1}$}}}
\put(61.5, 6){\tiny{$b_{p-1}$}}
\put(69, 9.8){\tiny{$b_{p}$}}
\put(82, 9.2){\tiny{$t$}}
\put(78.5, 10.5){\tiny{$b_p$}}
\put(87.3, 10.5){\tiny{$b_p$}}
\put(98, 5){\tiny{$b_{p-1}$}}
\put(78.5, 5.5){\tiny{$b_{p-1}$}}
\put(87.3, 5.5){\tiny{$b_{p-1}$}}
\put(73.5, 9.3){\tiny{$a_1$}}
\put(90, 9.1){\tiny{$a_1$}}
\put(93.7, 10.3){\tiny{$b_p$}}
\put(70, 2.5){\gray{\tiny{$r_{1,p-1}$}}}
\put(79.5, 2.2){\gray{\tiny{$r_{3,p-1}$}}}
\put(89, 2.5){\gray{\tiny{$r_{1,p-1}$}}}
\put(71, 14){\gray{\tiny{$r_{1,p}$}}}
\put(80, 14.4){\gray{\tiny{$r_{3,p}$}}}
\put(89, 14){\gray{\tiny{$r_{1,p}$}}}
\end{overpic}
\caption{Left: a \emph{badge}.  Middle: a \emph{button}. Right: a reduced diagram that includes two buttons and contains a loop that is an outward-oriented $b$-track.  }
\label{fig:button}
\end{figure}

Observe that a badge or button is dual to a bigon comprised of an $a$-subtrack and an outward oriented $b$-subtrack.  The next lemma shows that such bigons always give rise to badges or buttons in the absence of $y$-edges. The second part puts a further restriction on certain bigons formed by an $a_1$-track and a $b_i$-track, which will be used in the proof of Corollary~\ref{cor: no loops}.

\begin{lemma} \label{lem: bigons}
\textup{\textbf{(Bigons, badges, and buttons)}}
Let $R$ be a region in a  reduced van~Kampen diagram $\Delta$  over $\mathcal{P}$, such that $R$ does not contain any $y$-edges, and $\partial R$ is a bigon comprised of an $a$-subtrack $\alpha$ and an outward oriented $b$-subtrack $\beta$.  Then 
\begin{enumerate}

\item \label{lem part: badge button}
The minimal subdiagram of $\Delta$  containing $R$
contains either a badge or a button. 
\item \label{lem part: bi bigon}
If $\alpha$ is an $a_1$-subtrack and $\beta$ is a $b_i$-subtrack, and $R$ has no $a_1$-subtracks in its interior,  then one of the  intersections between $\alpha$ and $\beta$ occurs in an $r_{1, i-1}$-cell.  
\end{enumerate}
 \end{lemma}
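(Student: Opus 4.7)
The plan is a minimal-counterexample argument coupled with careful tracking of how noise flows through the bigon region. For part~(1) I would take $R$ to be an \emph{innermost} example: among all regions $R'$ in $\Delta$ satisfying the hypotheses of the lemma, pick one containing the fewest 2-cells in its interior. The minimal subdiagram $D$ containing $R$ consists of all 2-cells through which $\alpha$ or $\beta$ passes together with all 2-cells in the interior of $R$. Each of the two corners of the bigon is dual to a 2-cell carrying both an $a$-edge (crossed by $\alpha$) and a $b$-edge (crossed by $\beta$); inspecting Figure~\ref{fig:relations} these must be $r_{1,\ast}$-cells (if $\alpha$ is an $a_1$-subtrack) or $r_{2,\ast}$-cells (if $\alpha$ is an $a_2$-subtrack).

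Next I would examine the $b$-corridor of $\beta$. Since $\beta$ is outward-oriented and $R$ has no $y$-edges, Lemma~\ref{lem: trapped x-noise}\eqref{lem part: no x-edges} excludes $x$-edges from the interior of $R$, which rules out most Rips-word faces on the inside-$R$ side of the $b$-corridor and typically leaves the cells along $\beta$ to be $r_{3,j}$-cells whose $t$-edges point into $R$. These $t$-edges belong to $t$-subtracks that, by Lemma~\ref{lem: no t-loop} and the absence of $t$-subtracks in $\partial R$, must cross the bigon and terminate on the $\alpha$-side; matching them against cells along $\alpha$ forces the $\alpha$-side to consist of $r_{4,i}$-cells sharing a $t^n$ path with the $b$-corridor. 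Inside $D$, this is a badge. In the degenerate case in which the two corner cells abut along both an $a_1$- and a $b_p$-edge, the bigon is dual to just two parallel edges of the dual graph, and by examining which relators can pair up this way the corner cells are forced to be an $r_{1,p-1}$ and an $r_{1,p}$ sharing an $a_1 b_p$ subword, giving a button in $D$.

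For part~(2), I would argue by contradiction: suppose neither corner lies in an $r_{1,i-1}$-cell. By inspection of Figure~\ref{fig:relations}, the only other cell types carrying both an $a_1$- and a $b_i$-edge are $r_{1,i}$ (generically), together with $r_{1,q-1}$ when $i=q$ and $r_{1,p}$ when $i=p$, which can be handled analogously. In an $r_{1,i}$-cell there are two $a_1$-edges; the absence of $a_1$-subtracks in the interior of $R$ forces the $a_1$-edge not crossed by $\alpha$ to lead out of $R$, so the $a_1$-corridor containing $\alpha$ enters $R$ at one corner and leaves at the other, with no further $a_1$-crossings inside $R$. Tracing the $b_i$-corridor of $\beta$ through $R$, which by the no-$y$-edge and no-interior-$a_1$-subtrack conditions consists only of $r_{1,i-1}$- and $r_{1,i}$-cells, the combinatorics quickly produces an adjacent pair of identical oppositely oriented 2-cells, contradicting $\Delta$ being reduced.

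The main obstacle I anticipate is the bookkeeping in the case split for the special relators $r_{1,q-1}$, $r_{1,p}$, and $r_{1,0}$, each of which has slightly different edge-patterns than the generic $r_{1,i}$, and verifying that each special configuration either falls into the badge/button classification (for~(1)) or yields the desired cancelling pair (for~(2)). The $C'(1/4)$-condition on $\mathcal{X} \cup \mathcal{Y}$ together with Lemmas~\ref{lem: no t-loop}--\ref{lem: no junctions no loops} should provide the needed leverage once the split is in place.
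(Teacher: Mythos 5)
Your outline for part \eqref{lem part: badge button} is broadly aligned with the paper's strategy: pass to an innermost instance, then classify the resulting configuration as a badge or a button. However there are two gaps. First, ``fewest $2$-cells in the interior'' is not the minimality condition the argument actually needs: the paper proves that $R$ contains a region $S$ whose interior has \emph{no $a$- or $b$-subtracks at all}, and this requires its own argument (ruling out $a$-loops and teardrops inside $R$, showing any $b$-subtrack inside $R$ has both ends on $\alpha$ oriented outward, then passing to an innermost such bigon). Second, you do not identify the dichotomy that actually produces badge versus button. In the paper it is the \emph{orientation of $\alpha$}: if $\alpha$ is oriented out of $R$ then the cells of $C_\alpha$ other than the corners are $r_{4,\ast}$-cells, the inner boundary word $u$ is $t^n$ with $n\neq 0$, and one gets a badge; if $\alpha$ is oriented into $R$, then $C_\alpha$ has no interior cells, the two corner cells share a corner labelled $a_ib_j$, and inspecting Figure~\ref{fig:relations} this forces an $r_{1,p-1}$-cell and an $r_{1,p}$-cell identified along $a_1b_p$---a button. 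Your ``degenerate case'' hints at the button but does not pin down what triggers it, and your claim that $t$-subtracks emanating from $\beta$ ``must cross the bigon and terminate on the $\alpha$-side'' is asserted rather than derived; the paper gets this from the equality $u=v$ together with $v$ being a reduced word on $x_1,x_2,t$.

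Part \eqref{lem part: bi bigon} is where the proposal genuinely goes wrong. The assertion that the $b_i$-corridor of $\beta$ ``consists only of $r_{1,i-1}$- and $r_{1,i}$-cells'' is false: a $b_i$-corridor can also pass through $r_{2,i}$-, $r_{3,i}$-, and $r_{3,i,j}$-cells, and none of these introduce $a_1$-subtracks or $y$-edges into $R$, so neither hypothesis excludes them. Note also that for part~(2) the paper explicitly allows $a_2$- and $b_j$-subtracks in the interior of $R$, so one cannot reduce to a clean cell-by-cell enumeration of the corridor. The actual argument constrains $C_\alpha$ (not $C_\beta$): since $r_{4,1}$-cells would produce forbidden $y$-edges, every interior cell of $C_\alpha$ is of type $r_{1,\ast}$, so the side of $C_\alpha$ facing $R$ is labelled by a word $u$ of a very specific form. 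Projecting to the free-by-cyclic quotient $Q = F(b_0,\ldots,b_p)\rtimes\Z$ (killing $a_2,t,x_1,x_2,y_1,y_2$) turns the boundary relation into $b_{i+1}^{-1}\overline{u}\,b_{i+1}=1$, where $\overline{u}$ is a word on the free basis $\{b_1b_0,\ldots,b_pb_{p-1},b_p\}$ of $F(b_0,\ldots,b_p)$. Hence $\overline{u}=1$ as a word, which forces an inverse pair in $u$ and hence a cancelling pair of $2$-cells in $\Delta$. That projection step is the key idea and it is absent from your proposal; without it, the claimed ``the combinatorics quickly produces'' a cancelling pair cannot be completed.
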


\begin{proof}
If $R$ is as in the statement of the lemma,  
we first prove that $R$ contains a minimal region of the same type.  Specifically, $R$ contains a region $S$ with boundary a bigon comprised of an $a$-track $\alpha_S$ and an outward oriented $b$-track $\beta_S$ such that the interior of $S$ contains no $a$- or $b$-subtracks.  

To construct $S$, first observe that there can be no $a$-loop in $R$, as if there were one,
it would enclose a region with no $y$-edges, contradicting Lemma~\ref{lem: trapped x-noise}\eqref{lem part part: at least one}.  Since $R$ also has no teardrops (by Lemma~\ref{lem:teardrop}), 
any $a$-subtrack $\alpha_1$ in $R$ is a path with distinct  endpoints on $\partial R$. 
 If $\alpha_1$
has both endpoints on $\alpha$, 
then (in the absence of $a$-loops and teardrops)
 we get a smooth path by replacing a subsegment of $\alpha$ with $\alpha_1$, and this forms a smaller bigon with $\beta$.  
 If one or both endpoints of $\alpha_1$ are on $\beta$, then $\alpha_1$ divides $R$ 
into two regions, one of which has boundary a bigon comprised of an $a$-subtrack and a subtrack of $\beta$.   Passing to a minimal instance, we obtain a region $R'$ with boundary a bigon comprised of an $a$-track $\alpha'$ and an outward oriented $b$-track $\beta'$ (a subtrack of $\beta$), such that $R'$ has no $a$-subtracks in its interior.

Consider the minimal diagram containing $R'$, and let $D'$ be the subdiagram consisting of 2-cells not dual to $\alpha'$.  Then $D'$ has only cells of type $r_{3, i}$ or $r_{3, i, j}$ (as any other cells would introduce $a$-subtrack in $R'$).  So $D'$ has no junctions and, by Lemma~\ref{lem: no junctions no loops}\eqref{lem part: r23 diagrams}, has no $b$-loops.
Suppose there is a $b$-subtrack $\beta_1\neq \beta$ in $R$.  Then $\beta_1$ has  both endpoints on $\alpha'$ (as there are no junctions in $D'$).  
If $\beta_1$ is oriented into the bigon that it forms with $\alpha'$, then $\alpha'$ must be oriented outward by Lemma~\ref{lem: trapped y-noise}\eqref{lem part: no y-edges cor}.  As there are no $y$-edges in $R$, Lemma~\ref{lem: trapped x-noise}\eqref{lem part: no x-edges cor} applies, and implies that $\alpha'$ transitions from $a_1$ to $a_2$.  This happens at some $r_{1, q-1}$-cell dual to $\alpha'$.  However, as $\alpha'$ is oriented outward, such a cell contributes part of an $a_1$-subtrack to the interior of $R'$, a contradiction. 

Thus any $b$-subtrack in $R'$ has both endpoints on $\alpha'$, and is oriented out of the bigon it forms with $\alpha'$.  By passing to an innermost instance, we obtain a region $S$ with boundary a 
bigon comprised of a subtrack $\alpha_S$ of $\alpha'$ and an outward oriented $b$-track $\beta_S$ such that the interior of $S$ contains no $a$- or $b$-subtracks.

To complete our proof of~\eqref{lem part: badge button}, we show that if $R$ is minimal in that its interior contains no $a$- or $b$-subtracks, then the minimal subdiagram $D$ containing $R$ is either a badge or a button. 
 
\begin{figure}[htbp]
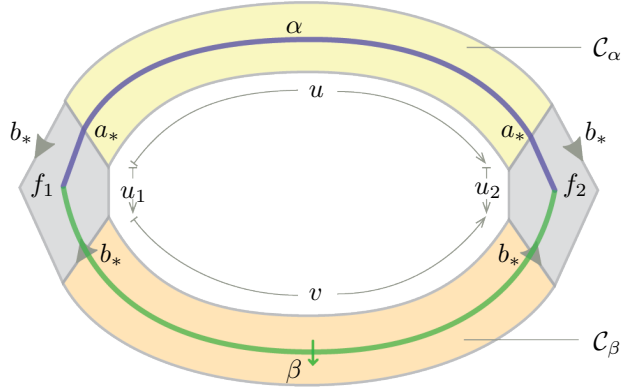

\centering
\begin{overpic} 
{Figures/bigon_be_gone}
\put(46, 1.5){\small{$\beta$}}
\put(46, 61){\small{$\alpha$}}
\put(-1.5, 42.5){\small{$b_{\ast}$}}
\put(97.5, 42.5){\small{$b_{\ast}$}}
\put(14, 21.5){\small{$b_{\ast}$}}
\put(82.5, 21.5){\small{$b_{\ast}$}}
\put(13, 43){\small{$a_{\ast}$}}
\put(83, 43){\small{$a_{\ast}$}}
\put(50, 50){\small{$u$}}
\put(17.5, 32.5){\small{$u_1$}}
\put(78.5, 33.2){\small{$u_2$}}
\put(50, 15){\small{$v$}}
\put(2, 33.5){\small{$f_1$}}
\put(93.7, 33.5){\small{$f_2$}}
\put(99, 6.3){\small{$\mathcal{C}_{\beta}$}}
\put(99, 57){\small{$\mathcal{C}_{\alpha}$}}
\end{overpic}
\caption{A bigon region per Lemma~\ref{lem: bigons}}
\label{fig: bigon be gone}
\end{figure}

Let $C_{\alpha}$ and $C_{\beta}$ be the corridors dual to $\alpha$ and $\beta$ respectively---see Figure~\ref{fig: bigon be gone}.
They intersect in distinct 2-cells $f_1$ and $f_2$ of type 
$r_{i,j}$ with $i=1$ or $2$. (If $f_1=f_2$, then the orientation on $\beta$ would force both corners of $\partial S$ to be on the top half of some $r_{i,j}$-cell, and a terminal subpath of $\alpha$ would merge with an initial one to create a teardrop, which contradicts Lemma~\ref{lem:teardrop}.)  
Further, the $2$-cells of $D$ are exactly the 2-cells of $C_{\alpha} \cup C_{\beta}$ (because a 2-cell strictly in the interior of $R$ would result in interior $a$- or $b$ subtracks). 

The inner boundary of 
$C_{\alpha} \cup C_{\beta}$ has subpaths coming from $f_1$ and $f_2$ (labelled  $u_1$ and $u_2$ respectively), from $C_\alpha$ (labelled $u$) and from $C_{\beta}$ (labelled $v$), and these are oriented as shown in Figure~\ref{fig: bigon be gone}.  
Next, we determine which letters can occur in these labels examining Figure~\ref{fig:relations} for cells which could occur in $D$ under the given constraints.

Firstly, $f_1$ is an $r_{1, \ast}$- or $r_{2, \ast}$- cell, and given that $\beta$ is outward-oriented, one sees that the only non-empty word that could arise as $u_1$ is $b_i$ for some $i$ (when $f_1$ is a 
$r_{1, i-1}$ cell and $\alpha$ is inward-oriented).  However, as this would lead to $b$-subtracks inside $R$, we conclude that $u_1$ is empty. Likewise $u_2$ is empty.  Thus $u=v$ as group elements.

Next, each cell of $C_\beta$ apart from $f_1$ and $f_2$ is of type $r_{3, k}$, $r_{3,k,1}$ and $r_{3, k, 2}$ (as any others would introduce $a$-subtracks in the interior of $R$).   
Since $\beta$ is oriented outward, this means $v$ is a word on  $x_1, x_2, t$. 
Furthermore, the part of $\beta$ between (and excluding) $f_1$ and $f_2$ has no junctions, and so it is a $b_k$-track for some fixed $k$.   As $x_1, x_2, t$ freely generate a free group in $G$ (as a consequence of Proposition~\ref{prop:hnn}),  and each of them appears in a unique $r_{3, k}$- or $r_{3, k, j}$-cell, $\Delta$ being reduced implies $v$ is freely reduced.

If $\alpha$ is oriented outward, then 
each cell of $C_\alpha$ 
 apart from $f_1$ and $f_2$ is  of type $r_{4,i}$ (as any other cells would introduce $y$-edges or $b$-subtracks to the the interior of $R$).  
So $u$ is a reduced word of the form $t^n$ for some $n \in \Z$.   
Now, since $v$ is reduced, we have that $v = u = t^n$ as words.  
Furthermore $n\neq 0$, for otherwise $f_1$ and $f_2$ would be identified along a pair of adjacent edges in each with label $a_i^{-1} b_k$ and as each such word appears in a unique cell,  $f_1$ and $f_2$ would be oppositely oriented identical cells, contradicting the fact that $\Delta$ is reduced. 
Thus $R$ is a badge.

If $\alpha$ is oriented inward, then $C_\alpha$ cannot have any 2-cells apart from $f_1$ and $f_2$, so $u$ is empty. Then, as $v$ is reduced, it is also empty, and 
$f_1$ and $f_2$ are distinct cells identified along a corner in each with label $a_ib_j$.  Examining Figure~\ref{fig:relations} again, we see that this can only happen if they are a $r_{1, p-1}$-cell and an $r_{1, p}$-cell identified along their corners labelled $a_1b_p$, so that $R$ is a button. This completes the proof of~\eqref{lem part: badge button}.

Now assume $R$ satisfies the additional hypotheses in~\eqref{lem part: bi bigon} of this lemma (but is not necessarily minimal).  
In particular, the interior $R$ has no $a_1$-subtracks, but could have $a_2$- or $b_j$-subtracks.  
We continue with the notation of Figure~\ref{fig: bigon be gone}.  The intersection of an $a_1$-track and a $b_i$-track can only occur in an $r_{1, i}$- or $r_{1, i-1}$-cell.  Assume for a contradiction that  $f_1$ and $f_2$ are both of the former type.  Now, if $\alpha$ is oriented outwards, then  $u_1$ and $u_2$ are empty and $u$ is a word on $b_0, \dots, b_p$ (here we do not have $t$, because an $r_{4,1}$-cell would produce a $y$-edge in $R$, a contradiction). If $\alpha$  is oriented inwards, then $u_1 = b_{i+1}^{-1}$ and $u_2 = b_{i+1}$ and $u$ is a word on
$$b_p (X_{\ast} t^{-1} X_{\ast} t X_{\ast})^{-1}, \quad  b_q b_{q-1} (X_{\ast} t^{-1} X_{\ast} t X_{\ast})^{-1} , \text{ and}$$  $$ b_{i+1} b_{i} (X_{\ast} t^{-1} X_{\ast} t X_{\ast})^{-1}  \ \ (i \neq 0,\, {q-1}, \,p).$$

Now define $\overline u$ and $\overline v$ to be the images of these words in  the quotient $Q = F(b_0, \ldots, b_p) \rtimes \Z$  of $G$ from \eqref{eqn QQQ} resulting from killing $a_2, t, x_1, x_2, y_1, y_2$.  Then $\overline v$ is empty and $\overline u$ is a word on $b_1b_0, \ldots, b_p b_{p-1}, b_p$, which is a free basis for $F(b_0, \ldots, b_p)$.  So  $b_{i+1}^{-1} \overline u b_{i+1}=1$ in $Q$, and so $\overline u =1$.   So there is a canceling pair in $u$, and this implies that there is a pair of adjacent oppositely oriented cells, contradicting the hypothesis that the diagram $\Delta$ is reduced.   An analogous analysis rules out $\alpha_1$ being 
  outward-oriented. This proves~\eqref{lem part: bi bigon}.
\end{proof}

The next corollary summarizes the restrictions on loops in reduced diagrams obtained so far. 
\begin{cor} \label{cor: no loops} 
\textup{\textbf{(Loops)}} Suppose $\Delta$ is a reduced diagram.  
\begin{enumerate}
\item \label{lem part: loops absent from reduced diagrams}    $\Delta$ has no  
$t$-loops and no  inward-oriented $a$- or $b$-loops.

\item \label{lem part: no a-loops}  Every   $a$-loop in $\Delta$  encloses a $y$-edge. 

\item \label{lem part: no b-loops} 
$\Delta$ has no $b_i$-loops, and 
if $\Delta$ has  no buttons, then it has no $b$-loops. 
\end{enumerate}
\end{cor}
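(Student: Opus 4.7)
The plan is to prove the three parts in succession, each leveraging the structural lemmas established earlier in this section. For Part~\eqref{lem part: loops absent from reduced diagrams}, the non-existence of $t$-loops is exactly Lemma~\ref{lem: no t-loop}. If $\alpha$ is an inward-oriented $a$- or $b$-loop, take $R$ to be the enclosed region; then $\partial R = \alpha$ consists solely of an inward-oriented $a$- or $b$-subtrack with no $t$-subtracks, $x$-edges, or $y$-edges---precisely the configuration excluded by the ``in particular'' clause of Lemma~\ref{lem: trapped y-noise}\eqref{lem part: no y-edges cor}. For Part~\eqref{lem part: no a-loops}, an $a$-loop $\alpha$ is outward-oriented by Part~\eqref{lem part: loops absent from reduced diagrams}; if the enclosed region $R$ had no $y$-edges, then $\partial R = \alpha$ would consist only of outward-oriented $a$-subtracks (with no $b$- or $t$-subtracks, and no $x$-edges), meeting the hypotheses of Lemma~\ref{lem: trapped x-noise}\eqref{lem part: no x-edges cor}, and its clause~\eqref{lem part part: at least one} would then demand a $b$- or $t$-subtrack in $\partial R$---a contradiction.

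For the first claim of Part~\eqref{lem part: no b-loops}, suppose $\beta$ is a $b_i$-loop; it is outward-oriented by Part~\eqref{lem part: loops absent from reduced diagrams}. Applying Lemma~\ref{lem: trapped y-noise}\eqref{lem part: no y-edges} to the enclosed region $R$ (whose boundary $\beta$ contains no $y$-edges) shows that $R$ has no $y$-edges; hence by Part~\eqref{lem part: no a-loops} no $a$-loop can form inside $R$. The $b_i$-corridor is composed of cells of types $r_{1,i}, r_{2,i}, r_{3,i}, r_{3,i,j}$; its $r_{1,i}$-cells carry $a_1$-edges into $R$, producing $a_1$-subtracks that must exit through $\beta$ (possibly after transitioning between $a_1$ and $a_2$ at $r_{1,q-1}$-junctions, since $a$-loops are forbidden in $R$). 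For $i \geq 1$, selecting an innermost $a_1$-$\beta$ bigon and invoking Lemma~\ref{lem: bigons}\eqref{lem part: bi bigon} forces a corner to lie in an $r_{1,i-1}$-cell; but both corners lie on $\beta$, whose cells are never of type $r_{1,i-1}$---a contradiction. For the second claim, if $\Delta$ has no buttons yet $\beta$ is a $b$-loop, then by the first claim $\beta$ traverses at least one junction in some $r_{1,j}$-cell, where it transitions between a $b_j$- and a $b_{j+1}$-segment; since a button (the $r_{1,p-1}$-$r_{1,p}$ configuration joined along a shared $a_1 b_p$ corner) is the only mechanism in $\mathcal P$ permitting a $b$-track to reverse its index change, absent buttons the index must change monotonically along $\beta$, which prevents closure into a loop.

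The main obstacle lies in Part~\eqref{lem part: no b-loops}. In the first claim, carefully establishing the existence of an innermost $a_1$-$\beta$ bigon amenable to Lemma~\ref{lem: bigons}\eqref{lem part: bi bigon} requires delicate handling of $a$-tracks that fragment at $r_{1,q-1}$-junctions into compound $a_1$-$a_2$ segments; moreover the case $i = 0$ warrants separate treatment, since no $r_{1,-1}$-cell exists, and appears to require Lemma~\ref{lem: bigons}\eqref{lem part: badge button} applied to a bigon formed by a compound $a$-track and a subarc of $\beta$ (buttons being excluded because they sit at the index-$p$ configuration, and badges requiring $r_{4,\ast}$-cells, which cannot occur in $R$ owing to the absence of $y$-edges). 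The second claim requires a careful orientation analysis at each junction along $\beta$ to rigorously justify the monotonicity of the $b$-index, and may need to rule out badge-based turnarounds by the same $y$-edge argument.
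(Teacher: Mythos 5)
Parts~\eqref{lem part: loops absent from reduced diagrams} and~\eqref{lem part: no a-loops} of your proposal match the paper exactly, citing the same lemmas. Your argument for the first claim of Part~\eqref{lem part: no b-loops} is also essentially the paper's: outward-orient $\beta$ via Part~\eqref{lem part: loops absent from reduced diagrams}, use Lemma~\ref{lem: trapped y-noise}\eqref{lem part: no y-edges} to exclude $y$-edges from $R$, exclude $a$-loops by Part~\eqref{lem part: no a-loops}, produce an $a_1$-subtrack, take an innermost $a_1$--$\beta$ bigon, and apply Lemma~\ref{lem: bigons}\eqref{lem part: bi bigon}. Two of your flagged concerns there are spurious: the case $i=0$ needs no separate treatment (Lemma~\ref{lem: bigons}\eqref{lem part: bi bigon} outputs an $r_{1,-1}$-cell, which simply does not exist, giving the contradiction directly), and the ``compound $a_1$-$a_2$ fragments'' worry is preempted because $\beta$ being a $b_i$-track for \emph{fixed} $i$ and $\Delta$ having no teardrops (Lemma~\ref{lem:teardrop}) already forces any $a_1$-subtrack in $R$ to cross $\beta$ twice. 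One mild inaccuracy: the existence of an $a_1$-subtrack in $R$ is cleaner to deduce from Lemma~\ref{lem: no junctions no loops}\eqref{lem part: r23 diagrams} (if the minimal subdiagram containing $R$ had only $r_{2,\ast}$-, $r_{3,\ast}$-cells, there would be no $b$-loop at all) rather than from cataloguing which cell types can line the $b_i$-corridor.

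The genuine gap is in the second claim of Part~\eqref{lem part: no b-loops}. Your ``monotone index'' argument rests on the assertion that a button is \emph{the only} mechanism in $\mathcal{P}$ by which a $b$-track can reverse its index change. This is unjustified and not proved anywhere in the paper; a $b$-track transitions up or down by one index at each junction ($r_{1,j}$-cell) depending on which pair of its three $b$-edges the track traverses, and there is no a~priori reason the sequence of increments must be monotone in a loop with no buttons. You yourself hedge on this at the end (``may need to rule out badge-based turnarounds''), which is precisely the badge case the paper handles. The paper avoids the monotonicity claim entirely: it reuses the innermost bigon $R_1$ from the first claim (now for a general $b$-track $\beta$) and invokes Lemma~\ref{lem: bigons}\eqref{lem part: badge button} to conclude that the minimal subdiagram containing $R_1$ has a badge or a button. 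Buttons are excluded by hypothesis, and a badge's $a$-corridor contains an $r_{4,\ast}$-cell, which carries a $y$-edge in the interior of $R$---contradicting the $y$-freeness of $R$ already established. Your proposal would need to replace the monotonicity argument with this (or a comparably rigorous) argument to close the gap.
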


\begin{proof}
 Lemmas~\ref{lem: no t-loop} and \ref{lem: trapped y-noise}\eqref{lem part: no y-edges cor} establish \eqref{lem part: loops absent from reduced diagrams}.  
 
Were there an $a$-loop enclosing no $y$-edges, it would satisfy the hypotheses of 
 Lemma~\ref{lem: trapped x-noise}\eqref{lem part: no x-edges cor} but fail the conclusion in part \eqref{lem part part: at least one} of that lemma.   This proves~\eqref{lem part: no a-loops}.

  For~\eqref{lem part: no b-loops},  suppose $\beta$ is a $b$-loop in $\Delta$, as shown in Figure~\ref{fig: bad beta}. Then $\beta$ is oriented outward by~\eqref{lem part: loops absent from reduced diagrams}.   If $R$ is the region enclosed by $\beta$, then $R$ contains no $y$-edges by  Lemma~\ref{lem: trapped y-noise}\eqref{lem part: no y-edges}. Consequently, $R$ contains no $a$-loops by~\eqref{lem part: no a-loops} of this corollary.  Because $\Delta$ has no teardrops by Lemma~\ref{lem:teardrop}, any $a_1$-subtrack in $R$ must intersect $\beta$ in two distinct points, and divides $R$ into two bigons.

Let $\Delta_0$ be the minimal diagram containing $R$.  There are no 2-cells of type $r_{4 ,\ast, \ast}$ or $r_{4 ,\ast}$ in $\Delta_0$, because any such 2-cell would have to be inside $\beta$ and would give rise to a $y$-edge there.   So Lemma~\ref{lem: no junctions no loops}\eqref{lem part: r23 diagrams} tells us that $\Delta_0$ contains at least one $r_{1, \ast}$-cell. Therefore $R$ contains an $a_1$-subtrack.  Let $\alpha$ be an $a_1$-subtrack in $R$ that     
 forms a bigon with  a subtrack $\beta_1$ of $\beta$, and is \emph{innermost} in that there is no $a_1$-subtrack in the region $R_1$ enclosed by $\alpha$ and $\beta_1$.

\begin{figure}[htbp]
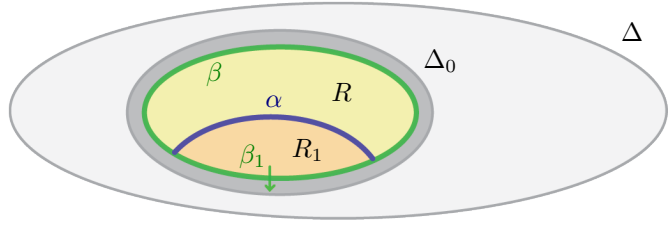

\centering
\begin{overpic}  
{Figures/bad_beta}
\put(35, 8.5){\green{\small{$\beta_1$}}}
\put(30, 21){\green{\small{$\beta$}}}
\put(39, 17){\blue{\small{$\alpha$}}}
\put(93, 27){\small{$\Delta$}}
\put(63, 23){\small{$\Delta_0$}}
\put(49, 18){\small{$R$}}
\put(43, 9.5){\small{$R_1$}}
\end{overpic}
\caption{Our proof of Corollary~\ref{cor: no loops}\eqref{lem part: no b-loops}, illustrated}
\label{fig: bad beta}
\end{figure}

Now suppose $\beta$ is a $b_i$-loop for some fixed $i$, and so $\beta_1$ is a $b_i$-subtrack.  Then  applying 
Lemma~\ref{lem: bigons}\eqref{lem part: bi bigon} to $R_1$, we see that 
one of the intersections between $\alpha $ and $\beta_1$ occurs in an $r_{1, i-1}$-cell.  This is a contradiction, as $\beta$, being a $b_i$-track, cannot pass though an $r_{1, i-1}$-cell.   Thus $\Delta$ has no $b_i$-loops.  

Finally suppose that $\Delta$ has no buttons  and that $\beta$ is a $b$-loop.  
Then, by  Lemma~\ref{lem: bigons}\eqref{lem part: badge button}, the minimal subdiagram containing $R_1$ contains a badge.  The 
$a$-subtrack of this badge is dual to at least one $r_{4, i}$-cell, and this cell is in the interior of $R$.  
This is a contradiction: as already noted, each $r_{4, i}$-cell has a $y$-edge, while $R$ has none. 
This completes our proof of  \eqref{lem part: no b-loops}. 
\end{proof}

\begin{remark} 
  Figure~\ref{fig:button} shows how Corollary~\ref{cor: no loops}\eqref{lem part: no b-loops} can fail without the hypothesis absenting buttons.  
  Corollary~\ref{cor: no loops}\eqref{lem part: no a-loops} cannot be upgraded to rule out all $a$-loops:
  a  reduced diagram with an outward oriented $a_1$-track can be formed by circling an $r_{3,0}$-cell (which has $y$-edges) with an outward oriented $a_1$-annulus made up of two $r_{1,0}$-cells, two $r_{4,1}$-cells, and some $r_{4,1,j}$-cells. 
\end{remark}

Our next two lemmas concern the impact of the presence of Rips subwords in the sides of   $t$-corridors or in generalizations  defined in the following   manner.   The following expanded definition of a corridor  $\mathcal{C}$ and the lemma that follows it are motivated by   applications to our proof of Lemma~\ref{lem:t-corridors on boundary}. 

\begin{definition}\textbf{(Generalized corridors)}\label{def: generalized corridor}
Let  $\mathcal{C}$  be a set of $r$ distinct 2-cells $C_1$, $C_2$, \ldots, $C_r$ in a reduced van~Kampen diagram over our presentation $\mathcal{P}$ for $G$ such that there are edges $e_0, \ldots, e_r$ with the property that for $i = 1, \ldots, r-1$, the edge $e_i$ is in both $\partial C_i$ and $\partial C_{i+1}$.  Suppose the word read clockwise around $C_i$ is   $z_i f_i z_{i+1}^{-1} g_i$, where $z_i$ labels edge $e_i$.  Then the words along the top and bottom boundaries of $\mathcal C$ are  $f_1 f_2 \cdots f_r$ and $g_1^{-1} g_2^{-1} \cdots g_r^{-1}$ respectively. 

\end{definition}

\begin{lemma} \label{lem:corridor overlap}
\textup{\textbf{(Rips words cause the sides of corridors to be near injective and adjacent corridors to have small overlap.)}} 
There exists a constant $K \geq 1$  such that reduced van~Kampen diagrams $\Delta$ have the following properties. 

Suppose  $\mathcal{C}$ is  a generalized corridor,  $\mu$ is the path along one side of $\mathcal{C}$, and   the word  read along $\mu$ is $f := f_1 f_2 \cdots f_r$ (all per Definition~\ref{def: generalized corridor}).  Refer to $f_1,  \ldots, f_r$ as the syllables of $f$. A \emph{Rips subword} in a syllable  $f_i$ of $f$ is an element of $(\mathcal{X} \cup \mathcal{Y})^{\pm 1}$ appearing as a subword.  Suppose that if $1 \leq i \leq j \leq r$ are such that $f_i, \ldots, f_j$ do not have Rips subwords, then $f_i \cdots  f_j$ is  a reduced word on $\{a_1, a_2, b_0, \ldots, b_p\}^{\pm 1}$.

Suppose  $\overline \mu \subseteq \mu$ is an injective path from the initial vertex of $\mu$ to its terminal vertex.  So the word $\overline{f}$ read along $\overline \mu$ can be obtained from $f$ by a sequence $\Sigma$ of free reductions (successive cancellations of adjacent inverse-pairs of letters).  Then:

\begin{enumerate}
\item  \label{lem part: near injective}  
\begin{enumerate}
\item \label{lem part: survival} At least one letter of every Rips subword in a syllable survives in $\overline{f}$.   
 \item \label{lem part: linear bound on f}  $|f| \leq K  |\overline{f}| + K$. 
 \item \label{lem part part: near injective} If a subpath $\mu_0$ of $\mu$  is a loop and encloses no $2$-cells, then the subword $f_0$  of $f$ read  along $\mu_0$ has length at most $K$.  
\end{enumerate}       
\end{enumerate}

Suppose $\mu'$ is the path along one side  of another generalized corridor $\mathcal{C}'$ and $f' := f'_1 f'_2 \cdots f'_{r'}$ is the word  read along it.  Suppose that for all $i$, some element of $(\mathcal{X} \cup \mathcal{Y})^{\pm 1}$ is a subword of $f'_i$.   Suppose $\mathcal{C}$ and $\mathcal{C}'$ have no 2-cells in common and that they start and end on $\partial \Delta$ (that is, $e_0, e_r, e'_0, e'_{r'}$ are in $\partial \Delta$).  Suppose that $$I \ := \  \mathcal{C} \cap \mathcal{C}'  \ = \   \mu \cap \mu'  \ \neq  \  \emptyset.$$

\begin{enumerate} \addtocounter{enumi}{1} 

\item  \label{lem part: small overlap} Suppose $\mu_0$ and  $\mu'_0$  are the shortest subpaths  of $\mu$ and of  $\mu'$, respectively,  such that $I = \mu_0 \cap \mu'_0$.  If  $\mu_0 \cup \mu'_0$    encloses no 2-cells, then  $|\mu_0|, |\mu'_0| \leq K$.   

\end{enumerate}

\end{lemma}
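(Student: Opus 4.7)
My plan is to exploit the partition of the alphabet into ``noise'' letters $\{x_1, x_2, y_1, y_2\}^{\pm 1}$, which comprise the Rips subwords, and ``non-noise'' letters $\{a_1, a_2, b_0, \ldots, b_p, t\}^{\pm 1}$, which make up everything else. Since no defining relator of $\mathcal P$ equates letters across this divide, the free reduction sequence $\Sigma$ taking $f$ to $\overline f$ decomposes independently into $x$-cancellations, $y$-cancellations and non-noise cancellations; in particular, a letter of a Rips subword can only be cancelled by another noise letter.

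To prove \eqref{lem part: survival}, I would argue by contradiction. Suppose a Rips subword $R \in \mathcal X^{\pm 1}$ (the case $R \in \mathcal Y^{\pm 1}$ is symmetric) lying in some syllable $f_i$ is entirely annihilated by $\Sigma$. Let $\overline v$ be the subword of $f$ consisting of its $\{x_1, x_2\}^{\pm 1}$-letters only. Then $\overline v$ has precisely the form analyzed in Lemma~\ref{lem: cancellation from C'(1/4)} (with the straightforward adjustment that consecutive Rips subwords may be separated by possibly-empty strings of isolated $x$-letters), and the induced cancellations still annihilate $R$. That lemma then supplies an index at which at least a quarter of one Rips subword cancels with at least a quarter of an adjacent Rips subword $R'$. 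The $C'(1/4)$ condition on $\mathcal X \cup \mathcal Y$ forces $R$ and $R'$ to be inverse copies of the same element of $\mathcal X$. Since each element of $\mathcal X \cup \mathcal Y$ appears in exactly one defining relator of $\mathcal P$, the 2-cells $C$ and $C'$ of $\mathcal C$ containing $R$ and $R'$ are oppositely oriented copies of the same relator. An induction on the number of syllables between $C$ and $C'$ in $\mathcal C$ (the intervening Rips material must first cancel, producing smaller instances of the same configuration) reduces to the case where $C$ and $C'$ are adjacent in $\mathcal C$. Their shared edge in $\mathcal C$, together with the matched Rips subwords, then exhibits $\{C, C'\}$ as a back-to-back cancelling pair, contradicting reducedness of $\Delta$.

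Parts \eqref{lem part: linear bound on f} and \eqref{lem part part: near injective} should then follow from \eqref{lem part: survival} by straightforward length-counting. For \eqref{lem part: linear bound on f}, each Rips subword has length at most some constant $K_0$ (the finite set $\mathcal X \cup \mathcal Y$ has bounded lengths) and contributes at least one surviving letter to $\overline f$ by \eqref{lem part: survival}; the hypothesis that Rips-free runs of syllables concatenate to reduced $\{a, b\}$-words rules out non-noise cancellation except near the boundaries of Rips-containing syllables, contributing only a constant per syllable and hence per surviving letter. For \eqref{lem part part: near injective}, if $\mu_0 \subseteq \mu$ is a loop enclosing no 2-cells then $f_0$ freely reduces to the empty word; by \eqref{lem part: survival}, $f_0$ can contain no complete Rips subword, so its noise content is confined to at most two partial Rips subwords at its endpoints, of total length at most $2K_0$. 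The remaining non-noise letters of $f_0$ must reduce to the empty word on their own, and since any run of Rips-free syllables contributes a reduced $\{a, b\}$-word, only a bounded number of such syllables can occur in $f_0$, yielding a uniform bound $|f_0| \le K$.

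For \eqref{lem part: small overlap}, the path $\mu_0 \cdot (\mu'_0)^{-1}$ is a loop (or disjoint union of loops) in $\Delta$ enclosing no 2-cells, so the concatenated word $f_0 (f'_0)^{-1}$ freely reduces to the empty word. Applying the argument of \eqref{lem part part: near injective} to this concatenation, any complete Rips subword appearing in $f_0$ or in $f'_0$ would have to cancel with its inverse in the other word; by $C'(1/4)$ and the uniqueness of Rips words in $\mathcal P$, the two 2-cells containing them, one in $\mathcal C$ and one in $\mathcal C'$, would share the edges crossed by that Rips subword (those edges lie in $I = \mu_0 \cap \mu'_0$) as oppositely oriented copies of the same relator, forming a back-to-back cancelling pair contrary to reducedness. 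Hence neither $f_0$ nor $f'_0$ contains a complete Rips subword, and the counting from \eqref{lem part part: near injective} applied to each side bounds $|\mu_0|$ and $|\mu'_0|$ by a uniform constant. The main obstacle I anticipate is the inductive reduction in \eqref{lem part: survival} to adjacent syllables, and the verification that the resulting oppositely oriented pair of cells actually shares the correct edge of $\Delta$ in the manner required to form a back-to-back cancelling pair; this requires carefully tracking both the Rips-subword identification and the corridor structure in tandem.
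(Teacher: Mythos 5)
Your overall strategy for part \eqref{lem part: near injective} (the ``first half to fully cancel'' small-cancellation argument) is the right one, and you have correctly anticipated where the difficulty lies. But the obstacle you flag at the end is a genuine gap in the proposal as written, and the way the paper handles it is quite different from what you suggest.

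The gap is the ``induction on the number of syllables between $C$ and $C'$ ... reduces to the case where $C$ and $C'$ are adjacent in $\mathcal C$,'' followed by the claim that ``their shared edge in $\mathcal C$'' exhibits the cancelling pair. Two problems. First, the reduction to adjacency does not go through: between the two Rips-bearing syllables there may be a nonempty Rips-free run $f_{l+1}\cdots f_{m-1}$, which by hypothesis is a \emph{reduced} word on $\{a_1, a_2, b_0, \ldots, b_p\}^{\pm 1}$ and therefore cannot itself vanish by free reduction -- it cancels only by being absorbed into the bounded non-Rips margins of $f_l$ and $f_m$. This forces $m-l$ to be bounded, but not $m-l = 1$, and there is no ``smaller instance of the same configuration'' to induct on in that case. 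Second, and more fundamentally, the corridor edge $e_i$ is not the relevant shared edge at all. The paper's argument never needs $\mathcal C$-adjacency: once $S_\sigma$ (the suffix half of the Rips subword $\sigma$ in cell $C_l$) is seen, via the free-reduction sequence $\Sigma$, to cancel against a sub-Rips-subword of $f_m$, the geometric content of that cancellation is that the subpath of $\mu$ tracing $S_\sigma$ around $\partial C_l$ and the subpath tracing $S_\sigma^{-1}$ around $\partial C_m$ are \emph{the same path of edges in $\Delta$}. So $C_l$ and $C_m$ automatically share a long path of edges, with no appeal to the corridor edges $e_i$. Combined with $C'(1/4)$ (forcing the two Rips subwords to be inverse to one another) and the fact that each Rips word appears in a unique defining relator, this identifies $C_l$ and $C_m$ as a back-to-back cancelling pair directly, wherever they sit in $\mathcal C$.

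A secondary issue: you invoke Lemma~\ref{lem: cancellation from C'(1/4)}, but that lemma is stated only for $\overline v$ reducing to the empty word, which is not given here, and its stated form allows at most one isolated $x$-letter between consecutive Rips subwords, which your ``straightforward adjustment'' glosses over. The paper sidesteps this by running the ``first prefix/suffix half to fully cancel'' argument directly on $f$ (with the $P_\sigma$, $S_\sigma$ notation) rather than routing through the earlier lemma. If you want to reuse the earlier lemma you would need to prove a variant of it under weaker hypotheses; it is cleaner to argue directly. Your outlines for \eqref{lem part: linear bound on f}, \eqref{lem part part: near injective} and \eqref{lem part: small overlap} are essentially what the paper does, conditional on having \eqref{lem part: survival} in hand.
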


 \begin{proof}  
For \eqref{lem part: near injective}, we can interpret the sequence $\Sigma$ as folding together adjacent pairs of edges in
 a  $|f \overline{f}^{-1}|$-sided simple polygonal-path in the plane until we have the planar tree  in $\Delta$  whose boundary circuit is $\mu  \overline{f}^{-1}$.  Because every cyclic conjugate of a defining relator (of Figure~\ref{fig:relations}) is freely reduced, no cancellation of a pair of letters within a syllable of $f$ occurs in the course of $\Sigma$.   

Given $\sigma   \in (\mathcal{X} \cup \mathcal{Y})^{\pm 1}$, let $P_{\sigma}$  and $S_{\sigma}$ denote 
its prefix and suffix, respectively, such that $\sigma  = P_{\sigma} S_{\sigma}$ as words, and $|P_{\sigma}| = \lfloor | \sigma|/2  \rfloor$.    Suppose of all the Rips subwords in the syllables of $f$, some subword $\sigma$  of $f_{l}$ is the first such that  either $P_{\sigma}$ and $S_{\sigma}$ is fully cancelled away in the course of $\Sigma$.  Assume it is $S_{\sigma}$ that is first cancelled away. (The argument if it is $P_{\sigma}$ will be essentially the same, and we omit it.)   Then $S_{\sigma}$  must cancel with a subword of   $f_{m}$, where   $m>l$ is  minimal such that $f_m$ has a Rips subword.  
But that is impossible: the $C'(1/4)$-condition for $\mathcal{X} \cup \mathcal{Y}$ and the fact that each of its elements has length at least 100, imply that some subword of $\sigma^{-1}$ of at least a quarter of its length is a subword of   $f_{m}$, and moreover the 2-cell $C_{l}$ cancels with $C_m$ in $\Delta$,  contrary to $\Delta$ being a reduced diagram.  
This proves \eqref{lem part: survival}.   

Now suppose that syllables $f_{i}, \ldots,  f_{j}$ do not contain Rips subwords.  Then (by hypothesis) $f_{i}  \cdots    f_{j}$ is a reduced word on  $\{a_1, a_2, b_0, \ldots, b_p\}^{\pm 1}$.  So the number of letters that can cancel away on freely reducing $f_{i-1} f_{i}  \cdots    f_{j}  f_{j+1}$ is less than four times the length of the longest defining relation for our group. Together with \eqref{lem part: survival}, this implies \eqref{lem part: linear bound on f} and \eqref{lem part part: near injective} for a suitable constant $K \geq 1$.

For \eqref{lem part: small overlap}, first we observe that $I$ is a path because, by hypothesis, $\mu_0 \cup \mu'_0$ encloses no 2-cells. Let $w_0$ and $w'_0$ be the words read along $\mu_0$ and $\mu'_0$, respectively.  Assume, without loss of generality, that $\mu_0$ and $\mu'_0$ are oriented in the same direction---which is to say that  $w_0 (w'_0)^{-1}$ is the word around $\mu_0 \cup \mu'_0$.  Then free reduction takes $w_0$ and $w'_0$ to the word $w$ read along $I$.  (We are not claiming $w$ is freely reduced---further free reduction may be possible.) 
 
 The proof can then be completed in a similar manner to part \eqref{lem part part: near injective}.  In short, if there is a  Rips subword $\sigma$ in $w'_0$,  then there must be a subword of $\sigma$ in  $w_0$ also and these two words 
 have large overlap in $w$,  
so as to imply that there are cancelling 2-cells in $\mathcal{C}$ and $\mathcal{C}'$.        So $\mu'_0$ contains no complete Rips subword   and,  because each of the syllables of $\mu'$ contains a Rips subword (by hypothesis), $\mu'_0$ has length at most a constant.  It then follows that $\mu_0$, which also contains no complete Rips subword, also has length at most a constant: within $w_0$, any  $f_i$ that contains no Rips subword can only cancel with the neighbouring $f_{i-1}$ or $f_{i+1}$ if they contain a Rips subword (so at most some constant number of letters in total can cancel away) and the remaining letters must be in $w'$, which has length at most  $|\mu'_0|$.
\end{proof}

\begin{lemma} \label{lem: no self-osculating t-corridors}
Suppose   $\mu$ is the path   along one side of a $t$-corridor $\mathcal{C}$ in a reduced van~Kampen diagram $\Delta$.  
Then the first $y$-edge $e$ of $\Delta$ traversed by $\mu$ is not  traversed a second time by $\mu$.       
\end{lemma}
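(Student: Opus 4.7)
The plan is to argue by contradiction. Suppose $\mu$ traverses the first $y$-edge $e$ twice, and pick the two positions $i<j$ in $\mu$ where this happens with $j-i$ minimal. Let $C_k$ and $C_l$ be the $2$-cells of $\mathcal C$ containing the two traversals, and write the syllables of $\mu$ that they contribute as $u_k, u_l \in \mathcal{U}$. Each edge of $\Delta$ lies on the boundary of at most two $2$-cells---one on each side---and the $t$-corridor $\mathcal C$ lies on a consistent side of $\mu$. From this I would deduce:
\emph{if the two traversals of $e$ are in the same direction, then $C_k=C_l$} (a single cell has $e$ on its boundary twice), \emph{and if the traversals are in opposite directions, then $C_k\neq C_l$ lie on opposite sides of $e$}.

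From the two traversals of $e$ I would extract a loop $L$ in $\Delta$: in the same-direction case I take $L=e\cdot(e_{i+1}\cdots e_{j-1})$, while in the opposite-direction case I take $L=e_{i+1}\cdots e_{j-1}$ directly (which already begins and ends at the common endpoint of the two $e$-copies). In either case the label $w_L$ of $L$ equals $1$ in $G$.

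\emph{Same direction.} Here the boundary word of $C_k$ has the $y$-letter labelling $e$ at two positions, and $w_L$ is precisely the non-empty subword of $u_k\in\mathcal U$ running from the first of these positions up to (but not including) the second. Corollary~\ref{cor:subwords non-trivial} says non-trivial subwords of elements of $\mathcal U$ represent non-identity elements of $G$, yielding the desired contradiction.

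\emph{Opposite direction.} Writing $u_k=\alpha\,\eta\,\beta$ and $u_l=\gamma\,\eta^{-1}\,\delta$ (where $\eta$ is the $y$-letter labelling $e$), the label of $L$ is $w_L=\beta\,u_{k+1}\cdots u_{l-1}\,\gamma$. I would then concatenate $L$ with the boundary paths of $C_k$ and $C_l$ on their far sides of $e$ (using the relators $t^{-1}u_k t v_k^{-1}$ and $t^{-1}u_l t v_l^{-1}$) to produce a closed loop in $\Delta$ whose label is a word in the generating set $\mathcal U\cup\{t\}$, with the $t$-letters appearing in the $t^{-1}\!\cdots\! t$ pattern dictated by the two corridor cells. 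Cancelling the $t$'s using conjugation and passing to the vertex group of the HNN-structure $G=F\HNN_t$ from Proposition~\ref{prop:HNNWise}, the resulting word lies in the free subgroup $\langle\mathcal U\rangle$ of $F$, which is free on $\mathcal U$ by Lemma~\ref{lem: C(3)}\eqref{lem part:vertex groups free U}, and represents the identity there. As in the proof of Lemma~\ref{lem: no t-loop}, the word must then contain an adjacent cancelling pair of $\mathcal U$-generators; and since each element of $\mathcal U$ uniquely determines the $2$-cell whose boundary contains it as a side, this produces a pair of back-to-back cancelling $2$-cells in $\Delta$, contradicting the reducedness of $\Delta$.

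The main obstacle is the opposite-direction case: the two occurrences of $e$ sit as \emph{internal} letters of the two distinct $\mathcal U$-syllables $u_k$ and $u_l$, so the loop label $w_L$ is not yet a word on $\mathcal U$. The delicate step is to absorb the partial syllables $\alpha,\beta,\gamma,\delta$ into a genuine loop whose label is a word on $\mathcal U\cup\{t\}$, so that the free-generation property of $\mathcal U$ inside $F$ may be invoked in the style of Lemma~\ref{lem: no t-loop} to force the sought cancelling pair of $2$-cells.
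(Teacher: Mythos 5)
Your split into same-direction and opposite-direction traversals is reasonable, and the same-direction case is handled correctly via Corollary~\ref{cor:subwords non-trivial}. But the opposite-direction case is, as you note yourself, the real content, and the gap you flag there is genuine and not easily repaired along the lines you sketch. The loop $L$ reads $w_L = \beta\, u_{k+1}\cdots u_{l-1}\,\gamma$, and any attempt to close up through $\partial C_k$ or $\partial C_l$ still drags along partial syllables: detouring around $C_k$ from the endpoint of $e$ to the end of $u_k$ reads $\eta^{-1}\alpha^{-1}\, t^{\mp 1}v_k^{\mp 1}t^{\pm 1}$, not a word on $\mathcal U \cup \{t\}$; the $\alpha^{-1}$ (and the analogous $\delta$ at the other end) persist. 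So the ``pass to the vertex group, observe a forced cancellation in $\langle\mathcal U\rangle$'' step of Lemma~\ref{lem: no t-loop} is not available as stated. One can observe from the loop that $u_k u_{k+1}\cdots u_l$ freely equals $\alpha\delta$ in $F$, but $\alpha$ and $\delta$ are proper sub-syllables, and the $C(3)$-condition alone does not obviously preclude $\alpha$ and $\delta$ being large pieces, so no contradiction drops out.

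A more telling symptom is that your argument nowhere uses the hypothesis that $e$ is the \emph{first} $y$-edge that $\mu$ traverses. That hypothesis is load-bearing in the paper's proof: after using Lemma~\ref{lem: no t-loop} to see that the loop $\overline\mu$ encloses no 2-cells, the paper applies the small-cancellation survival Lemma~\ref{lem:corridor overlap}\eqref{lem part: survival} to conclude $\overline w$ contains no complete Rips subword; then, because $e$ is first, the syllable of the cell $C_e$ splits as $u_1\, y_\ast\, u_2$ with $u_1$ free of $y$-edges, and inspection of the defining relators forces the tail $u_2$ (a subword of $\overline w$) to contain a complete $Y_\ast$. That is the contradiction. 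If $e$ were an interior letter of some $Y_\ast$, the tail $u_2$ might carry only a fragment of a Rips word, and the argument would collapse. A proof that (when completed) would apply to an arbitrary $y$-edge should make you suspicious that a hypothesis is being dropped.

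So the route you propose---contradicting reducedness via the HNN structure $G = F\HNN_t$ and the free basis $\mathcal U$---is genuinely different from the paper's, which works through the $C'(1/4)$ survival property of Rips subwords (Lemma~\ref{lem:corridor overlap}) rather than through $\mathcal U$. But as written the proposal has an unresolved gap exactly where the work is, and the unused ``first $y$-edge'' hypothesis strongly suggests the missing ingredient is the Rips-subword analysis rather than a sharper use of the HNN structure.
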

 
\begin{proof}   Suppose, on the contrary, $\mu$ traverses $e$ more than once, then (because $\Delta$ is planar and $\mu$ is the side of a corridor) it does so exactly twice---once in each   direction---and the subpath $\overline{\mu}$ of  $\mu$ starting with the first traverse of $e$ and ending with the second traverse is a loop. (See Figure~\ref{fig:self-osculating}.) 

\begin{figure}[htbp]
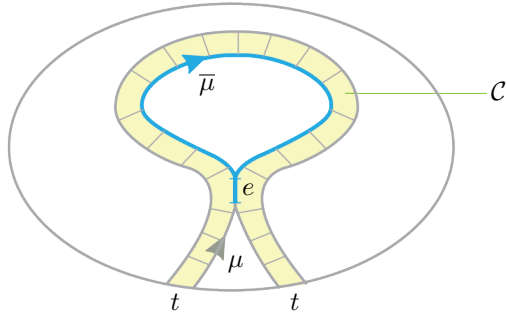

\centering
\begin{overpic}  
{Figures/self-osculating}
\put(59, -4){\small{$t$}}
\put(34, -4){\small{$t$}}
\put(46, 5){\small{$\mu$}}
\put(101, 40){\small{$\mathcal{C}$}}
\put(40, 42){\small{$\overline{\mu}$}}
\put(49, 20){\small{$e$}}
\end{overpic}
\caption{The $t$-corridor of our proof of Lemma~\ref{lem: no self-osculating t-corridors}}
\label{fig:self-osculating}
\end{figure}

With a view to applying  Lemma~\ref{lem:corridor overlap}\eqref{lem part: near injective} to $\mathcal C$, we check its hypotheses. As $\mathcal C$ is a $t$-corridor, our defining relations imply that the label of $\mu   \cap C$ contains a Rips subword for every cell $C$ of $\mathcal C$.  
There are no $t$-edges within  the region $\overline \Delta$ enclosed by $\overline{\mu}$, for if there were, then there would be a $t$-loop within $\overline \Delta$, contradicting Lemma~\ref{lem: no t-loop}. 
So $\overline{\mu}$ does not enclose any 2-cells.  
Thus Lemma~\ref{lem:corridor overlap}\eqref{lem part: survival} applies, and tells us that the label $\overline w$ of $\overline{\mu}$ 
has no Rips subword from $(\mathcal{X} \cup \mathcal{Y})^{\pm 1}$ as a subword.

On the other hand, Corollary~\ref{cor:subwords non-trivial} implies that $\overline{w}$ cannot be a subword of the boundary word of a single 2-cell of $\mathcal C$. 
In particular, if $C_e$ is the cell of $\mathcal C$ containing the initial point of $\overline{\mu}$ (and the edge $e$), then  $\overline{\mu}$ extends beyond $C_e$, and intersects at least one other cell of $\mathcal C$.   
Thus  if $t^{\pm 1} u t^{\mp 1} = v$  is the boundary label of $C_e$, where $u$ labels $\mu \cap C_e$, then $u$ has the form $u_1 y_\ast u_2$, where $y_\ast u_2$ is a prefix of $\overline w$.  Moreover, as 
$e$ is the first $y$-edge in $\mu$, it follows that $u_1$ has no $y$-edges.  Then, 
examining Figure~\ref{fig:relations}, we see that $u_2$ necessarily contains the entirety of some Rips subword $Y_\ast$ from 
$\mathcal{Y}^{\pm 1}$ as a subword.
(This is true even if the first letter of $\overline{w}$ is the lone $y_j^{\pm 1}$ that arises in the $r_{4,i,j}$-cells.)
This contradicts our earlier conclusion that $\overline w$ has no Rips subwords. 
\end{proof}

We will use our next lemma in our proof of Lemma~\ref{lem: Layout lemma}\eqref{lem part: No y-edges above a b-track}.    Here is the intuition. Imagine a diagram consisting of a sequence of side-by-side  vertical corridors as in Figure~\ref{fig:vertical}.
If there are no $y$-edges at the bottom of the diagram, 
then we can slice horizontally through it
and discard the portion above the cut, so that the diagram that remains has no $y$-edges and the length of the cut is at most a constant times the length of the top.

\begin{lemma} \label{lem: vertical t-corridors} \textup{\textbf{($y$-edges in side-by-side   $t$-corridors)}}  There  exists a constant $C>0$ with the following property.  
Suppose $u$ and $v$ are words that represent the same element of $G$ and  that $v$  contains no $y$-letters.  
Suppose $\Delta$ is a reduced diagram for  $uv^{-1}$.  Let $\ast_0$ and  $\ast_1$ be the vertices on    $\partial \Delta$ where both $u$ and $v$ start and end (respectively).   Assume that every $t$-corridor in $\Delta$ connects a $t^{\pm 1}$ in $u$  to a  $t^{\pm 1}$ in $v$.

Then there is a word $v'$ read along some injective path through  $\Delta^{(1)}$ from $\ast_0$ to $\ast_1$ such that $|v'| \leq C |u|$ and the subdiagram $\Delta'$  (per Figure~\ref{fig:vertical}), which is a van~Kampen diagram for $v(v')^{-1}$,  contains no $y$-edges.   
\end{lemma}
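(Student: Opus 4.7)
The strategy is to cut each $t$-corridor of $\Delta$ at a height that isolates the $y$-free portion near $v$. I first observe that since every relator of $\mathcal{P}$ has the form $t^{-1} u t v^{-1}$ (by Proposition~\ref{prop:HNNWise}), every 2-cell of $\Delta$ belongs to a unique $t$-corridor. The regions of $\Delta$ between consecutive $t$-corridors therefore contain no 2-cells, are tree-like, and effectively glue adjacent corridors along shared sides.

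For each $t$-corridor $\mathcal{C}$ I would walk along each of its two sides starting at the $v$-end and locate the first $y$-edge, if any, along each. By Lemma~\ref{lem: no self-osculating t-corridors} these first $y$-edges are each traversed only once. I then cut $\mathcal{C}$ at the $t$-edge just on the $v$-side of the earlier of these two first $y$-edges, or at the $v$-end $t$-edge itself if a $y$-edge already appears in the cell adjacent to $v$. The retained portion of $\mathcal{C}$ consists of some $k(\mathcal{C}) \geq 0$ cells, none of whose sides carries a $y$-edge. I then take $\Delta'$ to be the union of all retained corridor portions together with the tree-like pieces of strips on the $v$-side, and define $v'$ to be the path from $\ast_0$ to $\ast_1$ along the boundary of $\Delta'$ opposite $v$. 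By construction $\Delta'$ contains no $y$-edges and is a van~Kampen diagram for $v(v')^{-1}$.

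The main obstacle will be establishing the length bound $|v'| \leq C|u|$. The number of $t$-corridors equals $|u|_t \leq |u|$, contributing at most $|u|$ cut $t$-edges to $v'$; the remaining contribution comes from the sides of the retained corridor portions and the tree-like strip connections. The key point is that shared sides of adjacent corridors are traversed by $v'$ only once, so this side-contribution is bounded by the outer perimeter of $\Delta'$ (the part of its boundary that is neither on $v$ nor a cut $t$-edge). This outer perimeter should be controlled by $|u|$ via the tree structure of the strips together with the uniformly bounded length of each relator's sides. A careful bookkeeping argument using these facts together with the absence of $t$-loops (Lemma~\ref{lem: no t-loop}) and the reduced-diagram hypothesis should yield a constant $C$ depending only on the presentation $\mathcal{P}$.
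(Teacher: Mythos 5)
Your strategy is the right one and matches the paper's: cut each $t$-corridor just below its first $y$-edge, take $\Delta'$ to be the retained lower portions, and let $v'$ be the new boundary path.  Your use of Lemma~\ref{lem: no self-osculating t-corridors} to certify that the first $y$-edge along each corridor side is traversed only once is also the right first step.  However, the length bound $|v'| \leq C|u|$ is genuinely not established in your proposal, and the place where it falls short is precisely where the real content of the lemma lies.

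The difficulty is this: the cut heights of neighbouring corridors could, a priori, fluctuate wildly.  If $\tau_i$ is cut near $v$ while $\tau_{i+1}$ is cut near $u$, then $v'$ must traverse the shared side of $\tau_i$ and $\tau_{i+1}$ between the two cut levels, and the length of that traversal is proportional to the difference in heights -- which is \emph{not} controlled by $|u|$, the number of corridors, or the bounded relator length; it could be as large as the area of $\Delta$.  Your appeal to ``tree structure of the strips together with the uniformly bounded length of each relator's sides'' does not address this; bounded cell perimeter controls the per-cell contribution, not the number of cells whose sides end up on $v'$.  What is actually needed, and what the paper establishes, is a \emph{synchronization} of the cut levels: writing $e_i^\pm$ for the first $y$-edge encountered along each of the two sides $\mathcal{S}_i^\pm$ of $\tau_i$ starting from $v$, one has (i) $e_i^-$ and $e_i^+$ lie on the boundary of the \emph{same} cell $C_i$ of $\tau_i$ (because a cell of a $t$-corridor has a $y$-edge on one side iff it does on the other), and (ii) if $e_i^+$ is not an edge of $u$, then $e_i^+ = e_{i+1}^-$.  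Point (ii) is the crux: it uses Lemma~\ref{lem: no self-osculating t-corridors} to rule out $e_i^+$ lying in a spike that $\mathcal{S}_i^+$ traverses twice, and $v$ being $y$-free to rule out $e_i^+ \in v$, leaving only $e_i^+ \in u$ or $e_i^+$ on the side shared with $\tau_{i+1}$.  With (i) and (ii) in hand, the cells $C_i$ and $C_{i+1}$ are adjacent (or one of $e_i^+, e_{i+1}^-$ is in $u$), so $v'$ can be chosen to follow only portions of $u$ and the boundary circuits of the at most $|u|$ cells $C_i$, which immediately gives $|v'| \leq C|u|$ with $C$ the maximum relator length.  You should make this synchronization step explicit; without it, the ``careful bookkeeping'' you defer to has no foundation.
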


\begin{figure}[htbp]
\centering
\begin{overpic}  
{Figures/vertical}
\put(12, 41){\small{$\ast_0$}}
\put(78, 41){\small{$\ast_1$}}
\put(43.5, 94.5){\small{$u$}}
\put(58, -1){\small{$v$}}
\put(46.5, 66){\small{$v'$}}
\put(-7, 26){\small{$\Delta'$}}
\put(24, 19){\small{$t$}}
\put(34, 10){\small{$t$}}
\put(45, 1){\small{$t$}}
\put(54, 14){\small{$t$}}
\put(73, 14){\small{$t$}}
\put(72, 36){\small{$t$}}
\put(20, 62){\small{$t$}}
\put(27, 84){\small{$t$}}
\put(39, 75){\small{$t$}}
\put(62, 88){\small{$t$}}
\put(75, 60){\small{$t$}}
\put(86, 78){\small{$t$}}
\put(22, 33){\small{$\tau_1$}}
\put(31, 38){\small{$\tau_2$}}
\put(41, 40){\small{$\tau_3$}}
\put(50, 42){\small{$\tau_4$}}
\put(60, 46){\small{$\tau_5$}}
\put(68, 48){\small{$\tau_6$}}
\put(19.5, 46){\small{$C_1$}}
\put(28, 55){\small{$C_2$}}
\put(38, 62){\small{$C_3$}}
\put(57, 77){\small{$C_4$}}
\put(65.5, 63){\small{$C_5$}}
\end{overpic}
\caption{Lemma~\ref{lem: vertical t-corridors}, illustrated}\label{fig:vertical}
\end{figure}

\begin{proof}
We denote the $t$-corridors of $\Delta$ by
$\tau_1, \ldots, \tau_m$, for some $m$, where $\tau_i$ connects the $i$th $t^{\pm 1}$ in $v$  to the $i$th  $t^{\pm 1}$ in $u$.     Every $t$-corridor is of this form, by hypothesis.  Observe that $m \leq |u|$. 

For all $i$, let $\mathcal{S}_i^-$ and $\mathcal{S}_i^+$ be the paths from $v$ to $u$ along the two sides of $\tau_i$, with $\mathcal{S}_i^-$ emanating from the starting vertex of the $t^{\pm 1}$ of $\tau_i$ in $v$  and $\mathcal{S}_i^+$ from its  ending vertex. Assuming there is a $y$-edge   on $\mathcal{S}_i^{\pm}$, let $e_i^{\pm }$  be the lowest---which is to say that $e_i^{\pm }$ is  the first $y$-edge that $\mathcal{S}_i^{\pm}$ traverses.  If there are $y$-edges in one side of a 2-cell in a $t$-corridor, then there are $y$-edges in the other side of that cell.  So $e_i^{-}$ and $e_i^{+}$ (if defined) are in the boundary of the same  2-cell $C_i$ of~$\tau_i$.      
Moreover, as  Lemma~\ref{lem: no self-osculating t-corridors} guarantees that $\mathcal{S}_i^{\pm }$ does not traverse $e_i^{\pm }$ a second time and,  because $v$ has no $y$-edges, $e_i^{\pm }$ is either in $u$ or part of the neighboring $t$-corridor. 
 It follows that for all $i$, \begin{itemize} \item either both  $e_i^{+}$ and $e_{i+1}^{-}$ exist, they agree, and they are not in $u$, \item or  both  exist and are   in  $u$, \item or only one exists and is in $u$, \item or neither exists. \end{itemize}

Take $C$ to be the maximum length of a defining relator in $\mathcal{P}$.
Then there is an injective path through  $\Delta^{(1)}$ from $\ast_0$ to $\ast_1$ that follows portions of $u$ and portions of the   boundary circuits of the at most $|u|$ 2-cells $C_i$, such that the word $v'$ along  this path satisfies the required conditions.    (This path is shown in blue in Figure~\ref{fig:vertical}.)
\end{proof}

 Our final lemma is illustrated by Figures~\ref{fig:t-corridors on boundary} and \ref{fig:corridors_on_boundary_detail}.     
(The path   $\rho$ is in the   graph dual to  $\Delta^{(1)}$.)    In short, it says, in the notation of Figure~\ref{fig:t-corridors on boundary}, that the diagram cannot flare out exponentially towards $v$.   Its   application in    Lemma~\ref{lem: Layout lemma}\eqref{lem part:no sinks}  will be that certain regions can be sliced off a reduced diagram with the resulting diagram only longer by at most a constant factor. Thereby  we will simplify diagrams  that demonstrate distortion.

\begin{lemma} 
  \label{lem:t-corridors on boundary}    \textup{\textbf{(The lengths of compound-tracks between points on the boundary)}} 
  There exists a constant $C \geq 1$ with the following property.   Suppose a  region $R$ in a reduced diagram $\Delta$ is bounded by a portion $\mu$ of   $\partial \Delta$ and a compound track $\rho$ that is a concatenation  of  $a$-subtracks, inward-oriented $b$-subtracks, and $t$-subtracks.   	  Let $D$ be the minimal subdiagram of $\Delta$ containing $R$.  (That is, $D$ is the union of $R$ and the generalized corridor $\mathcal{C}$ through which $\rho$ passes.) So $D$ is a van~Kampen diagram for 
  $v {u}^{-1}$  for some words $v$ and ${u}$ 
  such that $v$ is read around $\partial \Delta$ starting and ending with the edges where $\mu$ and $\rho$ meet.                
  Suppose either
  \begin{enumerate}
\item  \label{lem part: boundary region} the $a$-subtracks in  $\rho$ are  oriented into $R$, or 
\item \label{lem part: boundary region no y}  $D$ contains no $y$-edges.  \end{enumerate} 
Then $|{u}|$ and the number of edges $| \rho|$ of $\Delta$ that $\rho$ crosses  are both at most $C |v|$.  
\end{lemma}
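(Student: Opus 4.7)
Let $\mathcal{C}$ denote the generalized corridor of $r$ cells $C_1,\dots,C_r$ through which $\rho$ passes, and let $\sigma$ (labelled by $u$) and $\lambda$ denote the paths along the $u$-side and $R$-side of $\mathcal{C}$, respectively, so that $\partial D=\mu\cdot\sigma^{-1}$ and $\partial R=\mu\cdot\lambda^{-1}$. Since $\rho$ crosses $r+1$ edges of $\Delta$, we have $|\rho|=r+1$, so it suffices to bound both $|u|=|\sigma|$ and $r$ linearly in terms of $|v|=|\mu|$. The plan is to combine a cell-by-cell case analysis of $\mathcal{C}$ with Lemma~\ref{lem:corridor overlap}\eqref{lem part: near injective} and the noise-trapping Lemmas~\ref{lem: trapped x-noise} and~\ref{lem: trapped y-noise}.

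By inspection of Figure~\ref{fig:relations}, for each cell $C_i$ of $\mathcal{C}$ the two sides of $\partial C_i$ between the edges crossed by $\rho$ are each either a syllable containing a Rips subword from $\mathcal{X}\cup\mathcal{Y}$ or a word of length at most $1$, and in every case the $u$-side of $C_i$ has length at most a universal constant $K_1$ (bounded by the longest defining relator). Under hypothesis~\eqref{lem part: boundary region}, the inward orientation of $a$- and $b$-subtracks combined with the structure of the $r_{\ast,\ast}$-cells forces the $R$-side of every cell of $\mathcal{C}$ to contain a Rips subword. Under hypothesis~\eqref{lem part: boundary region no y}, the absence of $y$-edges rules out cells of types $r_{3,0}$, $r_{3,0,j}$, $r_{4,\ast}$, $r_{4,\ast,\ast}$ from $\mathcal{C}$, and applying Lemma~\ref{lem: trapped x-noise}\eqref{lem part: no x-edges} to $R$ (whose boundary $\mu\cup\rho$ satisfies that lemma's hypotheses by the inward orientation of $b$-subtracks in $\rho$) rules out the ``bad'' orientations that would place Rips content only on the $u$-side; in the remaining cases the $R$-side of every cell again contains a Rips subword.

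From this, $|u|=|\sigma|\le K_1 r$: the path $\sigma$ is an injective subpath of $\partial D$, and any substantial cancellation between adjacent cell contributions in the formal $u$-side concatenation would, by $C'(1/4)$ on $\mathcal{X}\cup\mathcal{Y}$, produce a pair of cancelling cells in $\Delta$, contradicting that $\Delta$ is reduced. Applying Lemma~\ref{lem:corridor overlap}\eqref{lem part: near injective}\eqref{lem part: linear bound on f} to $\mathcal{C}$ with $\lambda$ as the injective side path (after noting that its hypothesis holds: syllables of the $R$-side without Rips subwords are single $\{a_1,a_2,b_0,\dots,b_p\}^{\pm1}$-letters, which concatenate to reduced words) yields $L_0 r\le K|\lambda|+K$, hence $r\le C_0(|\lambda|+1)$, where $L_0$ is the minimum Rips subword length.

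It remains to bound $|\lambda|$ by $|\mu|$. Any $2$-cells of $\Delta$ strictly interior to $R$ form a subdiagram whose area is controlled by the linear Dehn function of the $C'(1/6)$-hyperbolic group $G$, and any $a$-, $b$-, or $t$-subtrack strictly interior to $R$ must have both of its endpoints on $\mu$ (since $\lambda$ already accounts for the boundary of $R$ on the $\mathcal{C}$-side). An induction on the number of interior $2$-cells, slicing $R$ along innermost such interior subtracks, reduces to the base case where $R$ has no interior $2$-cells; then $\mu\cdot\lambda^{-1}$ encloses no $2$-cells and Lemma~\ref{lem:corridor overlap}\eqref{lem part: near injective}\eqref{lem part part: near injective} gives $|\lambda|\le K(|\mu|+1)$ directly. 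This yields $|\lambda|\le C_1|\mu|$, and combining with the preceding bounds, $r\le C|\mu|$ and $|u|\le C|\mu|$ for $C$ large enough. The hardest part of the argument is this final bound on $|\lambda|$: hypotheses~\eqref{lem part: boundary region} and~\eqref{lem part: boundary region no y} serve precisely to rule out interior configurations in $R$ that would let it ``flare'' and allow $|\lambda|$ to exceed $|\mu|$ by more than a constant factor.
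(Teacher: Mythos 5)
The first half of your argument — bounding $|u|$ and the number of cells $r$ in the corridor $\mathcal{C}$ linearly in terms of $|\lambda|$, the length of the $R$-side of $\mathcal{C}$ — is in the right spirit and roughly parallels the paper's use of Lemma~\ref{lem:corridor overlap}\eqref{lem part: near injective}. But the whole weight of the lemma rests on the final claim $|\lambda|\leq C_1|\mu|$, and that is where the argument has a genuine gap.

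Your justification for $|\lambda|\leq C_1|\mu|$ has two problems. First, the assertion that ``any $a$-, $b$-, or $t$-subtrack strictly interior to $R$ must have both of its endpoints on $\mu$'' is false. A $t$-track can enter $R$ across $\rho$ and exit across $\mu$; indeed, these mixed-endpoint $t$-tracks (the $\tau_i$ in the paper's notation) are the central organizing objects of the paper's proof. What the paper actually rules out — and has to work to rule out, via Lemmas~\ref{lem: trapped x-noise} and~\ref{lem: trapped y-noise} applied separately under hypotheses~\eqref{lem part: boundary region} and~\eqref{lem part: boundary region no y} — is $t$-subtracks with \emph{both} endpoints on $\rho$. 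Second, even granting your base case, the inductive step is never described: you say ``slicing $R$ along innermost such interior subtracks reduces to the base case,'' but the slicing produces new regions with new boundaries whose lengths are uncontrolled (the region between the two sides of a sliced-off $t$-corridor can contain an exponential blow-up of noise), so there is no reason the bounds accumulate linearly. Invoking the linear Dehn function does not help either: it bounds area by boundary length, not one portion of the boundary by another.

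The paper's proof handles exactly this difficulty by decomposing $R$ along the $t$-tracks $\tau_1,\dots,\tau_m$ that run from $\mu$ to $\rho$, into pieces $R_0,\dots,R_m$. It then establishes, for each piece separately, the bound $|\nu_i|\le L|v_i|+L$ where $\nu_i=\tau_i\cup\rho_i\cup\tau_{i+1}$, using Lemma~\ref{lem:corridor overlap}\eqref{lem part: near injective} for the near-injectivity of the corridor side \emph{and} Lemma~\ref{lem:corridor overlap}\eqref{lem part: small overlap} to show that any $t$-corridor joining two $t$-letters of $v_i$ has only bounded overlap with the generalized corridor dual to $\nu_i$. That second ingredient is what controls the length of the reduced boundary word of each $R_i$, and it has no analogue in your proposal.
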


\begin{figure}[ht]
\centering
\begin{overpic}   [scale=0.9] 
{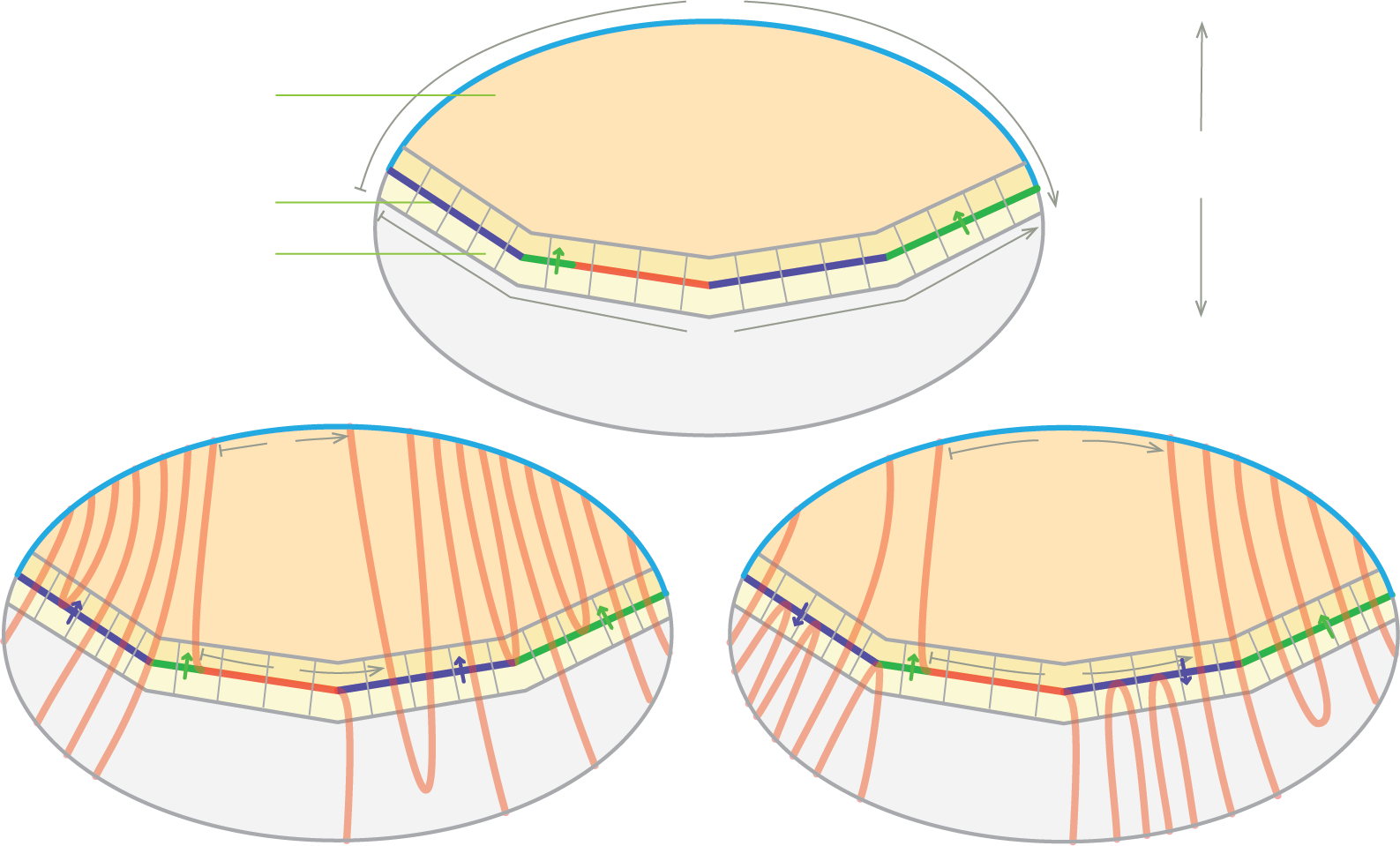}
\put(17.5, 53){\small{$R$}}
\put(17.5, 41.5){\small{$\mathcal{C}$}}
\put(17.5, 46){\small{$\rho$}}
\put(50.5, 57){\small{$\mu$}}
\put(50, 36){\small{$u$}}
\put(32.7, 43){\tiny{$a$}}
\put(38, 39.9){\tiny{$b$}}
\put(46, 38.7){\tiny{$t$}}
\put(57, 39.5){\tiny{$a$}}
\put(70, 43.5){\tiny{$b$}}
\put(50.5, 59.8){\small{$v$}}

\put(19.2, 28.4){\tiny{$\mu_8$}}
\put(19.4, 12.4){\tiny{$\rho_8$}}
\put(19.4, 20.4){\tiny{$R_8$}}

\put(75, 28.5){\tiny{$\mu_4$}}
\put(75, 11.9){\tiny{$\rho_4$}}
\put(75, 20.5){\tiny{$R_4$}}

\put(79.5, 48){\small{$D = R \cup \mathcal{C}$}}

\end{overpic}
\caption{Top: a region $R$ enclosed by a  portion $\mu$ of $\partial \Delta$ and  a compound track $\rho$ comprised of $a$- and $t$-subtracks and inward-oriented $b$-subtracks per Lemma~\ref{lem:t-corridors on boundary}.  The lower diagrams depict   the $t$-tracks incident with $\rho$  when (left) the $a$-subtracks are inward-oriented, and (right) when $R$ and $\mathcal{C}$ contain no $y$-edges.  Note that each $R_i$ could have $t$-tracks with both endpoints on $\mu_i$---these are are not pictured here, but are shown in the detail in Figure~\ref{fig:corridors_on_boundary_detail}.
}\label{fig:t-corridors on boundary}
\end{figure}

\begin{proof}       
We will establish the claimed bounds  by examining the $t$-tracks through $R$.  By Lemma~\ref{lem: no t-loop}, there are no $t$-loops in $R$ or indeed anywhere in $\Delta$, because $\Delta$ is reduced.  Next we will argue that there is no  $t$-subtrack $\tau$ in $R$ which is non-trivial (i.e.,  not a single point) 
and which starts and ends on $\rho$ and otherwise is in the interior of $R$.    If there were, then a subpath of $\tau$ together with  a subpath of $\rho$     would bound a region $R' \subseteq R$ that  cannot exist in a reduced diagram: under hypothesis \eqref{lem part: boundary region}, $R'$ would be contrary to Lemma~\ref{lem: trapped y-noise}\eqref{lem part: no y-edges cor}, and under  hypothesis \eqref{lem part: boundary region no y}, Lemma~\ref{lem: trapped x-noise}\eqref{lem part: no x-edges cor} applies to $R'$ and its conclusion \eqref{lem part part: ts} tells us there is an $r_{4,1}$-cell and an $r_{4,2}$-cell in $D$, and therefore a $y$-edge in $D$, contrary to assumption.

The tracks $\tau_1,   \ldots, \tau_m$ of $R$ which have one endpoint on $\mu$ and the other on $\rho$ divide 
$R$ into subregions $R_0, R_1, \ldots, R_m$ as illustrated in 
Figure~\ref{fig:t-corridors on boundary}, with the  lower left diagram depicting hypothesis \eqref{lem part: boundary region} and lower right, hypothesis 	\eqref{lem part: boundary region no y}.  Under either hypothesis  \eqref{lem part: boundary region} or 	\eqref{lem part: boundary region no y}, the previous paragraph implies that  every $t$-subtrack entering the interior of $R_i$ has both endpoints on $\mu$.  
In more detail, $\mu$ and $\rho$ can be expressed as  concatenations of subpaths 	$\mu_{0}$, $\mu_{1}$, \ldots,  $\mu_{m}$ and  $\rho_{0}$, $\rho_{1}$, \ldots,  $\rho_{m}$, respectively, so that for each $i$, the region  $R_i$ is bounded by $\mu_{i}$, $\rho_{i}$, $\tau_{i}$ and $\tau_{i+1}$ (with $\tau_0$ and $\tau_{m+1}$ being trivial paths).  

	 Guided by the locations of the letters $t^{\epsilon_i}$ read along the edges where 
the  $\tau_i$ meet $\mu$,  express $v$  as $$v \ = \ t^{\epsilon_0} v_{0} t^{\epsilon_1} v_{1} t^{\epsilon_2} v_{2} \cdots  t^{\epsilon_m} v_{m} t^{\epsilon_{m+1}} $$ where $\epsilon_1, \ldots, \epsilon_m \in \set{ \pm 1}$ and $\epsilon_0,  \epsilon_{m+1} \in \set{0, \pm 1}$,  and each $v_{i}$ is a subword of $v$ (which may contain further $t^{\pm 1}$).  

Fix $i \in \set{0, \ldots, m}$.  Let $\nu_i$ denote the concatenation of $\tau_i, \rho_i$ and $\tau_{i+1}$, so that $R_i$ is bounded by $\mu_i$ and $\nu_i$. Let $C_1, \dots, C_r$ denote the $2$-cells traversed by $\nu_i$, as shown in Figure~\ref{fig:corridors_on_boundary_detail} (with $i=4$ and $r=17$).  
Together they form a generalized corridor $\mathcal{C}$ in the  sense of Definition~\ref{def: generalized corridor}.  
Let $\Delta_i$ be the maximal subdiagram  that is a subset of $R$, includes the portion of $\partial \Delta$ labelled by $v_i$, and does not intersect $\tau_i$, $\rho$ or $\tau_{i+1}$.  Let $f = f_1\dots f_r$ be the word along the side of   $\mathcal{C}$ that is in $R_i$.  Then $\Delta_i$ is a van~Kampen diagram for $f v_i^{-1}$.  We refer to  $f_1, \ldots, f_r$ as the \emph{syllables} of $f$.  (It may be that $f$ is not reduced and $\Delta_i$ is not homeomorphic to a 2-disc.)

\begin{figure}[htbp]
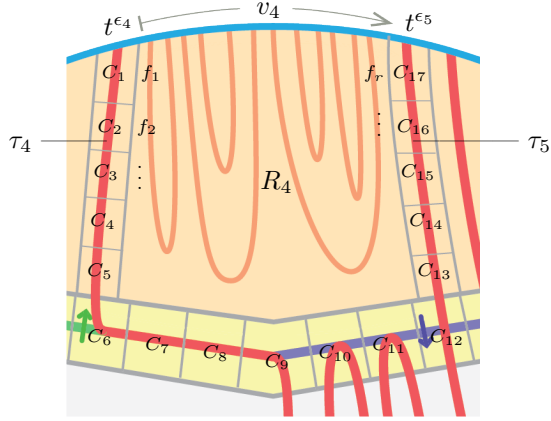

\centering
\begin{overpic}
{Figures/corridors_on_boundary_detail}
\put(9, 92.5){\small{$t^{\epsilon_4}$}}
\put(82, 93.5){\small{$t^{\epsilon_5}$}}
\put(46, 97.5){\small{$v_4$}}
\put(47, 55){\small{$R_4$}}
\put(8.5, 82){{\tiny{$C_{1}$}}}
\put(7.5, 69){{\tiny{$C_{2}$}}}
\put(6.5, 58){{\tiny{$C_{3}$}}}
\put(5.7, 46){{\tiny{$C_{4}$}}}
\put(5, 34){{\tiny{$C_{5}$}}}
\put(5, 18){{\tiny{$C_{6}$}}}
\put(19, 16){{\tiny{$C_{7}$}}}
\put(33, 14){{\tiny{$C_{8}$}}}
\put(48, 12){{\tiny{$C_{9}$}}}
\put(61, 14){{\tiny{$C_{10}$}}}
\put(74, 16){{\tiny{$C_{11}$}}}
\put(88, 18){{\tiny{$C_{12}$}}}
\put(85, 34){{\tiny{$C_{13}$}}}
\put(83, 46){{\tiny{$C_{14}$}}}
\put(81, 58){{\tiny{$C_{15}$}}}
\put(80, 69){{\tiny{$C_{16}$}}}
\put(79, 82){{\tiny{$C_{17}$}}}
\put(18, 82){{\tiny{$f_{1}$}}}
\put(17, 69){{\tiny{$f_{2}$}}}
\put(17, 55){{\tiny{$\vdots$}}}
\put(72, 82){{\tiny{$f_{r}$}}}
\put(75, 68){{\tiny{$\vdots$}}}
\put(-14, 65){{\small{$\tau_4$ --------}}}
\put(84, 65){{\small{------------ $\tau_5$}}}

\end{overpic}
\caption{The region $R_4$ illustrated per our proof of Lemma~\ref{lem:t-corridors on boundary}.}\label{fig:corridors_on_boundary_detail}
\end{figure}

We will show  that there exists a constant $L \geq 1$ such that, if $|\nu_i|$ denotes the number of edges of $\Delta$ crossed by $\nu_i$, then 
\begin{equation} \label{bound on nu_i}
|\nu_i| \ \leq  \  L|v_{i}| + L.
\end{equation}

We will argue that $\mathcal{C}$ satisfies the hypotheses of Lemma~\ref{lem:corridor overlap}.    
The label of $C_j$, read clockwise, is of the form $\alpha f_j \beta^{-1} \hat{f_j}$, 
with  $\alpha, \beta \in \set{a_1^{\pm 1}, a_2^{\pm 1}, b_1, \ldots, b_p,  t^{\pm 1}}$ being the letters  labeling edges dual to which $\nu_i$ enters and leaves $C_j$, respectively.  (The hypothesis that the $b$-subtracks that are part of $\rho$ are oriented into $R$ precludes $\alpha$ or $\beta$ being among $b_1^{-1}, \ldots, b_p^{-1}$.)     

Suppose $f_j$ does not have a Rips subword. Inspecting the defining relators for $G$ (Figure~\ref{fig:relations}), we find that one of  $\alpha$ and $\beta$ is in $\set{a_1^{- 1}, a_2^{- 1}}$ and the other is in  $\set{a_1^{- 1}, a_2^{-1}, t}$, and this can only occur when there is an $a$-subtrack in $\rho$ that is oriented out of $R$, contrary to hypothesis \eqref{lem part: boundary region}, which means that hypothesis \eqref{lem part: boundary region no y} must apply.    
But then the only way one of $\alpha$ and $\beta$ can be $t$ is if  $C_j$ is an $r_{4,i}$-cell and $\alpha$ and $\beta$  label the top and right edges (or vice versa) in the sense of Figure~\ref{fig:relations}, which is excluded by \eqref{lem part: boundary region no y} because $r_{4,i}$-cells have $y$-edges.     So     $\alpha, \beta \in \set{a_1^{- 1}, a_2^{- 1}}$ and  $C_j$ is an $r_{1, \ast}$- or $r_{2, \ast}$-cell, with $\ast \neq 0$ lest we contradict  \eqref{lem part: boundary region no y}.  If $C_j$ is an $r_{1, \ast}$-cell, then  $f_j  \in \set{b_1, \ldots, b_p, b_{q-1}a_1}^{\pm 1}$. If  $C_j$ is an $r_{2, \ast}$-cell, then $f_j  \in \set{b_1, \ldots, b_p}^{\pm 1}$.  

Next suppose $f_{j+1}$ also does not contain Rips word.  If one of  $C_{j}$ and $C_{j+1}$ is an $r_{1, \ast}$-cell and the other is an $r_{2, \ast}$-cell, then one  of them must be an $r_{1, q-1}$-cell and they meet along an edge labelled $a_2^{-1}$.  In this event, there is no cancellation between $f_j$ and $f_{j+1}$, because $f_j f_{j+1}$ is $(b_l^{\pm 1} a_1^{-1}b_{q-1}^{-1})^{\pm 1}$ for some $l$. If, on the other hand, $C_j$ and $C_{j+1}$ are both $r_{1, \ast}$-cells or both $r_{2, \ast}$-cells, then there can be no cancellation between $f_j$ and $f_{j+1}$ lest $C_j$ and $C_{j+1}$ be a cancelling pair of 2-cells, contrary to $\Delta$ being a reduced diagram.       
Thus if consecutive syllables $f_j$,   \ldots, $f_{l}$ (for $j\leq l$) do not contain Rips words, then $f_j,   \cdots, f_{l}  \in \set{b_1, \ldots, b_p, b_{q-1}a_1}^{\pm 1}$ and $f_j  \cdots f_{l}$ is a freely reduced word.  So $\mathcal{C}$ satisfies the hypotheses of Lemma~\ref{lem:corridor overlap}.

Let $\Delta_{\mathcal{C}}$ be the minimal subdiagram of $\Delta$ containing $\mathcal{C}$ and let $\overline{\Delta_{i}}$ be the maximal subdiagram of $\Delta_i$ that contains the path labelled $v_i$ and does not intersect the interior of $\Delta_{\mathcal{C}}$.  Let  $\overline{f}$ be the word such that $\overline{\Delta_{i}}$ is a van~Kampen diagram for $\overline{f} v_i^{-1}$. There are no 2-cells in $\Delta_i \setminus \overline{\Delta_{i}}$ because there would be a $t$-track through such a 2-cell and we know that all $t$-tracks in  $\Delta_i$ connect a pair of edges in $v_i$. So $\overline{f}$ can be obtained from $f$ by freely reducing $f$ (perhaps only partially: $\overline{f}$  need not be freely reduced), so as to remove all the letters which  label any 1-dimensional \emph{spikes} of $\Delta_i$ that protrude into $\mathcal{C}$. By Lemma~\ref{lem:corridor overlap}\eqref{lem part: linear bound on f}, there is a constant $K  \geq 1$ such that  
\begin{equation} \label{eqn:f bound}
|f| \ \leq  \ K  |\overline{f}| + K.  
\end{equation}

Next, suppose $\mathcal{C'}$ is a $t$-corridor that joins a pair of $t$-letters in $v_i$.  Then $\mathcal{C}$ and $\mathcal{C}'$ have no 2-cells in common:  were there such a 2-cell, the $t$-track through $\mathcal{C'}$ would intercept $\nu_i$ (see Figure~\ref{fig:cell_track_intersections}).  Moreover,  there can be no 2-cell in any subdiagram of $\Delta_i$ whose boundary is made up of a path along one side of $\mathcal{C}$ and a path along one side of $\mathcal{C}'$:  there would be a $t$-subtrack through such a 2-cell, and it would either be part of a $t$-loop (contrary to Lemma~\ref{lem: no t-loop})  or would join two points on $\rho_i$ (which we argued at the start of this proof cannot happen).  So Lemma~\ref{lem:corridor overlap}\eqref{lem part: small overlap}  applies and tells us that the overlap between $\mathcal{C}$ and $\mathcal{C}'$ has length at most the constant $K$.  

Each edge of the $\overline{f}$-portion of $\partial \Delta_i$ is either in the $v_i$-portion  of $\partial \Delta_i$ or is the side of such a   $t$-corridor $\mathcal{C'}$. At  most $|v_i| /2$ $t$-corridors join a pair of $t$-edges in $v_i$.  We conclude that there is a constant $K' \geq 1$ such that   
\begin{equation} \label{eqn:overlinef bound}
|\overline{f}|  \ \leq \  K' |v_i|.      
\end{equation}

The existence of a constant  $L \geq 1$ such that \eqref{bound on nu_i} holds now comes from combining $|\nu_i| \leq |f|$,  \eqref{eqn:f bound}, and \eqref{eqn:overlinef bound}.

Finally, using $|\rho_{i}|  \ \leq |\nu_i|$ and summing \eqref{bound on nu_i} over all $0 \le i \le m$, we get
that   $$|\rho |  \ \leq \  \sum_{i=0}^m |\rho_{i}| \ \leq \    L|v| + L(m+1)   \ \leq  \   2L|v|.$$
So  $|\rho|$ and $|{u}|$  are both at most $C |v|$ for a suitable constant $C \geq 1$ derived from $L$ and the maximum length of a defining relation.
\end{proof}

While we will only call on the lemma above in its full generality, we note that in the case when $\rho$ is a $t$-track, it gives:

 \begin{cor}\label{cor:HNN undistorted}
The vertex groups of the HNN-structure $G  =  F \HNN_{t}$ are undistorted in $G$.
\end{cor}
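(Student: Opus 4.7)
The plan is to show that the base group $F$ is undistorted in $G = F \HNN_t$; the two associated subgroups appearing in Proposition~\ref{prop:HNNWise} are finitely generated free subgroups of $F$ by Lemma~\ref{lem: C(3)}, and finitely generated subgroups of free groups are always undistorted, so the corollary reduces to undistortion of $F$.

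Fix $g \in F$ and let $w$ be a geodesic representative of $g$ in the generators of $G$, so that $|w| = |g|_G =: n$. Let $u_0$ be any word on the $F$-generators representing $g$, and let $\Delta$ be a reduced van~Kampen diagram for $w u_0^{-1}$. The key structural observation is that every defining relator of $\mathcal{P}$ has the form $t^{-1} u t v^{-1}$, so every $2$-cell of $\Delta$ lies in exactly one $t$-corridor. Combined with Lemma~\ref{lem: no t-loop} and the fact that $u_0$ has no $t$-letters, this forces each $t$-corridor to run between a pair of $t$-edges in the $w$-subarc of $\partial \Delta$. In particular, the connected component $R_0$ of $\Delta$ with the interiors of all $t$-corridors removed that meets the $u_0$-subarc of $\partial\Delta$ is a simply connected planar region containing no $2$-cell.

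Consequently the $1$-skeleton of $R_0$ is a tree, so the word around $\partial R_0$ freely reduces to the empty word. Decomposing
\[
\partial R_0 \;=\; u_0^{-1}\cdot p_1 \cdot w_1 \cdot p_2 \cdot w_2 \cdots p_m \cdot w_m,
\]
where $C_1,\ldots,C_m$ are the $t$-corridors that meet $R_0$, $p_i$ labels the side of $C_i$ adjacent to $R_0$, and each $w_i$ is a $t$-free subarc of $w$, we conclude that $u := p_1 w_1 \cdots p_m w_m$ represents $g$ in $F$ with $|u| \leq \sum_i |p_i| + \sum_i |w_i|$.

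It remains to bound each $|p_i|$ linearly in the length of the subarc $\nu_i$ of $w$ lying on the opposite side of $C_i$ from $R_0$. Let $S_i$ be the component of $\Delta \setminus C_i$ opposite $R_0$, so $\partial S_i$ is the union of $C_i$ and $\nu_i$. Apply Lemma~\ref{lem:t-corridors on boundary} to $S_i$ with $\mu = \nu_i$ and $\rho$ equal to the single $t$-track through $C_i$. Since $\rho$ has no $a$-subtracks, hypothesis~(1) of that lemma is vacuously satisfied, so the lemma yields a uniform constant $C_0$ with $|p_i| \leq C_0 |\nu_i|$. As the arcs $\nu_i$ and $w_i$ are pairwise disjoint subarcs of $w$, summing gives
\[
|g|_F \;\leq\; |u| \;\leq\; C_0 \sum_i |\nu_i| + \sum_i |w_i| \;\leq\; (C_0 + 1)\, n,
\]
establishing that $F$ is undistorted. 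The main technical point is verifying that Lemma~\ref{lem:t-corridors on boundary} applies to the far-side region $S_i$, which in general contains further $t$-corridors in its interior; this is fine because the lemma imposes no hypotheses on the interior of $R$, only on the orientations of the subtracks along $\partial R$.
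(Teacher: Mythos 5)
Your proof is correct and fleshes out exactly the argument the paper intends: the authors present Corollary~\ref{cor:HNN undistorted} with no written proof, stating only that Lemma~\ref{lem:t-corridors on boundary} specialized to $\rho$ a $t$-track gives it, and your argument --- pairing up $t$-edges of $w$ via corridors, noting the region $R_0$ containing the $F$-word has no 2-cells, and applying the lemma to each far-side region $S_i$ (with hypothesis~(1) vacuous for a pure $t$-track) --- is precisely that specialization, carried out carefully. The only polish I would add is to reduce at the outset to the case where $\Delta$ is a topological disc (so that Definition~\ref{def: tracks} and Lemma~\ref{lem:t-corridors on boundary} apply cleanly), and to account for the two $t$-letters of $w$ at the ends of each corridor when bounding $|v|$ in terms of $|\nu_i|$; both are routine and do not affect the linear bound.
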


\subsection{Intersection patterns for a pair of paths across a disc} \label{sec: intersection patterns}

Towards further understanding the intersection patterns of tracks, we consider here how a pair of transversely oriented paths in a disc may intersect if there are no ``sink-regions.''  
The results in this section are  formulated so as to be combinatorial, bypassing issues such as paths intersecting each other infinitely many times.  We could, equivalently, have made  the paths in this section injective combinatorial paths in the 1-skeleton of a finite 2-complex homeomorphic to a 2-disc.

\begin{definition}\label{def: sink source}
\textup{\textbf{(Sinks and sources)}}
Let $\sigma$ and $\tau$ be piecewise-linear paths in a 2-disc $D$, each of which is made up of   finitely many straight-line segments and has a  transverse orientation.   
Suppose that $\sigma$ and $\tau$ meet $\partial D$ at exactly four    points---their end points---and that their intersections are transverse.  
 A region $R$ in $D$ such that $\partial R$ is a union of subpaths of $\sigma$ and $\tau$ is called a \emph{sink region} if the orientation on each subpath in $\partial R$ points inward  and a \emph{source region} if the orientation on each subpath in $\partial R$ points outward.      Note that by definition, the boundary of a  sink or source region does not include any part of $\partial D$.  
\end{definition}

\begin{lemma} \label{lem: intersections}
Let $\sigma$ and $\tau$ be paths in a $2$-disc $D$ as per Definition~\ref{def: sink source}.    
	If there is no sink region in $D$, then, up to a homeomorphism of $D$, we have one of the cases displayed in Figure~\ref{fig: intersections}.  (The cases are arranged into four families according to the possible relative orientations of $\sigma$ and $\tau$ where they meet $S^1 = \partial D$.  
	Cases (2) and (3) include the possibility that $\sigma$ and $\tau$ do not intersect.) 
\end{lemma}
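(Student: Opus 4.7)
The plan is to argue by induction on the number $k$ of interior intersection points of $\sigma$ and $\tau$, organized first by the cyclic order of the four endpoints on $\partial D$ and then by the transverse orientations at those endpoints; together these data produce the four families in Figure~\ref{fig: intersections}.

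For the \emph{base case} $k = 0$, the paths $\sigma$ and $\tau$ are disjoint in the interior of $D$, so a standard planarity argument forces the endpoints of $\sigma$ to be non-interleaved with those of $\tau$ on $\partial D$; otherwise $\sigma$ and $\tau$ would have to meet an odd number of times in $D$. In the non-interleaved case, the two arcs cut $D$ into three regions: two ``end'' regions each bordered by a single path and an arc of $\partial D$, and a unique ``middle'' region bordered by one subarc from each of $\sigma$ and $\tau$ together with two arcs of $\partial D$. Only the middle region qualifies to be a sink, and the no-sink hypothesis is exactly the statement that the transverse orientations on its two interior sides are not both inward. Checking the remaining orientation possibilities on the four endpoints reproduces the disjoint configurations in families (2) and (3).

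For the \emph{inductive step} $k \ge 1$, I would locate an \emph{innermost bigon} $B$, namely a disc in $D$ whose boundary is a concatenation of a subarc $\sigma_0 \subseteq \sigma$ and a subarc $\tau_0 \subseteq \tau$ meeting at two points of $\sigma \cap \tau$ and whose interior contains no further points of $\sigma \cap \tau$; such a $B$ exists by a standard planarity argument applied to the graph $\sigma \cup \tau$ in $D$. Because the sink type is forbidden by hypothesis, the transverse orientations along $\partial B$ are either a \emph{source} (both outward) or \emph{matching} (one inward, one outward). In either case I would \emph{swap} across $B$, producing new paths $\sigma'$ and $\tau'$ obtained by interchanging the subarcs $\sigma_0$ and $\tau_0$; the pair $\sigma', \tau'$ has $k-2$ interior intersection points and agrees with $\sigma, \tau$ on $\partial D$ both as a point set and in its transverse orientations. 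Applying the induction hypothesis to $\sigma', \tau'$ yields a picture in Figure~\ref{fig: intersections}, and reinserting $B$ recovers the corresponding picture with an extra bigon for the original $\sigma, \tau$, still in the same family.

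The main obstacle will be verifying that the swap never creates a new sink region, so that the inductive hypothesis genuinely applies to $\sigma', \tau'$. This reduces to a local analysis at $B$: after the swap, $B$ together with its two neighbours $R_1, R_2$ in $D \setminus (\sigma \cup \tau)$ merges into a single region of $D \setminus (\sigma' \cup \tau')$, and one must check that this merged region is not a sink. If either $R_i$ meets $\partial D$, the merged region meets $\partial D$ and is disqualified from being a sink by definition. Otherwise $R_1$ and $R_2$ are themselves sink candidates in the original configuration, and so by hypothesis each carries at least one outward-pointing transverse orientation on its boundary; these outward orientations survive the swap and prevent the merger from being a sink. A short case split on whether $B$ is a source or matching bigon confirms this in each subcase. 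The bookkeeping needed to track the four-family classification is then automatic, because the swap alters neither the four endpoints nor the transverse orientations at them.
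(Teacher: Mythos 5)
Your proposal takes a genuinely different route from the paper. The paper builds a planar graph $\mathcal{G}$ from $\sigma$, $\tau$, and $\partial D$, then considers the tree $\mathcal{T}$ dual to the faces that $\tau$ points into (with edges dual to $\sigma$-arcs). It identifies a distinguished vertex $\ast$ dual to a boundary face, shows every other vertex of $\mathcal{T}$ at even distance from $\ast$ is a sink region, and concludes from the no-sink hypothesis that $\mathcal{T}$ is a star (or, in case (2), a path between two stars), which forces the fence picture. Your innermost-bigon-swap induction is a standard curve-surgery strategy and is not unreasonable here, but as written it has a genuine gap at the crucial step.

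The gap is the ``reinserting $B$'' step. After the swap you apply the induction hypothesis to $(\sigma', \tau')$, which produces a homeomorphism $h$ of $D$ carrying $(\sigma', \tau')$ onto the figure pattern with $k-2$ intersections. Then $(h(\sigma), h(\tau))$ is obtained from this figure pattern by reinserting the bigon $h(B)$ inside whichever region of the pattern $h$ sent $R^{\ast}$ to. You justify that this is again a figure picture solely by observing that the endpoints and their transverse orientations on $\partial D$ are unchanged; but that pins down only the \emph{family}, not the picture. You still need to show that reinserting a source or matching bigon into a figure pattern, at an arbitrary valid location, yields another figure pattern of the same family with $k$ intersections. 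That claim is precisely where the combinatorial content of the lemma lives, and your sketch does not engage with it. Two smaller issues: your base case should also cover $k=1$ (the stopping point when the endpoints interleave, which is case~(1) with $n=1$), and in the source-bigon case the swap actually \emph{reverses} the transverse orientations of $\sigma_0$ and $\tau_0$ (one can check that the orientation of $\tau_0$ as an arc of $\sigma'$ is opposite, with respect to $B$, to the original orientation of $\sigma_0$, and symmetrically for $\sigma_0$ as an arc of $\tau'$). Your argument that $R^{\ast}$ is not a sink can survive this, because the outward arcs you borrow from $R_1$ and $R_2$ are not $\sigma_0$ or $\tau_0$, but ``these outward orientations survive the swap'' understates what needs to be checked since some orientations do change.
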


\begin{figure}[htbp]
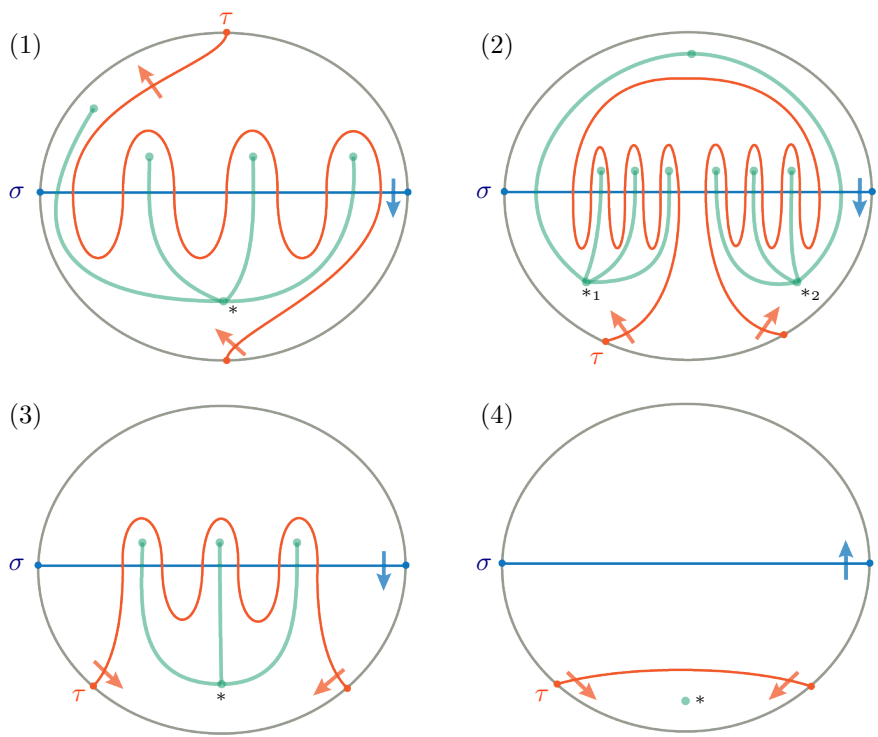

\centering
\begin{overpic} 
{Figures/intersections}
\put(-3, 63.8){\blue{\small{$\sigma$}}}
\put(52.5, 63.8){\blue{\small{$\sigma$}}}
\put(-3, 19.5){\blue{\small{$\sigma$}}}
\put(52.5, 19.5){\blue{\small{$\sigma$}}}
\put(22, 84.5){\orange{\small{$\tau$}}}
\put(66, 44){\orange{\small{$\tau$}}}
\put(4.5, 4){\orange{\small{$\tau$}}}
\put(59.5, 4){\orange{\small{$\tau$}}}
\put(23, 50){\tiny{$\ast$}}
\put(65, 52){\tiny{$\ast_1$}}
\put(91, 52){\tiny{$\ast_2$}}
\put(21.6, 4){\tiny{$\ast$}}
\put(78.5, 3.6){\tiny{$\ast$}}
\put(-3, 81){\small{(1)}}
\put(53, 81){\small{(2)}}
\put(-3, 37){\small{(3)}}
\put(53, 37){\small{(4)}}

\end{overpic}
\caption{The intersections patterns of two transversely oriented chords $\sigma$ and $\tau$ across a disc per  Lemma~\ref{lem: intersections}, if there are no sink regions. There are four cases depending on the relative positions of the end points of $\sigma$ and $\tau$ and on their orientations.  In  (1) $\sigma$ and $\tau$ intersect $2n-1$ times for some $n \geq 1$, in (2) they intersect either $0$ times or  $(2m-1) + (2n-1)$ times for some  $m, n \geq 1$, in (3) they intersect  $2n$ times for some  
$n \geq 0$,   and in (4) they do not intersect. }\label{fig: intersections} 
\end{figure}

\begin{proof}
Consider the planar graph $\mathcal{G}$ whose vertices are  the points of intersection of $\sigma$ and $\tau$ and the four end points, and whose edges are the subpaths of $\sigma$, $\tau$, and $\partial D$ that connect them (call these \emph{$\sigma$-}, \emph{$\tau$-}, and \emph{$\partial D$-edges}, respectively).  The path $\tau$ subdivides $D$ into two subdiscs (ditto the path $\sigma$).  Let $\mathcal{T}$ be the planar graph (in fact, \emph{tree}) that has   
\begin{itemize}
\item vertices dual to every \emph{face} of $\mathcal{G}$ (i.e,   connected component of  $D \ssm \mathcal{G}$) that the orientation of $\tau$ points into,  and
\item  edges dual to all $\sigma$-edges.  
\end{itemize}
Figure~\ref{fig: intersections example}(left) shows an example---there is no loss of generality in taking $\sigma$ to be a diameter of the disc.

\begin{figure}[htbp]
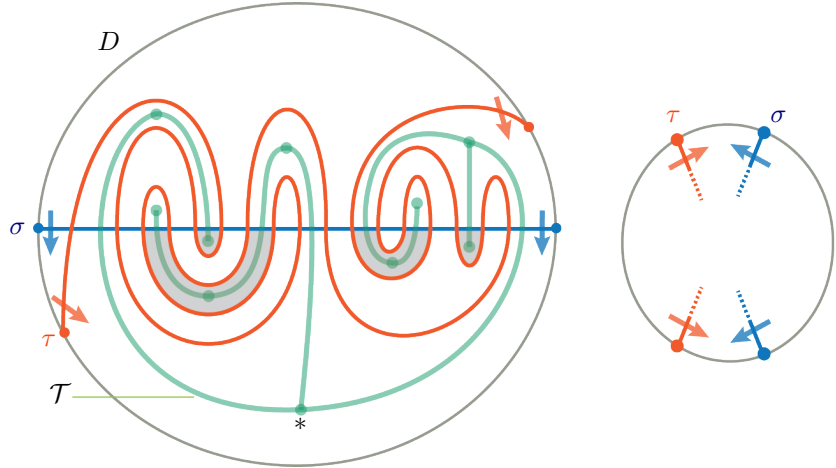

\centering
\begin{overpic} 
{Figures/intersections_example}

\put(8, 52){\small{$D$}}
\put(2, 8){\small{$\mathcal{T}$}}
\put(-3, 29){\blue{\small{$\sigma$}}}
\put(1, 15){\orange{\small{$\tau$}}}
\put(32.5, 4.5){\small{$\ast$}}
\put(92, 43){\blue{\small{$\sigma$}}}
\put(79, 43){\orange{\small{$\tau$}}}
\end{overpic}
\caption{Left:  our proof of Lemma~\ref{lem: intersections}, illustrated.  Right: orientations per Corollary~\ref{cor: intersections}.}\label{fig: intersections example} 
\end{figure}

 Case (1) of Figure~\ref{fig: intersections} concerns when the end points of $\sigma$ and  $\tau$  alternate around $\partial D$.  Cases (2)--(4) subdivide the eventuality where they do not alternate to three mutually exclusive possibilities for the orientations of $\sigma$ and  $\tau$ where they meet $\partial D$,  
  namely, oriented towards each other,  in the same direction, or away from each other.    
  
Depending on whether or not $\sigma$ and $\tau$ intersect, there  are either four or three faces in $\mathcal{G}$  that have $\partial D$-edges in their boundaries.  Call these \emph{boundary faces}.        
A face $f$ of $\mathcal{G}$ either has all the $\sigma$-edges in its boundary oriented into or all out of $f$, depending on which side of $\sigma$ the face $f$ is on.  The same is true of the $\tau$-edges in $\partial f$.  
In case (1),  let $f$ be the unique boundary face that has all $\sigma$- and $\tau$-edges in $\partial f$  oriented into $f$. 
In cases (3) and (4), let $f$ be the unique boundary face that has all $\tau$-edges in $\partial f$  oriented into $f$. 
 Now,  the vertex $\ast$ dual to $f$ is a vertex of $\mathcal{T}$.  In cases (1) and (3), every other vertex of $\mathcal{T}$ that is an even distance (in $\mathcal{T}$) from $\ast$ is dual to a face that is a sink region.   (In the example of Figure~\ref{fig: intersections example} there are four such vertices, all a distance $2$ from $\ast$.  The  four faces that they are dual to are shown shaded.)  In case (4)  every vertex of $\mathcal{T}$ that is an odd distance from $\ast$ is dual to a sink region.    As our hypotheses prohibit sink regions, $\mathcal{T}$ is 
 restricted accordingly.  
 Thus $\sigma$ and $\tau$ cannot intersect in case (4), and  in cases, (1) and (3), if $\sigma$ and $\tau$ intersect, they must do so as shown in Figure~\ref{fig: intersections}, where $n$ is the valence of $\ast$.

In  the instance of case (2) if $\sigma$ and $\tau$ do  intersect, there  are two boundary faces $f_1$ and $f_2$ into which all $\sigma$- and $\tau$-edges in their boundaries are inward-oriented.  Let $\ast_1$ and $\ast_2$ be their dual vertices.  It follows that $\ast_1$ and $\ast_2$ are an even distance apart in  $\mathcal{T}$  and any there can be no other vertices in $\mathcal{T}$  that are an even distance from either.  Thus $\mathcal{T}$ is the tree shown in Figure~\ref{fig: intersections}(2),
with $m$ and $n$ being the valences of  $\ast_1$ and $\ast_2$, and moreover, no other arrangement of $\mathcal T$ along $\sigma$ is possible. 
\end{proof}

\begin{corollary} \label{cor: intersections}
	Suppose  $\sigma$ and $\tau$ are paths  in a $2$-disc $D$ as per Definition~\ref{def: sink source},  
	but we prohibit source regions instead of sink regions.  If the order and relative orientations of $\sigma$ and $\tau$ close to $\partial D$ are as shown in  Figure~\ref{fig: intersections example} (right), then   $\sigma$ and $\tau$ do not intersect.   
\end{corollary}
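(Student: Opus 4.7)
The plan is to reduce this to case (4) of Lemma~\ref{lem: intersections} by reversing the transverse orientations of both $\sigma$ and $\tau$. First I would observe that reversing the transverse orientation of each of $\sigma$ and $\tau$ interchanges sink regions and source regions: a region bounded by subpaths of $\sigma$ and $\tau$ with all boundary orientations pointing inward becomes, after the reversal, a region with all boundary orientations pointing outward, and vice versa. So under the reversal, the ``no source regions'' hypothesis of the corollary becomes the ``no sink regions'' hypothesis of Lemma~\ref{lem: intersections}.

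Next I would identify the orientation pattern. The configuration shown in Figure~\ref{fig: intersections example}(right) has the endpoints of $\sigma$ and $\tau$ on $\partial D$ arranged non-alternatingly and with the transverse orientations pointing \emph{towards} one another at both pairs of nearby endpoints. Reversing both transverse orientations yields the pattern of case~(4) of Figure~\ref{fig: intersections}, i.e.\ non-alternating endpoints with orientations pointing away from each other at both pairs.

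Finally, Lemma~\ref{lem: intersections} case~(4) says that, under the ``no sink regions'' hypothesis and this orientation pattern, $\sigma$ and $\tau$ do not intersect. Since intersection is a property independent of the transverse orientations, this conclusion transfers back to the original $\sigma$ and $\tau$, giving $\sigma \cap \tau = \emptyset$ as required. There is no genuine obstacle here; the only thing to verify carefully is the bookkeeping that reversing transverse orientations on both paths simultaneously swaps the notions of sink and source regions while preserving the combinatorial intersection pattern.
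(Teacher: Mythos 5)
Your proposal is correct and is essentially identical to the paper's own proof, which simply states that this is case~(4) of Lemma~\ref{lem: intersections} with the orientations reversed. You have merely spelled out the bookkeeping (reversal swaps sinks and sources, and intersection is orientation-independent) that the paper leaves implicit.
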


\begin{proof}
	This is case (4) of Lemma~\ref{lem: intersections}, but  with the orientations reversed.   
\end{proof}

Our final lemma is the observation which says, roughly, that  a pair of  oriented paths through a disc that intersect transversely, can be ``combined''  to obtain a new transversely oriented such path, so that the original paths both lie to one side of the new path.  This is illustrated in Figure~\ref{fig:combining oriented paths}, under the simplifying assumption that the intersections between the paths are transverse.  The lemma allows subpaths as intersections, so it can be applied to (compound) tracks.

\begin{lemma} \label{lem: combining oriented paths}
	Suppose for $i=1,2$,   an injective piecewise-linear path $\sigma_i$ in a 2-disc $D$ is  made up of   finitely many straight-line segments, and that $\sigma_i$  meets $\partial D$ at exactly 2 points, specifically its endpoints.   Suppose $\sigma_1$ and $\sigma_2$ have  transverse orientations. So, for $i=1,2$, there are subsets $D_i^+$ and $D_i^-$ of $D$, each  homeomorphic to a 2-disc, such that
	 $D = D_i^+ \cup D_i^-$, and $\sigma_i$ traverses the intersection of  $D_i^+$ and $D_i^-$ with   $\sigma_i$  oriented into $D_i^+$ and out of $D_i^-$.  Assume $\sigma_1$ and  $\sigma_2$ intersect in the interior of $D$.   We allow the intersection of $\sigma_1$ and $\sigma_2$ to include (finitely many) straight line segments, provided their orientations agree on the common segments.

	Suppose there is a point $p \in \partial D$ that is in $D^+_1 \cap D^+_2$ and is not on $\sigma_1$ or $\sigma_2$.  Let $C_0^+$ be the maximal connected open subset of $D$ that contains $p$ and does not intersect $\sigma_1$ or $\sigma_2$.  Let $C^+$ be the closure of $C_0^+$  and $C^-$ be  $D \ssm C_0^+$.  Then   $C^+$ and $C^-$ are homeomorphic to 2-discs.  Furthermore, 
\begin{enumerate}
\item $C^+$ contains $p$,
\item $D_1^- \cup D_2^- \subseteq C^-$.  In particular, $\sigma_1$ and $\sigma_2$ are in $C^-$, and
\item an  injective piecewise-linear path $\tau$ traverses  $C^+ \cap C^-$, connecting two different points on $\partial D$. It is a concatenation of subpaths of $\sigma_1$ and  $\sigma_2$, all oriented into $C^+$, and so has a well-defined orientation (into $C^+$). 
\end{enumerate}
\end{lemma}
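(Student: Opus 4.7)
The plan is to establish the four parts of the conclusion in order, with parts (2) and (3) following from standard topology once the main geometric fact---that $C_0^+$ meets $\partial D$ in a single arc of $\partial D \setminus (\sigma_1 \cup \sigma_2)$---has been secured. First I would prove (2): the set $C_0^+$ is connected, contains $p \in D_1^+$, and is disjoint from $\sigma_1$, so it lies in the connected component of $D \setminus \sigma_1$ through $p$, which is the interior of $D_1^+$. Symmetrically $C_0^+ \subseteq D_2^+$, and therefore $D_1^- \cup D_2^- \subseteq D \setminus C_0^+ = C^-$; in particular $\sigma_1, \sigma_2 \subseteq C^-$.

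For (1) I would first observe that $\sigma_1 \cup \sigma_2$ is a finite piecewise-linear graph in $D$ (its intersection locus consists of finitely many points and, by the orientation-agreement hypothesis, finitely many straight-line subsegments), so $D \setminus (\sigma_1 \cup \sigma_2)$ decomposes into finitely many open path-components, each a topological open $2$-disc bounded by a PL Jordan curve alternating between sub-arcs of $\partial D$ and sub-arcs of $\Gamma := \sigma_1 \cup \sigma_2$. The component $C_0^+$ is thus an open $2$-disc and its closure $C^+$ a closed $2$-disc. The remaining claim that $C^-$ is a $2$-disc is equivalent to asserting that $C_0^+$ meets $\partial D$ in a single connected open arc.

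The hard part will be this single-arc claim. The four endpoints of $\sigma_1$ and $\sigma_2$ on $\partial D$ subdivide $\partial D$ into four open arcs $A_1, \ldots, A_4$, and each such arc lies in a single component of $D \setminus \Gamma$; hence $C_0^+ \cap \partial D$ is a union of some of these four arcs, certainly including the one through $p$. Suppose for contradiction that it also contains a distinct arc $A'$. Choose a PL arc $\gamma \subseteq C_0^+$ from $p$ to a point in $A'$, and close it up with one of the two $\partial D$-arcs $\beta_1, \beta_2$ between its endpoints to form a PL Jordan curve $\omega = \gamma \cup \beta_1$ bounding a sub-disc $E \subseteq D$. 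By the interior intersection hypothesis, choose $z \in \sigma_1 \cap \sigma_2$ in the interior of $D$. A short case analysis on how the four $\sigma$-endpoints distribute between $\beta_1$ and $\beta_2$ shows that in every configuration either some $\sigma_i$ has one endpoint in $\beta_1$ and the other in $\beta_2$, or both endpoints of some $\sigma_i$ lie on the side of $\omega$ opposite $z$. Either way, that $\sigma_i$ must cross $\omega$; since the interior of $\sigma_i$ does not meet $\partial D$, the crossing must occur on $\gamma$, contradicting $\gamma \subseteq C_0^+$ being disjoint from $\sigma_1 \cup \sigma_2$.

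With the single-arc property in hand, the remainder follows quickly. The set $\tau := C^+ \cap C^-$ is the complement in the PL Jordan curve $\partial C^+$ of its unique $\partial D$-arc, hence an injective PL path connecting two distinct points of $\partial D$, with sub-paths lying on $\sigma_1 \cup \sigma_2$; this yields (3). The set $C^-$, being $D$ with the open topological $2$-disc $C_0^+$ removed along a single boundary arc, is itself a closed topological $2$-disc, completing (1). For the orientation condition, note that along any subpath of $\tau$ contained in $\sigma_i$ we have $C^+ \subseteq D_i^+$ by Step 1, so the transverse orientation of $\sigma_i$, pointing into $D_i^+$ by hypothesis, points into $C^+$, as required.
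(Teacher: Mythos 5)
The paper states this lemma as an ``observation,'' illustrated only by Figure~\ref{fig:combining oriented paths}, and supplies no proof, so there is nothing to compare your argument against; you are filling a genuine gap.

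Your proof is correct in substance and structure. Part~(2) via $C_0^+\subseteq \operatorname{int}(D_1^+)\cap\operatorname{int}(D_2^+)$ is clean, the reduction of the disc/Jordan-curve claim to the single-boundary-arc claim is the right move, and the Jordan-curve argument for the single-arc claim is sound: in the straddling case the relevant $\sigma_i$ must pass from a small half-disc in $\operatorname{int}(E)$ near one endpoint to the far side of $\omega$ near the other, and in the non-straddling case (each $\sigma_i$ entirely in one $\beta_j$) either $z\in\gamma$ immediately contradicts $\gamma\subseteq C_0^+$, or one of the $\sigma_i$ has both endpoints on the side of $\omega$ opposite $z$ and so must cross $\gamma$ to reach $z$. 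The orientation claim in~(3) follows exactly as you say, since $C^+\subseteq D_i^+$.

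Two small points worth tightening. First, you appeal to ``standard PL topology'' for the assertion that the closure of $C_0^+$ is a disc with Jordan-curve boundary; this needs a word to rule out pinch points where the boundary walk of the face traverses an edge of $\Gamma$ twice. In this setting it is easy: at any point of $\Gamma$, the local sectors of $D\smallsetminus\Gamma$ are distributed among the four combinations of $D_1^\pm$ and $D_2^\pm$, and since $C_0^+\subseteq\operatorname{int}(D_1^+)\cap\operatorname{int}(D_2^+)$, it can touch only the one sector lying in $D_1^+\cap D_2^+$; hence the frontier of $C_0^+$ is locally an arc and its closure has no self-identifications. Second, your count of ``four open arcs $A_1,\ldots,A_4$'' assumes the four $\sigma$-endpoints are distinct; if $\sigma_1$ and $\sigma_2$ share an endpoint there are fewer arcs, but the argument goes through unchanged with the obvious modification. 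Neither point affects the correctness of the argument.
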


\begin{figure}[htbp]
\centering
\begin{overpic} 
{Figures/combining_oriented_paths}
\put(49, -4){\small{$p$}}
\put(-6, 43){\small{$\sigma_1$}}
\put(-1, 23){\small{$\sigma_2$}}
\put(55, 17){\small{$\tau$}}
\put(100, 15){\small{$C^+$}}
\end{overpic}
\caption{Lemma~\ref{lem: combining oriented paths}, illustrated.}\label{fig:combining oriented paths}
\end{figure}

\subsection{Tracks in distortion diagrams} \label{sec: tracks in distortion diagrams}

In Section~\ref{sec:tracks in reduced diagrams} we established constraints on \emph{reduced} van~Kampen diagrams over our presentation $\mathcal{P}$ for $G$.  Here, we will show that   diagrams pertinent to the distortion of $H$ in $G$ are further constrained.  The rigidity we will prove here and in Section~\ref{sec: a2bq tracks} will allow us to calculate upper bounds on distortion in Section~\ref{sec:upper}.

\begin{definition} \label{def: distortion diagram} \textbf{(Distortion diagrams, sides)}
A  \emph{distortion diagram}  $\Delta$ is a reduced van~Kampen diagram for $w \chi^{-1}$ over $\mathcal{P}$, where $\chi$ is a word on  $t, y_1, y_2$ and $w$ is a word on our generating set for $G$.  Where no confusion should result, we refer to the portions of the boundary circuit $\partial \Delta$ that are labelled by $w$ and by $\chi$ simply as $w$ and~$\chi$.     When an $a$- or $b$-track $\rho$ connects two edges in $\partial \Delta$ those edges must both be in $w$, as there are no $a$- or $b$-letters in $\chi$.  So, as shown in Figure~\ref{fig:sides},  the track $\rho$ subdivides $\Delta$ into two subsets  
whose intersection is $\rho$.  The subset  that   contains $\chi$ is the \emph{$\chi$-side} of $\rho$, and the other subset is the \emph{$w$-side}.   	
\end{definition}

\begin{figure}[htbp]
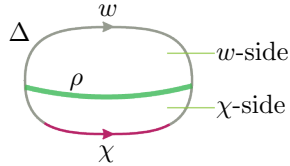

\centering
\begin{overpic} 
{Figures/sides}
\put(-8, 52){\small{$\Delta$}}
\put(24, 29){\small{$\rho$}}
\put(39, -9){\small{$\chi$}}
\put(39, 65){\small{$w$}}
\put(101, 14){\small{$\chi$-side}}
\put(101, 42){\small{$w$-side}}
\end{overpic}
\caption{An $a$- or $b$-track $\rho$ in a distortion diagram}\label{fig:sides}
\end{figure}

\begin{lemma} ${}$ \label{lem: Layout lemma} \textbf{($a$- and $b$-tracks in distortion diagrams.)}  There exists $C>0$ satisfying the following.    Suppose
		 $w_0$ is a word on the generators of $G$ that equals in $G$ a reduced word   $\chi$ on $t, y_1, y_2$, and  suppose  $\Delta_0$ is a distortion diagram for $w_0 \chi^{-1}$.
Assume that $\Delta_0$ is homeomorphic to a 2-disc.
   Then there is a subdiagram $\Delta$ of $\Delta_0$ that is a van~Kampen diagram for $w \chi^{-1}$, where $w$ is a word of length at most $C | w_0 |$ and the following properties are satisfied.

   \begin{enumerate}  
 
 \addtocounter{enumi}{-1}
 
 \item \label{concatenation of disc diagrams}  The portions of  $\partial \Delta$ labelled by $w$ and by $\chi$ are both  injective paths, so that $\Delta$ is a 
 concatenation of paths and distortion diagrams $\Delta'_1, \ldots, \Delta'_r$, each homeomorphic to a 2-disc and each demonstrating that some subword of $w$ equals some subword of $\chi$ (as shown on the right below). 
\bs \ms

\begin{center}
\centering
\begin{overpic} 
{Figures/crossing0}
\put(39, 17){\small{$\Delta_0$}}
\put(101, 17){\small{$\Delta$}}
\put(64.6, 9){\small{$\Delta'_1$}}
\put(79, 5){\small{$\Delta'_2$}}
\put(93, 9){\small{$\Delta'_3$}}
\put(16, 27){\small{$w_0$}}
\put(79, 12.5){\small{$w$}}
\put(17, -3){\small{$\chi$}}
 \put(79, -3){\small{$\chi$}}
\end{overpic}
\end{center}
   
   \bs

\item \label{lem part:orientations} 
No  compound track in $\Delta$ between a pair of edges in $w$ is made up of $a$-subtracks oriented towards $w$,  $b$-subtracks oriented towards $w$, and $t$-tracks (oriented either way).  In particular, 
no $t$-corridor in $\Delta$ connects two $t$-letters in  $w$ and every $a$- or $b$-track that connects a pair of edges in $\partial \Delta$ is oriented towards $\chi$.

\bs
\begin{center}
\centering
\begin{overpic}   [scale=0.75] 
{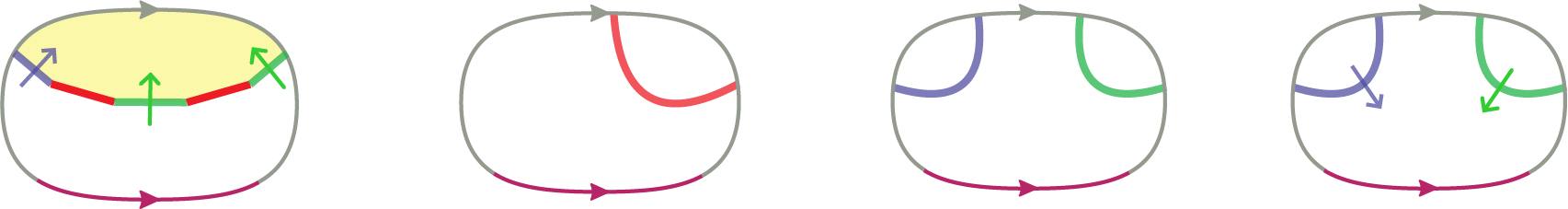}
\put(25, 9){\small{No}}
\put(-4.5, 9){\small{No}}
\put(1, 6.5){\blue{\small{$a$}}}
\put(4.5, 5){\dred{\small{$t$}}}
\put(10.5, 4.3){\green{\small{$b$}}}
 \put(14, 5){\dred{\small{$t$}}}
\put(16.8, 6){\green{\small{$b$}}}
 \put(44, 4.5){\dred{\small{$t$}}}
\put(60, 5.5){\blue{\small{$a$}}}
\put(70.5, 5){\green{\small{$b$}}}
\put(85, 5.5){\blue{\small{$a$}}}
\put(96.5, 5){\green{\small{$b$}}}
\put(76, 6){\small{$\implies$}}
\put(8.5, -1.5){\small{$\chi$}}
\put(37.5, -1.2){\small{$\chi$}}
\put(65, -1){\small{$\chi$}}
\put(90.5, -1){\small{$\chi$}}
\put(8.5, 13.8){\small{$w$}}
\put(37, 13.4){\small{$w$}}
\put(65, 13.4){\small{$w$}}
\put(90.5, 13.4){\small{$w$}}
\end{overpic}
\end{center}
\bs

\item  \label{lem part: No y-edges above a b-track} There are no $y$-edges in the $w$-side of any $b$-track $\beta$ that connects two edges in $\partial \Delta$. 

\bs
\begin{center}
\centering
\begin{overpic}  
{Figures/crossing2}
\put(20, 45.7){\small{No $y$-edges}}
\put(44, -9){\small{$\chi$}}
\put(44, 73){\small{$w$}}
\end{overpic}
\end{center}
\bs

\item \label{lem part:no sinks} Suppose a region  $R$ is a subset of the $w$-side of a $b$-track connecting two points in $w$.  
\begin{enumerate}
\item \label{lem part part:no boundary sinks}  $\partial R$ cannot be comprised of a (non-trivial) subpath of the boundary circuit $\partial \Delta$,  $a$-subtracks,   inward oriented $b$-subtracks, and $t$-subtracks. 

\item \label{lem part part:no sinks} If  $\partial R$ is comprised of $a$-subtracks and inward-oriented $b$-subtracks, then it satisfies the constraints \ref{lem part part: bs}--\ref{lem part part: as} of Lemma~\ref{lem: trapped x-noise}.   
In particular,  $\partial R$ cannot be a bigon comprised of an $a_1$-subtrack and an inward oriented $b$-subtrack.

\end{enumerate}
\bs
\begin{center}
\centering
\begin{overpic}  [scale=0.8] 
{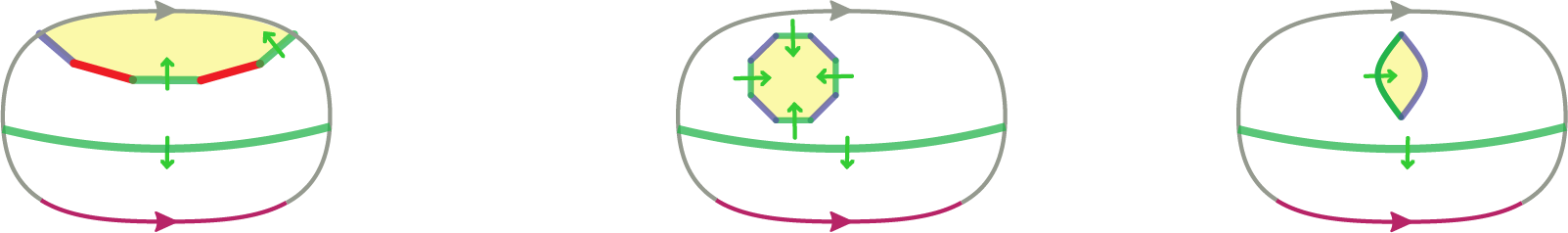}
\put(-4, 11){\small{No}}
\put(30, 11){\small{Almost no}}
\put(75, 11){\small{No}}
\put(9.9, -2){\small{$\chi$}}
\put(53, -2){\small{$\chi$}}
\put(88.5, -2){\small{$\chi$}}
\put(9.4, 15.5){\small{$w$}}
\put(52.5, 15.5){\small{$w$}}
\put(88.1, 15.5){\small{$w$}}
\put(2, 10.2){\blue{\small{$a$}}}
\put(5.5, 8){\dred{\small{$t$}}}
\put(11, 7.3){\green{\small{$b$}}}
 \put(14.5, 7.8){\dred{\small{$t$}}}
\put(18.5, 9.5){\green{\small{$b$}}}
\put(11.2, 2.8){\green{\small{$b$}}}
\put(54.8, 2.8){\green{\small{$b$}}}
\put(90.4, 2.8){\green{\small{$b$}}}
\put(86.5, 7.7){\green{\small{$b$}}}
\put(90.8, 7.7){\blue{\small{$a$}}}
\put(50, 9.2){\green{\small{$b$}}}
\put(47.2, 6.7){\blue{\small{$a$}}}
\put(52.8, 6.7){\blue{\small{$a$}}}
\put(52.8, 11.9){\blue{\small{$a$}}}
\put(47.2, 11.9){\blue{\small{$a$}}}
\end{overpic}
\end{center}
\bs

\item \label{lem part: no button} $\Delta$ contains no badge and no button (Definitions~\ref{def: badge} and \ref{def: button}).

\bs
 \begin{center}
\centering
\begin{overpic} 
{Figures/crossing4}
\put(-8, 19){\small{No}}
\put(69, 19){\small{No}}
\put(11.5, 13.2){\small{\red{$t$}}}
\put(20, 13.2){\small{\red{$t$}}}
\put(30.5, 13.2){\small{\red{$t$}}}
\put(40, 13.2){\small{\red{$t$}}}
\put(21, 2){\small{\green{$b_{\ast}$}}}
\put(21, 21){\small{\blue{$a_{\ast}$}}}
\put(81, 6){\small{\green{$b_{\ast}$}}}
\put(97, 18){\small{\blue{$a_{\ast}$}}}
\end{overpic}
\end{center}
\bs

\item \label{lem part:no loops and bigons}
$\Delta$ has  no $a$- or $b$-loops and 
no bigons comprised of an $a$-subtrack and an outward oriented $b$-subtrack.

\bs

\begin{center}
\centering
\begin{overpic} 
{Figures/crossing5}
\put(-10, 14){\small{{No}}}
\put(29, 14){\small{{No}}}
\put(76, 14){\small{{No}}}
\put(85, 1){\small{\blue{$a$}}}
\put(37, 1){\small{\green{$b$}}}
\put(56.5, 15.5){\small{\blue{$a$}}}
\put(-1, 1){\small{\green{$b$}}}
\end{overpic}
\end{center}

\bs 
 
 \item \label{lem part:no sources} 
	More generally, no region of $\Delta$ has boundary made up of \emph{consistently oriented} (meaning all inward- or all outward-oriented)  $a$-subtracks and outward-oriented $b$-subtracks.  
 \bs
 \begin{center}
\centering
\begin{overpic}  
{Figures/crossing7}
\put(-30, 49){\small{{No}}}
\put(45, -8){\small{$\chi$}}
\put(45, 72){\small{$w$}}
\put(49, 10){\green{\small{$b$}}}
\put(29.2, 19){\blue{\small{$a$}}}
\put(29.2, 49){\blue{\small{$a$}}}
\put(57, 19){\blue{\small{$a$}}}
\put(57, 49){\blue{\small{$a$}}}
\put(67, 27.2){\green{\small{$b$}}}
\put(21, 27.2){\green{\small{$b$}}}
\put(49, 55){\green{\small{$b$}}}
\end{overpic}
\end{center}
\bs

\item \label{lem part:crossing possibilities}   Suppose $\alpha$ is an $a_1$-track and $\beta$ is a $b$-track in $\Delta$. 
	\begin{enumerate}
\item \label{lem part: both sides}
If $\alpha$ has one endpoint on either side of $\beta$ then $\alpha$ and $\beta$ intersect exactly once.
\item \label{lem part: chi side}
If both endpoints of $\alpha$ are on the $\chi$-side of $\beta$, then $\alpha$ and $\beta$ do not intersect.   

\item \label{lem part: w side} If both endpoints of $\alpha$ are on the $w$-side of $\beta$, then $\alpha$ and $\beta$ 
 intersect exactly twice.

 \bs 
\begin{center}
\centering
\begin{overpic}  [scale=0.75]  
{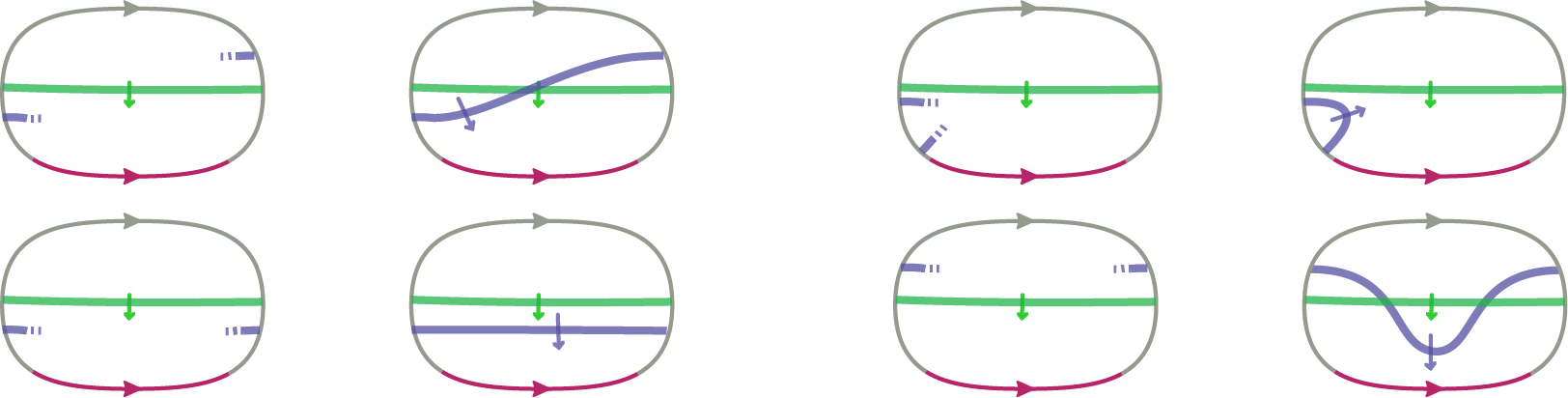}
\put(19, 5){\small{$\implies$}}
\put(20, 7.5){\small{\textit{(b)}}}

\put(19, 19){\small{$\implies$}}
\put(20, 21.5){\small{\textit{(a)}}}

\put(75.5, 5){\small{$\implies$}}
\put(76.5, 7.5){\small{\textit{(c)}}}

\put(75.5, 19){\small{$\implies$}}
\put(76.5, 21.5){\small{\textit{(b)}}}

\end{overpic}
\end{center}
\bs
	\end{enumerate}

\item \label{lem part: b_0-track outermost} There can be no $b_0$-track $\beta_0 \neq \beta$ in the $w$-side of a $b$-track $\beta$. 

\bs
 \begin{center}
\centering
\begin{overpic} 
{Figures/crossing8}
\put(-30, 49){\small{{No}}}
\put(75, 37.3){\green{\small{$b_0$}}}
\put(75, 14.5){\green{\small{$b$}}}
\put(45, -10){\small{$\chi$}}
\put(45, 74){\small{$w$}}
\end{overpic}
\end{center}
\bs

	\end{enumerate}
	
\end{lemma}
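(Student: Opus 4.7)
The plan is to start from a minimal-area disc diagram $\Delta_0'$ for $w_0 \chi^{-1}$ contained in $\Delta_0$ and perform a finite sequence of surgeries that enforce the properties (0)--(8) one after another. Each surgery will excise a subregion $R$ bounded by part of $\partial\Delta$ together with a compound track, and replace the relevant boundary arc by a shorter word; a lexicographic complexity measure (number of 2-cells, then $|w|$) strictly decreases, guaranteeing termination. The universal constant $C$ will be chosen to dominate all the constants arising in Lemmas~\ref{lem:corridor overlap}, \ref{lem: vertical t-corridors}, and~\ref{lem:t-corridors on boundary}, which is enough because every surgery replaces a boundary arc of length $L$ by one of length at most a constant multiple of $L$, and the total length change is bounded by $|w_0|$ times the product of these constants. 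Property (0) is secured at the outset: $\chi$ is freely reduced in $H$ (hence injective on $\partial\Delta_0$ by Corollary~\ref{H is free}), so I only have to slit along self-touching vertices of the $w$-boundary to extract a concatenation of disc distortion diagrams.

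For (1), suppose a compound track $\tau$ of the forbidden type connects two edges of $w$; then the region between $\tau$ and the arc of $w$ it cuts off satisfies hypothesis~(\ref{lem part: boundary region}) of Lemma~\ref{lem:t-corridors on boundary}, so $|u|\leq C|v|$ and we may replace the arc by $u$. The specializations to $t$-corridors between two $t$'s in $w$ and to $a$- or $b$-tracks oriented toward $w$ are the cases where $\tau$ is a single subtrack. After this is enforced, every $t$-corridor whose top lies in the $w$-side of a $b$-track $\beta$ connects $w$ to $\beta$, which puts us in the configuration of Lemma~\ref{lem: vertical t-corridors}: if a $y$-edge persists in this subregion, slicing horizontally as in that lemma gives a new arc $v'$ with $|v'|\leq C|u|$, proving (2). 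Parts (3)(a) and (3)(b) are then, respectively, further applications of Lemma~\ref{lem:t-corridors on boundary}(\ref{lem part: boundary region no y}) (since (2) provides the $y$-edge-free hypothesis) and a direct invocation of Lemma~\ref{lem: trapped x-noise}(\ref{lem part: no x-edges cor}).

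Badges and buttons (4) each contain a bigon region with no $y$-edges (by (2)), so the two 2-cells sharing the corner $a_ib_j$ respectively $a_1b_p$ can be folded onto their common corner; this removes the offending configuration while strictly reducing area. Part (5) then follows: inward-oriented $a$- or $b$-loops are absent from any reduced diagram by Corollary~\ref{cor: no loops}(\ref{lem part: loops absent from reduced diagrams}); an outward $a$-loop encloses a $y$-edge by Corollary~\ref{cor: no loops}(\ref{lem part: no a-loops}), which would have to sit on the $\chi$-side of some $b$-track and so contradict the combination of (1) and (2); outward $b$-loops or $a$/$b$-bigons produce a badge or button by Lemma~\ref{lem: bigons}(\ref{lem part: badge button}) and Corollary~\ref{cor: no loops}(\ref{lem part: no b-loops}), contradicting (4). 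For (6), such a region has no $y$-edges in its interior by (2), so if the $a$-subtracks are outward oriented it violates Lemma~\ref{lem: trapped x-noise}(\ref{lem part part: at least one}), and if they are inward oriented one combines the boundary via Lemma~\ref{lem: combining oriented paths} and applies Corollary~\ref{cor: intersections} to a suitable $a$-track and $b$-track that together produce a source region, contradicting the orientation.

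The crossing conclusions in (7) are read off Lemma~\ref{lem: intersections} applied to $\alpha$ and $\beta$ in the closed disc they bound together with the boundary arcs: sink regions between them are excluded by (3), (5), and (6), so only cases (1), (3), and (4) of Figure~\ref{fig: intersections} occur, with the intersection parities forced by the boundary orientation. Finally (8) holds because a $b_0$-track $\beta_0$ strictly above $\beta$ on the $w$-side would put all the $y$-edges produced when $x$-noise crosses $\beta_0$ into the $w$-side of $\beta$, contradicting (2). The main obstacle will be the bookkeeping: I must verify that each surgery preserves the properties already established and that the cumulative bound on $|w|$ stays linear in $|w_0|$; the key is that surgeries in steps (1)--(3) shorten $w$ by Lemma~\ref{lem:t-corridors on boundary}, while steps (4)--(8) strictly reduce area without increasing $|w|$, so after finitely many steps we arrive at a diagram satisfying all the stated conditions with $|w|\leq C|w_0|$.
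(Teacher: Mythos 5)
Your plan for properties (0)--(3) tracks the paper well: you use Lemma~\ref{lem: combining oriented paths} to merge bad compound tracks, Lemma~\ref{lem:t-corridors on boundary}(\ref{lem part: boundary region}) to excise them with linear control on boundary length, Lemma~\ref{lem: vertical t-corridors} to clear $y$-edges from the $w$-side of $b$-tracks, and Lemma~\ref{lem:t-corridors on boundary}(\ref{lem part: boundary region no y}) together with Lemma~\ref{lem: trapped x-noise}(\ref{lem part: no x-edges cor}) for (3). That portion of your proof matches the paper's structure.

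From (4) onward the plan has real gaps, and the central one is structural: in the paper, properties (4)--(8) are \emph{not} enforced by further surgery --- they are derived as consequences of (0)--(3) by contradiction arguments on the fixed diagram $\Delta$. You instead propose to keep surgering, and your surgery step for (4) is not a valid operation: the two 2-cells of a button are an $r_{1,p-1}$-cell and an $r_{1,p}$-cell (respectively, in a badge they are two $r_{i,j}$-cells joined by an $a_i$-corridor and a $b_j$-corridor), which are \emph{different} relators identified along a single corner $a_1 b_p$. They are not a back-to-back cancelling pair, so there is no ``fold'' that removes them while preserving a van~Kampen diagram over $\mathcal P$. The paper's actual proof of (4) is a case analysis: it takes the minimal $i$ such that the relevant component of $\mathcal G_b$ has a $b_i$-edge, locates a $b_i$-track $\beta$ that is a non-loop oriented towards $\chi$, shows using (2) and Lemma~\ref{lem: bigons}(\ref{lem part: bi bigon}) that every $a_1$-track on the $w$-side crosses $\beta$ exactly once, and then exhausts the three continuations of a $b$-track emanating from the button (Figure~\ref{fig:why_no_buttons}) to produce a region forbidden by (3). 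None of that appears in your proposal.

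Two further specific issues. For outward $a$-loops in (5), you assert that the enclosed $y$-edge ``would have to sit on the $\chi$-side of some $b$-track,'' which you do not justify; the enclosed region need not meet a $b$-track at all. The paper's argument is different: it shows the enclosed region cannot contain any $b$-subtrack (a $b$-subtrack there would yield a teardrop, a $b$-loop, or an $a$/$b$-bigon, all already excluded), hence the minimal subdiagram containing the enclosed region has only $r_{4,\ast}$-cells, contradicting Lemma~\ref{lem: no junctions no loops}(\ref{lem part: r4 diagrams}). For (6), you invoke Lemma~\ref{lem: trapped x-noise}(\ref{lem part part: at least one}), but that lemma's hypotheses require the $b$-subtracks in $\partial R$ to be \emph{inward}-oriented, while in (6) they are outward-oriented, so the hypotheses fail. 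The paper's proof of (6) instead writes $\partial R$ as an alternating concatenation $\overline\alpha_1\overline\beta_1\cdots\overline\alpha_m\overline\beta_m$ with $m\ge 2$, uses (1) to see $R$ lies on the $w$-side of some $\beta_1$, and tracks how $\beta_2$ must merge with $\beta_1$ to produce a forbidden $a$/$b$-bigon. You would need to replace your incorrect argument with something equivalent.

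Finally, your bookkeeping claim that the cumulative boundary growth ``stays linear in $|w_0|$'' is fine for the surgeries in (1)--(3) since they are bounded by a finite composition of the constants from Lemmas~\ref{lem:corridor overlap}, \ref{lem: vertical t-corridors}, and \ref{lem:t-corridors on boundary}; but since (4)--(8) should not involve surgery at all, the termination-by-area argument for those is moot and should be dropped.
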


\begin{proof}

We will   sever parts of $\Delta_0$ to obtain subdiagrams $\Delta_1$, then $\Delta_2$, and then $\Delta_3$,  that establish, respectively,  \eqref{lem part:orientations}, then   \eqref{lem part: No y-edges above a b-track}, and then   \eqref{lem part:no sinks}.  Then we will   sever parts of $\Delta_3$ to  get $\Delta$ such  that  the portion of  $\partial \Delta$ labelled by $w$ is an injective path, and we will argue that  $\Delta$  satisfies \emph{all} of   \eqref{concatenation of disc diagrams}--\eqref{lem part:no sinks}.    Then we will verify that $\Delta$ also  satisfies \eqref{lem part: no button}--\eqref{lem part: b_0-track outermost}.

\bs
For \eqref{lem part:orientations}, define a \emph{bad} path in $\Delta_0$ to be a compound track connecting a  pair of edges in $w_0$ comprised of $a$-and  $b$-subtracks oriented towards $w_0$, and $t$-tracks (oriented either way).     Let $\Delta_1$ be the maximal subdiagram of $\Delta_0$ that contains $\chi$ and intersects no bad path.  
Let $w_1$ be the word such that    $\Delta_1$ is a van~Kampen diagram for $w_1 \chi^{-1}$. 
If bad paths $\sigma_1$ and $\sigma_2$ intersect, then we may apply  Lemma~\ref{lem: combining oriented paths} with $p$ a point on $\chi$ and $\sigma_1$ and $\sigma_2$ oriented \emph{towards} $\chi$, to obtain a new path $\tau$ 
which is a concatenation of subpaths of $\sigma_1$ and $\sigma_2$ (and therefore is again a bad path), such that  both $\sigma_1$ and $\sigma_2$ are contained in the $w_0$-side of $\tau$.   
Therefore there is a collection of bad paths $\tau_1$, \ldots, $\tau_m$  that are disjoint and are such that $\Delta_1$ is the result of removing from $\Delta_0$ the subdiagrams bounded by the corridors of 2-cells through which $\tau_i$ passes and by  subwords of $w_0$.  Now 
Lemma~\ref{lem:t-corridors on boundary}\eqref{lem part: boundary region}  tells us that there exists a constant $C_1>0$ such that  $|w_1| \leq C_1|w_0|$.

\bs  For  \eqref{lem part: No y-edges above a b-track}, we first establish that
there exist disjoint $b$-tracks $\beta_1, \dots, \beta_k$, each a path between two points in $\partial \Delta_1$,  such that every $b$-track between two points in $\partial \Delta_1$ 
is on the $w_1$-side of $\beta_i$ for some $i$.  To see this, note that following~\eqref{lem part:orientations}, all $b$-tracks between pairs of points in $\partial \Delta_1$ are oriented towards $\chi$, and if two such $b$-tracks $\sigma_1$ and $\sigma_2$ intersect, then applying 
 Lemma~\ref{lem: combining oriented paths} with $p$ a point on $\chi$, we obtain a 
path $\tau$ connecting a pair of points on $\partial \Delta_1$,  
such that both  $\sigma_1$ and $\sigma_2$ are on the $w_1$ side of $\tau$, 
and $\tau$ is a concatenation  of subtracks of $\sigma_1$ and $\sigma_2$, each oriented into the component of $\Delta_1 \setminus \tau$ containing $\chi$.  Since a concatenation of consistently oriented $b$-subtracks  is again a $b$-subtrack, $\tau$ is again a $b$-track.  
 The existence of $\beta_1, \dots, \beta_k$ as above follows. 
 
 Thus, in constructing $\Delta_2$ by severing parts of $\Delta_1$, it suffices to guarantee that~\eqref{lem part: No y-edges above a b-track} holds for  $\beta=\beta_i$ for each $1\le i \le k$.  
Our argument in this case is illustrated by Figure~\ref{fig:y-track}.

By Lemma~\ref{lem: trapped y-noise}\eqref{lem part: no y-edges}, there is  no $y$-edge in any region $R_i$ enclosed by a subpath of $\beta$ and  a $t$-subtrack on the $w_1$-side of $\beta$ (such as   regions $R_1$, $R_2$,  and $R_3$ in Figure~\ref{fig:y-track}), as $\partial R_i$ has no edges in this case.  
Define $\Delta'_{\beta}$ to be the maximal subdiagram  of $\Delta_1$ that is contained in the $w_1$-side of $\beta$ and intersects no $t$-subtracks that start and end on $\beta$.
Then $\Delta'_{\beta}$ is a van~Kampen diagram for $uv^{-1}$, where $u$ is a subword of $w_1$ and $v$ is the word 
along the remainder of $\partial \Delta'_\beta$, as shown in Figure~\ref{fig:y-track}.

We will apply Lemma~\ref{lem: vertical t-corridors} to $\Delta'_{\beta}$. Let us check the hypotheses. 
To see that there are no $y$-letters in $v$, observe that $v$ is comprised of subpaths that run along the corridor associated to $\beta$, on the side that $\beta$ is oriented away from, and subpaths that run along the sides of $t$-corridors.  
The defining relations of $G$ (see Figure~\ref{fig:relations}) imply that the first type of subpath cannot have any $y$-edges, and if there were a $y$-edge in a subpath of the second type, then then there would be one on the other side of the $t$-corridor also, and so in one of the regions $R_i$, a contradiction.  

Next, we observe that all $t$-corridors in $\Delta'_{\beta}$ connect a $t$-edge in $u$ to a $t$-edge in $v$.  This  is because there are no $t$-loops by  Lemma~\ref{lem: no t-loop};   were there a $t$-track connecting a pair of edges in $u$, it would be a part (or whole) of a bad path in $\Delta_0$, and would have been cut off in the construction of $\Delta_1$;
and no $t$-corridor joins  pair of $t$-edges in $v$ by construction.  

Lemma~\ref{lem: vertical t-corridors} now implies that there is a constant $C_2>0$ (depending only on $\mathcal{P}$) and a word $v'$ labeling a path in $\Delta_\beta'^{(1)}$ with the same endpoints as $u$ and $v$ with $|v'| \le C_2 |u|$ such that the subdiagram enclosed by $v$ and $v'$ has no $y$-edges.  We now cut $\Delta_\beta'$ along $v'$, discarding the subdiagram bounded by $u$ and $v'$.  As $\beta_1, \dots, \beta_k$ are disjoint and non-nested, we do this independently for each $\beta=\beta_i$, resulting in a subdiagram $\Delta_2$ of $\Delta_1$ for a relation $w_2 \chi^{-1}$, where $w_2$ is obtained from $w_1$ by replacing a disjoint collection of subwords with words whose lengths are greater by at most a factor of $C_2$. It follows that $|w_2| \le C_2 |w_1|$, and by construction, there are no $y$-edges on the $w_2$ side of $\beta_i$ for any $i$.   In particular, \eqref{lem part: No y-edges above a b-track}  holds for $\Delta_2$.

Now suppose $\Delta_2$ has a bad path $\sigma$---i.e., suppose that \eqref{lem part:orientations} fails for $\Delta_2$.  Since $\Delta_1$ had none, $\sigma$ must have at least one end on along a path labelled by one of the $v'$, and this path is on the $w$ side of some $\beta$ which is oriented towards $\chi$.  If $\sigma$ intersects $\beta$ at least twice, then, since $\beta$ is oriented towards $\chi$, a subtrack of $\beta$ and a subpath of $\sigma$ together bound a region $R$ that is precluded by Lemma~\ref{lem: trapped y-noise} (see Figure~\ref{fig:forbidden examples}).  If $\sigma$ crosses $\beta$ exactly once, then a subpath of $\beta$, together with the part of $\sigma$ on the $\chi$ side of $\beta$ form a bad path (in the sense of  \eqref{lem part:orientations}) in $\Delta_2$, which is not possible.  Thus any bad path $\sigma$ in $\Delta_2$
lies on the $w_2$ side of $\beta$.  Such paths will be removed next, in 
the construction of $\Delta_3$.

\begin{figure}[htbp]
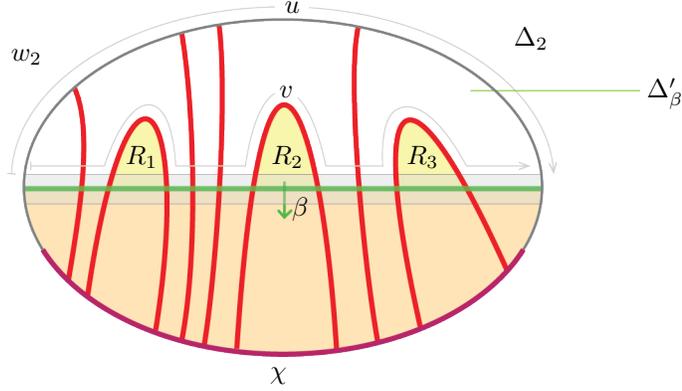

\centering
\begin{overpic}
{Figures/removing_ys_above_b_tracks}
\put(50, -3){\small{$\chi$}}
\put(53, 19){\small{$\beta$}}
\put(30.5, 26){\small{$R_1$}}
\put(50.2, 26){\small{$R_2$}}
\put(68.5, 26){\small{$R_3$}}
\put(52, 46.5){\small{$u$}}
\put(51.3, 35){\small{$v$}}
\put(83, 42){\small{$\Delta_2$}}
\put(15, 40){\small{$w_2$}}
\put(101, 35){\small{$\Delta'_{\beta}$}}
\end{overpic}
\caption{Subdiagrams and $t$-tracks per our proof of Lemma~\ref{lem: Layout lemma}\eqref{lem part: No y-edges above a b-track}}
\label{fig:y-track}
\end{figure}

 \bs

For \eqref{lem part part:no boundary sinks}, define a region $R$ to be \emph{bad} if it is of the form \eqref{lem part part:no boundary sinks} excludes: that is, $R$ is a subset of the $w_2$-side of a $b$-track $\beta$ connecting two edges in $w$ and $\partial R$ is comprised of a  non-trivial subpath of the boundary circuit $\partial \Delta_2$   
and a compound track consisting of   $a$-subtracks,   inward oriented $b$-subtracks, and $t$-subtracks. We may assume that $\beta$ is one of the tracks $\beta_1, \dots,\beta_k$ identified above, which persist in $\Delta_2$.  
   Here are two key observations:
\begin{itemize}
\item[i.] 
If two bad regions $R_1$  and $R_2$ have intersecting interiors, they are on the $w_2$-side of a common $b$-track, say $\beta_i$.  Then, applying 
Lemma~\ref{lem: combining oriented paths} to the compound tracks in $\partial R_1$ and $\partial R_2$, we get a new bad region $R_3$ containing $R_1 \cup R_2 $ that is again on the $w_2$-side of $\beta_i$.

\item[ii.]   Suppose $R$ is a bad region on the $w$-side of a $b$-track $\beta$. Then the minimal subdiagram $D$ of $\Delta_2$ containing $R$ contains no $y$-edges.  To see this, note that no subpath of $\beta$ can contribute to $\partial R$, as $\beta$ is oriented towards $\chi$, and so no 2-cell through which $\beta$ passes can be in $D$.  Thus $D$ is a subset of the $w$-side of $\beta$ and has no $y$-edges by \eqref{lem part: No y-edges above a b-track}.
 \end{itemize}

Define $\Delta_3$ to be the maximal subdiagram of $\Delta_2$ that includes $\chi$ and does not intersect any bad region. On account of (i),  $\Delta_3$ is obtained from  $\Delta_2$ by severing a finitely many subdiagrams $D$ per Lemma~\ref{lem:t-corridors on boundary} by, in   the notation of that lemma, cutting along the paths labelled  $u_1$.  Moreover, any two of these $D$ have disjoint interiors and the associated words $u_0$ label  paths in $\partial \Delta_2$ that are non-overlapping (but can share endpoints). By (ii), hypothesis \eqref{lem part: boundary region no y} of Lemma~\ref{lem:t-corridors on boundary} holds and we can apply that lemma  to each of these $D$.    
Let $w_3$ be the word such  that $\Delta_3$ is a van~Kampen diagram for $w_3 \chi^{-1}$.  The inequality in Lemma~\ref{lem:t-corridors on boundary} then tells us that there exists a constant $C_3>0$ such that $|w_3| \leq C_3|w_2|$.    
Finally, $\Delta_3$ satisfies conditions~\eqref{lem part:orientations}--\eqref{lem part:no sinks}: as shown above, the only paths that could fail 
\eqref{lem part:orientations} were removed in the construction of $\Delta_3$; \eqref{lem part: No y-edges above a b-track}  is immediately inherited from $\Delta_2$; \eqref{lem part part:no boundary sinks} is satisfied by construction; and, in light of \eqref{lem part: No y-edges above a b-track}, 
Lemma~\ref{lem: trapped x-noise} implies \eqref{lem part part:no sinks}.

\bs

If the portion of  $\partial \Delta_3$ labelled by $w_3$ is not an injective path, then some subword labels a subdiagram which is only attached to the rest of    $\Delta_3$ at a single vertex.  We sever all subdiagrams that so arise, so as to produce a van~Kampen diagram $\Delta$ for a word  $w \chi^{-1}$, with $|w| \leq |w_3|$, such that conditions~\eqref{lem part:orientations}--\eqref{lem part:no sinks} hold, and  the portion of  $\partial \Delta$ labelled by $w$ is an injective path.  By hypothesis, 
$\chi$ is a reduced word on $t, y_1, y_2$, which freely generate a free subgroup of $G$ by Corollary~\ref{H is free}, so $\chi$ also labels an injective path in $\partial \Delta$. So $\Delta$ is  a concatenation of paths and distortion diagrams $\Delta', \ldots, \Delta'_r$, each homeomorphic to a 2-disc and each   demonstrating that some subword of $w$ equals some subword of $\chi$. This establishes \eqref{concatenation of disc diagrams}.
Further, if we let $C = C_1 C_2 C_3 C_3$, then our inequalities combine to give  $|w| \leq C | w_0 |$, as required.

For the remainder of the proof, we assume, for convenience, that $\Delta$  is homeomorphic to a 2-disc. The proofs of  \eqref{lem part: no button}--\eqref{lem part: b_0-track outermost} in the general case follow easily.  (In cases (a) and (c) of \eqref{lem part:crossing possibilities} the hypothesis forces $\alpha$ and $\beta$ to be in the same component.  In case (b), the result is automatic if they are in different components.)

 \bs
\eqref{lem part: no button}. 
Suppose there is a badge or button $\mathcal B$ in $\Delta$. Per Definition~\ref{def: tracks},
let $\mathcal{G}_b$ be the graph whose edges are the duals of the  $b$-edges in $\Delta$.  Let $\mathcal{C}$ be the connected component of $\mathcal{G}_b$ that includes the $b$-track through  $\mathcal B$.
Let $i$ be minimal such that  $\mathcal{C}$ includes the dual of a $b_i$-edge.    A $b$-track that enters a 2-cell across a $b_i$-edge can exit across another $b_i$-edge unless that 2-cell is an  $r_{1,i-1}$-cell.   So the minimality of $i$  ensures that $\mathcal{C}$ contains a $b_i$-track $\beta$.  By Corollary~\ref{cor: no loops}\eqref{lem part: no b-loops}, $\beta$ is not a loop, and so it connects two $b_i$-edges in $w$, and is oriented towards $\chi$ by  \eqref{lem part:orientations}.  So  no $b$-tracks branch off $\beta$ on its $\chi$-side and, in particular,     the $b$-tracks through $\mathcal{B}$ are on its $w$-side. (They can have subpaths in common with $\beta$.)  By  \eqref{lem part: No y-edges above a b-track}, there are no $y$-edges on the $w$-side of $\beta$.  
This ensures that $\mathcal B$ is not a badge, as if it were, it would have an $r_{4, i}$-cell contributing a $y$-edge to the $w$-side of $\beta$.

Any $a_1$-track intersecting the $w$-side of $\beta$ intersects $\beta$ exactly once---it is not a loop on the $w$-side of $\beta$ (by \eqref{lem part: No y-edges above a b-track} and Corollary~\ref{cor: no loops}\eqref{lem part: no a-loops}), it is dual to at most one edge in $\partial \Delta$ (by \eqref{lem part part:no boundary sinks}), and it  intersects $\beta$ at most once, for if it formed a bigon with $\beta$, then Lemma~\ref{lem: bigons}\eqref{lem part: bi bigon} would apply to an innermost such instance $\alpha$, and one of the intersections of $\alpha$ and $\beta$ would have to occur in an $r_{1, i-1}$-cell, contradicting the minimality of $i$.  

\begin{figure}[htbp]
\centering
\begin{overpic}  [scale=0.8] 
{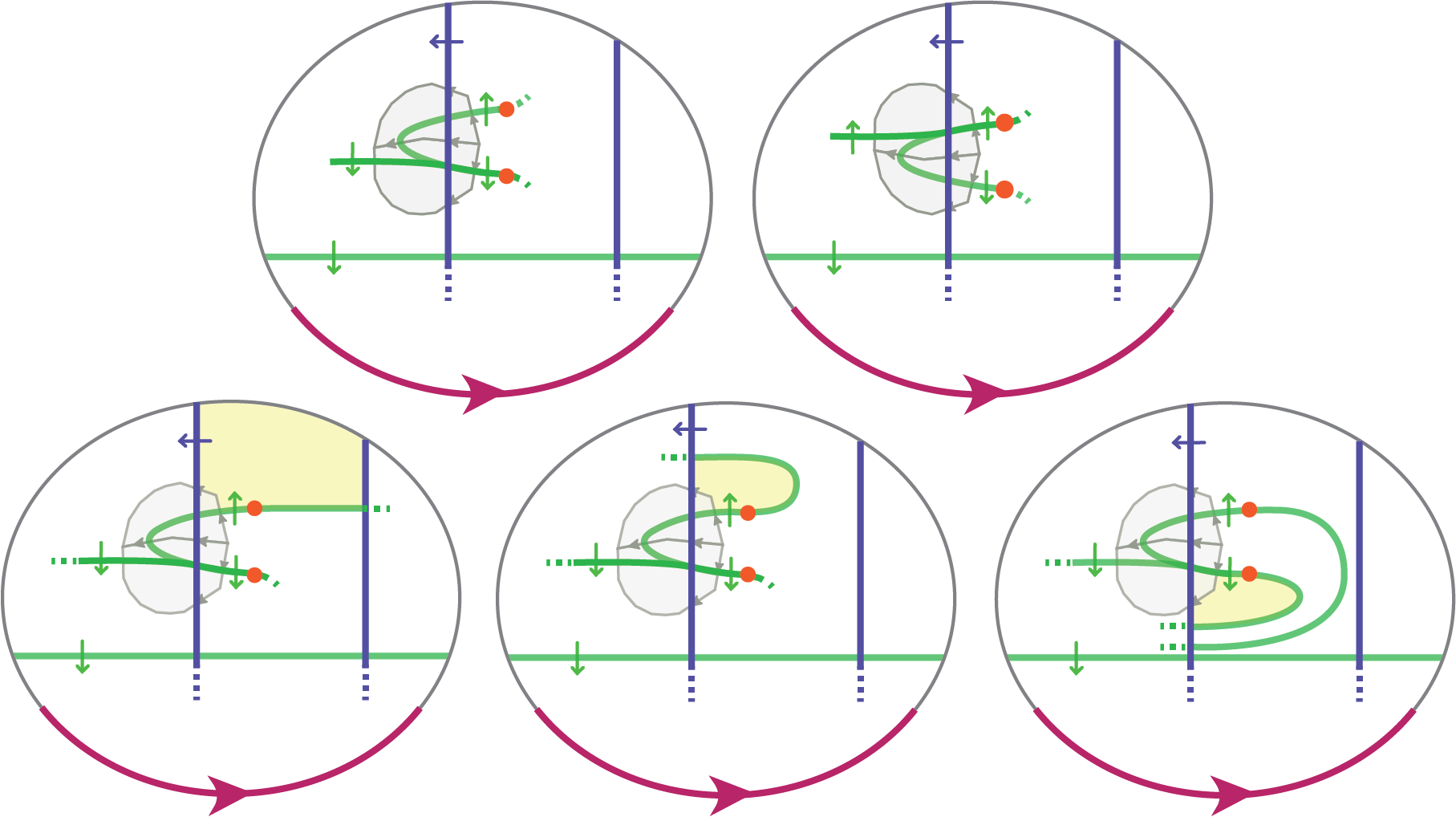}
\put(32.5, 26.5){\small{$\chi$}}
\put(67, 26.5){\small{$\chi$}}
\put(15, -1.5){\small{$\chi$}}
\put(49.5, -1.5){\small{$\chi$}}
\put(84, -1.5){\small{$\chi$}}
\put(32, 53.7){\small{$\alpha$}}
\put(66.2, 53.7){\small{$\alpha$}}
\put(14.5, 26.7){\small{$\alpha$}}
\put(49, 26.7){\small{$\alpha$}}
\put(82.6, 26.5){\small{$\alpha$}}
\put(43, 48){\small{$\alpha'$}}
\put(77.5, 48){\small{$\alpha'$}}
\put(22.4, 24){\small{$\alpha'$}}
\put(60, 20.7){\small{$\alpha'$}}
\put(94, 20.8){\small{$\alpha'$}}
\put(18, 24){\small{$R$}}
\put(51, 22){\small{$R$}}
\put(85, 14){\small{$R$}}
\put(23.5, 36){\small{$\beta$}}
\put(58, 36){\small{$\beta$}}
\put(6.5, 8.5){\small{$\beta$}}
\put(40.5, 8.5){\small{$\beta$}}
\put(74.7, 8.5){\small{$\beta$}}
\put(35.5, 47.3){\small\orange{{$A$}}}
\put(70, 43.3){\small\orange{{$B$}}}
\put(35.5, 44.5){\small\orange{{$B$}}}
\put(70, 46.2){\small\orange{{$A$}}}
\end{overpic}
\caption{Cases in our proof of Lemma~\ref{lem: Layout lemma}\eqref{lem part: no button} \label{fig:why_no_buttons}}
\end{figure}

Let $\alpha$ be the $a_1$-track through $\mathcal B$, which we now know to be a button.  Figure~\ref{fig:why_no_buttons} (top-left and top-right) shows the two possible placements of $\mathcal B$ along $\alpha$, once we assume, without loss of generality, that $\alpha$ is oriented towards the left (in the sense of the figure). 
Let $A$ and $B$ be the points shown (in either case).    
Let $\alpha'$ be the first $a_1$-track   one meets on following $\beta$ to the right (in the sense of the figure) from its intersection with $\alpha$.  (If there is no such $\alpha'$ a simpler version, which we omit, of the following analysis will apply.)  
Then $\mathcal{G}_b$ can have no 
 junction
in the (closed) region bounded by $\alpha$ (on the left), $\alpha'$ (on the right), $\beta$ (below), and a portion of $\partial \Delta$ (above),  as this region has no  $a_1$-tracks.  
Thus there are three possible continuations for the $b$-track at $A$ through this region:  (i) it continues to $\alpha'$ or to $\partial \Delta$ (as shown lower left in Figure~\ref{fig:why_no_buttons}); (ii)  it returns to $\alpha$ \emph{above} the button (as shown lower middle); and (iii) it returns to $\alpha$ \emph{below} the button (as shown lower right).  In case (iii),  the $b$-track at $B$ must return to $\alpha$ \emph{below} the button also  (as otherwise there would be a junction). In all cases (i)--(iii) there  is a region $R$ (shown shaded in the figure) with boundary made up of an inward-oriented $b$-subtrack, $a_1$-subtracks, and (in   case (i)) a portion of the $w$-part of $\partial \Delta$,  contrary to  \eqref{lem part:no sinks} of this lemma.

\bs
\eqref{lem part:no loops and bigons}. 
In light of \eqref{lem part: no button}, Lemma~\ref{lem: bigons}\eqref{lem part: badge button} 
and Corollary~\ref{cor: no loops}\eqref{lem part: no b-loops} preclude bigons comprised of 
an $a$-subtrack and an outward oriented $b$-subtrack
 and $b$-loops respectively.   
 Corollary~\ref{cor: no loops}\eqref{lem part: loops absent from reduced diagrams} precludes inward
oriented $a$-loops. 
Suppose, for a contradiction, that there exists a non-trivial  $a$-loop  $\alpha$.  Then the region $R$ enclosed by $\alpha$ cannot contain a $b$-subtrack, as such a subtrack would give rise to a teardrop, a $b$-loop, or a bigon comprised of an outward oriented $b$-subtrack and an $a$-subtrack, all of which have been ruled out.  It follows that the minimal subdiagram containing $R$ contains only cells of type $r_{4, \ast}$ (as any other cells with $a$-letters would introduce $b$-subtracks), which contradicts Lemma~\ref{lem: no junctions no loops}\eqref{lem part: r4 diagrams}.

\bs
\eqref{lem part:no sources}.   Suppose, for a contradiction, that $R$ is a region of $\Delta$  whose boundary is comprised of $a$-subtracks and outward-oriented $b$-subtracks.     We may assume that no $a$- or $b$-track intersects the interior of $R$, because such a track would subdivide $R$ into two regions, at least one of which would satisfy the hypotheses of \eqref{lem part:no sources}.

 By \eqref{lem part:no loops and bigons},
 $\partial R$ cannot be an  $a$- or $b$-loop or a bigon comprised of an $a$-track and an outward-oriented $b$-track.   
 Any two  adjacent   $b$-subtracks in the circuit $\partial R$ are together  a single $b$-subtrack.  
As the $a$-subtracks in $\partial R$ are consistently oriented,  the same is true for $a$-subtracks. 
   So, $\partial R$ is a concatenation of non-trivial paths $\overline{\alpha}_1$, $\overline{\beta}_1$, \ldots, $\overline{\alpha}_m$, $\overline{\beta}_m$ where $m \geq 2$ and each  $\overline{\alpha}_i$ is a subtrack of some $a$-track $\alpha_i$, each  $\overline{\beta}_i$ is a subtrack of some $b$-track $\beta_i$ and the $\overline{\beta}_i$ are all oriented out of $R$.

  As $\overline{\beta}_1$ is oriented out of $R$ and its continuation $\beta_1$ is oriented toward $\chi$ 
 (by \eqref{lem part:orientations}), $R$ is in the $w$-side of $\beta_1$.   Now, because $\beta_2$ is also oriented toward $\chi$, and because the interior of $R$ has no $b$-subtracks, $\beta_2$ must merge with $\beta_1$ either to the left or right of $R$, as shown in Figure~\ref{fig:merging}.  
 Then some  subtrack of $\beta_2$  bounds a region $R'$ either with $\overline{\alpha}_2$  (per Figure~\ref{fig:merging}, left)   or with  the concatenation $\overline{\alpha}_3 \overline{\beta}_3 \cdots \overline{\alpha}_m \overline{\beta}_m \overline{\alpha}_1$ (per Figure~\ref{fig:merging}, right).  
 In the latter case, the extension $\alpha_1$ of $\overline{\alpha}_1$ cannot enter $R$ (as $R$ contains no $a$-subtracks) so must meet the part of $\beta_2$ in $\partial R'$ 
 (after possibly passing through some other $\overline \alpha_i$'s for $3 \le i \le m$).   
In either case we get a bigon $B$ bounded by an  $a$-subtrack and an outward-oriented  $b$-subtrack, contrary to  \eqref{lem part:no loops and bigons}.
 
 \begin{figure}[htbp]
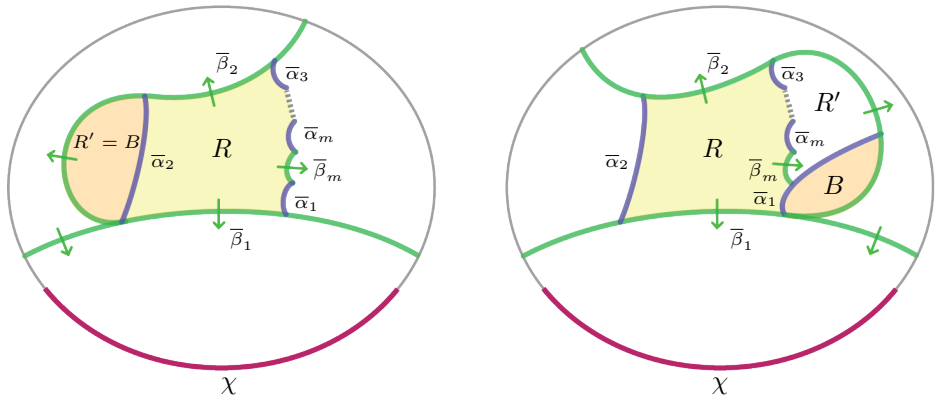

 \centering
\begin{overpic}
{Figures/merging}
\put(23, -2){\small{$\chi$}}
\put(76, -2){\small{$\chi$}}
\put(22, 23){\small{$R$}}
\put(75, 23){\small{$R$}}
\put(7, 24){\tiny{$R'=B$}}
\put(88, 19){\small{$B$}}
\put(87, 28){\small{$R'$}}
\put(24, 13.5){\tiny{$\overline{\beta}_1$}}
\put(22.5, 32.5){\tiny{$\overline{\beta}_2$}}
\put(33, 21){\tiny{$\overline{\beta}_m$}}
\put(78, 13.5){\tiny{$\overline{\beta}_1$}}
\put(75.5, 32.5){\tiny{$\overline{\beta}_2$}}
\put(80, 21){\tiny{$\overline{\beta}_m$}}
\put(31, 17.5){\tiny{$\overline{\alpha}_1$}}
\put(15.5, 22){\tiny{$\overline{\alpha}_2$}}
\put(30, 31.5){\tiny{$\overline{\alpha}_3$}}
\put(32, 25){\tiny{$\overline{\alpha}_m$}}
\put(80.5, 18){\tiny{$\overline{\alpha}_1$}}
\put(64.5, 22){\tiny{$\overline{\alpha}_2$}}
\put(83.5, 31.5){\tiny{$\overline{\alpha}_3$}}
\put(85, 24.5){\tiny{$\overline{\alpha}_m$}}
\end{overpic}
\caption{Illustrating our proof of Lemma~\ref{lem: Layout lemma}\eqref{lem part:no sources}  }
\label{fig:merging}
\end{figure}

\bs 
\eqref{lem part:crossing possibilities}.   We will use  Lemma~\ref{lem: intersections} with 
$\{\tau, \sigma\} = \{\alpha, \beta\}$.
Lemma~\ref{lem: trapped y-noise}\eqref{lem part: no y-edges cor} tells us that there is no region in $\Delta$ that is bounded by inward-oriented $a$-and $b$-subtracks, which establishes the no-sink-regions hypothesis of Lemma~\ref{lem: intersections}.     

The case \eqref{lem part: both sides}  corresponds to  case~(1) of Lemma~\ref{lem: intersections}  with $\tau = \alpha$ and $\sigma = \beta$.  
By \eqref{lem part:orientations} of the present lemma,  $\alpha$ and $\beta$ are oriented towards $\chi$.  So \eqref{lem part: chi side} corresponds to either case~(2) or case~(3) of Lemma~\ref{lem: intersections} with $\tau = \alpha$ and $\sigma = \beta$,   
and \eqref{lem part: w side}   concerns  case~(3) with $\tau = \beta$ and $\sigma = \alpha$.  With just one exception, \eqref{lem part:no loops and bigons} of the present lemma (specifically the part concerning bigons)  rules out all the intersection patterns catalogued in Lemma~\ref{lem: intersections} apart from those listed in the conclusion of \eqref{lem part:crossing possibilities}.  That one exception occurs in  \eqref{lem part: w side}, where we need to further exclude the possibility that $\alpha$ and $\beta$ do not cross, which we do by invoking \eqref{lem part:no sinks} of this lemma.

\bs
\eqref{lem part: b_0-track outermost}.  
 Suppose there is a $b_0$-track $\beta_0$ in the $w$-side of a $b$-track $\beta$.   
  If $C$ is a 2-cell dual to $\beta_0$, then $C$ has a $y$-edge, so cannot be on the $w$-side of $\beta$ by \eqref{lem part: No y-edges above a b-track}.  Thus $C$ is dual to $\beta$ as well, and is an $r_{\ast, 0}$-cell.  Then $\beta$ agrees with $\beta_0$ on $C$. (This is clear if $\ast$ is 2 or 3.  If $C$ has type $r_{4, 1}$, it follows from the fact that $\beta$ and $\beta_0$ are oriented towards $\chi$ by \eqref{lem part:orientations}.)  
Consequently, $\beta_0 = \beta$. 
\end{proof}

\subsection{$(a_2,b_q)$-tracks} \label{sec: a2bq tracks}
 
 A key idea leading to the ``$p/q$'' in the subgroup distortion function of Theorem~\ref{main} is that the generation of  $b_p$ letters  within distortion diagrams is offset by generation of letters $b_q$ that must ``appear'' in $w$ either as $b_q$-letters or in the guise of $a_2$-letters.  The reason for this is that $b_q$ letters feature in \emph{$(a_2,b_q)$-tracks}, which are the subject of this section and  will be crucial to our proof of Lemma~\ref{lem:p/q}.

\begin{definition}  \textbf{($(a_2,b_q)$-tracks)} \label{def: a2bq-track} 
An \emph{$(a_2,b_q)$-track} in a   van Kampen diagram $\Delta$ over our presentation $\mathcal{P}$ for $G$ is a maximal path that is a concatenation of edges dual to  consistently oriented $a_2$-edges and $b_q$-edges in $\Delta$, such that an $(a_2,b_q)$-track entering a 2-cell of the form shown rightmost in Figure~\ref{fig:a_2bq_tracks} across an $a_2$-edge leaves across the consistently oriented $b_q$-edge. The two $(a_2,b_q)$-tracks in the 2-cell shown rightmost in Figure~\ref{fig:a_2bq_tracks}  touch, but we do not consider them to intersect.  Examples are shown in Figures~\ref{fig:QDiagram} and \ref{fig:G2Diagram}.
\end{definition}

\begin{lemma} \label{lem:(a_2,b_q)-tracks}
$(a_2,b_q)$-tracks in a   van Kampen diagram $\Delta$ have the  following properties:
\begin{enumerate}
\item \label{zeroth a2bq property} $(a_2, b_q)$-tracks inherit orientations from the orientations of their constituent subtracks.  
	\item \label{first a2bq property} Every $a_2$-edge and $b_q$-edge in $\Delta$ is dual to an edge in exactly one $(a_2,b_q)$-track.
\item \label{second a2bq property} An $(a_2,b_q)$-track cannot intersect itself or another $(a_2,b_q)$-track.   
\item \label{third a2bq property} The set of $a_2$- and $b_q$-edges in $\partial \Delta$ are paired off according to whether there is   an $(a_2,b_q)$-track whose first and last edges are dual to them.
\item \label{lem part: a2bq not a loop} If $\Delta$ is a distortion diagram 
as constructed in Lemma~\ref{lem: Layout lemma}, 
then an $(a_2,b_q)$-track in $\Delta$ cannot be a loop.  
\end{enumerate}
\end{lemma}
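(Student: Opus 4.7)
My plan for parts (1)--(4) is essentially bookkeeping on the relators of Figure~\ref{fig:relations}, while part (5) is where the rigidity results from Lemmas~\ref{lem: trapped y-noise} and \ref{lem: Layout lemma} do the actual work. I would start by cataloguing the 2-cells that carry $a_2$- or $b_q$-edges: $a_2$-edges occur in $r_{1,q-1}$ (one copy), each $r_{2,i}$ (two copies), and each $r_{4,2,j}$ and $r_{4,2}$ (two copies); $b_q$-edges occur in $r_{1,q-1}$ (one copy), in $r_{1,q}$, $r_{3,q}$, and each $r_{3,q,j}$ (two copies each), and in $r_{2,q}$ (two copies alongside two $a_2$-edges). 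Reading off the orientations in each of these relators, Definition~\ref{def: a2bq-track} prescribes a unique, consistently oriented exit edge whenever a track enters a cell across an $a_2$- or $b_q$-edge. This immediately yields (1), since orientations propagate across each cell, and (2), since every $a_2$- or $b_q$-edge lies on exactly one track.

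For (3), the uniqueness of the exit edge within a cell means a track cannot re-enter that cell through a different edge, ruling out self-intersection. Two distinct tracks sharing a cell would have to use disjoint pairs of $a_2$- and $b_q$-edges: the only cell where this is possible is $r_{2,q}$, and there the convention in Definition~\ref{def: a2bq-track} pairs the two $a_2$-edges with the two $b_q$-edges so the tracks merely touch at the interior vertex. Part (4) then falls out: by (2) and the maximality of tracks, each boundary $a_2$- or $b_q$-edge is a first or last edge of exactly one track, and (3) guarantees these pairings are disjoint.

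The substance is in (5). Suppose, for contradiction, that $\gamma$ is an $(a_2,b_q)$-loop in a distortion diagram $\Delta$ produced by Lemma~\ref{lem: Layout lemma}, and let $R$ be the region it bounds. Since $\gamma$ is a cyclic concatenation of $a_2$- and $b_q$-subtracks carrying a single consistent orientation (by (1)), each constituent $a$-subtrack and each constituent $b$-subtrack along $\partial R$ is either uniformly inward-oriented with respect to $R$ or uniformly outward-oriented. In the inward case, $\partial R$ is made entirely of inward-oriented $a$- and $b$-subtracks, contradicting Lemma~\ref{lem: trapped y-noise}\eqref{lem part: no y-edges cor}. In the outward case, $\partial R$ is made of consistently oriented $a$-subtracks and outward-oriented $b$-subtracks, contradicting Lemma~\ref{lem: Layout lemma}\eqref{lem part:no sources}. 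The degenerate situations in which $\gamma$ is a pure $a_2$-loop or a pure $b_q$-loop are also excluded by Lemma~\ref{lem: Layout lemma}\eqref{lem part:no loops and bigons}.

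The main obstacle I expect is keeping the orientation conventions straight at the transition cells $r_{1,q-1}$ and $r_{2,q}$: one has to verify that the single orientation carried by $\gamma$ really does force \emph{uniformity} of orientations on the individual $a$- and $b$-subtracks relative to $R$, rather than allowing an orientation flip each time the track switches from an $a_2$- to a $b_q$-subtrack. A careful inspection of the cyclic word around each of these two relators, however, shows that the ``consistently oriented'' clause built into Definition~\ref{def: a2bq-track} is engineered to deliver exactly this uniformity, so the case analysis above closes.
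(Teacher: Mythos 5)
Your proof is correct and takes essentially the same route as the paper: parts (1)--(4) are bookkeeping on which relators contain $a_2$- and $b_q$-letters (the paper simply observes that $r_{2,q}$ is the unique exception to the ``exactly one positive and one negative'' rule), and part (5) rules out inward- and outward-oriented loops via Lemma~\ref{lem: trapped y-noise}\eqref{lem part: no y-edges cor} and Lemma~\ref{lem: Layout lemma}\eqref{lem part:no sources}, respectively, exactly as the paper does. Your explicit check that the orientation convention at $r_{1,q-1}$ and $r_{2,q}$ really does propagate a single consistent orientation around the loop is a sensible precaution and addresses the one point the paper leaves implicit.
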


\begin{figure}[htbp]
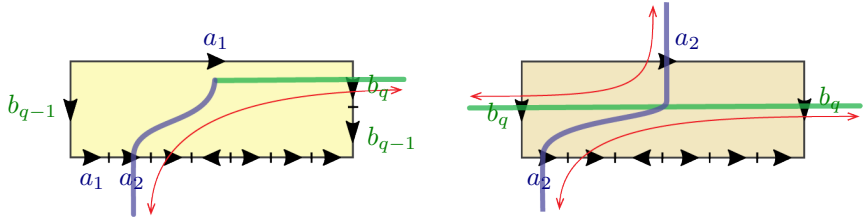

\centering
\begin{overpic} 
{Figures/a_2bq_tracks}
\put(2, 4){\blue{\small{$a_1$}}}
\put(7, 4){\blue{\small{$a_2$}}}
\put(17.5, 21.5){\blue{\small{$a_1$}}}
\put(-7, 13){\green{\small{$b_{q-1}$}}}
\put(38, 9){\green{\small{$b_{q-1}$}}}
\put(38, 15.5){\green{\small{$b_{q}$}}}
\put(53, 12){\green{\small{$b_{q}$}}}
\put(94.5, 14){\green{\small{$b_{q}$}}}
\put(58, 4){\blue{\small{$a_2$}}}
\put(76.5, 21.5){\blue{\small{$a_2$}}}
\end{overpic}
\caption{\emph{How $(a_2,b_q)$-tracks progress through $r_{1,q-1}$- and $r_{2,q}$-cells}}
\label{fig:a_2bq_tracks}
\end{figure}  
 
\begin{proof}
 \eqref{zeroth a2bq property} holds because constituent  subtracks are consistently oriented edges by construction.

With the sole exception of  $r_{2, q}$ (shown rightmost in Figure~\ref{fig:a_2bq_tracks}), all our defining relators contain either none of the letters $a_2$, $a_2^{-1}$, $b_q$,  and $b_q^{-1}$, or contain exactly one of $a_2$ and $b_q$, and exactly one of $a_2^{-1}$ and $b_q^{-1}$. So \eqref{first a2bq property}--\eqref{third a2bq property} follow.  For \eqref{lem part: a2bq not a loop}, suppose there is a  $(a_2,b_q)$-loop in a distortion diagram $\Delta$. As the orientations of its constituent subtracks are consistent, it is either inward- or outward-oriented.  The former is impossible by Lemma~\ref{lem: trapped y-noise}\eqref{lem part: no y-edges cor}  and the latter by Lemma~\ref{lem: Layout lemma}\eqref{lem part:no sources}.
\end{proof}

Given $\Delta$ as per Lemma~\ref{lem: Layout lemma}, its $b_0$-tracks $\beta_1, \dots, \beta_m$ must
be arranged consecutively around $\partial \Delta$ as  per Figure~\ref{fig:a_2bq_and_b0_tracks} (since they cannot nest by Lemma~\ref{lem: Layout lemma}\eqref{lem part: b_0-track outermost}).
In short, our next lemma states that the intersections of an $(a_2,b_q)$-track with the $b_0$-tracks in $\Delta$ progress in order around the diagram.  We will use it in our proof of Proposition~\ref{prop:upper bound} at the end of Section~\ref{sec:upper}.

\begin{lemma} \label{lem: a2bq not nesting}
Suppose $\Delta$ is a distortion diagram for $w \chi^{-1}$ as per 
Lemma~\ref{lem: Layout lemma}.  Let $Q_0$ and $P_{m+1}$ be the initial and terminal vertices of the $w$ portion of $\partial \Delta$.   For distinct points $P$ and $Q$ on $w$, write $P < Q$ when one reaches $P$ first when following $w$ from $Q_0$ to $P_{m+1}$.    Suppose, as shown in  Figure~\ref{fig:a_2bq_and_b0_tracks}, $P_1 < Q_1 <  \cdots < P_m < Q_m$ are $2m$ successive points on the $w$-portion of $\partial \Delta$ and, for $i=1, \ldots, m$, $\beta_i$ is a $b_0$-track from $P_i$ to $Q_i$ oriented towards $\chi$.  Let $R$ be the maximal region of $\Delta$ that is bounded by $\beta_1$, \ldots, $\beta_m$ and the intervening subpaths of $\partial \Delta$.

Suppose $\tau$ is an $(a_2,b_q)$-track  in $\Delta$ starting at some $P$ and ending at some $Q$ in $\partial \Delta$, with $P<Q$. Let $\Sigma$ be the set of points where $\tau$ meets $\partial R$.  The order   in which $\tau$ visits the points of $\Sigma$ as it progresses from $P$ to $Q$ is the same as the order in which they occur on the boundary circuit $\partial R$ starting from $Q_0$ and following it around to $P_{m+1}$.            
\end{lemma}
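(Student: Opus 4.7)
The plan is to argue topologically, using that $\tau$ is an injective combinatorial arc (by Lemma~\ref{lem:(a_2,b_q)-tracks}\eqref{second a2bq property} and \eqref{lem part: a2bq not a loop}) and that the $b_0$-tracks $\beta_i$ are non-nested (by Lemma~\ref{lem: Layout lemma}\eqref{lem part: b_0-track outermost}). First I would observe that, since $\tau$ crosses only $a_2$- and $b_q$-edges while $\chi$ is a word on $t, y_1, y_2$, we have $\tau \cap \chi = \emptyset$. Hence $\Sigma$ lies in the subarc of $\partial R$ from $Q_0$ to $P_{m+1}$ traversed via $\gamma_0, \delta_1, \gamma_1, \ldots, \delta_m, \gamma_m$, where $\delta_i$ denotes the $R$-side of the corridor of $\beta_i$, so the $\partial R$-order becomes a total order on $\Sigma$ in which every $\delta_i$-crossing strictly precedes every $\delta_{i'}$-crossing for $i < i'$.

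Next I would partition $\tau$ by $\Sigma$ into finitely many sub-arcs, each lying entirely in $R$ or in a single pocket $S_i$, and label the sub-arcs in $R$ as $\alpha_0, \alpha_1, \ldots, \alpha_k$ in $\tau$-order with endpoints $(a_j, b_j)$, where $a_j$ precedes $b_j$ along $\tau$. Injectivity of $\tau$ forces the $\alpha_j$ to be pairwise disjoint simple arcs in the closed disc $R$ with endpoints on $\partial R$, so the matching $\{(a_j, b_j)\}_{j=0}^{k}$ on $\partial R$ is \emph{non-crossing}; the same reasoning gives non-crossing matchings of the $\tau$-sub-arcs in each pocket $S_i$ on $\delta_i$.

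Assume for contradiction that the $\tau$- and $\partial R$-orderings on $\Sigma$ differ. The mismatch falls into one of two types: (i) the sequence of indices $i$ of the pockets $S_i$ visited successively by $\tau$ fails to be non-decreasing, or (ii) within some single $\delta_i$ the $\tau$-order of crossings disagrees with the order along $\delta_i$. In case~(i), if $\tau$ excurses first into $S_{i_1}$ and next into $S_{i_2}$ with $i_2 < i_1$, then the $R$-arc whose $\tau$-exit-point lies on $\delta_{i_1}$ and the $R$-arc joining the $\delta_{i_1}$-re-entry to the subsequent exit on $\delta_{i_2}$ have four endpoints alternating along $\partial R$, directly contradicting non-crossing. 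In case~(ii), nesting or reversal of the $S_i$-arcs on $\delta_i$ similarly forces the $R$-arcs flanking the offending excursion to interleave on $\partial R$, giving the same contradiction.

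The main obstacle is packaging the case analysis of the final step into a single uniform argument: while any one specific mismatch is easy to check by enumerating positions on $\partial R$, assembling a clean catch-all argument that every possible order discrepancy produces an interleaving pair of $R$-arcs takes some care. Once that is in place, the conclusion is immediate: a crossing in the matching of the disjoint arcs $\alpha_0, \ldots, \alpha_k$ in the disc $R$ is impossible, so the two orderings of $\Sigma$ must agree.
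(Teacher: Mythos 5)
Your approach — reduce to a non-crossing matching of disjoint sub-arcs of $\tau$ inside the disc $R$ and the pockets $S_i$, and argue that any order discrepancy forces an interleaving pair — is a genuinely different route from the paper (which instead applies the intersection-pattern machinery of Section~\ref{sec: intersection patterns}, namely Lemma~\ref{lem: intersections} and Corollary~\ref{cor: intersections}, to each pair $(\tau, \beta_\ell)$). However, the purely topological argument does not suffice; there is a real gap in the step where you claim that any out-of-order visit produces an interleaving pair of $R$-arcs.

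Here is a concrete configuration your argument fails to rule out. Take $m=2$, with $P$ on $\partial R$ before $\beta_1$ and $Q$ on $\partial R$ after $\beta_2$, and let $\tau$ cross $\beta_1$ at $x_1$, return across $\beta_1$ at $x_2$, cross $\beta_2$ at $x_3$, return at $x_4$, cross $\beta_1$ again at $x_5$, and return at $x_6$. Assign $\partial R$-positions $P,x_1,x_6,x_5,x_2,x_3,x_4,Q = 0,1,2,3,4,5,6,7$; then the $R$-arcs are $\{0,1\},\{4,5\},\{3,6\},\{2,7\}$ (pairwise \emph{nested}, hence non-crossing), the $S_1$-arcs are $\{1,4\}$ and $\{2,3\}$ (nested), and the $S_2$-arc $\{5,6\}$ stands alone. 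Every non-crossing check passes, yet the $\tau$-order $x_1,x_2,x_3,x_4,x_5,x_6$ disagrees with the $\partial R$-order $x_1,x_6,x_5,x_2,x_3,x_4$. Your resolution of case~(i) asserts that the $R$-arc exiting on $\delta_{i_1}=\beta_2$ (namely $\{x_2,x_3\}=\{4,5\}$) and the $R$-arc from the $\beta_2$-re-entry to the next exit on $\beta_1$ (namely $\{x_4,x_5\}=\{3,6\}$) must interleave, but here they nest. In short, non-crossing is compatible with nesting, and nesting permits $\tau$ to revisit earlier pockets.

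What actually rules this out is the orientation data, which your proof never invokes. Both $\tau$ (by Lemma~\ref{lem:(a_2,b_q)-tracks}\eqref{zeroth a2bq property} and Lemma~\ref{lem: Layout lemma}\eqref{lem part:orientations}) and every $\beta_\ell$ are transversely oriented towards $\chi$, and Lemma~\ref{lem: Layout lemma}\eqref{lem part:no sources} forbids source regions bounded by such tracks. The paper leverages this: it first applies Corollary~\ref{cor: intersections} to show that $\tau$ cannot meet $\beta_\ell$ for $\ell$ outside the range determined by $P$ and $Q$, then uses the same corollary (in a disc obtained by excising the $w$-side of a $\beta_k$ already crossed) to show $\tau$ cannot return to an earlier $\beta_\ell$, and finally invokes cases~(1) and~(3) of Lemma~\ref{lem: intersections} to pin down the intersection pattern with each $\beta_\ell$ it does meet. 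Your proposal would need to incorporate this orientation information — for instance, by showing that the nested configuration above contains a forbidden source region — before the non-crossing bookkeeping becomes decisive.
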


\begin{figure}[htbp]
\centering
\begin{overpic} 
{Figures/a_2bq_and_b0_tracks}
\put(49, -2){\small{$\chi$}}
\put(49, 68.5){\small{$w$}}
\put(49, 49){\orange{\small{$\tau$}}}
\put(49, 29){\small{$R$}}
\put(-1, 44.5){\orange{\small{$P$}}}
\put(82, 60){\orange{\small{$Q$}}}
\put(23, 23){\green{\small{$\beta_1$}}}
\put(17, 36){\green{\small{$\beta_i$}}}
\put(26.5, 48){\green{\small{$\beta_{i+1}$}}}
\put(55, 54){\green{\small{$\beta_j$}}}
\put(80, 35.5){\green{\small{$\beta_{j+1}$}}}
\put(71, 22){\green{\small{$\beta_m$}}}
\put(18.5, 3.5){\green{\small{$P_1$}}}
\put(-4, 39){\green{\small{$P_i$}}}
\put(19, 64){\green{\small{$P_{i+1}$}}}
\put(57, 67.5){\green{\small{$P_j$}}}
\put(86.5, 56.5){\green{\small{$P_{j+1}$}}}
\put(95, 17){\green{\small{$P_m$}}}
\put(70, 1){\green{\small{$P_{m+1}$}}}
\put(25, 1.5){\green{\small{$Q_0$}}}
\put(2, 16){\green{\small{$Q_1$}}}
\put(10.5, 58.5){\green{\small{$Q_i$}}}
\put(39, 68){\green{\small{$Q_{i+1}$}}}
\put(76.5, 62.5){\green{\small{$Q_j$}}}
\put(100, 40){\green{\small{$Q_{j+1}$}}}
\put(78, 4){\green{\small{$Q_m$}}}
 \end{overpic}
\caption{Illustrating Lemma~\ref{lem: a2bq not nesting}}
\label{fig:a_2bq_and_b0_tracks}
\end{figure}

\begin{proof} As its constituent $a_2$- and $b_q$-subtracks are, by construction, consistently oriented, $\tau$ is a compound track which is oriented either towards or away from $\chi$.  The latter eventuality is precluded by Lemma~\ref{lem: Layout lemma}\eqref{lem part:orientations}.

 The lemma will be proved by applying either Lemma~\ref{lem: intersections}
 or Corollary~\ref{cor: intersections} to pairs consisting of  $\tau$ (or a subpath thereof) and $\beta_l$, for each $l$.   
 
Let $i, j \in \set{0, \ldots, m+1}$ be such that $Q_{i-1} < P < Q_i$ and  $P_{j} < Q < P_{j+1}$. 
 By Lemma~\ref{lem: Layout lemma}\eqref{lem part:no sources}, for all $\ell$, there is no source-region bounded by subtracks of $\tau$ and  $\beta_{\ell}$. 
If $\ell <i$ or $\ell >j$, then the orientations of $\beta_\ell$ and $\tau$ near $\partial \Delta$ are as shown in  Figure~\ref{fig: intersections example}(right),  so $\beta_\ell$ and $\tau$ cannot intersect by Corollary~\ref{cor: intersections}.

Consider traveling along $\tau$ from $P$ to $Q$.  
If $\tau$ intersects $\beta_k$ for some $k$, then $\tau$ cannot
intersect any   $\beta_{\ell}$ with $\ell <k$.    This is because were there such an $\ell$, there would be a subpath $\hat{\tau}$ of $\tau$ that connects a pair of points on $\beta_k \cup \set{Q}$ 
and intersects $\beta_\ell$. 
However, in the disc obtained from~$\Delta$ by excising the $w$-side of $\beta_k$, the orientations on $\hat{\tau}$ and $\beta_\ell$ are as shown in Figure~\ref{fig: intersections example}(right), so this intersection is contrary to Corollary~\ref{cor: intersections}.

So $\tau$ intersects none of  $\beta_1$, \ldots, $\beta_{i-1}$, $\beta_{j+1}$, \ldots, $\beta_m$ and, 
proceeding from $P$, it intersects $\beta_i, \beta_{i+1}, \dots, \beta_j$ in order (intersecting each some number of times, possibly zero). 
If $P_i < P < Q_i$, then  how $\tau$ intersects $\beta_i$ is described by case (1) of Lemma~\ref{lem: intersections}.  The other possibility is that $P < P_i$,   which is handled by case (3).  Case (3) likewise describes how $\tau$ intersects $\beta_{i+1}$, \ldots, $\beta_{j-1}$, and case (1) or (3)  how  $\tau$ intersects $\beta_j$.    
These observations combine to prove the result.   
\end{proof}

\section{The upper bound}\label{ch:upper} 

\subsection{Reduction to a free-by-cyclic quotient}\label{sec:upper}

Modulo calculations we will postpone to Section~\ref{sec:p/q}, we will  prove here: 

\begin{prop} \label{prop:upper bound} For $\chi$, $w$ and $\Delta$ as per Lemma~\ref{lem: Layout lemma}, there exists a constant $K > 1$, depending only on our presentation $\mathcal{P}$ for $G$, such that   
\begin{equation} \label{eqn:chi bound} 
 |\chi|  \ \leq  \ K^{{|w|}^{p/q}}. 
 \end{equation}        
\end{prop}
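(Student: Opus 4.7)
The plan is to bound $|\chi|$ in three steps: first exploit the rigidity of $\Delta$ granted by Lemma~\ref{lem: Layout lemma} to decompose it into a stack of $b$-tracks and $a$-tracks flowing noise toward $\chi$; second, control the noise expansion across each crossing by a uniform factor; third, invoke the $(a_2,b_q)$-tracks together with a calculation in the quotient $Q$ to obtain the $p/q$ exponent.

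More concretely, by Lemma~\ref{lem: Layout lemma}\eqref{concatenation of disc diagrams} it suffices to treat the case where $\Delta$ is a single distortion diagram homeomorphic to a 2-disc. By parts~\eqref{lem part:orientations}, \eqref{lem part: No y-edges above a b-track}, and \eqref{lem part: b_0-track outermost} of that lemma, the $b$-tracks between points of $\partial \Delta$ are oriented toward $\chi$, the $w$-side of any $b$-track has no $y$-edges, and at most one $b_0$-track is present (sitting nearest $\chi$); by \eqref{lem part:crossing possibilities} the $a_1$-tracks nest predictably against the $b$-tracks, and by \eqref{lem part:no loops and bigons}, \eqref{lem part:no sources} there are no stray loops or bigon regions. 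Thus $\Delta$ is structured in layers akin to the schematic in Figure~\ref{fig:G2Diagram}: a generating cascade of $b$-corridors, punctuated by $a$-corridors, funnelling noise into $\chi$.

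For step two, I would show that $|\chi| \leq K_0^{N_{\mathrm{total}}}$, where $N_{\mathrm{total}}$ is the total number of $b$-edges in $\Delta$ and $K_0>1$ depends only on $\mathcal{P}$. The point is that, on crossing a $b$-track, $x$-noise is conjugated by some $b_i$ and converted via the Rips subwords $\Xb$ (or, at the unique $b_0$-track, into $y$-noise via $\Yb$); similarly, $y$-noise is conjugated by $a$-letters. The $C'(1/4)$-condition for $\mathcal{X}\cup\mathcal{Y}$ from Section~\ref{sec:the defn} guarantees, exactly as in the proof of Lemma~\ref{lemma iv}, that at most half of each Rips word can cancel. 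So the length of the noise emerging toward $\chi$ from one layer is at most $K_0$ times the length entering it. Iterating over all $b$-tracks (and using the track constraints to see that $a$-tracks contribute at most a bounded multiplicative factor per crossing, controlled by a fixed constant from the relators $r_{4,\ast,\ast}$) yields the bound.

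For step three, I would pass to the free-by-cyclic quotient $Q$ from \eqref{eqn QQQ} by killing $a_2, t, x_1, x_2, y_1, y_2$, under which $\Delta$ maps to a van Kampen diagram over $Q$ with the same $b$-edges. The $(a_2, b_q)$-tracks of Section~\ref{sec: a2bq tracks} provide a matching: by Lemma~\ref{lem:(a_2,b_q)-tracks}\eqref{third a2bq property} and \eqref{lem part: a2bq not a loop}, each $b_q$-edge of $\Delta$ lies on an $(a_2,b_q)$-track whose other endpoint is an $a_2$- or $b_q$-edge of $\partial \Delta$; since $\chi$ contains no such edges and Lemma~\ref{lem: a2bq not nesting} ensures these tracks do not ``waste'' boundary edges, the number $N_q$ of $b_q$-edges satisfies $N_q \leq 2|w|$. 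The postponed Lemma~\ref{lem:Qp/q} (reflecting that in $Q$ the automorphism $\varphi$ produces $\binom{n}{q}$-many $b_q$-letters and $\binom{n}{p}$-many $b_p$-letters after $n$ iterations, with total growth degree $p$) will then give $N_{\mathrm{total}} \leq C \, N_q^{p/q}$ for a constant $C>0$. Combined with step two, $|\chi| \leq K_0^{N_{\mathrm{total}}} \leq K_0^{C(2|w|)^{p/q}} \leq K^{|w|^{p/q}}$ for a suitable $K$.

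The main obstacle will be step two: making ``noise flow'' rigorous requires tracking how noise letters emerging from many 2-cells assemble into $\chi$ without ambient cancellation between noise contributed by non-adjacent parts of the diagram. The $C'(1/4)$-condition together with the strict orientation and non-nesting properties secured in Lemma~\ref{lem: Layout lemma} should do this work, but the bookkeeping is delicate; in particular, one must rule out the possibility that long noise strings generated deep inside $\Delta$ cancel against one another before reaching $\chi$, and it is there that the absence of $y$-edges on the $w$-side of $b$-tracks (Lemma~\ref{lem: Layout lemma}\eqref{lem part: No y-edges above a b-track}) becomes essential.
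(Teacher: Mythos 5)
Your high-level plan is right---use the rigidity from Lemma~\ref{lem: Layout lemma}, control noise blow-up via the $C'(1/4)$-condition, and use the $(a_2,b_q)$-tracks together with Lemma~\ref{lem:Qp/q} to introduce the $p/q$ exponent---but the quantity you feed into the exponential is wrong, and this is not a bookkeeping issue that can be repaired: the argument collapses.

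The gap is in steps two and three, and it comes from conflating two very different numbers: the \emph{length of the $b$-word along the side of an $a_1$-corridor} (the $\lambda$ of Lemmas~\ref{lem:a-corridor words}--\ref{lem:p/q}), and the \emph{total number $N_{\mathrm{total}}$ of $b$-edges in $\Delta$}. The exponent in the final estimate must be the former, which the paper bounds by $C_1 |w|^{p/q}$ in Lemma~\ref{lem:p/q}. The latter, $N_{\mathrm{total}}$, is itself of order $K^{|w|^{p/q}}$ and can in no way be bounded polynomially in $|w|$. To see why, recall how noise doubles: if noise of length $L$ enters one side of a $b$-corridor, then that corridor has about $L$ 2-cells, each contributing a $b$-edge, and the noise emerges with length roughly $KL$. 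Iterating across the $|\lambda|$ layers of a $b$-block therefore produces $\sum_{j\le|\lambda|} K^j \approx K^{|\lambda|}$ many $b$-edges, not $|\lambda|$ many. So your step-two bound $|\chi|\le K_0^{N_{\mathrm{total}}}$ is (vacuously) true but useless, and your step-three target $N_{\mathrm{total}} \le C N_q^{p/q}$ is false.

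Your use of the $(a_2,b_q)$-tracks is also misapplied: Lemma~\ref{lem:(a_2,b_q)-tracks}\eqref{third a2bq property} and Lemma~\ref{lem: a2bq not nesting} do not give a bound on the \emph{total} number $N_q$ of $b_q$-edges in $\Delta$. An $(a_2,b_q)$-track has both endpoints on $w$ but may pass through arbitrarily many interior $b_q$-edges, so $N_q$ is not $\le 2|w|$. What the tracks actually give (and what the paper uses in Lemma~\ref{lem:p/q}) is that each $(a_2,b_q)$-track can cross a \emph{fixed} corridor-bottom $\lambda$ at most once, whence $|\lambda|_q \le |w|$. That local bound, combined with Lemma~\ref{lem:Qp/q}, gives $|\lambda| \le C_1 |w|^{p/q}$, and it is \emph{that} length that becomes the exponent in the blow-up estimate of Lemma~\ref{lem:Lengths of W_i}. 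Finally Lemma~\ref{lem:Qp/q} itself concerns a single relation $\mu b_0 a_1^l = \lambda b_0$ in $Q$ arising from one $a_1$-corridor; it is not a statement about aggregate $b$-edge counts, so there is no route from it to $N_{\mathrm{total}} \le C N_q^{p/q}$. To fix the proof you would need to replace the ``total $b$-edge count'' with the block decomposition into $\mathcal{A}$ and $\mathcal{B}_i$ (Definition~\ref{def:b-block}), apply Lemma~\ref{lem:p/q} to bound each corridor side, feed this through the multi-HNN structure inside each $\mathcal{D}_j$ to obtain Lemma~\ref{lem:Lengths of W_i}, and handle the $a$-block separately (which contributes only an extra $C_4^{|w|}$ factor via Lemma~\ref{lem: a2bq not nesting}).
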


As a corollary, we obtain the desired upper bound on distortion: 

\begin{cor}	 \label{cor:upper bound}
$\mathrm{Dist}_H^G(n) \preceq \exp({n^{p/q}}).$
\end{cor}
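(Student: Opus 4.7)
The plan is to derive the corollary as a direct consequence of Proposition~\ref{prop:upper bound}, with Lemma~\ref{lem: Layout lemma} and Corollary~\ref{H is free} supplying the needed reductions. Suppose $g \in H$ satisfies $d_S(e,g) \leq n$. Pick a geodesic word $w_0$ on the generators of $G$ representing $g$, so $|w_0| \leq n$. Since $H$ is freely generated by $t, y_1, y_2$ (Corollary~\ref{H is free}), the element $g$ is represented by a unique freely reduced word $\chi$ on these three letters, and $|\chi| = d_T(e,g)$. Because $w_0 \chi^{-1}$ represents the identity in $G$, there is a reduced van~Kampen diagram $\Delta_0$ for $w_0 \chi^{-1}$, which we may take to be homeomorphic to a $2$-disc by the usual standard surgery on any disconnecting vertices.

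Next, I would apply Lemma~\ref{lem: Layout lemma} to $\Delta_0$. This yields a subdiagram $\Delta$ that is a van~Kampen diagram for $w \chi^{-1}$, where $w$ is a word of length at most $C |w_0| \leq C n$, and $\Delta$ satisfies all the hypotheses of Proposition~\ref{prop:upper bound}. That proposition then gives
$$ |\chi| \ \leq \ K^{|w|^{p/q}} \ \leq \ K^{(Cn)^{p/q}} \ = \ K^{C^{p/q} n^{p/q}}. $$
Taking the maximum over all $g \in H$ with $d_S(e,g) \leq n$ and recalling $|\chi| = d_T(e,g)$, we obtain
$$ \mathrm{Dist}^G_H(n) \ \leq \ K^{C^{p/q} n^{p/q}}. $$
Lemma~\ref{lem:exponents equiv}(2) (with base $K^{C^{p/q}} > 1$ and exponent $\alpha = p/q \geq 1$) shows that the right-hand side is $\simeq 2^{n^{p/q}} \simeq \exp(n^{p/q})$, so $\mathrm{Dist}^G_H(n) \preceq \exp(n^{p/q})$, as desired.

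There is no serious obstacle here: the entire geometric content has already been packaged into Proposition~\ref{prop:upper bound} and Lemma~\ref{lem: Layout lemma}. The only items to double-check are that the diagram $\Delta_0$ may be taken to be a topological disc (if not, property \eqref{concatenation of disc diagrams} of Lemma~\ref{lem: Layout lemma} lets us decompose $\Delta$ into disc-subdiagrams, apply the proposition to each, and sum the resulting exponential bounds, which remains $\preceq \exp(n^{p/q})$), and that the $\simeq$-equivalence step is valid, which is immediate from Lemma~\ref{lem:exponents equiv}(2).
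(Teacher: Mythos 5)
Your overall structure matches the paper's: apply Lemma~\ref{lem: Layout lemma} to get $\Delta$ and $w$ with $|w|\le C|w_0|$, then invoke Proposition~\ref{prop:upper bound} and conclude via Lemma~\ref{lem:exponents equiv}. The substance is right when $\Delta_0$ is a 2-disc.

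However, your treatment of the non-disc case has a genuine slip. First, the claim that a reduced van~Kampen diagram for $w_0\chi^{-1}$ ``may be taken to be homeomorphic to a 2-disc by the usual standard surgery'' is false: if some subword of $w_0\chi^{-1}$ (e.g.\ a cancelling prefix/suffix pair coming from $w_0$ and $\chi$) represents the identity in $G$, any van~Kampen diagram genuinely has a cut vertex, and no surgery produces a disc with the same boundary word. Second, your fallback invokes Lemma~\ref{lem: Layout lemma}\eqref{concatenation of disc diagrams} to decompose $\Delta$ into disc pieces, but that lemma takes ``$\Delta_0$ is homeomorphic to a 2-disc'' as a hypothesis and its part~\eqref{concatenation of disc diagrams} is a statement about the output $\Delta$, not about how to cope with a non-disc $\Delta_0$. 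The correct move, which the paper carries out directly, is to decompose $\Delta_0$ itself into disc subdiagrams $\Delta_1,\dots,\Delta_r$: since $w_0$ is geodesic it has no trivial proper subwords, and since $\chi$ is reduced and $H$ is free no two vertices along the $\chi$-portion are identified, so cut vertices can only arise from identifications between the $w_0$- and $\chi$-portions, giving a decomposition $w_0=w_1\cdots w_r$, $\chi=\chi_1\cdots\chi_r$ with each $\Delta_i$ a disc for $w_i\chi_i^{-1}$ (or a degenerate piece where $w_i=\chi_i$). One then applies Lemma~\ref{lem: Layout lemma} and Proposition~\ref{prop:upper bound} to each disc $\Delta_i$ and sums using the superadditivity of $n\mapsto\exp(n^{p/q})$. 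With that correction, your argument agrees with the paper's.
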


\begin{proof}[Proof of Corollary~\ref{cor:upper bound}, assuming  Proposition~\ref{prop:upper bound}]   Suppose $n \geq 0$.  Let $\chi$ be a reduced word on the generators of $H$
 which realizes the distortion function of $H$, i.e.:
\begin{equation} \label{eqn:Dist def} \mathrm{Dist}_H^G(n) \ = \  |\chi|.\end{equation}
More precisely, $\chi$ is a maximal length reduced word on the generators of $H$  that equals,  in $G$, some word $w_0$ of length at most $n$.  We can assume $w_0$ has no subwords representing the identity in $G$.  

 Let $\Delta_0$ be a reduced van~Kampen  diagram for $w_0 \chi^{-1}$.  
If $\Delta_0$ is homeomorphic to a $2$-disc, then Lemma~\ref{lem: Layout lemma} and hence Proposition~\ref{prop:upper bound} apply, yielding $w$ such that   $|\chi|  \ \leq  \ K^{{|w|}^{p/q}} $ and $|w| \leq C |w_0|$.  This, combined with 
\eqref{eqn:Dist def} and  $|w_0| \leq n$ gives the result.

Now suppose that $\Delta_0$ is not a 2-disc.  Our choice of $w_0$ guarantees that no two vertices along the part of $\partial \Delta_0$ labelled $w_0$ are identified.  The same holds for $\chi$, as it is reduced. 
It follows that $w_0$ and $\chi$ are concatenations of subwords $w_1, w_2, \dots, w_r$ and $\chi_1, \chi_2, \dots, \chi_r$ respectively, such that for each~$i$, either $w_i = \chi_i$ and the paths with these labels along $\partial \Delta_0$ are identified, or there is a (reduced) subdiagram $\Delta_i$ of $\Delta_0$ homeomorphic to a 2-disc whose boundary reads $w_i \chi_i^{-1}$.  
In either case, we have $\chi_i \ \leq  \ K^{{|w_i|}^{p/q}} $, and 
the bound we require follows from
the superadditivity of the function $n \mapsto \exp({n^{p/q}})$.   
  \end{proof}

Let $\chi$, $w$, and $\Delta$ be as per Lemma~\ref{lem: Layout lemma}.  To prove Proposition~\ref{prop:upper bound}, we will decompose $\Delta$ into the subdiagrams we now define.  

\begin{definition}\label{def:b-block}
(\textbf{Decomposing a distortion diagram into $b$-blocks and an $a$-block.})
Given a $b$-track $\beta$ in $\Delta$, define $\Delta_{\beta}$ to be the minimal subdiagram of $\Delta$ containing the 
$w$-side of $\beta$ (see Definition~\ref{def: distortion diagram}). 
So $\Delta_\beta$ is comprised of all the 2-cells of $\Delta$ that  either have $\beta$ passing through them or are in the $w$-side of $\beta$. Say that $\beta$ is \emph{outermost} when  there is no $b$-track $\beta'$ such that  
$\Delta_{\beta'}$ properly contains $\Delta_{\beta}$.  The $\Delta_{\beta}$ such that $\beta$ is outermost are the \emph{$b$-blocks} of $\Delta$.            

Let $\mathcal{B}_1$, \ldots, $\mathcal{B}_r$ be the $b$-blocks  of $\Delta$ as per Figure~\ref{fig:blocks} (when $r=3$).
Define the \emph{$a$-block} $\mathcal{A}$ of $\Delta$ to be the maximal subdiagram of $\Delta$ that contains $\chi$ and intersects no $b$-tracks.  So $\mathcal{A}$ is obtained from $\Delta$ by severing $\mathcal{B}_1$, \ldots, $\mathcal{B}_r$.   
\end{definition}

\begin{cor}\label{cor:block cells}
${}$ \label{cor: blocks}  For $\mathcal A$ and $\mathcal{B}_1$, \ldots, $\mathcal{B}_r$ as   defined above--- 
\begin{enumerate}
\item  \label{a-block part}
$\mathcal{A}$ is a subdiagram of $\Delta$ whose 2-cells are of type $r_{4, \ast, \ast}$   and $r_{4,\ast}$ (per Figure~\ref{fig:relations}).  

\item \label{b-blocks part} 
$\mathcal{B}_1$, \ldots, $\mathcal{B}_r$ are subdiagrams of $\Delta$ whose 2-cells are of type $r_{1, \ast}$, $r_{2, \ast}$, $r_{3, \ast}$, and $r_{3, \ast, \ast}$. 
\item \label{b_0 corridor part}
For all $i$, there exists $j_i$ such that the outermost $b$-track $\beta_i$ of $\mathcal B_i$ is a $b_{j_i}$-track.  It is oriented towards $\chi$ and the cells of $\Delta$ that it traverses comprise a  $b_{j_i}$-corridor in $\mathcal{B}_i$ whose top boundary (the boundary the $b_{j_i}$-edges are oriented towards) follows $\mathcal{A} \cap \mathcal{B}_i$.  If $j_i =0$, then this is the only $b_0$-corridor in $\mathcal{B}_i$. 
\end{enumerate}
\end{cor}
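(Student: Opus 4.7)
The plan is to treat the three parts in turn, drawing mainly on Lemma~\ref{lem: Layout lemma} and a direct inspection of Figure~\ref{fig:relations}.

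For part~\eqref{a-block part}, since $\mathcal{A}$ by definition intersects no $b$-track, and every $b$-edge of $\Delta$ is dual to an edge of some $b$-track, no $2$-cell of $\mathcal{A}$ can carry a $b$-edge in its boundary.  Reading off Figure~\ref{fig:relations}, the only defining relators whose boundary words contain no $b$-letter are those of types $r_{4, *, *}$ and $r_{4, *}$, so every cell of $\mathcal{A}$ is of one of these types.

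For part~\eqref{b-blocks part}, I would first apply Lemma~\ref{lem: Layout lemma}\eqref{lem part:no loops and bigons} to rule out $b$-loops in $\Delta$, so the outermost $b$-track $\beta_i$ is a path between two distinct points of $\partial\Delta$, both necessarily on the $w$-portion since $\chi$ carries no $b$-letter.  Lemma~\ref{lem: Layout lemma}\eqref{lem part: No y-edges above a b-track} then gives that the $w$-side of $\beta_i$ contains no $y$-edges.  A cell of $\mathcal{B}_i$ is either traversed by $\beta_i$, in which case it has a $b$-edge and so is of type $r_{1,*}$, $r_{2,*}$, $r_{3,*}$, or $r_{3,*,*}$; or it lies in the $w$-side of $\beta_i$, hence has no $y$-edge, and since every defining relator in Figure~\ref{fig:relations} has at least one $b$-letter or at least one $y$-letter in its boundary word, it is again of one of those four types.

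For part~\eqref{b_0 corridor part}, the orientation of $\beta_i$ towards $\chi$ is the content of Lemma~\ref{lem: Layout lemma}\eqref{lem part:orientations}, and when $j_i = 0$ the uniqueness of the $b_0$-corridor is a direct translation of Lemma~\ref{lem: Layout lemma}\eqref{lem part: b_0-track outermost}.  The substantive claim is that $\beta_i$ may be chosen to be a $b_{j_i}$-track for a single fixed index $j_i$, i.e.~that it does not branch at any junction of $\mathcal{G}_b$.  Every such junction is dual to an $r_{1, k-1}$-cell $c$ for some $k \ge 1$, whose boundary carries two $b_{k-1}$-edges and one $b_k$-edge.  My plan is to analyse the three possible smooth pairings of these three edges, track how the unused $b$-edge under each pairing sits relative to the $w$- and $\chi$-sides of $\beta_i$, and argue that rerouting $\beta_i$ at $c$ onto the pairing that joins the two $b_{k-1}$-edges preserves $\Delta_{\beta_i}$; iterating over all junctions then yields an outermost track of uniform type.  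Once this is established, the cells traversed by $\beta_i$ form a standard $b_{j_i}$-corridor by definition; its top boundary is a path of $b_{j_i}$-edges oriented towards $\chi$, each shared with a cell not in $\mathcal{B}_i$.  By part~\eqref{a-block part}, cells of $\mathcal{A}$ carry no $b$-edges, while any other $b$-track whose cells abutted the $\chi$-side of the corridor would, together with $\beta_i$, violate outermost-ness of $\beta_i$, so those neighbouring cells lie in $\mathcal{A}$ and the top boundary coincides with $\mathcal{A}\cap\mathcal{B}_i$.

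The main obstacle will be making the non-branching claim rigorous: the rerouting argument requires a careful geometric analysis of how the three smooth pairings at an $r_{1,k-1}$-cell junction partition $c$ and how the resulting placement of the unused $b$-edge interacts with the global $w$- and $\chi$-sides of $\beta_i$, in order to rule out the possibility that rerouting shrinks $\Delta_{\beta_i}$.
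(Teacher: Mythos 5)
Your parts (1) and (2) are correct and match the paper's (very brief) argument: cells of $\mathcal{A}$ can have no $b$-edges and so must be $r_{4,*}$- or $r_{4,*,*}$-cells, while Lemma~\ref{lem: Layout lemma}\eqref{lem part: No y-edges above a b-track} excludes $y$-edges from the $w$-side of $\beta_i$, which excludes $r_{4,*}$- and $r_{4,*,*}$-cells from $\mathcal{B}_i$.

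However, in part (3) you have invented an obstacle that does not exist.  You describe the assertion that $\beta_i$ is a $b_{j_i}$-track for a single fixed index $j_i$ as ``the substantive claim'' and propose a ``rerouting argument'' requiring ``careful geometric analysis of how the three smooth pairings at an $r_{1,k-1}$-cell junction partition $c$.'' This rests on a misreading of Definition~\ref{def: tracks}.  At a junction there are not three smooth pairings to choose among; exactly one pairing is smooth, namely the one joining the two $b_{k-1}$-edges (this is what the orientation criterion ``agree'' picks out, as shown in Figure~\ref{fig:junction} and as is forced if $b_{k-1}$-corridors are to exist).  The $b_k$-edge is a corner with respect to both $b_{k-1}$-edges.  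Since a $b$-track is by definition smooth on its interior, it passes through every junction via the unique smooth pairing, so the index of the $b$-edges it crosses never changes.  Thus \emph{every} $b$-track in a diagram over $\mathcal{P}$ is a $b_j$-track for a single $j$, immediately from the definition; there is no choosing, no branching, and no rerouting. The paper's proof treats this as immediate and spends no words on it.

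There is also a smaller slip at the end of your part (3): you write that the top boundary of the $b_{j_i}$-corridor ``is a path of $b_{j_i}$-edges oriented towards $\chi$.''  By the definition of corridor, the top boundary is the concatenation $v_1\cdots v_m$ of the non-$b_{j_i}$ arcs of the cells on the side the $b_{j_i}$-edges point towards; it contains no $b_{j_i}$-edges at all.  (The $b_{j_i}$-edges are what the track $\beta_i$ crosses, not what its corridor's top boundary runs along.)  The conclusion you want — that the top boundary is $\mathcal{A}\cap\mathcal{B}_i$ — still holds, but the justification needs to be phrased in terms of the cells abutting those $v_i$ from the $\chi$-side, which have no $b$-edges in common with the corridor cells and hence, by the outermost-ness of $\beta_i$ and part (1), lie in $\mathcal{A}$.
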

\begin{proof}
Lemma~\ref{lem: Layout lemma}\eqref{lem part: No y-edges above a b-track} implies that the $b$-blocks contain   no $r_{4, \ast}$- or $r_{4, \ast, \ast}$-cells. 
Statements~(1) and (2) are then consequences of the definitions of the $a$- and $b$-blocks.   
Lemma~\ref{lem: Layout lemma}(\ref{lem part:orientations}) tells us that every $b$-track is oriented towards $\chi$.  Part (3) then follows, except we also invoke     Lemma~\ref{lem: Layout lemma}\eqref{lem part: b_0-track outermost} for its final claim.  
\end{proof}

 \begin{figure}[htbp]
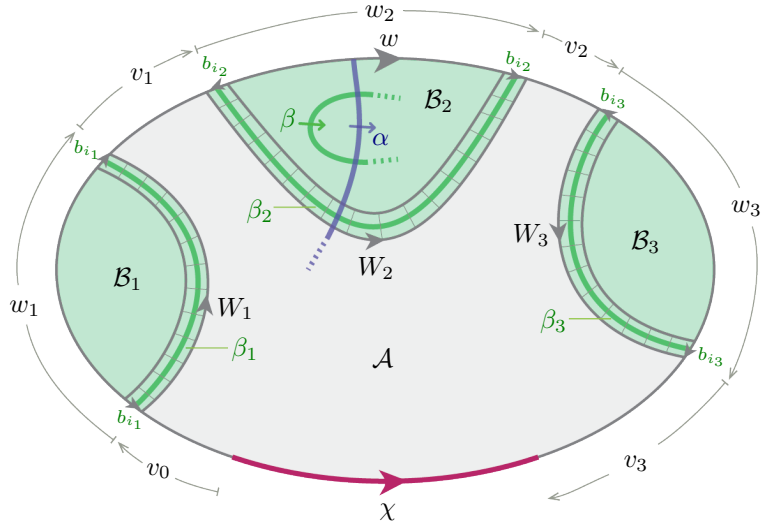

\centering
\begin{overpic} 
{Figures/blocks}
\put(49, -2){\small{$\chi$}}
\put(49, 61.5){\small{$w$}}
\put(13.5, 10.5){\green{\tiny{$b_{i_1}$}}}
\put(7.7, 47){\green{\tiny{$b_{i_1}$}}}
\put(25, 58.3){\green{\tiny{$b_{i_2}$}}}
\put(66, 59){\green{\tiny{$b_{i_2}$}}}
\put(78.8, 54){\green{\tiny{$b_{i_3}$}}}
\put(92, 19){\green{\tiny{$b_{i_3}$}}}
\put(27, 25){\small{$W_1$}}
\put(46, 30.5){\small{$W_2$}}
\put(67, 35){\small{$W_3$}}
\put(48, 48){\blue{\small{$\alpha$}}}
\put(35.5, 50.5){\green{\small{$\beta$}}}
\put(29, 19.5){\green{\small{$\beta_1$}}}
\put(31, 38.2){\green{\small{$\beta_2$}}}
\put(70.7, 23.3){\green{\small{$\beta_3$}}}
\put(48, 18){\small{$\mathcal{A}$}}
\put(13, 29){\small{$\mathcal{B}_1$}}
\put(55, 53){\small{$\mathcal{B}_2$}}
\put(83, 34){\small{$\mathcal{B}_3$}}
\put(-1, 25.5){\small{$w_1$}}
\put(47, 65.4){\small{$w_2$}}
\put(96.5, 39){\small{$w_3$}}
\put(17.5, 3.5){\small{$v_0$}}
\put(15.3, 56.8){\small{$v_1$}}
\put(74, 60.5){\small{$v_2$}}
\put(82, 4.5){\small{$v_3$}}
\end{overpic}
\caption{The $a$-block and $b$-blocks in $\Delta$. The $a_1$-track   $\alpha$ and $b$-track $\beta$ illustrate a case in the proof of Lemma~\ref{lem:a-corridor words}. }
\label{fig:blocks}
\end{figure}

Express $w$ as the concatenation of words $$w \ = \ v_0 w_1 v_1 w_2 \cdots w_r v_r$$ where, for all $i$, $w_i$ is the word along $\partial \mathcal{B}_i \cap \partial \Delta$ as shown in  Figure~\ref{fig:blocks} and the $v_i$ are the 
(possibly empty) intervening subwords. 
Per Corollary~\ref{cor:block cells}\eqref{b_0 corridor part}, each $w_i$ has first letter $b_{j_i}^{-1}$ and final letter $b_{j_i}$.
For all $i$, let $W_i$ be the word along the other side of $\mathcal{B}_i$, so that $\mathcal{B}_i$  is a van~Kampen diagram for $w_i W_i^{-1}$. Let 
\begin{equation} \label{eq:W}
W \ = \ v_0 W_1 v_1 W_2 \cdots W_r v_r.\end{equation}
 So $\mathcal{A}$ is a van~Kampen diagram for $W \chi^{-1}$.

\bigskip

In the following lemmas we analyze the structure of a $b$-block $\mathcal B_i$ in $\Delta$.  When $\beta_i$ is a $b_0$-track, this will lead  (in Lemma~\ref{lem:Lengths of W_i}) to  an upper bound on the length of $W_i$.

\begin{lemma} \label{lem:b-block corridors}
Let $\mathcal B_i$ be a $b$-block of $\Delta$, and let $w_i$ and $W_i$ be as above.    Then 
every $a_1$-track in $\mathcal B_i$ runs from an $a_1^{\pm 1}$ in $w_i$ to   an $a_1^{\pm 1}$ in $W_i$.  
\end{lemma}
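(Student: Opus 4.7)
The plan is to show that $\alpha$ has one endpoint on $w_i$ and one on $W_i$ by excluding the three other possibilities (loop, both on $w_i$, both on $W_i$). First I would record two structural facts. By Corollary~\ref{cor:block cells}\eqref{b-blocks part}, the only $2$-cells of $\mathcal{B}_i$ with $a_1$-edges are of type $r_{1,\ast}$, and each carries exactly one $a_1$-edge and one $a_1^{-1}$-edge, so $\alpha$ is non-branching and behaves as an $a_1$-corridor. Examining Figure~\ref{fig:relations}, an $r_{1,j_i}$-cell sitting inside the $b_{j_i}$-corridor of $\beta_i$ has its $b_{j_i}$-edges shared with adjacent corridor cells on the two sides, with its two $a_1$-edges on the remaining two sides: one lies on the top boundary $W_i$ of the corridor and the other is interior to $\mathcal{B}_i$. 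Consequently, if $\alpha$ passes through an $r_{1,j_i}$-cell of the $\beta_i$-corridor then one of its endpoints necessarily lies on the associated top $a_1$-edge of $W_i$, since otherwise $\alpha$ would have to exit $\mathcal{B}_i$ across $W_i$. Equivalently, the number of intersections of $\alpha$ with $\beta_i$ equals the number of endpoints of $\alpha$ that lie on $W_i$.

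With these observations in hand, ruling out the first two cases is short. If $\alpha$ were a loop in $\mathcal{B}_i$, then every $a_1$-edge crossed by $\alpha$ would be interior to $\mathcal{B}_i$, shared between the same two $r_{1,\ast}$-cells in $\Delta$, so $\alpha$ would also be a loop in $\Delta$, contradicting Lemma~\ref{lem: Layout lemma}\eqref{lem part:no loops and bigons}. If both endpoints of $\alpha$ lay on $w_i$, then by the structural observation $\alpha$ does not meet $\beta_i$, so $\alpha$ together with the subpath of $w_i$ between its endpoints bounds a region $R \subset \mathcal{B}_i$, which is a subset of the $w$-side of $\beta_i$; its boundary is made up of an $a$-subtrack and a non-trivial subpath of $\partial\Delta$, a configuration excluded by Lemma~\ref{lem: Layout lemma}\eqref{lem part part:no boundary sinks}.

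The main case is both endpoints of $\alpha$ on $W_i$, which I plan to handle via a bigon. In this case $\alpha$ meets $\beta_i$ at least twice, at the two corridor cells containing its endpoints. Picking two consecutive intersections $p,q$ along $\alpha$, the subpath $\alpha_{pq}$ of $\alpha$ between them does not cross $\beta_i$ and lies in $\mathcal{B}_i$, hence on the $w$-side of $\beta_i$; together with the corresponding subpath $\beta_{pq}$ of $\beta_i$ it bounds a bigon on the $w$-side. Since $\beta_i$ is oriented towards $\chi$ by Lemma~\ref{lem: Layout lemma}\eqref{lem part:orientations}, $\beta_{pq}$ is outward-oriented relative to this bigon, producing a bigon comprised of an $a$-subtrack and an outward-oriented $b$-subtrack, which is forbidden by Lemma~\ref{lem: Layout lemma}\eqref{lem part:no loops and bigons}. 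The main subtlety I expect is in correctly extracting, from Figure~\ref{fig:relations}, the positional claim about which of the two $a_1$-edges of an $r_{1,j_i}$-cell lies on $W_i$; once that is set up, all three cases reduce to direct applications of items already in Lemma~\ref{lem: Layout lemma}.
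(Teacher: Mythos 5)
Your proof is correct, but it takes a somewhat more laborious route than the paper's. The paper treats $\alpha$ as an $a_1$-track of $\Delta$ intersecting $\mathcal B_i$ and simply reads off the conclusion from the pre-packaged clause Lemma~\ref{lem: Layout lemma}\eqref{lem part:crossing possibilities}: both endpoints on the $\chi$-side of $\beta_i$ is impossible (no intersection with $\beta_i$, so $\alpha$ would miss $\mathcal B_i$); one endpoint on each side gives exactly one crossing, hence one $w_i$-to-$W_i$ track in $\mathcal B_i$; both endpoints on the $w$-side gives exactly two crossings, hence two such tracks. You instead work with $a_1$-tracks of $\mathcal B_i$ directly and exclude the three bad endpoint configurations by hand, invoking \eqref{lem part part:no boundary sinks} and \eqref{lem part:no loops and bigons} of the Layout lemma together with a local analysis of how $a_1$-tracks meet the $r_{1,j_i}$-cells of the $\beta_i$-corridor. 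This is essentially an unpacking of a special case of \eqref{lem part:crossing possibilities}, whose own proof already rests on those same clauses via the intersection-pattern lemma (Lemma~\ref{lem: intersections}). Two small remarks: your claim that the bottom $a_1$-edge of an $r_{1,j_i}$-cell is ``interior to $\mathcal B_i$'' is a slight overstatement (it could lie on $w_i$), but this does not affect the argument, which only needs the top $a_1$-edge to lie on $W_i$ and hence to be a termination point; and the structural observation you record is exactly the translation between tracks of $\mathcal B_i$ and tracks of $\Delta$ that the paper sidesteps by working with $\Delta$-tracks from the outset.
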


\begin{proof}
Let $\alpha$ be an $a_1$-track of $\Delta$ intersecting $\mathcal B_i$. It cannot be a loop by Lemma~\ref{lem: Layout lemma}\eqref{lem part:no loops and bigons}.   
It must have at least one endpoint in the $w$-side of $\beta_i$ by  Lemma~\ref{lem: Layout lemma}(\ref{lem part:crossing possibilities}a).   If  it has one endpoint on each side of $\beta_i$, then it intersects  $\beta_i$ exactly once by Lemma~\ref{lem: Layout lemma}(\ref{lem part:crossing possibilities}b), and so corresponds to a single $a_1$-track of $\mathcal B_i$ running from $w_i$ to $W_i$. If it has both endpoints in the $w$-side of $\beta_i$, then, by Lemma~\ref{lem: Layout lemma}(\ref{lem part:crossing possibilities}c),   it intersects $\beta_i$ exactly  twice, giving rise to two $a_1$-tracks in $\mathcal B_i$ both running from $w_i$ to $W_i$.  
\end{proof}

\begin{lemma}\label{lem:a-corridor words} Suppose $\beta_i$ is a $b_0$-track.   Let $\mathcal C$ be an $a_1$-corridor of $\mathcal B_i$.  The bottom boundary of $\mathcal C$ is labelled (in the direction from $w_i$ to $W_i$)  by a word $\lambda b_0$, where $\lambda$ is a  positive word on $b_1, \ldots, b_p$. 
\end{lemma}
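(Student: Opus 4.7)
My plan is to identify the types of the 2-cells making up $\mathcal C$ and then read off the bottom label directly from the defining relators of Figure~\ref{fig:relations}. By Corollary~\ref{cor:block cells}\eqref{b-blocks part}, every 2-cell of $\mathcal B_i$ has type $r_{1,\ast}$, $r_{2,\ast}$, $r_{3,\ast}$, or $r_{3,\ast,\ast}$, and only the $r_{1,\ast}$-relators contain $a_1$-letters. Consequently every cell of $\mathcal C$ is of type $r_{1,j_k}$ for some $j_k \in \{0,1,\ldots,p\}$, and I label these cells $C_1, C_2, \ldots, C_m$ in order along $\mathcal C$, with $C_1$ incident to an $a_1$-edge in $w_i$ and $C_m$ incident to an $a_1$-edge in $W_i$.

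The next step is to prove $j_m = 0$ while $j_k \neq 0$ for $k < m$. By Corollary~\ref{cor:block cells}\eqref{b_0 corridor part}, $W_i$ is the top boundary of the $b_0$-corridor determined by $\beta_i$, and this is the unique $b_0$-corridor in $\mathcal B_i$ by Lemma~\ref{lem: Layout lemma}\eqref{lem part: b_0-track outermost}. Since the terminal $a_1$-edge of $\mathcal C$ lies on this top boundary, and the only $b_0$-corridor cells carrying $a_1$-edges are the $r_{1,0}$-cells, we must have $C_m$ of type $r_{1,0}$, i.e., $j_m = 0$. Conversely, if $j_k = 0$ for some $k < m$, then both $b_0$-edges of $C_k$ would belong to a $b_0$-corridor in $\mathcal B_i$, forcing $C_k$ into $\beta_i$'s corridor by Lemma~\ref{lem: Layout lemma}\eqref{lem part: b_0-track outermost}. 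But in $\mathcal B_i$ the corridor $\mathcal C$ runs from $w_i$ (on the $w$-side of $\beta_i$) to $W_i$ (on top of $\beta_i$'s corridor), and so passes through $\beta_i$'s corridor in exactly one cell (an application of Lemma~\ref{lem: Layout lemma}\eqref{lem part:crossing possibilities} to the ambient $a_1$-track in $\Delta$: whether that track has both endpoints in $w_i$ or one in $w_i$ and one outside $\mathcal B_i$, each connected component of its intersection with $\mathcal B_i$ crosses $\beta_i$ exactly once, at its terminal cell). So any $r_{1,0}$-cell of $\mathcal C$ must equal $C_m$.

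Finally, I will read off the bottom boundary of $\mathcal C$ from the relators. Each $r_{1,j}$-relator in Figure~\ref{fig:relations} has the form $a_1^{-1} b_j a_1 \cdot v^{-1}$, where $v$ is the noisy expansion: $b_{j+1} b_j$ with Rips noise for $j \notin \{0, q-1, p\}$, with $\varphi(b_p) = b_p$ forcing $v = b_p \cdot X_\ast t X_\ast$, with a $Y$-type Rips word when $j = 0$, and with an additional $a_2$-factor placed on the noisy side when $j = q-1$. In every case the side of the cell between the two $a_1$-edges that sits opposite the noise consists of a single positively oriented $b_j$-edge. Since the $a_1$-edges along $\mathcal C$ are consistently oriented (by definition of a track), these simple sides fit together into one side of $\mathcal C$, which is by definition the bottom boundary; read in the direction from $w_i$ to $W_i$ it spells $b_{j_1} b_{j_2} \cdots b_{j_{m-1}} b_{j_m} = \lambda b_0$, with $\lambda := b_{j_1} \cdots b_{j_{m-1}}$ a positive word on $b_1, \ldots, b_p$. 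The only real subtlety is the crossing argument in the middle step that forces $C_m$ to be the \emph{unique} $r_{1,0}$-cell of $\mathcal C$; once this is in hand, everything else is direct inspection of the defining relators.
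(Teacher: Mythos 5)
Your identification of the cell types and of the location of the unique $r_{1,0}$-cell is sound and matches the paper's approach: Corollary~\ref{cor:block cells} shows every cell of $\mathcal C$ is an $r_{1,\ast}$-cell, and the uniqueness of the $b_0$-corridor (Lemma~\ref{lem: Layout lemma}\eqref{lem part: b_0-track outermost}) together with Lemma~\ref{lem:b-block corridors} forces the single $b_0$-edge of $\mathcal C$ to be at the $W_i$-end, so the bottom boundary is a word on $b_1^{\pm 1},\dots,b_p^{\pm1}$ followed by $b_0$.

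However, there is a genuine gap in your final step, where you assert that the bottom boundary is a \emph{positive} word ``by direct inspection of the defining relators'' together with the consistent orientation of the $a_1$-edges. This does not follow. Each $r_{1,j}$-cell can be glued into the diagram with either orientation (reading $r_{1,j}$ or $r_{1,j}^{-1}$ around its boundary), and both orientations are compatible with both $a_1$-edges pointing the same way across the track $\alpha$ dual to $\mathcal C$. In one orientation the cell contributes $b_j$ to the bottom of $\mathcal C$; in the other it contributes $b_j^{-1}$. Two adjacent cells of opposite orientation are a back-to-back cancelling pair only if their indices $j$ agree, so a reduced diagram can perfectly well contain an $a_1$-corridor whose bottom word has mixed signs. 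The paper explicitly flags this --- it first concludes only that $\lambda$ is a word on $b_1^{\pm1},\dots,b_p^{\pm1}$, and then proves positivity by a separate argument: a hypothetical $b_j^{-1}$-edge in $\lambda$ gives a $b$-track $\beta$ which, being oriented towards $\chi$ by Lemma~\ref{lem: Layout lemma}\eqref{lem part:orientations}, must meet $\alpha$ at least twice, producing a bigon that contradicts Lemma~\ref{lem: Layout lemma}\eqref{lem part:no sinks} or \eqref{lem part:no loops and bigons}. Your proof needs this bigon argument (or an equivalent one); as written it silently assumes all corridor cells are consistently oriented, which is precisely what must be proved.
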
 
\begin{proof}
By Lemma~\ref{lem:b-block corridors},  $\mathcal C$ has one end in $w_i$ and the other in $W_i$.
By  Corollary~\ref{cor:block cells}\eqref{b-blocks part},\eqref{b_0 corridor part},  
the cells of 
$\mathcal C$ are of type $r_{1, \ast}$ (per 
Figure~\ref{fig:relations}), and only the cell where $\mathcal{C}$ meets $W_i$ has an edge labelled $b_0$, so that the bottom boundary of $\mathcal C$ (in the direction from $w_i$ to $W_i$) is labelled by a word $\lambda b_0$ where $\lambda$ is a word on $b_1^{\pm 1}, \dots, b_p^{\pm 1}$.  We will argue that $\lambda$ is a \emph{positive} word.  
 Suppose, for a contradiction, that $\lambda$ includes a letter $b_j^{-1}$ for some $j$.  
Let $\beta$ be any  $b$-track that has an edge dual to the  edge of $\partial \mathcal C$ labelled by that $b_j^{-1}$. Let $\alpha$ be the $a_1$-track  dual to $\mathcal C$. 
By Lemma~\ref{lem: Layout lemma}(\ref{lem part:orientations}),  $\beta$ is oriented towards $\chi$, and so $\beta$ intersects $\alpha$ at least one more time. So $\alpha$ and $\beta$ form a bigon.  This leads to a contradiction: that bigon violates (\ref{lem part:no sinks}b) or (\ref{lem part:no loops and bigons}) of Lemma~\ref{lem: Layout lemma}, depending on whether $\beta$ is oriented into or out of  the bigon, respectively.    (The (\ref{lem part:no sinks}b) case is illustrated in Figure~\ref{fig:blocks}.)
 We conclude that $\lambda$ is a positive word on $b_1, \dots, b_p$.   
\end{proof}

Our next lemma is illustrated by Figure~\ref{fig:b-block}.

\begin{lemma}\label{lem:clean up bs} Given  $\mathcal B_i, \mathcal C$,  and $\lambda$ as in Lemma~\ref{lem:a-corridor words}, the side of $\mathcal{C}$ labelled by $\lambda b_0$ divides $\mathcal B_i$ into two subdiagrams.  Of these two subdiagrams, let $\Lambda_0$ be that which does not contain $\mathcal{C}$. Its boundary word is $\tilde{\mu} b_0 \nu (\lambda b_0)^{-1}$, where $\nu$ and $\tilde{\mu}^{-1}$ are, respectively, some  prefix  of ($W_i$  or $W_i^{-1}$) and of ($w_i$ or $w_i^{-1}$).  (Which of these pairs it is depends on the orientation of $\mathcal{C}$.  Figure~\ref{fig:b-block} shows the case where they are prefixes of $W_i$ and $w_i$.)  Let $\Lambda_1$ be the maximal subdiagram of  $\Lambda_0$ that contains 
portions of $\partial \Lambda_0$ coming from $\lambda b_0$ and  $\hat{W}_i$,
but intersects no $b$-track in  $\Lambda_1$  that connects a pair of edges in the $\tilde{\mu}$ portion of $\partial \Lambda_0$.   (See Figure~\ref{lem:clean up bs}.)
Let $\hat{\mu}$ be the word such that   $\hat{\mu} b_0 \nu (\lambda b_0)^{-1}$ is the word read around $\partial \Lambda_1$.   Then:   
  
\begin{enumerate}
\item \label{understand the as} 
 The $a_1$-tracks in  $\Lambda_1$ all arise from removing initial subtracks from  $a_1$-tracks in $\Lambda_0$.  In particular, each runs from an $a_1^{\mp 1}$ in $\hat{\mu}$ to an $a_1^{\pm 1}$ in $\nu$, and  the number of $a_1^{\pm 1}$-letters in $\hat{\mu}$ is at most the number  in $\tilde{\mu}$, and therefore at most $|w_i|$.

\item \label{understand the b signs} In  $\hat{\mu}$ there are no letters $b_0^{\pm 1}, b^{-1}_1, \ldots, b^{-1}_p$ and

\item \label{understand the number of bs} There are  at most $|\tilde{\mu}|$ letters $b_1, \ldots, b_p$ in $\hat{\mu}$.

\item \label{sides as before}  The word read along the bottom boundary (in the direction from $\hat{\mu}$ to $\nu$) of a corridor dual to an $a_1$-track in $\Lambda_1$ is a positive word on $b_0, b_1, \ldots, b_p$.  Moreover, it has only one $b_0$, namely its final letter.     
\end{enumerate}
\end{lemma}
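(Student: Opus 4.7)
The plan is to realise $\Lambda_1$ as obtained from $\Lambda_0$ by iteratively cutting along $b$-tracks both of whose endpoints lie in $\tilde{\mu}$ (at each step taking an innermost such track) and discarding the subdiagram on the $\tilde\mu$-side of each cut. Each cut replaces a subpath of $\tilde{\mu}$ with the path read along the top side of the corresponding $b$-corridor, so $\hat{\mu}$ decomposes into alternating \emph{residual} subpaths of $\tilde{\mu}$ and \emph{inserted} paths along tops of $b$-corridors in $\Lambda_0$.

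For \eqref{understand the as}, Lemma~\ref{lem:b-block corridors} ensures every $a_1$-track of $\Lambda_0$ runs between $\tilde{\mu}$ and $\nu$. When we cut along a returning $b$-track $\beta$, any $a_1$-track $\alpha$ whose $\tilde{\mu}$-endpoint lies inside the subpath enclosed by $\beta$ has its endpoints on opposite sides of $\beta$, and so by Lemma~\ref{lem: Layout lemma}\eqref{lem part: both sides} crosses $\beta$ exactly once; its terminal subtrack survives in $\Lambda_1$ with a new endpoint on the inserted portion of $\hat{\mu}$, while all other $a_1$-tracks survive unchanged. This yields an injection from the $a_1$-tracks of $\Lambda_1$ into those of $\Lambda_0$, so the count of $a_1^{\pm 1}$-letters in $\hat{\mu}$ is bounded by the count in $\tilde{\mu}$, hence by $|w_i|$.

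For \eqref{understand the b signs} and \eqref{understand the number of bs}, analyse the inserted and residual subpaths separately. By Corollary~\ref{cor:block cells}\eqref{b_0 corridor part}, $\beta_i$ is the unique $b_0$-corridor in $\mathcal{B}_i$, and $\beta_i$ does not return to $\tilde{\mu}$; hence every cut $b$-corridor is made of cells of type $r_{1,j}$ with $j \ge 1$, or of type $r_{2,*}$, $r_{3,*}$, or $r_{3,*,*}$, whose top sides (inspection of Figure~\ref{fig:relations}) contribute only positive $b$-letters from $\{b_1,\dots,b_p\}$, at most two per cell. On a residual subpath, a $b_0^{\pm 1}$-letter would again contradict Corollary~\ref{cor:block cells}\eqref{b_0 corridor part}, while a $b_j^{-1}$-letter with $j \ge 1$ would initiate a $b_j$-track $\beta'$ oriented into $\Lambda_1$; by construction $\beta'$ cannot return to $\hat{\mu}$, and by Lemma~\ref{lem: Layout lemma}\eqref{lem part:orientations} it is oriented towards $\chi$, so it must exit through $\lambda b_0$ or through $\nu$. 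The former produces a bigon with $\mathcal{C}$ violating the positivity of $\lambda$ via the argument in Lemma~\ref{lem:a-corridor words}, and the latter produces a bigon with $\beta_i$ violating Lemma~\ref{lem: Layout lemma}\eqref{lem part:no loops and bigons}. A careful count of $b$-letters contributed by the cut corridors (at most a constant multiple of the $b$-letters removed from $\tilde{\mu}$) then yields the bound in \eqref{understand the number of bs}.

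Part \eqref{sides as before} follows exactly as in Lemma~\ref{lem:a-corridor words}: the cells of an $a_1$-corridor $\mathcal{C}_\alpha$ in $\Lambda_1$ are of type $r_{1,*}$ by Corollary~\ref{cor:block cells}\eqref{b-blocks part}, so its bottom boundary lies in $\{b_0,\dots,b_p\}^{\pm 1}$; a negative letter would produce a bigon with the emanating $b$-track, contradicting Lemma~\ref{lem: Layout lemma}\eqref{lem part:no sinks} or \eqref{lem part:no loops and bigons}. Uniqueness of the terminal $b_0$ follows from Corollary~\ref{cor:block cells}\eqref{b_0 corridor part}: the $r_{1,0}$-cells of $\mathcal{B}_i$ all lie in $\beta_i$'s corridor, and $\mathcal{C}_\alpha$ meets this corridor only at the single cell through which it exits to $\nu$. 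The main obstacle I anticipate is \eqref{understand the b signs}: systematically tracing the $b$-tracks initiated by hypothetical negative $b$-letters in the residual subpaths and ruling out each possible exit via bigon arguments tailored to the specific geometry of $\Lambda_1$.
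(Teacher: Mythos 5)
Your high-level strategy is the same as the paper's: realise $\Lambda_1$ by cutting $\Lambda_0$ along an outermost system of $b$-tracks whose endpoints both lie in $\tilde\mu$ (your ``iteratively cut along innermost'' process terminates at the same collection), decompose $\hat\mu$ into residual and inserted arcs, and treat each piece separately.  Your argument for \eqref{understand the as} matches the paper's (the paper invokes Lemma~\ref{lem: Layout lemma}\eqref{lem part:no loops and bigons} where you invoke \eqref{lem part:crossing possibilities}(a); both are fine).  Part \eqref{sides as before} is handled identically.  However, two steps in \eqref{understand the b signs} and \eqref{understand the number of bs} need more than you give.

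The real gap is precisely where you dismiss it: the \emph{inserted} $b$-letters.  You assert that ``inspection of Figure~\ref{fig:relations}'' shows the top sides of the cut corridors contribute only positive $b$-letters.  This is not determined by the relator alone.  At an $r_{1,\ell}$-cell in $C_{\hat\beta_j}$ where $\hat\beta_j$ transitions from a $b_\ell$- to a $b_{\ell+1}$-subtrack, the leftover $b_\ell$-edge lies on the top of the corridor, and whether $\hat\mu$ reads it as $b_\ell$ or $b_\ell^{-1}$ depends on the orientation class of the $2$-cell relative to the reading direction of $\partial\Lambda_1$; both orientations are permitted in a reduced diagram.  The paper does \emph{not} rule out the sign by inspection --- it uses the branching-track argument: a $b_\ell^{-1}$ in the inserted portion would mean there is a $b$-track that initially follows $\hat\beta_j$, branches off into $\Lambda_1$, and (by orientation, and because $\lambda$ is positive and $\nu$ has no $b$-letters) must terminate back on $\tilde\mu$, contradicting the $\hat\beta_j$ being outermost.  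You anticipate that the hard part will be tracing $b$-tracks from hypothetical negative letters in the \emph{residual} arcs, but the residual arcs follow ``by construction''; it is the inserted arcs that require this extra argument.

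A second issue is the count in \eqref{understand the number of bs}.  You propose bounding the inserted $b$-letters by ``a constant multiple of the $b$-letters removed from $\tilde\mu$''.  That is both the wrong quantity and too weak a bound: the lemma asserts the sharp bound $|\tilde\mu|$, and the inserted $b$-letters need not be controlled by the removed $b$-letters at all (the cut corridor could have many junctions while passing through a region with few $b$-edges on $\tilde\mu$).  The paper instead observes that each $r_{1,*}$-cell in $C_{\hat\beta_j}$ is traversed by a distinct $a_1$-track, and by \eqref{understand the as} these connect to distinct $a_1$-edges in the \emph{removed} portion of $\tilde\mu$; so the inserted $b$-letters are at most the number of removed $a_1$-edges, and adding the residual $b$-letters gives at most $|\tilde\mu|$.  (Incidentally, the residual-arc bigon claims are also a little off: a $b$-track exiting through $\nu$ need not form a bigon with $\beta_i$ --- it simply crosses $\beta_i$; the cleaner route, as in the paper, is that $\nu$ and $\lambda$ simply contain no negative $b$-letters.)
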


   \begin{figure}[htbp]
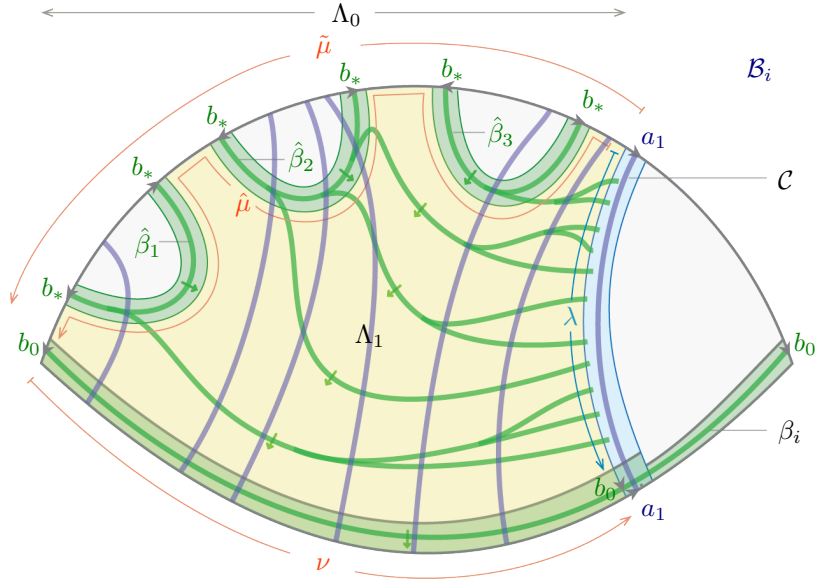

\centering
\begin{overpic} 
{Figures/b-block}
\put(39, 1){\orange{\small{$\nu$}}}
\put(39, 67){\orange{\small{$\tilde{\mu}$}}}
\put(28.7, 46.9){\orange{\small{$\hat{\mu}$}}}
\put(70.5, 32.3){\bcyan{\small{$\lambda$}}}
\put(1, 29){\green{\small{$b_0$}}}
\put(4, 36){\green{\small{$b_{\ast}$}}}
\put(15.5, 51.5){\green{\small{$b_{\ast}$}}}
\put(25, 58){\green{\small{$b_{\ast}$}}}
\put(42, 63.5){\green{\small{$b_{\ast}$}}}
\put(55, 64){\green{\small{$b_{\ast}$}}}
\put(73, 60){\green{\small{$b_{\ast}$}}}
\put(100, 29){\green{\small{$b_0$}}}
\put(74.5, 10.5){\green{\small{$b_0$}}}
\put(15.9, 41.5){\green{\small{$\hat{\beta}_1$}}}
\put(35.5, 52.5){\green{\small{$\hat{\beta}_2$}}}
\put(60.9, 55.7){\green{\small{$\hat{\beta}_3$}}}
\put(41, 71){{\small{$\Lambda_0$}}}
\put(44, 30){{\small{$\Lambda_1$}}}
\put(98, 49.7){{\small{$\mathcal{C}$}}}
\put(98, 17.8){{\small{$\beta_i$}}}
\put(80.5, 8){\blue{\small{$a_1$}}}
\put(80.5, 55.5){\blue{\small{$a_1$}}}
\put(94, 64){\blue{\small{$\mathcal{B}_i$}}}
\end{overpic}
\caption{Illustrating our proof of Lemma~\ref{lem:clean up bs}}
\label{fig:b-block}
\end{figure}

\begin{proof}
There are no letters $b_0^{\pm 1}$ in $\tilde{\mu}$ by construction. If there is a  $b^{-1}_r$ in   $\tilde{\mu}$ for some $1 \leq r \leq p$, then it is connected by a  $b$-track to some  letter $b_r$ labeling an edge in $\partial \Lambda_0$---in fact, that $b_r$ must be in  $\tilde{\mu}$, because there are no    $b^{-1}_r$ letters in $\lambda b_0$ (by Lemma~\ref{lem:a-corridor words}) or in $\nu$ (such are the 2-cells in $b_0$-corridors).  By Lemma~\ref{lem: Layout lemma}(\ref{lem part:orientations}), all such $b$-tracks are oriented towards $\chi$ in $\Delta$, and so towards $\nu$ in $\Lambda_0$.   
 So there are such $b$-tracks  $\hat{\beta}_1, \dots, \hat{\beta}_k$ (in  Figure~\ref{fig:b-block} they are shown with  $k=3$) in $\Lambda_0$ that we might call \emph{outermost}  in that 
 \begin{itemize}
 \item  the $w$-sides of any two of them are disjoint,
 \item every such $b$-track is in the  	$w$-side of one of $\hat{\beta}_1, \dots, \hat{\beta}_k$.
 \end{itemize}
Then $\Lambda_1$ is obtained from $\Lambda_0$ by cutting along the top boundaries of the corridors $C_{\hat{\beta}_1}, \dots, C_{\hat{\beta}_k}$ dual to $\hat{\beta}_1, \dots, \hat{\beta}_k$.

Then  \eqref{understand the as}  follows from Lemma~\ref{lem:b-block corridors} and the observation that, by Lemma~\ref{lem: Layout lemma}(\ref{lem part:no loops and bigons}), no $a_1$-track can cross one of the $\hat{\beta}_j$ twice.

For \eqref{understand the b signs} and \eqref{understand the number of bs}, we examine the $b$-letters in  $\hat{\mu}$. Those that arise  as letters in $\tilde{\mu}$ include no $b_0^{\pm 1}, b_1^{-1}, \ldots, b_p^{-1}$ by construction. Each of the other $b_l^{\pm 1}$ in $\hat{\mu}$ arises  on the top boundary of one of the $C_{\hat{\beta}_j}$ at some 2-cell of type $r_{1,l}$ (per Figure~\ref{fig:relations}) where some other $b$-track  branches off $\hat{\beta}_j$. There are no $b_0$-edges in $\Lambda_1$ except in the $b_0$-corridor abutting $\nu$---for otherwise there would be an additional $b_0$-corridor and therefore a $b_0^{\pm 1}$ in $\tilde{\mu}$ or $\lambda$, which is not so.  So $1 \leq l \leq p-1$.  In fact, the letter cannot be a  $b_l^{-1}$ because then there would be a $b$-track that initially follows  $\hat{\beta}_j$ until branching off into $\Lambda_1$ and eventually terminates back on $\tilde \mu$ (not on $\lambda$ because $\lambda$ is a positive word), so as to contradict $\hat{\beta}_1, \dots, \hat{\beta}_k$  being outermost.  This proves \eqref{understand the b signs}.  Then, for \eqref{understand the number of bs}, observe that each  2-cell  of type $r_{1,\ast}$ in $C_{\hat{\beta}_j}$ has a different $a_1$-track passing through it which, in light of  \eqref{understand the as}, connects  to an $a_1$-edge in $\tilde{\mu}$ between the  between   the endpoints of $\hat{\beta}_j$.     

Finally, Lemma~\ref{lem:a-corridor words} implies \eqref{sides as before}.   
\end{proof}

We will use the conclusions of Lemma~\ref{lem:clean up bs} to further analyze $\lambda$ via calculations in   
\begin{equation}  \label{eq:Q def} Q  \ = \ \langle a_1,  b_0, \ldots, b_p  \mid a_1^{-1}b_i a_1 = \varphi(b_i)  \ \, \forall i  \  \rangle, \  \ \varphi(b_j) \, = \, \begin{cases} b_{j+1}b_j & \text{if } j<p \\  b_j & \text{if } j=p, \end{cases}\end{equation}   
which is a  free-by-cyclic  quotient of $G$ via the map $G \onto Q$  killing 
 $a_2$, $t$, $x_1$, $x_2$, $y_1$, and $y_2$.

Our next simplifying step, in   Lemma~\ref{lem:clean up as}, will dispense with the positive $a_1$-letters from $\hat{\mu}$.  But first, we need two technical results concerning $Q$: 
 
\begin{lemma}\label{lem:w suffix}
Suppose $u$ and $v$ are positive words on $b_0, \dots, b_p$.  Take $\varphi^{-1}(u)$ to   denote the reduced word on $b_0, \dots, b_p$ representing that element of $Q$. Then  $\varphi^{-1}(u)v$ is reduced---that is, there is no cancellation between  $\varphi^{-1}(u)$ and $v$.  In particular, if $w$ is a positive word on $b_0, \dots, b_p$ which equals $\varphi^{-1}(u) v$ in $Q$, then  $v$ is a suffix of $w$.  
\end{lemma}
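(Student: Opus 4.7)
The plan is to prove the following stronger claim by strong induction on the length of $u$: for every non-empty positive word $u = b_{i_1} b_{i_2} \cdots b_{i_n}$ on $b_0, \ldots, b_p$, the reduced word in $F = F(b_0, \ldots, b_p)$ representing $\varphi^{-1}(u)$ ends with the positive letter $b_{i_n}$. Both assertions of the lemma will follow immediately. For the first: the last letter of $\varphi^{-1}(u)$ is positive and the first letter of $v$ is positive, so the concatenation $\varphi^{-1}(u)v$ has no interface cancellation and is therefore reduced. For the ``in particular'': if $w$ is positive and equals $\varphi^{-1}(u)v$ in $Q$, then both $\varphi^{-1}(u)v$ and $w$ are reduced words in $F$ representing the same group element, hence equal letter-by-letter, and so $v$ is literally the suffix of $w$ consisting of its last $|v|$ letters.

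The recursive relations $\varphi^{-1}(b_p) = b_p$ and $\varphi^{-1}(b_j) = \varphi^{-1}(b_{j+1})^{-1}\, b_j$ for $j < p$ drive the argument. A short downward induction on $p-j$ shows that $\varphi^{-1}(b_j)$ is a reduced word on the alphabet $\{b_j, b_{j+1}, \ldots, b_p\}^{\pm 1}$ which begins with $b_{j+1}^{-1}$ (when $j<p$) and in all cases ends with the positive letter $b_j$. This handles the base case $|u|=1$.

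For the inductive step with $u = u' b_l$ and $|u| \geq 2$, let $b_m$ denote the last letter of $u'$, so by the inductive hypothesis the reduced form of $\varphi^{-1}(u')$ ends with $b_m$. If $l = p$, or if $l < p$ with $m \neq l+1$, then the letter immediately following $b_m$ in the concatenation $\varphi^{-1}(u')\varphi^{-1}(b_l)$ is $b_p$ or $b_{l+1}^{-1}$ respectively (neither the inverse of $b_m$), so no interface cancellation occurs and the reduced form of $\varphi^{-1}(u)$ ends with the last letter of $\varphi^{-1}(b_l)$, namely $b_l$. In the remaining case $l<p$ and $m = l+1$, write $u' = u'' b_{l+1}$ and exploit the algebraic identity in $F$
\[
\varphi^{-1}(u) \;=\; \varphi^{-1}(u'')\,\varphi^{-1}(b_{l+1})\,\varphi^{-1}(b_{l+1})^{-1}\, b_l \;=\; \varphi^{-1}(u'')\, b_l.
\]
Since $|u''| = |u|-2$, the strong inductive hypothesis applies: either $u''$ is empty (so $\varphi^{-1}(u) = b_l$), or the reduced form of $\varphi^{-1}(u'')$ ends with a positive letter. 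In either case, $\varphi^{-1}(u'')\, b_l$ is reduced as written and ends with $b_l$.

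The main obstacle is that a naive word-level analysis of the reduction of $\varphi^{-1}(u')\varphi^{-1}(b_l)$ in the interface-cancellation case could a priori involve cascades of further cancellations which are awkward to track and might threaten the trailing $b_l$. Passing to the group-element identity $\varphi^{-1}(b_{l+1})\varphi^{-1}(b_{l+1})^{-1} = 1$ in $F$ sidesteps this combinatorial bookkeeping entirely, reducing the problematic case to a strictly shorter positive word $u''$ on which the induction closes cleanly.
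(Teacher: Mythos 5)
Your proof is correct. You prove the slightly stronger statement that the reduced form of $\varphi^{-1}(u)$ ends with the same positive letter $b_{i_n}$ that ends $u$, by strong induction on $|u|$, splitting into three cases according to whether the last letter of $u'$ can cancel into the first letter of $\varphi^{-1}(b_l)$, and in the interface-cancellation case you sidestep cascade-tracking entirely via the identity $\varphi^{-1}(b_{l+1})\varphi^{-1}(b_l) = b_l$ in $F$, which brings the problem down to the strictly shorter word $u''$.

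The paper's proof is genuinely different in structure. It performs a \emph{downward} induction on the minimal index $i$ of a letter appearing in $u$ (not on $|u|$), splits $u = u_0 u_1$ at the last occurrence of $b_i$, and relies on the explicit closed-form expression for $\varphi^{-1}(b_j)$ --- a reduced word on $\{b_j\}\cup\{b_{j+1},\dots,b_p\}^{\pm 1}$ whose unique $b_j$ is its final letter --- to argue that no $b_i^{-1}$ ever appears in $\varphi^{-1}(u_0)$, so the terminal $b_i$ survives free reduction, and then appeals to the inductive hypothesis for the higher-minimal-index word $u_1$ to rule out cancellation with $v$. Your version trades that combinatorial bookkeeping (the ``no $b_i^{-1}$ anywhere'' observation, and the explicit formula for $\varphi^{-1}(b_j)$) for the cleaner algebraic collapse $\varphi^{-1}(u''b_{l+1}b_l) = \varphi^{-1}(u'')b_l$; it only needs the first/last-letter behaviour of $\varphi^{-1}(b_j)$, which you correctly extract from the recursion $\varphi^{-1}(b_j)=\varphi^{-1}(b_{j+1})^{-1}b_j$. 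Both arguments require knowing that each $\varphi^{-1}(b_j)$ is reduced with a specific alphabet and specific first and last letters, but your route packages the inductive step more cleanly and makes it transparent that the final letter of $u$ literally persists. I see no gap.
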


\begin{proof}
We downwards induct on the minimal index $i$ such that $u$ includes a letter $b_i$.  If $i=p$, the result holds because  $u$ is a power of $b_p$ and $\varphi^{-1} (u ) = u$. For the induction step, write $u$ as the concatenation $u_0u_1$, where $u_0$ ends in $b_i$, and $u_1$ contains no $b_i$.     

It can be checked that for $j=0, \ldots, p$,
\begin{equation*}
\varphi^{-1}(b_j)  \ = \  
\begin{cases}
   b_{j+1}^{-1}  \cdots  b_{p-3}^{-1} b_{p-1}^{-1} b_p \cdots  b_{j+2} b_{j} & \textrm{ when $p-j$ is even,}\\ \\
b_{j+1}^{-1}    \cdots  b_{p-2}^{-1} b_{p}^{-1}   b_{p-1}  \cdots  b_{j+2}  b_{j} & \textrm{ when when $p-j$ is odd,}
\end{cases}
\end{equation*}
which is a reduced word on $b_j, b_{j+1}^{\pm 1}, \ldots, b_p^{\pm 1}$ whose one and only  $b_j$ is its final letter.

So $\varphi^{-1} (u_0)$ has one $i$-letter, its last, and $\varphi^{-1} (u_1)$ has no $b_i$ letters. 
Thus $\varphi^{-1}(u) = \varphi^{-1}(u_0) \varphi^{-1}(u_1)$ as words---there is no cancellation between the two factors.  By the induction hypothesis, there is no cancellation between $\varphi^{-1}(u_1)$ and $v$, so the result follows. 
\end{proof}

\begin{lemma}\label{lem:positive in Q}
If $u$ and $\varphi^{-1}(u)$ are both positive words on $b_0, \ldots, b_p$, then $$|\varphi^{-1}(u)| \  \leq \  |u|.$$   
\end{lemma}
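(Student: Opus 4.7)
The plan is to exploit the explicit formula for $\varphi$ on the free group $F(b_0, \ldots, b_p)$. Recall that $Q$ is a semidirect product with $a_1$ acting by $\varphi$, so computations with $\varphi$ and $\varphi^{-1}$ can be done inside the free group $F(b_0, \ldots, b_p)$.

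Set $v := \varphi^{-1}(u)$, where we take $v$ to be the reduced word in $F(b_0, \ldots, b_p)$ representing the stated element. By hypothesis, $v$ is positive. The key observation is that because $\varphi$ sends each generator $b_j$ to the positive word $b_{j+1} b_j$ (for $j < p$) or to $b_p$ (for $j = p$), applying $\varphi$ to any positive word produces another positive word \emph{with no free cancellation}. In particular, $\varphi(v)$ is a positive word in $F(b_0, \ldots, b_p)$ that represents the same element as the positive word $u$. Since positive representatives in a free group are unique, we conclude $\varphi(v) = u$ as \emph{words}, not merely as group elements.

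With this identification, the lengths are directly comparable: each letter $b_j$ in $v$ with $j < p$ contributes two letters to $\varphi(v)$, while each $b_p$ contributes one. Hence
\[
|u| \ = \ |\varphi(v)| \ = \ 2|v| - |v|_{b_p} \ \geq \ |v|,
\]
which is the desired inequality $|\varphi^{-1}(u)| \leq |u|$.

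The only subtle point, and the step that really does the work, is justifying the equality $\varphi(v) = u$ as words (not just in $Q$). This reduces to the two facts that (i) $\varphi$ of a positive word is again positive with no free cancellation, and (ii) positive words representing the same element of a free group are equal. Once these are in place the length bound is immediate. I do not anticipate any serious obstacle.
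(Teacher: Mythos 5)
Your proof is correct, and it takes a cleaner route than the paper's. The paper's argument works in the inverse direction: it lets $n_j$ and $m_j$ denote the number of $b_j$-letters in $u$ and in $\varphi^{-1}(u)$ respectively, extracts from the explicit form of $\varphi^{-1}$ (computed in the proof of Lemma~\ref{lem:w suffix}) the relations $m_0 = n_0$ and $m_j = n_j - m_{j-1}$, and then uses positivity of all the $m_j$ to deduce $m_j \leq n_j$ term by term, whence $\sum m_j \leq \sum n_j$. You instead apply $\varphi$ to $v := \varphi^{-1}(u)$, observing that a positive word is automatically freely reduced, so positive representatives of a free-group element are unique; this yields the stronger word-level identity $\varphi(v) = u$, from which the exact count $|u| = 2|v| - |v|_{b_p} \geq |v|$ is immediate. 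Your route is sharper (it produces an equality rather than a chain of inequalities) and avoids the somewhat delicate explicit formula for $\varphi^{-1}(b_j)$. Both proofs turn on the same leverage that positivity provides; you apply it at the level of words while the paper applies it at the level of exponent counts.
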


\begin{proof}
For $0  \leq j \leq p$, let $n_j$ and $m_j$ be the number of $b_j$-letters in $u$ and $\varphi^{-1}(u)$, respectively. Then in view of the form of $\varphi^{-1}$ given in the proof of Lemma~\ref{lem:w suffix}, we have 
$$\begin{array}{rl} 
0 \ \leq \ m_0 &  \! = \ n_0,  \   \text{  and so}  \\
0 \ \leq \  m_1  & \! = \ n_1 - n_0 \ \leq \ n_1, \   \text{  and so} \\ 
0 \ \leq  \  m_2  & \! =  \ n_2 - n_1 + n_0 \ \leq \  n_2,  \   \text{  and so on,}
\end{array}$$      	
from which the result follows.
\end{proof}

\begin{lemma} \label{lem:clean up as}
Given  $\lambda$ as in Lemmas~\ref{lem:a-corridor words} and \ref{lem:clean up bs}, there exists a word $\mu$ on $a^{-1}_1, b_1, \dots, b_p$ (so containing no $a_1, b_1^{-1}, \dots, b_p^{-1}$) such that  $|\mu| \leq 2 |w_i|$, and an integer $0 \leq l \leq |w_i|$ such that    in $Q$,
$$\mu b_0 a_1^l \ = \  \lambda b_0.$$   
\end{lemma}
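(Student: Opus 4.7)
The plan is to pass to the quotient $Q$ and exploit the equation $\hat\mu\,b_0\,\nu = \lambda\,b_0$ that holds in $Q$ because $\Lambda_1$ (from Lemma~\ref{lem:clean up bs}) is a van~Kampen diagram for $\hat\mu\, b_0\, \nu\, (\lambda b_0)^{-1}$. Under the projection $G \twoheadrightarrow Q$, the letters $a_2, t, x_1, x_2, y_1, y_2$ are killed, so the defining relators $r_{2,0}, r_{3,0}, r_{3,0,1}, r_{3,0,2}$ of the $b_0$-corridor $\beta_i$ (which involve only these letters together with $b_0$) become trivial in $Q$. Only the $r_{1,0}$-cells in $\beta_i$ contribute nontrivially to the image $\bar W_i$ of $W_i$, and each contributes a single $a_1^{\pm 1}$-letter on the $W_i$-side of the corridor. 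Because $\Delta$ is reduced, all these cells are consistently oriented (otherwise an adjacent pair of oppositely oriented $r_{1,0}$-cells sharing a $b_0$-edge would form a cancelling pair), so $\bar W_i$ is a power of $a_1$ in $Q$.

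Writing $k$ for the number of $r_{1,0}$-cells in the $\beta_i$-corridor, we thus get $\bar W_i = a_1^{\pm k}$, and since each such cell contributes an $a_1$-edge to $w_i$, $k \leq |w_i|$. The prefix $\nu$ (of $W_i$ or $W_i^{-1}$) therefore has image $\bar \nu = a_1^{\pm l}$ for some $0 \le l \le k \le |w_i|$. I would then work through the orientation conventions: with all $b$-tracks pointing toward $\chi$ (Lemma~\ref{lem: Layout lemma}\eqref{lem part:orientations}) and the matching of $a_1$-tracks provided by Lemma~\ref{lem:clean up bs}\eqref{understand the as}, the two subcases of Lemma~\ref{lem:clean up bs} (where $\nu$ is a prefix of $W_i$ versus $W_i^{-1}$) track the two possible signs of $\bar\nu$. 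In each, one orientation gives $\bar\nu = a_1^l$ with $l \geq 0$; in the other, taking $\nu$ as a prefix of $W_i^{-1}$ (as permitted by the lemma) gives the same conclusion. Together with the matching rule of Lemma~\ref{lem:clean up bs}\eqref{understand the as}, this forces every $a_1$-letter of $\hat\mu$ to be $a_1^{-1}$.

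Now take $\mu$ to be the image of $\hat\mu$ in $Q$, obtained by deleting all $a_2, t, x_j, y_j$ letters. By Lemma~\ref{lem:clean up bs}\eqref{understand the b signs} the remaining $b$-letters of $\hat\mu$ all lie in $\{b_1, \ldots, b_p\}$, so $\mu$ is indeed a word on $\{a_1^{-1}, b_1, \ldots, b_p\}$. Its length is bounded by the $a_1$-letter count in $\hat\mu$ (at most $|w_i|$, by Lemma~\ref{lem:clean up bs}\eqref{understand the as}) plus the $b_j$-letter count (at most $|w_i|$, by \eqref{understand the number of bs}), giving $|\mu| \le 2|w_i|$. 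Finally
\[
\mu\,b_0\,a_1^l \ = \ \hat\mu \, b_0\, \bar\nu \ = \ \hat\mu\, b_0\, \nu \ = \ \lambda\,b_0 \quad \text{in } Q,
\]
as required.

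The main obstacle is the sign bookkeeping in the middle step: tracing through the explicit orientation of the $r_{1,0}$-relator consistent with $b$-tracks pointing toward $\chi$, and confirming that $\bar\nu$ can always be arranged to be a non-negative power of $a_1$ (handling both parity subcases of Lemma~\ref{lem:clean up bs} uniformly). Once this sign analysis is in place, everything else reduces to substitution and the explicit bounds from Lemma~\ref{lem:clean up bs}.
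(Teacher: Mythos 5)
The central gap is your claim that all $r_{1,0}$-cells in the $b_0$-corridor of $\mathcal{B}_i$ are consistently oriented, justified by ``otherwise an adjacent pair of oppositely oriented $r_{1,0}$-cells sharing a $b_0$-edge would form a cancelling pair.'' Reducedness precludes only \emph{adjacent} oppositely-oriented $r_{1,0}$-cells sharing a $b_0$-edge; in the $b_0$-corridor these cells are interspersed with $r_{2,0}$-, $r_{3,0}$-, and $r_{3,0,j}$-cells, so two $r_{1,0}$-cells with opposite orientations need not be adjacent and nothing rules them out. Indeed the claim is false in general: the $a_1$-letters of $w_i$ can have mixed signs, the $a_1$-tracks of $\mathcal{B}_i$ run from $w_i$ to $W_i$ (Lemma~\ref{lem:b-block corridors}), and those mixed signs propagate to $\nu$. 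So the image $\tau$ of $\nu$ in $Q$ is a word $a_1^{\epsilon_1}\cdots a_1^{\epsilon_l}$ with $\epsilon_i = \pm 1$ not necessarily all equal; it is not automatically a non-negative power of $a_1$.

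This is precisely what makes the lemma nontrivial, and what your proposal omits. The paper encodes the data of $\Lambda_1$ after projecting to $Q$ as a triple $(\pmb{\lambda}, \mathbf{u}, \pmb{\epsilon})$ and then applies two explicit moves -- move I (when $\epsilon_1 = -1$) and move II (when $\epsilon_{j-1}=1, \epsilon_j = -1$) -- to eliminate all $-1$'s from $\pmb{\epsilon}$ while preserving the equation $\sigma^{-1}b_0\tau = \lambda b_0$ in $Q$ and the length bounds. Each move relies on Lemma~\ref{lem:w suffix} and Lemma~\ref{lem:positive in Q}, which control how $\varphi^{-1}$ acts on positive words. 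That inductive clean-up is the substance of the proof, and nothing in your argument replaces it. Relatedly, your claim that the matching rule of Lemma~\ref{lem:clean up bs}\eqref{understand the as} ``forces every $a_1$-letter of $\hat\mu$ to be $a_1^{-1}$'' is not established: whether $\nu$ is a prefix of $W_i$ or $W_i^{-1}$ is determined by the orientation of $\mathcal{C}$, not chosen freely, and, as above, $\hat\mu$ can have $a_1$-letters of both signs before the moves are applied. In the paper, the fact that $\mu=\sigma^{-1}$ ends up containing only $a_1^{-1}$ is again a consequence of driving $\pmb{\epsilon}$ to $(1,\ldots,1)$, not something read directly off the diagram.
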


\begin{proof} 
Suppose that $\pmb{\lambda} = (\lambda_0, \ldots, \lambda_l)$,   $\mathbf{u} = (u_0, \ldots, u_l)$, and $\pmb{\epsilon} = (\epsilon_1, \ldots, \epsilon_l)$, where each $\lambda_j$ is a positive word on $b_1, \ldots, b_p$, each $u_j$ is a prefix of $\lambda_j$,   each $\epsilon_i=\pm 1$, and $u_0 = \lambda_0$.  Say that \emph{$\sigma^{-1}  b_0  \tau  = \lambda b_0$ in $Q$ via $(\pmb{\lambda},  \mathbf{u}, \pmb{\epsilon})$} when  
   $$\begin{array}{rl} 
	 \sigma   & =  \  u_{0}^{-1} a_1^{\epsilon_1} u_1^{-1} a_1^{\epsilon_2} \cdots u_{l-1}^{-1} a_1^{\epsilon_l} u_{l}^{-1}    \\
	\tau  & =  \  a_1^{\epsilon_1}a_1^{\epsilon_{2}} \cdots a_1^{\epsilon_l}   \\
	\lambda & = \ \lambda_l
\end{array}
$$ 
as words, and for all $0 \leq j \leq l$,   
\begin{equation} \label{eqn:corridor side}
\lambda_j b_0  \ = \ 
(u_{j}  a_1^{-\epsilon_{j}} u_{j-1}   \cdots  a_1^{-\epsilon_{2}}  u_{1}   a_1^{-\epsilon_{1}} u_{0}) 
  \  b_0 \    (a_1^{\epsilon_1}a_1^{\epsilon_{2}} \cdots a_1^{\epsilon_j}) 
  \end{equation} in $Q$, as illustrated in Figure~\ref{fig:a_1fence}.

 \begin{figure}[htbp]
\centering
\begin{overpic}  [scale=0.8] 
{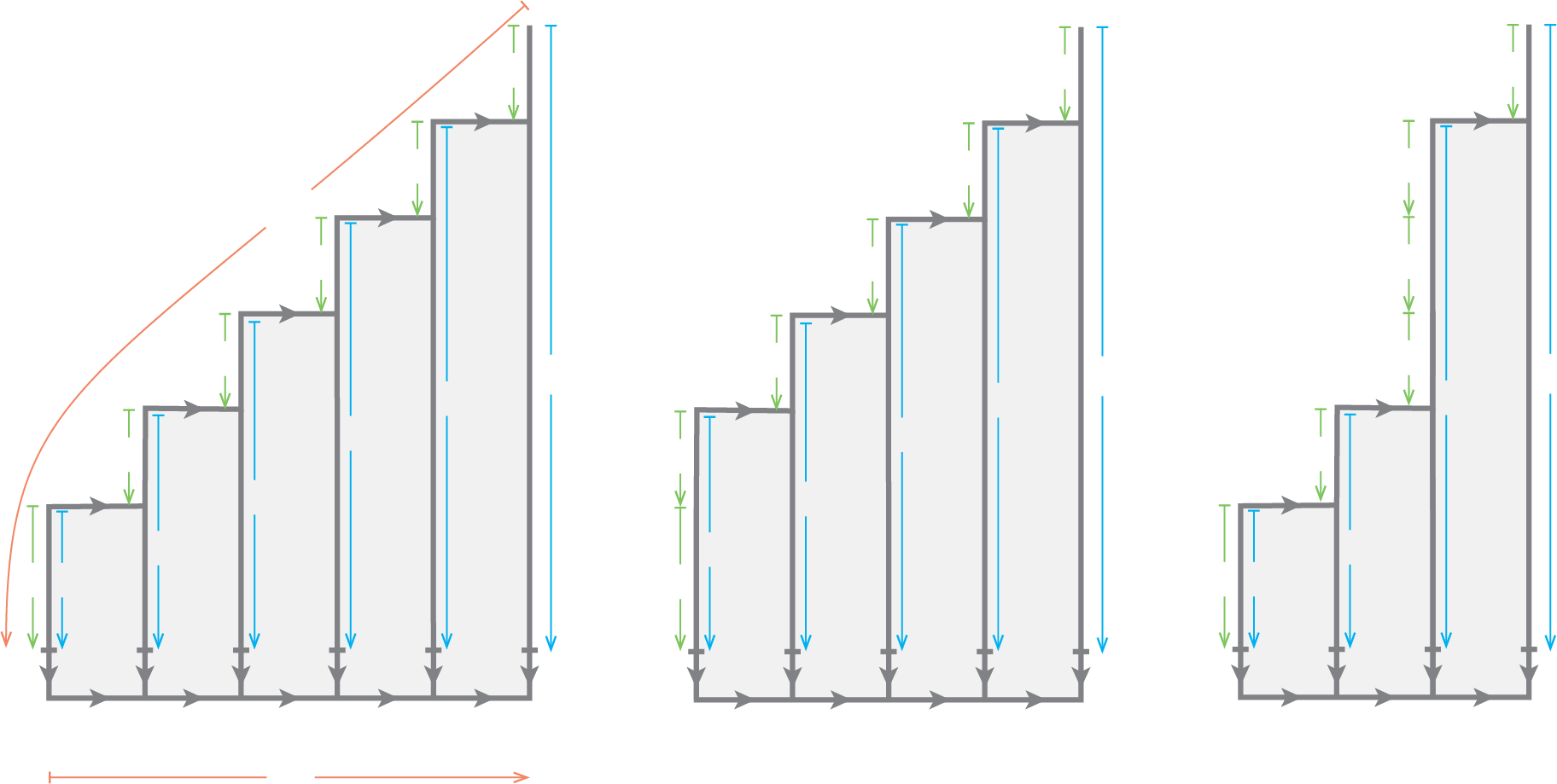}
\put(18, -0.3){\orange{\small{$\tau$}}}
\put(18, 36.5){\orange{\small{$\sigma$}}}
\put(0.7, 12.8){\green{\small{$u_0$}}} 
\put(7, 20.7){\green{\small{$u_1$}}} 
\put(13, 27,2){\green{\small{$u_2$}}} 
\put(19.2, 33){\green{\small{$u_3$}}} 
\put(25.3, 39.3){\green{\small{$u_4$}}} 
\put(31.5, 45.5){\green{\small{$u_5$}}} 
\put(0.7, 6.5){{\small{$b_0$}}} 
\put(7, 6.5){{\small{$b_0$}}} 
\put(12.8, 6.5){{\small{$b_0$}}} 
\put(19.2, 6.5){{\small{$b_0$}}} 
\put(25.3, 6.5){{\small{$b_0$}}} 
\put(34.8, 6.5){{\small{$b_0$}}} 
\put(3.5, 12.3){\bcyan{\small{$\lambda_0$}}} 
\put(9.5, 14.4){\bcyan{\small{$\lambda_1$}}} 
\put(15.7, 17.7){\bcyan{\small{$\lambda_2$}}} 
\put(22, 21.7){\bcyan{\small{$\lambda_3$}}} 
\put(28.1, 24){\bcyan{\small{$\lambda_4$}}} 
\put(34.6, 25.7){\bcyan{\small{$\lambda_5$}}} 
\put(5.4, 3.2){\small{$a_1^{\epsilon_1}$}}
\put(10.7, 3.2){\small{$a_1^{\epsilon_2}$}}
\put(17.2, 3.2){\small{$a_1^{\epsilon_3}$}}
\put(23.5, 3.2){\small{$a_1^{\epsilon_4}$}}
\put(29.4, 3.2){\small{$a_1^{\epsilon_5}$}}
\put(4.3, 19.2){\small{$a_1^{\epsilon_1}$}}
\put(10.8, 25.2){\small{$a_1^{\epsilon_2}$}}
\put(16.8, 31.4){\small{$a_1^{\epsilon_3}$}}
\put(22.7, 37,7){\small{$a_1^{\epsilon_4}$}}
\put(28.7, 44){\small{$a_1^{\epsilon_5}$}}
\put(42.2, 12.6){\green{\small{$\tilde{u}_0$}}} 
\put(42.2, 20.6){\green{\small{$u_1$}}} 
\put(48.3, 27.2){\green{\small{$u_2$}}} 
\put(54.5, 33){\green{\small{$u_3$}}} 
\put(60.6, 39.3){\green{\small{$u_4$}}} 
\put(66.8, 45.5){\green{\small{$u_5$}}} 
\put(70.2, 6.5){{\small{$b_0$}}} 
\put(42.3, 6.5){{\small{$b_0$}}} 
\put(48.1, 6.5){{\small{$b_0$}}} 
\put(54.5, 6.5){{\small{$b_0$}}} 
\put(60.6, 6.5){{\small{$b_0$}}} 
\put(45, 14.4){\bcyan{\small{$\lambda_1$}}} 
\put(51.3, 17.7){\bcyan{\small{$\lambda_2$}}} 
\put(57.6, 21.7){\bcyan{\small{$\lambda_3$}}} 
\put(63.7, 24){\bcyan{\small{$\lambda_4$}}} 
\put(70, 25.5){\bcyan{\small{$\lambda_5$}}} 
\put(46.5, 3.2){\small{$a_1^{\epsilon_2}$}}
\put(52.9, 3.2){\small{$a_1^{\epsilon_3}$}}
\put(59, 3.2){\small{$a_1^{\epsilon_4}$}}
\put(65.2, 3.2){\small{$a_1^{\epsilon_5}$}}
\put(46.1, 25.2){\small{$a_1^{\epsilon_2}$}}
\put(52.1, 31.4){\small{$a_1^{\epsilon_3}$}}
\put(58, 37,7){\small{$a_1^{\epsilon_4}$}}
\put(64, 44){\small{$a_1^{\epsilon_5}$}}
\put(77.1, 6.5){{\small{$b_0$}}} 
\put(83.5, 6.5){{\small{$b_0$}}} 
\put(89.5, 6.5){{\small{$b_0$}}} 
\put(98.9, 6.5){{\small{$b_0$}}} 
\put(77, 12.8){\green{\small{$u_0$}}} 
\put(83.5, 20.9){\green{\small{$u_1$}}} 
\put(89.2, 27.2){\green{\small{$u_2$}}} 
\put(83.8, 33.2){\green{\small{$\varphi^{-1}(u_3)$}}} 
\put(89.2, 39.3){\green{\small{$u_4$}}} 
\put(95.5, 45.8){\green{\small{$u_5$}}} 
\put(80, 12.3){\bcyan{\small{$\lambda_0$}}} 
\put(86.2, 14.4){\bcyan{\small{$\lambda_1$}}} 
\put(92.2, 24.2){\bcyan{\small{$\lambda_4$}}} 
\put(98.7, 25.8){\bcyan{\small{$\lambda_5$}}} 
\put(81.4, 3.2){\small{$a_1^{\epsilon_1}$}}
\put(88, 3.2){\small{$a_1^{\epsilon_2}$}}
\put(94, 3.2){\small{$a_1^{\epsilon_5}$}}
\put(81.3, 19.2){\small{$a_1^{\epsilon_1}$}}
\put(86.5, 25.2){\small{$a_1^{\epsilon_2}$}}
\put(93, 44){\small{$a_1^{\epsilon_5}$}}
\end{overpic}
\caption{Illustrating our proof of Lemma~\ref{lem:clean up as} (with $l=5$).  Left: a diagram for  $\sigma^{-1}  b_0  \tau  = \lambda b_0$ in $Q$ via $(\pmb{\lambda},  \mathbf{u}, \pmb{\epsilon})$.  Centre: the result of applying move I. Right: the result of applying move II (with $j=4$).}
\label{fig:a_1fence}
\end{figure}

Let $\lambda_0,  \ldots, \lambda_{l-1}$ be  the positive words on $b_1, \ldots, b_p$  such that  $\lambda_0 b_0,  \ldots, \lambda_{l-1}b_0$ are the words   along the bottom boundaries (read in the direction from $\hat{\mu}$ to $\nu$) of the $a_1$-corridors in $\Lambda_1$.  Let  $\lambda_{l} = \lambda$.  
Per Lemma~\ref{lem:clean up bs},   $\hat{\mu} b_0 \nu  = \lambda b_0$ in $G$ and, given how the $a_1$-corridors in $\Lambda_1$ pair off the $a_1^{\pm 1}$ in $\nu$ with the   $a_1^{\pm 1}$ in  $\hat{\mu}$,  if we define  $\sigma$ and  $\tau$ to be  $\hat{\mu}^{-1}$ and   $\nu$   with all letters $a_2$, $t$, $x_1$, $x_2$, $y_1$, and $y_2$ deleted,  then they have the forms displayed above. Accordingly, they define  $\mathbf{u}$ and $\pmb{\epsilon}$  so that $\sigma^{-1}  b_0  \tau  = \lambda b_0$ in $Q$ via $(\pmb{\lambda},  \mathbf{u}, \pmb{\epsilon})$.  Moreover, $l \leq |w_i|$ and $|\mathbf{u} | : = \sum_{j=0}^l |u_i| \leq 2|w_i|$, the last inequality coming from summing the bounds from Lemma~\ref{lem:clean up bs} \eqref{understand the as} and \eqref{understand the number of bs}.

We will simplify $(\pmb{\lambda},  \mathbf{u}, \pmb{\epsilon})$ in two ways: 

\begin{itemize}  
\item[I.] Suppose that $\epsilon_1 =-1$.   Then \eqref{eqn:corridor side} in the case $j=1$   gives that in $Q$,
$$
\lambda_1 b_0  \ = \ u_{1} a_1  u_{0}  
  \  b_0 \     a_1^{-1} \ = \ u_1 \varphi^{-1}(u_0 b_0).$$
 Now,   $u_1$ is a prefix of  $\lambda_1$ and so $\varphi^{-1}(u_0 b_0)$ is a suffix of $\lambda_1 b_0$, and so is a positive word. Therefore  Lemma~\ref{lem:positive in Q} applies and tells us that  $|\varphi^{-1}( u_{0} b_0)| \leq |u_0 b_0|$.  Define  $\tilde{u}_0$ to be the word obtained from  $\varphi^{-1}( u_{0} b_0)$ by removing its final letter $b_0$.  Then $|\tilde{u}_0| \leq |u_0|$ and $\lambda_1 = u_1 \tilde{u}_0$.   Define $\hat{\pmb{\lambda}}$
to be $\pmb{\lambda}$ with $\lambda_0$ discarded, define  
$\hat{\mathbf{u}}$ to be $\mathbf{u}$ with $u_{0}$  discarded
and $u_{1}$ replaced by $u_{1}  \tilde{u}_{0}$,  and define $\hat{\pmb{\epsilon}}$ 
 to be $\pmb{\epsilon}$ with   $\epsilon_{1}$ discarded.   
Then $\sigma^{-1}  b_0  \tau  = \lambda b_0$ in $Q$ via $(\hat{\pmb{\lambda}},  \hat{\mathbf{u}}, \hat{\pmb{\epsilon}})$, the lengths of the three sequences have all decreased by $1$.  And because $|\tilde{u}_0| \leq |u_0|$, we get $|\hat{\mathbf{u}}| \leq |\mathbf{u}|$.

\item[II.] Suppose $\epsilon_{j-1}=1$ and $\epsilon_{j}=-1$ for some $2 \leq j \leq l$.    
  Using \eqref{eqn:corridor side} to relate $\lambda_{j-2} b_0$ and $\lambda_{j} b_0$, we get  
$$\lambda_{j}b_0 
\ = \ u_{j} a_1  u_{j-1} a_1^{-1}    \ \lambda_{j-2}  b_0  \ a_1  a_1^{-1} \  = \  u_{j}   \varphi^{-1}( u_{j-1})      \ \lambda_{j-2}  b_0$$ in $Q$.
Now, $u_{j}$ is a prefix of $\lambda_{j}$ and $\lambda_{j}b_0$ is a positive word, so the word $\varphi^{-1}( u_{j-1})      \ \lambda_{j-2}  b_0$ is equal in $Q$ to a positive word, and then by Lemma~\ref{lem:w suffix},  $\varphi^{-1}( u_{j-1})$ is a prefix of that positive word.  Given that both $\varphi^{-1}( u_{j-1})$ and $u_{j-1}$ are positive words,  Lemma~\ref{lem:positive in Q} tells us that $|\varphi^{-1}( u_{j-1})| \leq |u_{j-1}|$.   
Now define
$\hat{\pmb{\lambda}}$
to be $\pmb{\lambda}$ with $\lambda_{j-1}$ and $\lambda_{j}$ discarded, define  
$\hat{\mathbf{u}}$
to be $\mathbf{u}$ with $u_{j-2}$ and $u_{j-1}$ discarded
and $u_{j}$ replaced with $u_{j} \varphi^{-1}( u_{j-1}) u_{j-2}$,  and define $\hat{\pmb{\epsilon}}$ 
 to be $\pmb{\epsilon}$ with   $\epsilon_{j-1}$ and $\epsilon_{j}$ discarded.   
Then $\sigma^{-1}  b_0  \tau  = \lambda b_0$ in $Q$ via $(\hat{\pmb{\lambda}},  \hat{\mathbf{u}}, \hat{\pmb{\epsilon}})$, the lengths of the three sequences have all decreased by $2$, and $|\hat{\mathbf{u}}| \leq |\mathbf{u}|$. 
\end{itemize}

Repeat I and II until we have  $(\pmb{\lambda},  \mathbf{u}, \pmb{\epsilon})$ via which $\sigma^{-1}  b_0  \tau  = \lambda b_0$ in $Q$  with $\pmb{\epsilon} = (1, \cdots, 1)$.  Throughout, the bounds $l \leq |w_i|$ and $|\mathbf{u} |  \leq 2|w_i|$ are maintained.  The resulting  $\mu = \sigma^{-1}$ and $\tau = a_1^l$ have the required properties.
\end{proof}

A calculation in $Q$ now bounds the length of $\lambda$.  We state the result in the following lemma, deferring the proof to Section~\ref{sec:p/q}.
\begin{lemma}\label{lem:Qp/q} There exists $C_0>1$ with the following property. 
Suppose there are words $\mu$ on $a_1^{-1}, b_1, \dots, b_p$ (so containing no $a_1, b_1^{-1}, \dots, b_p^{-1}$)  and  $\lambda$ on $b_1, \dots, b_p$ (so containing only positive letters), and a number $l\ge 1$ such that in $Q$ 
\begin{equation}
\label{eq:Qp/q}
\mu b_0 a_1^l \ =  \  \lambda b_0.\end{equation}  
Then, if $|\cdot|_q$ counts the number   of $b_q$ in a given word, we have: 
$$|\lambda|  \ \le \  C_0(|\mu|+ |\lambda|_q)^{p/q}.$$
\end{lemma}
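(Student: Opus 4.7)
The plan is to rewrite the equation $\mu b_0 a_1^l = \lambda b_0$ in $Q$ as an identity of positive words in the free group $F(b_0,\ldots,b_p)$, and then estimate lengths using the polynomial growth of $\varphi$ together with an upper bound on $l$ that is forced by the trailing $b_0$.

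First I would decompose $\mu$ (which is already freely reduced, as it contains no inverse pair) as
\[
\mu \ = \ u_l\, a_1^{-1}\, u_{l-1}\, a_1^{-1} \cdots u_1\, a_1^{-1}\, u_0,
\]
where each $u_i$ is a (possibly empty) positive word on $b_1,\ldots,b_p$; comparing $a_1$-exponent sums on the two sides of $\mu b_0 a_1^l = \lambda b_0$ forces the number of $a_1^{-1}$-letters of $\mu$ to equal $l$.  Using the defining relation $w a_1 = a_1 \varphi(w)$ in $Q$ for any $w \in F(b_0,\ldots,b_p)$ and inducting on $i$ to push all positive powers of $a_1$ to the left, one obtains
\[
\mu b_0 a_1^l \ = \ u_l \cdot \varphi(u_{l-1}) \cdot \varphi^2(u_{l-2}) \cdots \varphi^{l-1}(u_1) \cdot \varphi^l(u_0 b_0) \qquad \text{in } Q.
\]
Because $\varphi$ preserves positivity, both sides and $\lambda b_0$ are reduced positive words in $F(b_0,\ldots,b_p)$ representing the same element, hence coincide letter-for-letter.

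Next, a straightforward induction from $\varphi(b_j) = b_{j+1}b_j$ (for $j<p$) and $\varphi(b_p) = b_p$ shows that the letter $b_m$ appears $\binom{i}{m-j}$ times in $\varphi^i(b_j)$ (for $j \le m \le p$, with $\binom{i}{r}=0$ when $r>i$).  Consequently there is a constant $C_1 = C_1(p)$ such that $|\varphi^i(b_j)| \le C_1(i+1)^{p-j}$ for all $i,j$, and $|\varphi^i(b_j)|_q = \binom{i}{q-j}$ when $j \le q$ and is $0$ otherwise.  Since the factor $\varphi^l(b_0)$ alone contributes $\binom{l}{q}$ copies of $b_q$ to $\lambda b_0$, we have $\binom{l}{q} \le |\lambda|_q$, and standard binomial estimates then give the key inequality
\[
l \ \le \ C_2\bigl(|\lambda|_q^{1/q} + 1\bigr)
\]
for a constant $C_2 = C_2(p,q)$.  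This is the crucial exchange: it converts the bound we want into a trade-off between $|\mu|$ and $|\lambda|_q$.

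Write $N = |\mu|$ and $M = |\lambda|_q$, and split the letter-occurrences $(i, b_j)$ in the identity above into $A = \{j \le q\}$ and $B = \{j > q\}$; both classes have cardinality at most $N$.  For $(i,b_j) \in A$, factoring $(i+1)^{p-j} = (i+1)^{p-q}(i+1)^{q-j}$ and using the elementary estimate $(i+1)^{q-j} \le C_3\bigl(\binom{i}{q-j}+1\bigr)$ (from the asymptotics of binomials) gives
\[
\sum_A |\varphi^i(b_j)| \ \le \ C_4(l+1)^{p-q}\bigl(|\lambda|_q + |A|\bigr) \ \le \ C_5(M+N+1)^{(p-q)/q+1} \ = \ C_5(M+N+1)^{p/q},
\]
where the second inequality combines $(l+1)^{p-q} \le C(M+1)^{(p-q)/q}$ (from the bound on $l$) with $|A| \le N$.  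For $(i, b_j) \in B$, the cruder estimate $|\varphi^i(b_j)| \le C_1(l+1)^{p-q-1}$ together with $|B| \le N$ yields
\[
\sum_B |\varphi^i(b_j)| \ \le \ C_6 N (M+1)^{(p-q-1)/q} \ \le \ C_6(N+M+1)^{(p-1)/q} \ \le \ C_6(N+M+1)^{p/q},
\]
where the penultimate step is the weighted majorization $N^\alpha M^\beta \le (N+M)^{\alpha+\beta}$ applied with $\alpha = 1$ and $\beta = (p-q-1)/q$.  Summing the $A$- and $B$-contributions bounds $|\lambda| \le |\lambda b_0|$ by a constant multiple of $(N+M+1)^{p/q}$, and the hypothesis $l \ge 1$ guarantees $N \ge 1$, so $(N+M+1)^{p/q} \le 2^{p/q}(N+M)^{p/q}$, completing the proof.

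The main obstacle is the $B$-contribution: naively bounding each $|\varphi^i(b_j)|$ by $(l+1)^{p-q-1}$ and using $l \le N$ only delivers $\sum_B \le C(N+1)^{p-q}$, which can exceed $(N+M)^{p/q}$ when $p$ is large relative to $q$ (e.g.\ $p=5$, $q=2$).  The trick is to keep exactly one factor of $N$ linear and to route the remaining $(l+1)^{p-q-1}$ through the $l \le CM^{1/q}$ estimate, so that the weighted majorization combines the exponents to $(p-1)/q$, just below the required $p/q$.
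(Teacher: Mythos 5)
Your proof is correct, but it takes a genuinely different route from the paper's. The paper proves Lemma~\ref{lem:Qp/q} by induction on $|\mu|_b$: the base case $\mu = a_1^{-l}$ isolates the ``pure'' estimate $|\varphi^l(b_0)| \le (p+1)\binom{l}{p} \le K\binom{l}{q}^{p/q}$, and the inductive step peels off the first $b$-letter of $\mu$ as $\hat\mu = a_1^{-m}b_i\beta$, expands $(|\lambda| + |\gamma|)^q$ binomially, and runs a three-way case analysis on $m$ and $i$. You bypass the induction by fully unrolling: writing $\mu = u_l a_1^{-1} u_{l-1} \cdots a_1^{-1} u_0$ and pushing the $a_1$'s through gives the \emph{closed-form} positive-word identity $\lambda b_0 = u_l\,\varphi(u_{l-1})\cdots\varphi^{l-1}(u_1)\,\varphi^l(u_0 b_0)$, after which one sums the per-letter contributions $|\varphi^i(b_j)|$ directly. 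Both proofs hinge on the same two ingredients --- the binomial growth of $\varphi^i$ (Lemma~\ref{lem:Bgrowth}) and the trade-off $l \preceq |\lambda|_q^{1/q}$ extracted from the $\varphi^l(b_0)$ factor (the paper's inequality $n \ge \binom{m}{q}$) --- but you package them differently. Your split into the classes $j \le q$ (where the $b_q$-production factor $\binom{i}{q-j}$ is summed exactly to $|\lambda|_q$) and $j > q$ (where a cruder $(i+1)^{p-q-1}$ bound plus the weighted majorization $N\cdot M^{(p-q-1)/q}\le(N+M)^{(p-1)/q}$ suffices) cleanly avoids the paper's case analysis and the $q$-th-power binomial expansion. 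The paper's induction would give essentially the same constants but costs more bookkeeping; your direct version is somewhat cleaner and makes the role of the $\varphi^l(b_0)$ factor more transparent, at the small price of having to verify that the unrolled word identity holds letter-for-letter (which you correctly justify by noting that positive words in $F(b_0,\ldots,b_p)$ are already reduced).
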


In the situation of Corollary~\ref{cor:block cells}, this leads to an upper bound on the lengths of the $a_1$-corridors in $\mathcal B_i$ for all $i$ such that $\beta_i$ is a $b_0$-corridor.  

\begin{lemma} \label{lem:p/q}
There exists $C_1>1$ such that if $\mathcal C$ is as in Lemma~\ref{lem:a-corridor words} and $\xi b_0$ and $\lambda b_0$ are the words read along the top and bottom boundaries (respectively) of $\mathcal C$, then 
$$\max\{|\lambda|, |\xi|\} \ \le \  C_1 |w|^{p/q}.$$
   \end{lemma}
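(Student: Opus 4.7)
The plan is to chain Lemma~\ref{lem:clean up as}, Lemma~\ref{lem:Qp/q}, and a planarity argument for $(a_2,b_q)$-tracks to bound $|\lambda|$, and then to deduce the bound on $|\xi|$ from the cell-count of $\mathcal C$.

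First, I will apply Lemma~\ref{lem:clean up as} to obtain a word $\mu$ on $\{a_1^{-1}, b_1, \ldots, b_p\}$ with $|\mu| \le 2|w_i| \le 2|w|$ and an integer $l \in \{0, 1, \ldots, |w_i|\}$ such that $\mu b_0 a_1^l = \lambda b_0$ in $Q$. Inspection of the construction in Lemma~\ref{lem:clean up as} shows that $\mu$ contains exactly $l$ many $a_1^{-1}$-letters, so when $l=0$ both $\mu b_0$ and $\lambda b_0$ are positive words on $b_0, \ldots, b_p$ lying in the free subgroup $F(b_0, \ldots, b_p)$ of $Q$ (the kernel of $Q \twoheadrightarrow \langle a_1\rangle \cong \mathbb{Z}$), forcing $\mu = \lambda$ as words and hence $|\lambda| \le 2|w|$.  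When $l \ge 1$, Lemma~\ref{lem:Qp/q} supplies a constant $C_0 > 1$ with
\[
|\lambda| \ \le \ C_0 \bigl(|\mu| + |\lambda|_q\bigr)^{p/q} \ \le \ C_0 \bigl(2|w| + |\lambda|_q\bigr)^{p/q}.
\]

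The key step is to bound $|\lambda|_q \le |w|/2$ using $(a_2,b_q)$-tracks.  Each $b_q$-letter of $\lambda$ labels a $b_q$-edge in the bottom boundary $\gamma$ of $\mathcal C$, which is crossed by a unique $(a_2,b_q)$-track (Lemma~\ref{lem:(a_2,b_q)-tracks}\eqref{first a2bq property}); by Lemma~\ref{lem:(a_2,b_q)-tracks}\eqref{lem part: a2bq not a loop} this track is not a loop, so it has two endpoints on $\partial\Delta$, and since $\chi$ is a word on $\{t, y_1, y_2\}^{\pm 1}$ (which contains no $a_2$ or $b_q$) both endpoints lie on $w$. Distinct $(a_2,b_q)$-tracks use disjoint endpoint-pairs, so there are at most $|w|/2$ tracks in $\Delta$, and it therefore suffices to show that each such track crosses $\gamma$ at most once. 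To this end I will extend $\gamma$, whose interior endpoint $Q$ lies on $W_i$, to a chord $\tilde\gamma$ of $\Delta$ by appending a simple arc $\delta$ from $Q$ to $\partial\Delta$ chosen to avoid every $a_2$- and $b_q$-edge of $\Delta$ (such a $\delta$ exists by a generic-position argument: the $a_2$- and $b_q$-edges of $\Delta$ form a $1$-dimensional subset, and the $(a_2,b_q)$-tracks through these edges are mutually non-crossing simple arcs with endpoints on $\partial\Delta$, so the complementary region of $\Delta$ containing $Q$ meets $\partial\Delta$). No $(a_2,b_q)$-track crosses $\delta$, so for any such track $\tau$ every crossing of $\tilde\gamma$ by $\tau$ lies on $\gamma$. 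The $b_q$-edges of $\gamma$ all carry the orientation of $\gamma$ (they are positive letters in $\lambda$), and by Definition~\ref{def: a2bq-track} together with Lemma~\ref{lem:(a_2,b_q)-tracks}\eqref{zeroth a2bq property} the dual edges of $\tau$ are consistently oriented, so every crossing of $\tilde\gamma$ by $\tau$ moves $\tau$ in the same transverse direction.  The standard planarity fact---a simple arc in a disc with endpoints on the boundary can cross a chord in a single transverse direction at most once---then yields the claim.

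Combining the bounds gives $|\lambda| \le C_0 \bigl(\tfrac{5}{2}|w|\bigr)^{p/q} \le C_1' |w|^{p/q}$ for a suitable constant $C_1'$ depending only on $\mathcal P$. Since $\mathcal C$ is an $a_1$-corridor with exactly one letter per 2-cell on its bottom (Lemma~\ref{lem:a-corridor words}), it has $|\lambda|+1$ cells, and since each defining relator of $\mathcal P$ has length at most some constant $K$, the top word $\xi b_0$ satisfies $|\xi|+1 \le K(|\lambda|+1)$; hence $|\xi| \le K C_1' |w|^{p/q} + K$, which we absorb into a single constant $C_1$ to conclude $\max\{|\lambda|,|\xi|\} \le C_1|w|^{p/q}$. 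I anticipate the main obstacle to be the rigorous construction of the extension $\delta$: concretely, one must verify that the subcomplex of $\Delta$ formed by the $a_2$- and $b_q$-edges does not separate the vertex $Q$ from $\partial\Delta$, together with formalizing the ``same transverse direction'' claim using the orientation conventions of Definition~\ref{def: a2bq-track} at $r_{1,q-1}$- and $r_{2,q}$-cells.
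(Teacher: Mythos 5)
Your proof is correct and follows essentially the same route as the paper: pass through Lemmas~\ref{lem:clean up bs} and \ref{lem:clean up as} to put the problem in the form of Lemma~\ref{lem:Qp/q}, bound $|\lambda|_q$ by counting $(a_2,b_q)$-tracks, and then bound $|\xi|$ by the cell count of $\mathcal C$. You give a fuller treatment of two points that the paper handles tersely: (i) you explicitly dispose of the possibility $l=0$, which the paper's proof glosses over when it says "a number $l\ge 1$ such that Lemma~\ref{lem:Qp/q} applies" (your observation that $l=0$ forces $\mu=\lambda$ because the $b$-letters freely generate the kernel of $Q\twoheadrightarrow\langle a_1\rangle$ cleanly closes this case); and (ii) you make the planarity argument behind "each $(a_2,b_q)$-track crosses $\lambda$ at most once" explicit by extending $\gamma=\lambda b_0$ to a chord $\tilde\gamma$ via an arc $\delta$ avoiding all $a_2$- and $b_q$-edges, whereas the paper compresses this to the observation that two crossings of a consistently-oriented track against the positive word $\lambda$ would force one of the $b_q$-edges to be $b_q^{-1}$. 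These are refinements of detail, not a different strategy; the essential ideas match.
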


\begin{proof}
First consider the word $\lambda b_0$ along the bottom boundary of $\mathcal C$. Use  Lemma~\ref{lem:clean up bs} and \ref{lem:clean up as} to obtain a word $\mu= \mu(b_1, \dots, b_p, a_1^{-1})$ and a number $l\ge 1$ such that Lemma~\ref{lem:Qp/q} applies.  Then $|\lambda| \le C_0(|\mu|+ |\lambda|_q)^{p/q}$. 
By Lemma~\ref{lem:clean up as}, we have  $|\mu| \le 2 |w_i| \le 2|w|$.  

We estimate $|\lambda|_q$ using $(a_2, b_q)$-tracks (see Definition~\ref{def: a2bq-track}).
The dual of every edge labelled $b_q$  in $\lambda$  is part of an $(a_2, b_q)$-track of $\Delta$ with endpoints on $w$ (by parts~\eqref{first a2bq property} and~\eqref{lem part: a2bq not a loop} of Lemma~\ref{lem:(a_2,b_q)-tracks}).  Suppose some  $(a_2, b_q)$-track $\gamma$ crosses $\mathcal C$ twice.  Then the edges of $\lambda$ dual to $\gamma$ are necessarily labelled by $b_q^{\pm 1}$, as $\lambda$ has no $a_2$, and since $\gamma$ is oriented (Lemma~\ref{lem:(a_2,b_q)-tracks}\eqref{zeroth a2bq property})  at least one of these must be $b_q^{-1}$.  This contradicts the fact, established in Lemma~\ref{lem:a-corridor words}, that $\lambda$ is a positive word. 
Thus any $(a_2, b_q)$-track crosses $\lambda$ at most once.
It follows that 
$|\lambda|_q \le |w|$.  Thus
\begin{equation} \label{eq:lambda length}
|\lambda|  \ \le \  C_0(|\mu|+ |\lambda|_q)^{p/q} \ \le \  C_	0 ( 4 |w|)^{p/q} \ \le \  C_0' |w|^{p/q},
\end{equation}
for a suitable constant $C_0'$.  

Now if $\xi b_0$ is the top boundary of an $a_1$-corridor, then we have  a relation $\xi b_0  = a_1^{-1} (\lambda b_0)  a_1$, where $\lambda$ is a positive word on $b_1, \dots, b_p$.  Inspecting the $r_{1, \ast}$-defining relations (of Figure~\ref{fig:relations}), we see that 
$|\xi| \le C_0''|\lambda|$ for a suitable constant $C_0'' \ge 1$.  Combining this with \eqref{eq:lambda length}, we obtain $\max\{|\lambda|, |\xi|\} \le C_1 |w|^{p/q}$ for a suitable constant $C_1>1$. 
\end{proof}

Our next lemma is illustrated by Figure~\ref{fig:b-block simplify}. 
We can now derive:

\begin{lemma}\label{lem:Lengths of W_i} There exists a constant $C_2 >1$ such that for all $i$ such that $\beta_i$ is a $b_0$-track,
	   \begin{equation} \label{W_i upper bound}    | W_i|   \ \leq \  C_2^{|w| ^{p/q}}.
   \end{equation} 	
\end{lemma}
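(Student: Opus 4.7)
The plan is to bound $|W_i|$ via the number $m$ of cells in the $b_0$-corridor $\mathcal{C}_0$ traversed by $\beta_i$, whose top boundary reads $W_i$.  Since each cell of $\mathcal{C}_0$ contributes at most $K$ letters to $W_i$ (where $K$ is the maximum length of a defining relator of $\mathcal{P}$), we have $|W_i| \le Km$.  I aim for a polynomial bound on $m$ in $|w|$, which immediately implies $|W_i| \le C_2^{|w|^{p/q}}$ upon choosing $C_2$ large enough, since any polynomial in $|w|$ grows sub-exponentially in $|w|^{p/q}$.

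To bound $m$, I would classify the cells of $\mathcal{C}_0$ by type---namely $r_{1,0}$, $r_{2,0}$, $r_{3,0}$, or $r_{3,0,j}$, these being the cell types through which a $b_0$-track can pass.  By Lemma~\ref{lem:b-block corridors}, each $a_1$-track in $\mathcal{B}_i$ crosses $\beta_i$ exactly once (necessarily at an $r_{1,0}$-cell), so the number of $r_{1,0}$-cells in $\mathcal{C}_0$ is at most the number of $a_1$-letters in $w_i$, and hence at most $|w|$.  For each remaining cell of $\mathcal{C}_0$, its non-$b_0$ ``bottom'' edge is labelled by $a_2$, $t$, $x_1$, or $x_2$; this edge is an endpoint in $\mathcal{B}_i^- := \mathcal{B}_i \setminus \mathcal{C}_0$ of the corresponding $a_2$-, $t$-, or $x_j$-corridor.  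The subdiagram $\mathcal{B}_i^-$ contains no $b_0$- or $y$-edges (by Lemma~\ref{lem: Layout lemma}\eqref{lem part: No y-edges above a b-track}), and the corridor in question cannot form an annulus (by Lemma~\ref{lem: no t-loop} for $t$-corridors, and by orientation arguments invoking Lemma~\ref{lem: Layout lemma}\eqref{lem part:orientations} for the others); thus it must terminate on $\partial \mathcal{B}_i^- = w_i' \cup \eta$, where $w_i'$ is the portion of $w_i$ between the two $b_0^{\pm1}$-edges at the endpoints of $\beta_i$ and $\eta$ is the word along the bottom of $\mathcal{C}_0$.

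I would then charge each non-$r_{1,0}$ cell of $\mathcal{C}_0$ either to a suitable letter of $w_i$ or to a cell (typically of type $r_{1,q-1}$) inside an $a_1$-corridor of $\mathcal{B}_i$ at which the corresponding corridor in $\mathcal{B}_i^-$ terminates.  Applying Lemma~\ref{lem:p/q}, each $a_1$-corridor of $\mathcal{B}_i$ has at most $C_1|w|^{p/q}$ cells, and there are at most $|w|$ such corridors, giving at most $C_1|w|^{1+p/q}$ interior charge-receivers, in addition to the at most $|w|$ coming from $w_i$.  This yields $m \le |w| + (1+C_1)|w|^{1+p/q} = O(|w|^{1+p/q})$, hence $|W_i| \le KC'|w|^{1+p/q}$ for some constant $C'$, from which the desired bound $|W_i| \le C_2^{|w|^{p/q}}$ follows for $C_2$ sufficiently large.

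The main obstacle will be to rule out \emph{phantom} configurations in which a corridor of $\mathcal{B}_i^-$ has both endpoints on $\eta$, thereby evading the charging described above.  Such a corridor together with the sub-arc of $\eta$ between its endpoints would enclose a subregion of $\mathcal{B}_i^-$; using the absence of $y$-edges in $\mathcal{B}_i^-$ together with the orientation data established earlier, I expect to derive a contradiction from the no-sink conclusion of Lemma~\ref{lem: trapped y-noise}\eqref{lem part: no y-edges cor} or the no-source conclusion of Lemma~\ref{lem: Layout lemma}\eqref{lem part:no sources} applied to this subregion, thereby ensuring that the charging is legitimate.
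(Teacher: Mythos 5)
Your plan asserts a polynomial bound on the number $m$ of cells in the $b_0$-corridor, namely $m=O(|w|^{1+p/q})$, and hence a polynomial bound on $|W_i|$. This cannot be correct when $p>q$: if $|W_i|$ (and hence $|\widehat W|$ in the proof of Proposition~\ref{prop:upper bound}) were polynomial in $|w|$, the estimate~\eqref{eq:chi first upper bound} would then give $|\chi| \leq \mathrm{poly}(|w|)\cdot C_4^{|w|}$, so $\Dist_H^G(n)\preceq \exp(n)$, which contradicts the lower bound $\Dist_H^G(n)\succeq \exp(n^{p/q})$ established in Section~\ref{sec:lower}. So a polynomial bound on $m$ is actually false, not merely unproved.

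The ``phantom configurations'' you flag at the end are precisely where the argument breaks, and they cannot be ruled out. Write $\mathcal{D}_j$ for a component of $\mathcal{B}_i$ between two consecutive $a_1$-corridors; the sides $\mu_j,\nu_j$ of these $a_1$-corridors are, by Lemma~\ref{lem:a-corridor words}, positive words on $b_0,\ldots,b_p$ of length up to $C_1|w|^{p/q}$, so they carry no $t$-edges and no $t$-corridor can end on them. Hence any $t$-corridor in $\mathcal{D}_j$ must end on $\alpha_j$ (a subword of $w_i$) or on $u_j'$ (a subword of $W_i'$, i.e.\ on $\eta$). Lemma~\ref{lem: Layout lemma}\eqref{lem part:orientations} forbids both ends on $\alpha_j$, and $\alpha_j$ has only $O(|w|)$ $t$-letters in total; but the conjugations by the $b_k$-letters of $\mu_j,\nu_j$ blow up the $t$- and $x$-content of $u_j'$ \emph{exponentially} in $|\mu_j|,|\nu_j| \lesssim |w|^{p/q}$ (this is exactly the free-by-cyclic/HNN distortion the construction is designed to produce). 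So exponentially many $t$-corridors are forced to have both endpoints on $\eta$, and neither Lemma~\ref{lem: trapped y-noise}\eqref{lem part: no y-edges cor} nor Lemma~\ref{lem: Layout lemma}\eqref{lem part:no sources} excludes them---the region they enclose is bounded by a $t$-corridor and a piece of $\eta\subset\partial\mathcal{B}_i^-$, which is not a forbidden configuration in either of those lemmas. Consequently the charging to letters of $w_i$ and to $r_{1,q-1}$-cells inside $a_1$-corridors cannot account for all the non-$r_{1,0}$ cells of the $b_0$-corridor.

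Your instinct to split $\mathcal{B}_i$ by its $a_1$-corridors and to invoke Lemma~\ref{lem:p/q} to bound the corridor lengths by $C_1|w|^{p/q}$ is exactly right and matches the paper. The difference is what one does with the pieces $\mathcal{D}_j$: rather than count cells, treat each $\mathcal{D}_j$ as a van~Kampen diagram over the multiple HNN-extension of $F(a_2,t,x_1,x_2)$ with stable letters $b_1,\ldots,b_p$ (there are no $a_1$- or $b_0$-corridors inside $\mathcal D_j$). In that group, conjugating the vertex group by a word of length $M$ in the stable letters distorts lengths by at most $K_1^M$, so $|u_j'| \le |\alpha_j|\,K_1^{\max(|\mu_j|,|\nu_j|)} \le |\alpha_j|\,K_1^{C_1|w|^{p/q}}$. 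Summing over $j$ gives $|W_i'| \le |w|\,K_1^{C_1|w|^{p/q}}$, and $|W_i|\le K_0|W_i'|$, from which the exponential bound $|W_i|\le C_2^{|w|^{p/q}}$ follows. The correct bound is genuinely exponential in $|w|^{p/q}$, not polynomial in $|w|$.
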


   \begin{figure}[htbp]
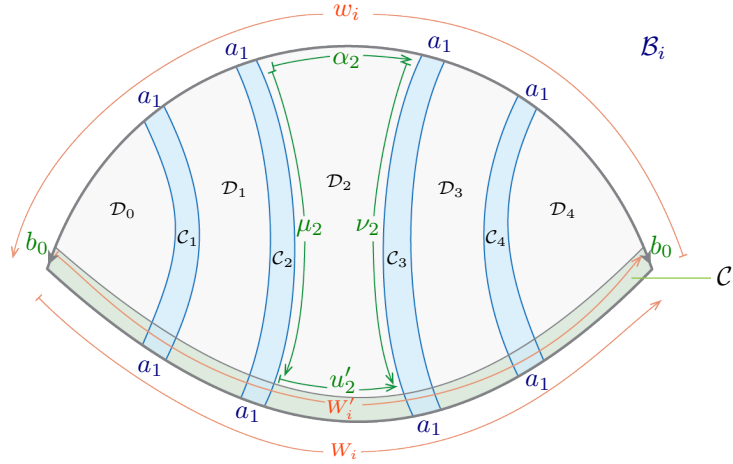

   \centering
\begin{overpic}
{Figures/b-block_and_a-corridors}
\put(46, 62.5){\orange{\small{$w_i$}}}
\put(45.6, -.5){\orange{\tiny{$W_i$}}}
\put(45, 5.6){\orange{\tiny{$W'_i$}}}
\put(101, 24){{\small{$\mathcal{C}$}}}
\put(45.9, 55.7){\green{\small{$\alpha_2$}}}
\put(45.5, 9.4){\green{\small{$u'_2$}}}
\put(40.7, 31.7){\green{\small{$\mu_2$}}}
\put(49.2, 31.7){\green{\small{$\nu_2$}}}
\put(23.7, 30){{\tiny{$\mathcal{C}_1$}}}
\put(37.3, 27){{\tiny{$\mathcal{C}_2$}}}
\put(53.7, 27){{\tiny{$\mathcal{C}_3$}}}
\put(67.8, 30){{\tiny{$\mathcal{C}_4$}}}
\put(14, 34){{\tiny{$\mathcal{D}_0$}}}
\put(30, 37){{\tiny{$\mathcal{D}_1$}}}
\put(45, 38){{\tiny{$\mathcal{D}_2$}}}
\put(61, 37){{\tiny{$\mathcal{D}_3$}}}
\put(77, 34){{\tiny{$\mathcal{D}_4$}}}
%
\put(73.5, 9.5){\blue{\small{$a_1$}}}
\put(73.5, 51){\blue{\small{$a_1$}}}
\put(18, 11.5){\blue{\small{$a_1$}}}
\put(18, 50){\blue{\small{$a_1$}}}
\put(32, 5){\blue{\small{$a_1$}}}
\put(30.5, 57){\blue{\small{$a_1$}}}
\put(58, 3){\blue{\small{$a_1$}}}
\put(59.5, 58){\blue{\small{$a_1$}}}
\put(90, 57){\blue{\small{$\mathcal{B}_i$}}}
\put(2, 29){\green{\small{$b_0$}}}
\put(91.4, 28.5){\green{\small{$b_0$}}}
\end{overpic}
\caption{Illustrating our proof of Lemma~\ref{lem:Lengths of W_i} (with $l=4$)}
\label{fig:b-block simplify}
\end{figure}

\begin{proof}
Let $\mathcal C$ be the (unique) $b_0$-corridor in $\mathcal B_i$ and let $W_i'$ be its bottom boundary, so we have the relation
$b_0^{-1} W_i' b_0 = W_i$.  Then there exists a constant $K_0 \ge 1$ such that 
\begin{equation} \label{eq:W_i'}
 |W_i|  \ \le  \  K_0  |W_i'|.
\end{equation}
Let $\mathcal C_1, \dots \mathcal C_l$ be the $a_1$-corridors of $\mathcal B_i$ and let $\mathcal D_0, \dots, \mathcal D_{l}$ be the (closures of the) components of $\mathcal B_i \setminus (\mathcal C \cup C_1\cup  \dots \cup \mathcal C_l)$.   
Then, for all $j$, $\mathcal D_j$ is a van Kampen diagram for the relation $\mu_j^{-1} \alpha_j \nu_j = u'_j$, where 
 $\alpha_j$ is a subpath of $w_i$, the paths $\mu_j$ and $\nu_j$ (which are possibly empty) run along the $a_1$-corridors bounding $\mathcal D_j$, and $u_j'$ is a subpath of $W_i'$.  We know from Corollary~\ref{cor:block cells}\eqref{b-blocks part} that  the  2-cells in $\mathcal D_j$ are of type $r_{1, \ast}$, $r_{2, \ast}$, $r_{3, \ast}$, and $r_{3, \ast, \ast}$.  And, as $\mathcal D_j$ has no $a_1$- or $b_0$-corridors, the relation $\mu_j^{-1} \alpha_j \nu_j = u'_j$ holds in (in the notation of Figure~\ref{fig:relations})  
 $$\langle \  a_2, t, x_1, x_2, \ b_1, \ldots, b_p \mid  \ \set{r_{2,i}, \,  r_{3,i}, \,  r_{3,i,j} : 1\leq i \leq p \text{ and } 1\leq j \leq 2}  \     \rangle,$$
which is a multiple HNN-extension of $F(a_2, t, x_1, x_2)$ with stable letters $b_1, \ldots, b_p$.  
 So, by repeated use of Britton's Lemma $|u_j'| \le |\alpha_j | K_1^M$, where $K_1 \geq 1$ is a constant multiplicative factor bounding the increase in length on eliminating a \emph{pinch}, and $M= \max(|\mu_j|, |\nu_j|)$.  So  $|u_j'| \le C_1 |w|^{p/q}$ by Lemma~\ref{lem:p/q}.  Then, because the number of $a_1$-corridors is $l$, we have 
 $$
 |W_i'| \ \le \  l+ \sum_{i=0}^{l+1}  |u'_j|  \ \le \  l+ \sum_{i=0}^{l+1}  |\alpha_j | K_1^M  \  \le \  \left(l+   \sum_{i=0}^{l+1}  |\alpha_j | \right) K_1^M \  \le \  |w| K_1^{C_1 |w|^{p/q}}. 
$$
  This and \eqref{eq:W_i'} together establish \eqref{W_i upper bound} for a suitable constant $C_2>1$. 
\end{proof}

We  can now   complete: 

\begin{proof}[Proof of Proposition~\ref{prop:upper bound}] 
Recall that $\Delta$ is a van Kampen diagram for $w \chi^{-1}$ and $\mathcal A$ is a subdiagram  for $W \chi^{-1}$, where $W$ is as defined in~\eqref{eq:W} and all the 2-cells of $\mathcal A$ are $r_{4, \ast}$- or $r_{4, \ast, \ast}$-cells (per Figure~\ref{fig:relations}).  
Now, $\mathcal A$ is a tree-like arrangement of 2-disc components connected by 1-dimensional portions (trees).  
As $r_{4, \ast}$- and $r_{4, \ast, \ast}$-cell have no $x$-edges on their boundaries,  
any $x$-edges in   $\mathcal A$ are in $1$-dimensional portions.    Let  $\widehat{\mathcal{A}}$ be the subdiagram of $\mathcal A$ consisting of the path $\chi$ and all its 2-disc components that share at least one edge with $\chi$. 
Then   $\widehat{\mathcal{A}}$ is a van Kampen diagram for $\widehat{W} \chi^{-1}$, where $\widehat{W}$ is a word obtained from $W$ by deleting some of its letters. Then $\widehat{W}$ contains no $x$-letters: its letters  are either 
 along the path $\chi$ or are on the boundaries of $2$-cells, neither of which have $x$-edges. 

 If $\beta_i$ is not a $b_0$-track, then $W_i$ is a word on $a_1 \Xb t \Xb$, $a_1  \Xb t \Xb$, $a_1 a_2  \Xb t \Xb$, $\Xb t^{-1} \Xb t \Xb$ and $\Xb t \Xb$.  And (because $\Delta$ is reduced and thanks to the   $C'(1/4)$ small-cancellation  condition  of Section~\ref{sec:the defn}   for the set  $\mathcal{X}$ of the  $\Xb$),  if a  subword of   the freely reduced form of $W_i$ contains no $x$-letters, then it has length at most $2$.    It follows that $W_i$ can contribute  at most two letters to    $\widehat{W}$.   
 
Therefore, in the notation of \eqref{eq:W},  $|\widehat{W}|$  is at most   $\sum_{i=0}^{r} |v_i|$, plus twice the number of $W_i$ such that $\beta_i$ is not $b_0$-track, plus  the lengths of the  remaining  $W_i$.   So, using Lemma~\ref{lem:Lengths of W_i} and that there are at most $|w|$ subwords in $W_i$ in $W$,    for a suitable constant $C_3>1$, we get
\begin{equation}\label{eq:length of What}
|\widehat{W}|  \ \leq \ |w| + 2 |w| + |w|   C_2^{|w|^{p/q}} \ \le \  C_3^{|w|^{p/q}}.
\end{equation}

Next we claim that there exists a constant $C_4>1$   such that 
\begin{equation} \label{eq:chi first upper bound}
| \chi |  \ \leq \ |\widehat{W}| \, C_4^{|w|}.  	
\end{equation}

 Since the 2-cells in $\widehat{\mathcal{A}}$ are all of type $r_{4, \ast}$ or $r_{4, \ast, \ast}$ (per Figure~\ref{fig:relations}),  $\widehat{\mathcal A}$ is a union of non-intersecting $a_1$- and $a_2$-corridors.  Each $a_1$-corridor of $\widehat{\mathcal A}$ is part of an $a_1$-corridor of~$\Delta$ whose ends are in $w$, and Lemma~\ref{lem: Layout lemma}\eqref{lem part:crossing possibilities} implies that no two $a_1$-corridors of $\widehat{\mathcal A}$ are part of the same $a_1$-corridor in $\Delta$.  On the other hand, several $a_2$-corridors of $\widehat{\mathcal A}$ could be part of the same $(a_2, b_q)$-corridor of $\Delta$.  However, by Lemma~\ref{lem: a2bq not nesting}, if a pair of  $a_2$-corridors  of $\mathcal A$ nest (meaning one is entirely in the $W$-side of the other), then they cannot be part of the same $(a_2, b_q)$-corridor of $\Delta$.  It follows that the same is true of $\widehat{\mathcal A}$: no pair of  $a_2$-corridors  of $\widehat{\mathcal A}$ have the property that one is entirely in the $\widehat{W}$-side of the other. 
Distinct $(a_2, b_q)$-corridors end on distinct pairs of edges of $w$.

Thanks to these observations, we can strip away successive portions of $\widehat{\mathcal A}$ by at most $|w|$ moves, each of which either
\begin{itemize}
\item removes an $a_1$-corridor, or 
\item removes \emph{all} the $a_2$-corridors  of $\widehat{\mathcal A}$ that are part of the same  $(a_2,b_q)$-corridor of $\Delta$.\end{itemize}
The result is 
a sequence of diagrams which demonstrate that  each word in a sequence of words equals
 $\chi$ in $G$.  Moreover, this sequence of words starts with  $\widehat{W}$ and ends with a word freely equal to $\chi$, and the length of each word is longer than the last by at most a constant factor.  This proves  \eqref{eq:chi first upper bound} for a suitable constant $C_4>1$.

Finally, \eqref{eq:length of What} and~\eqref{eq:chi first upper bound} combine to yield 
$$|\chi|  \ \le \  |\widehat{W}| \, C_4^{|w|}  \ \le  \  C_3^{|w|^{p/q}} C_4^{|w|} \ \le \  K^{|w|^{p/q}}$$
 for a suitably chosen constant $K>1$. 
 \end{proof}

\subsection{Why $p/q$?} \label{sec:p/q}

This section is devoted to a proof of Lemma~\ref{lem:Qp/q}, which we used in our proof of Proposition~\ref{prop:upper bound}.  The lemma concerns the group \begin{equation*}   Q  \ = \ \langle a_1,  b_0, \ldots, b_p  \mid a_1^{-1}b_i a_1 = \varphi(b_i)  \ \, \forall i  \  \rangle,  \ \ \varphi(b_j) \, = \, \begin{cases} b_{j+1}b_j & \text{if } j<p \\  b_j & \text{if } j=p. \end{cases}\end{equation*}   
  We begin with two preparatory lemmas.  We use the convention that the binomial coefficient ${n \choose r}$ equals $0$ for all $r \notin \set{0, \ldots, n}$.

\begin{lemma}\label{lem:Bgrowth}
Consider the relation $a_1^{-m}b_i a_1^m = \lambda$ in $Q$, where $m \geq 0$,  $0\le i \le p$, and  $\lambda$ is a word in  $b_0, \dots, b_p$. Then  
\begin{enumerate}
\item \label{binomial1} For $0 \le j \le p-i$, there are
 $m \choose j$ instances of $b_{i+j}$ in $\lambda$.  Also, $\lambda$ has no $b_k$ for $k<i$.
\item \label{binomial2}
If $m>2p$, then 
$|\lambda|\le (p+1) {m \choose p-i}$.
\item \label{binomial3} If $m \le 2p$, then 
$|\lambda|\le (p+1) (2p)^p $
\end{enumerate}
\end{lemma}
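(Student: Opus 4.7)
The plan is to identify $\lambda$ with the reduced word $\varphi^m(b_i)$ in the free subgroup $\langle b_0,\dots,b_p\rangle \le Q$ and then read off the counts. Iterating the defining relation $a_1^{-1}b_j a_1 = \varphi(b_j)$ gives $a_1^{-m}b_i a_1^m = \varphi^m(b_i)$ in $Q$. Since $\langle b_0,\dots,b_p\rangle$ is free in $Q$ (being the fiber of the free-by-cyclic structure of~\eqref{eq:Q def}), any two words on $b_0,\dots,b_p$ representing the same element of $Q$ share a common freely reduced form. Therefore the counts and length claimed for $\lambda$ may be computed on the positive freely reduced word $\varphi^m(b_i)$.

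For part~\eqref{binomial1} I would induct on $m$. Letting $N_j(m)$ denote the number of $b_{i+j}$-letters in $\varphi^m(b_i)$, the rules $\varphi(b_k) = b_{k+1}b_k$ for $k<p$ and $\varphi(b_p)=b_p$ yield the Pascal-style recurrence
\[ N_j(m+1) \ = \ N_j(m) + N_{j-1}(m) \qquad (0 \le j \le p-i), \]
with $N_{-1}(m) = 0$ and base case $N_j(0) = \delta_{j,0}$, which solves to $N_j(m) = \binom{m}{j}$. That no $b_k$ with $k<i$ appears in $\lambda$ is immediate, since $\varphi$ preserves the subgroup $\langle b_i, b_{i+1},\dots, b_p\rangle$.

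Parts~\eqref{binomial2} and~\eqref{binomial3} will then follow from estimating the sum $|\lambda| = \sum_{j=0}^{p-i} \binom{m}{j}$. When $m > 2p$, we have $p-i \le p < m/2$, so $\binom{m}{\cdot}$ is monotonically increasing on $[0,p-i]$; each term is bounded by $\binom{m}{p-i}$, giving $|\lambda| \le (p-i+1)\binom{m}{p-i} \le (p+1)\binom{m}{p-i}$. When $m \le 2p$, the crude bound $\binom{m}{j} \le m^j \le (2p)^p$ (valid for $j \le p$) gives $|\lambda| \le (p+1)(2p)^p$. No step promises real difficulty; the only point to be careful about is that the Pascal recurrence really does hold at the top index $j=p-i$ in spite of the exceptional rule $\varphi(b_p)=b_p$, because both a $b_{p-1}$ and a $b_p$ in $\varphi^m(b_i)$ each contribute exactly one $b_p$ under $\varphi$, preserving the binomial identity.
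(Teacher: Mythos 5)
Your proof is correct and follows essentially the same approach as the paper's: the paper disposes of part~\eqref{binomial1} with the single line ``induct on $m$ or refer to \cite{BR1},'' and your Pascal-recurrence argument is exactly that induction carried out explicitly (including the careful check at $j=p-i$), while your estimates for parts~\eqref{binomial2} and~\eqref{binomial3} are the same inequalities the paper uses.
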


\begin{proof} For \eqref{binomial1}, induct on $m$ or refer to \cite{BR1}.  For \eqref{binomial2}, note that   if $0 \le i \le p$ and $m > 2p$, then $p-i \le p < m/2$, and so ${m\choose j} \le {m\choose p-i}$ for all $j \le p-i$. Then from (1), we have  
$$
|\lambda| \ =   \ \sum_{j=0}^{p-i} {m \choose j}  \ \le \ 
\sum_{j=0}^{p-i} {m \choose p-i} \ \le \  (p-i+1){m \choose p-i} \ \le \  (p+1){m \choose p-i}.
$$
For \eqref{binomial3}, we use the fact that ${m \choose j} \le m^j$ for any $j \le m$, and 
$$
|\lambda| \ = \  \sum_{j=0}^{p-i} {m \choose j} \  \le \ \sum_{j=0}^{p-i} m^j \ \le \  (p-i+1) m^{p-i}  \ \le 
 \ (p+1)(2p)^p. 
$$
\end{proof}

\begin{lemma}\label{lem:binomialbound}
Let $K = (2p)^{p^2}$.  For all $m, k, l \in \Z$ such that $m>2p$ and $1 \le k, l \le p$, 
\begin{enumerate}
\item \label{more binomial1}
$
{m \choose k} \le K {m \choose l}^{k/l}
$

\item \label{more binomial2} If $l<k$, then ${m \choose {k}} \le K {m \choose l}{m \choose {k-l}}$
\end{enumerate}
\end{lemma}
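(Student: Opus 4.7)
The plan is to prove both parts by direct estimation, sandwiching $\binom{m}{k}$ between the standard upper bound $m^k/k!$ and a matching lower bound of the form $(m/2)^l/l!$ for $\binom{m}{l}$, exploiting the hypothesis $m>2p$.

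For the lower bound on $\binom{m}{l}$, I would write $\binom{m}{l} = \frac{m(m-1)\cdots(m-l+1)}{l!}$ and note that since $l \le p$ and $m>2p$, each factor $m-i$ with $0 \le i \le l-1$ satisfies $m-i > m/2$. This yields $\binom{m}{l} > (m/2)^l / l!$. Combined with the trivial $\binom{m}{k} \le m^k/k!$, it reduces each inequality to a $p$-bounded constant.

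For part (1), raising the lower bound to the power $k/l$ gives $\binom{m}{l}^{k/l} > (m/2)^k/(l!)^{k/l}$. Dividing, the $m^k$ factors cancel and I get
\[
\frac{\binom{m}{k}}{\binom{m}{l}^{k/l}} \ < \ \frac{2^k\,(l!)^{k/l}}{k!} \ \le \ 2^p\,(p!)^p \ \le \ 2^{p}\,p^{p^2} \ \le \ (2p)^{p^2} \ = \ K,
\]
using $k,l\le p$ and $k/l \le p$ at the crucial step where I bound $(l!)^{k/l} \le (p!)^p \le p^{p^2}$. For part (2), the same lower bound applied to both factors gives $\binom{m}{l}\binom{m}{k-l} > (m/2)^k / (l!(k-l)!)$, so
\[
\frac{\binom{m}{k}}{\binom{m}{l}\binom{m}{k-l}} \ < \ \frac{2^k\,l!(k-l)!}{k!} \ = \ \frac{2^k}{\binom{k}{l}} \ \le \ 2^p \ \le \ K,
\]
where the penultimate bound uses $\binom{k}{l} \ge 1$ because $1\le l<k$.

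There is no real obstacle here: both parts reduce to the two-line calculation above once one observes that $m>2p$ forces $m-i>m/2$ in the relevant range. The only point that needs care is keeping track of the ranges of $k$ and $l$ when collapsing $(l!)^{k/l}$ into the constant $K=(2p)^{p^2}$ in part (1); part (2) is strictly easier since the exponent $k/l$ does not appear.
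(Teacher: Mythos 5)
Your proof is correct and takes essentially the same approach as the paper: both exploit the hypothesis $m>2p$ to sandwich $\binom{m}{t}$ between a lower bound of the form $(m/2)^t/t!$ and the upper bound $m^t$ (or $m^t/t!$), then absorb the resulting $p$-dependent ratio into $K=(2p)^{p^2}$. You keep the $1/t!$ factors a little longer than the paper does, which tidies part (2) into the clean expression $2^k/\binom{k}{l}$, but the underlying estimate and the use of $m-i>m/2$ for $i<p$ are identical.
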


\begin{proof}
Let $m> 2p$.  Now, if $t$ satisfies $1 \le t \le p$, then  
$m> 2t$, or equivalently $-t > -m/2$.  Consequently, $m-t+1 > m-m/2+1 > m/2$, which gives the ``$>$'' in: 
\begin{equation}\label{eq:nq}
m^t \ge {m\choose t} \ = \  \frac{m(m-1) \dots (m-t+1)}{t!} \  > \ \left(\frac{m} {2}\right)^t \frac{1}{t!} \ \ge \  \frac{m^t} {2^p p!} \  \ge  \ \frac{m^t} {(2p)^p}. 
\end{equation}

Now, ${m \choose k}  \le m^{k}$,  \eqref{eq:nq}, and $k <p$, respectively,  imply the first, second, and third of the following inequalities: 
$$ {m \choose k}^{l} \ \le \  m^{kl} \ \le \  (2p)^{pk} {m \choose l}^k \ \le \  (2p)^{p^2} {m \choose l}^k.$$
Then \eqref{more binomial1} follows since $(2p)^{p^2/l} \le (2p)^{p^2}=K$.

For \eqref{more binomial2}, now  apply \eqref{eq:nq} to $t=l$ and $t=k-l$, and note that $2p \le p^2$ (since $1 \le l < k  \le p$ implies that $p \geq 2$): 
\begin{align*}
{m \choose k}  \ &\le \  m^k = m^{l} m^{k-l}  \ \le \  (2p)^p {m \choose l} (2p)^p {m \choose {k-l}} \\  
&= \  (2p)^{2p} {m \choose l} {m \choose {k-l}} \ \le \  K  {m \choose l} {m \choose {k-l}}.
\end{align*}
\end{proof}

 For a word $\pi$, we write $| \pi |_b$ and $| \pi |_q$ to denote the number of $b$-letters and the number of $b_q$-letters (respectively) in  $\pi$. 
 
 Suppose $\mu$ is a word on $a_1^{-1}, b_1, \dots, b_p$ (no $a_1, b_1^{-1}, \dots, b_p^{-1}$ letters), $\lambda$ is a positive word on $b_1, \dots, b_p$, and $l\ge 1$ is an integer such that in $Q$ 
\begin{equation}
\label{eq:Qp/q repeat}
\mu b_0 a_1^l \ =  \  \lambda b_0.\end{equation}
Lemma~\ref{lem:Qp/q} asserts  that 
\begin{equation} \label{p/q inequality}
|\lambda|  \ \le \  C_0(|\mu|+ |\lambda|_q)^{p/q}
\end{equation}
 for a suitable constant $C_0 >1$.

Here is the idea behind this. When we shuffle the $a_1^{\pm 1}$ letters   through $\mu b_0 a_1^l$, in order to  collect them together and cancel them away and obtain $\lambda b_0$, the effect is to apply $\varphi$ to the intervening $b$-letters.  Lemma~\ref{lem:Bgrowth}\eqref{binomial1} indicates how the number of $b$-letters then grows: as a function of $l$, the number of $b_i$-letters in $\lambda$ is at most a polynomial of degree $i$.  Whether this rate of growth is achieved depends on $\mu$. What \eqref{p/q inequality} states is how the total number of $b$-letters produced is contingent on the length of $\mu$ \emph{and} the number of $b_q$-letters produced.

\begin{proof}[Proof of Lemma~\ref{lem:Qp/q}]  
Let $C_0 = (p+1)(2p)^{2p^2}$.  We induct on   $|\mu|_b$.  

\emph{Base case.} 
In the base case, $|\mu|_b=0$, and so $\mu = a_1^{-l}$ and  \eqref{eq:Qp/q repeat} is 
$a_1^{-l} b_0 a_1^l = \lambda b_0$.  Then $ |\lambda|_q = {l \choose q}$ by 
Lemma~\ref{lem:Bgrowth}\eqref{binomial1}, and so
\begin{equation} \label{eqn:basecase}
|\mu|+ |\lambda|_q \ge {l \choose q}.
\end{equation}
If $l > 2p$, then Lemmas~\ref{lem:Bgrowth}\eqref{binomial2} and \ref{lem:binomialbound}\eqref{more binomial1} apply so as to give the first and second (respectively) of the following inequalities;  the definition of $C_0$ and  \eqref{eqn:basecase} give the third:
$$
 |\lambda|  \ \le \  (p+1) {l\choose p} \  \le \  (p+1) (2p)^{p^2} {l\choose q}^{p/q}  \ \le  \  C_0 (|\mu|+ |\lambda|_q )^{p/q}.
$$
If, on the other hand,  $l \le 2p$, then, by
Lemma~\ref{lem:Bgrowth}\eqref{binomial3}, we have that $$|\lambda| \  \le \  (p+1) (2p)^p \ \le \   C_0 \ \le  \ C_0(|\mu|+ |\lambda|_q )^{p/q},$$ with the final inequality true because  $l \ge 1$. 
This completes our proof of the base case.

\emph{Inductive step.}
Suppose we have $\hat{\mu} b_0   a_1^{l} =\hat{\lambda} b_0$  as 
per \eqref{eq:Qp/q repeat}  with  $|\hat{\mu}|_b = k+1$.
We will show that $|\hat{\lambda}|^q \le C_0^q \hat{n}^{p}$, where \begin{equation}\label{eq:n_{k+1}}
\hat{n}  \ = \  |\hat{\mu}|+|\hat{\lambda}|_q.\end{equation}  

Suppose $b_i$ is the first $b$-letter in $\hat{\mu}$.  Then $\hat{\mu} =  a_1^{-m} b_i  \beta$ for some integer $m$ such that  $0 \leq m \leq l$, and word $\beta$  that contains $l-m$ instances of $a_1^{-1}$ and satisfies $|\beta|_b =k$.  The exponent sums of the $a_1$-letters in $a_1^{-m} b_i a_1^{m}$ and $a_1^{-m} \beta a_1^{l}$ are both $0$, so there exist  positive words $\gamma$ and $\lambda$, respectively, on $b_1, \ldots, b_p$ representing them in $Q$.   
   Then in $Q$, 
$$\hat{\lambda}b_0  \ = \  \hat{\mu}b_0a_1^l  \ = \  (a_1^{-m} b_i a_1^{m})(a_1^{-m} \beta b_0a_1^{l}) \  =  \ \gamma \lambda b_0.$$ 
Thus $|\hat{\lambda}| = |\lambda|+ |\gamma|$. We will bound  $|\hat{\lambda}|^q$ by combining bounds on  $|\lambda|$ and $|\gamma|$.

Setting $\mu = a_1^{-m}\beta $, we have  $\mu b_0  a_1^l = \lambda b_0$ in $Q$, where $\mu$ satisfies the hypotheses of the present lemma and $|\mu|_b  = k$.  By the induction hypothesis,  
$|\lambda|  \le C_0 n^{p/q}$, where $n = |\mu| + |\lambda|_q$.   
 
 Before bounding $|\gamma|$ 
 we make some observations about $n$ and  $\hat{n}$.
Firstly, the presence of $b_0$ in the relation   $a_1^{-m} \beta b_0 a_1^{l} = \lambda$, together with 
Lemma~\ref{lem:Bgrowth}(1) implies that $|\lambda|_q \ge {m \choose q}$,
  and so 
\begin{equation}\label{eq:nlower}
n  \ \ge \   {m \choose q}.
\end{equation} 
Note that $|\hat{\mu}| = |\beta|+1+ m = |\mu|+1$, leading to:
\begin{equation}\label{eq:nn_k}
\hat{n}  \ = \  |\hat{\mu}| + |\hat{\lambda}|_q \  = \  
|\mu| + 1 +|\lambda|_q +  |\gamma|_q  \  = \
n + 1 + |\gamma|_q. 
\end{equation}

\smallskip
Then, since $|\hat{\lambda}| = |\lambda|+ |\gamma|$, we have
\begin{align}
|\hat{\lambda}|^q  \ \le \  ( |\lambda| + |\gamma|)^q 
& \ =  \ \sum_{j=0}^q {q \choose j} |\lambda|^{q-j} |\gamma|^j&&  \nonumber \\
& \ \le \  \sum_{j=0}^q {q \choose j} 
\left(C_0 n^{p/q}\right)^{q-j} 
|\gamma|^j && \text{(by the induction hypothesis)} \nonumber \\
& \ \le \  \sum_{j=0}^q {q \choose j} C_0^{q-j} n^{p-\frac{pj}{q}}
|\gamma|^j.&& 
\label{eq:gamma}
\end{align}

Similarly to the base case, we treat the cases $m \le 2p$ and $m > 2p$ separately.  When $m>2p$, our estimate depends on whether $i \geq q$, in which case no new $b_q$ letters are created in $\gamma$, or $i < q$, in which case new $b_q$ letters are created in $\gamma$.  Thus, we have three cases as follows.

\medskip
\noindent\emph{Case 1:} $m \le 2p$.
 In this case, $|\gamma| \le C_0$ by Lemma~\ref{lem:Bgrowth}(3).  
 Moreover, since $p>q$, we have   $n^{p-\frac{pj}{q}} \le n^{p-j}$  and  ${q \choose j} \le {p \choose j}$ for each $j$. Continuing from \eqref{eq:gamma}, we get  

$$|\hat{\lambda}|^q 
\ \le \   \sum_{j=0}^q {q \choose j} 
C_0^{q-j} n^{p-j} C_0^j
\ \le \ 
 C_0^q \sum_{j=0}^q {p \choose j}  n^{p-j} 
 \ \le \  C_0^q(n+1)^p 
$$
Finally, since $\hat{n}\ge n+1$ by \eqref{eq:nn_k}, we obtain $|\hat{\lambda}|^q \le C_0^q \hat{n}^p$, as desired.

\medskip
\noindent\emph{Case 2:} $m> 2p$ and $q \le i \le p$.  
 We have that for $K = (2p)^{p^2}$:
\begin{align*}
|\gamma| 
& \ \le \  (p+1) {m\choose {p-i}} &&\text{by Lemma~\ref{lem:Bgrowth}(2)}\\
& \ \le \ (p+1) {m\choose {p-q}} && \text{as } p-i \le p-q \le p \text{ and }m > 2p \\
& \ \le \ (p+1) K  {m\choose {q}}^{\frac{p-q}{q}} && \text{by Lemma~\ref{lem:binomialbound}(1), as } m> 2p \text{ and } q, p-q \le p\\
& \ \le \  C_0 n^{\frac{p}{q}-1} && \text{by } \eqref{eq:nlower}. 
\end{align*}

Then, continuing from \eqref{eq:gamma}, and using  that $\hat{n}\ge n+1$ by \eqref{eq:nn_k} and  that ${q \choose j} \le {p \choose j}$ for each $j$, we get
$$
|\hat{\lambda}|^q   \ \le \   \sum_{j=0}^q {q \choose j} 
C_0^{q-j} n^{p-\frac{pj}{q}} ( C_0n^{\frac{p}{q}-1})^j
 \ \le \  C_0^{q} \sum_{j=0}^q {p \choose j} 
  n^{p-j}  \ \le \  C_0^q (n+1)^p \  \le \ 
C_0^q\hat{n}^p.
$$

\medskip
\noindent\emph{Case 3:} $m> 2p$ and $1 \le i <q$. 
In this case, $|\gamma|_q={m \choose q-i} $ by Lemma~\ref{lem:Bgrowth}\eqref{binomial1} and 
\begin{align*}
|\gamma| 
& \ \le \  (p+1) {m \choose p-i} &&\text{by Lemma~\ref{lem:Bgrowth}\eqref{binomial2}}\\
& \ \le \  (p+1) K {m \choose p-q} {m \choose q-i} &&\text{by Lemma~\ref{lem:binomialbound}\eqref{more binomial2}, where $K = (2p)^{p^2}$}\\
& \ \le \ (p+1)K^2 {m \choose q}^{\frac{p-q}{q}}{m \choose q-i} && \text{by Lemma~\ref{lem:binomialbound}\eqref{more binomial1}, as } m> 2p \text{ and } \\
&&&1 \leq q, p-q \le p\\
& \ \le \ 
C_0 n^{\frac{p-q}{q}} |\gamma|_q && 
\text{by } \eqref{eq:nlower}, K = (2p)^{p^2}, \text{ and } |\gamma|_q={m \choose q-i}. 
\end{align*}

Then, continuing from \eqref{eq:gamma}, we have
\begin{align*}
|\hat{\lambda}|^q 
& \ \le \   \sum_{j=0}^q {q \choose j} 
C_0^{q-j} n^{p-\frac{pj}{q}} \left(C_0n^{\frac {p-q}{q}} |\gamma|_q \right)^j \\
& \ \le \  C_0^q \sum_{j=0}^q {p \choose j} n^{p-j} |\gamma|_q ^j \\
&\ \le \  
 C_0^q \left(n+ |\gamma|_q\right)^p \\
& \ \le \  C_0^q \hat{n}^p,
\end{align*}
where the last inequality follows from \eqref{eq:nn_k}.

This concludes the proof of inductive step, as    
$|\hat{\lambda}| \le C_0 \hat{n}^{p/q}$ in all three cases.
\end{proof}

\section{Leveraging our groups}  \label{ch:leveraging}

\subsection{Iterated exponential functions} \label{sec:general k}
 
Recall that $\exp^k$ denotes the $k$-fold iterated exponential-function. More precisely,
$\exp^1(x)= \exp(x)$ and  $\exp^i(x) = \exp(\exp^{i-1} (x))$ for integers $i > 1$.   
Here we will leverage our examples $H \leq G$ from Section~\ref{sec:the defn} to  construct free subgroups of hyperbolic groups whose distortion functions are $\simeq$-equivalent to $n \mapsto \exp^k(n^{p/q})$, where $p>q\ge 1$ and $k>1$ are integers, proving Theorem~\ref{main}.  We will take iterated amalgamated products of $G$  with certain hyperbolic free-by-free groups constructed by Brady and Tran \cite{BrT}.  We begin by reviewing the parts of their construction we need.  We write $F_m$ to denote the free group on $m$ generators.   

\begin{thm}\cite[Theorem 5.2]{BrT} \label{BT1}
Given $m \ge 1$, there exists $l >m$ and a group $F_l \rtimes F_m$ that is $\CAT(0)$ and hyperbolic.   
\end{thm}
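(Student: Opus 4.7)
The plan is to build, for each $m\ge 1$, a nonpositively curved piecewise Euclidean $2$-complex $X$ whose universal cover is $\CAT(-1)$-like in the graph-theoretic sense required to force word-hyperbolicity, and whose fundamental group has the prescribed semidirect-product form $F_l\rtimes F_m$. I would realize the $F_m$ factor as the fundamental group of a rose (a wedge of $m$ circles) and the fiber $F_l$ as the fundamental group of a finite graph $Y$, so that $X$ arises as a graph-of-graphs: its underlying graph of spaces has a single vertex space $Y$ and $m$ edge spaces, each an interval $Y\to Y$ glued by an automorphism $\phi_i\in \Aut(F_l)$. This produces an $F_l\rtimes F_m$ structure automatically, with the $F_m$-action on $F_l$ determined by the tuple $(\phi_1,\dots,\phi_m)$.

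The key step is to choose $l$ and the $m$-tuple $(\phi_1,\dots,\phi_m)$ so that two conditions hold simultaneously. First, for $\CAT(0)$: each $\phi_i$ should be realized as a combinatorial isometry of a fixed metric graph $Y$ with all edges of the same length, and the link condition at the unique vertex of $X$ must be verifiable by a direct check that the Whitehead-type link graph contains no short circuits. Second, for hyperbolicity: every nontrivial element of the subgroup $\langle\phi_1,\dots,\phi_m\rangle\le\Out(F_l)$ must be fully irreducible and atoroidal, so that there are no periodic conjugacy classes and no $\mathbb{Z}^2$ subgroups appear in the semidirect product. Hyperbolicity then follows from the Bestvina--Feighn combination theorem, in the iterated form (using Brinkmann's theorem on each cyclic subgroup together with a ping-pong argument on the space of projective currents to rule out flats in the higher-rank direction).

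First I would construct a single hyperbolic $\phi\in\Aut(F_l)$ as a graph self-map with a train-track structure of Perron--Frobenius type, following Bestvina--Handel, then use a ping-pong argument on Culler--Vogtmann outer space (or on the Gromov boundary of the free factor complex, after Bestvina--Feighn--Handel) to produce sufficiently high powers $\phi_1=\phi^{N_1},\dots,\phi_m=\phi^{N_m}$ of conjugates $\phi^{g_i}\phi\phi^{-g_i}$ that freely generate a subgroup $F_m\le\Out(F_l)$, all of whose nontrivial elements remain fully irreducible and atoroidal. Taking $l$ large enough (depending on $m$) is what ensures $l>m$ and gives room for the ping-pong. The hard part will be the simultaneous achievement of hyperbolicity and $\CAT(0)$: raising $\phi$ to large powers tends to make the link condition fail, since the combinatorial image of an edge gets subdivided and creates short loops in the link. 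I would address this by replacing $Y$ by a subdivided graph whose edges have several different rational lengths chosen so that the induced edge-angles at the vertex stay uniformly bounded away from $\pi$, and by verifying the link condition by an explicit combinatorial computation on the finitely many link vertices contributed by the $m$ edge spaces. The remaining verification -- that the resulting $\CAT(0)$ complex has hyperbolic fundamental group -- is then a direct application of the combination theorem to the graph-of-spaces decomposition, using the atoroidality of every nontrivial element of $\langle\phi_1,\dots,\phi_m\rangle$.
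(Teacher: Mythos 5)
Your proposal correctly identifies the general framework Brady--Tran use (a piecewise Euclidean graph-of-graphs complex with one vertex space realizing $F_l$ and $m$ edge spaces realizing the monodromies), and your observation that hyperbolicity and the $\CAT(0)$ link condition pull in opposite directions is exactly the right thing to worry about. But you do not resolve that tension, and the resolution is the content of the theorem. Your strategy---construct a fully irreducible atoroidal $\phi$ via Bestvina--Handel, then ping-pong with high powers of conjugates to get a purely atoroidal $F_m \le \Out(F_l)$, then retrofit a $\CAT(0)$ structure by subdividing the vertex graph and adjusting edge lengths---has a genuine gap at the last step. Mapping tori of free-group automorphisms do not in general admit piecewise Euclidean $\CAT(0)$ $2$-complex structures satisfying the link condition (there are free-by-cyclic groups that are provably not $\CAT(0)$), and high powers of a generic fully irreducible automorphism are precisely the wrong thing to feed into a link-condition computation: the combinatorial complexity of the edge maps explodes, and subdividing $Y$ or tuning rational edge lengths does not restore the ``no short circuits in the link'' condition, which is a local angle constraint, not something fixed by global rescaling. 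Brady--Tran instead build the automorphisms $\phi_1,\ldots,\phi_m$ and the complex together, from explicit combinatorial data chosen precisely so that the Gromov link condition can be verified by a finite direct check; the ultra-convexity property of the $F_m$ factor (their Lemma 5.10, which the present paper uses to iterate the construction) is a byproduct of this explicitness, and there is no analogue of it in your existential ping-pong argument.

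A secondary issue: once you do have a $\CAT(0)$ structure and know every nontrivial element of $F_m$ is atoroidal, you should not invoke the Bestvina--Feighn combination theorem or ``Brinkmann iterated''---neither applies directly to $F_l \rtimes F_m$ with $m \ge 2$ without a flaring or convex-cocompactness hypothesis that is substantially stronger than pointwise atoroidality. The clean argument is the Flat Plane Theorem: a $\CAT(0)$ group is hyperbolic iff it has no $\mathbb{Z}^2$ subgroup, and a $\mathbb{Z}^2 \le F_l\rtimes F_m$ either injects into $F_m$ (impossible) or meets $F_l$ in a $\mathbb{Z}$ whose conjugacy class is fixed by some nontrivial element of $F_m$, contradicting atoroidality. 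This simplification does not rescue the proposal, since the $\CAT(0)$ step is where it fails.
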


\begin{definition}\label{BT2}
 Let $G_1$ be a finitely generated group and let $F_{m_1} < G_1$ be a free subgroup of rank $m_1$. Take $m_1< m_2 <  \cdots$ so that 
$F_{m_{i+1}} \rtimes F_{m_{i}}$ is the group of Theorem~\ref{BT1} (with $m_{i+1}=l$ and $m_i=m$).  
For $i>1$, define $G_i$ by
$$
 G_i  \ = \  (F_{m_{i}} \rtimes F_{m_{i-1}} ) \ast_{F_{m_{i-1}}}  G_{i-1}. 
$$
\end{definition}

  \begin{prop}\cite[Proposition 4.4]{BrT}\label{BT3}
In the notation of Definition~\ref{BT2}, if
  $\Dist_{F_{m_1}}^{G_1} \simeq f$ for some  non-decreasing superadditive function $f$, then for all integers $k \ge 1$, 
 $$
 \Dist_{F_{m_{k}}}^{G_k} (n) \ \simeq \  \exp^{k-1}(f(n)).
 $$
  \end{prop}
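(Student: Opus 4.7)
The plan is to induct on $k$, with the base case $k=1$ being immediate since $\exp^0 \circ f = f$ and $\Dist_{F_{m_1}}^{G_1} \simeq f$ by hypothesis. For the inductive step, I would set $A = F_{m_k} \rtimes F_{m_{k-1}}$ and view $G_k = A \ast_C G_{k-1}$ as an amalgam with edge group $C = F_{m_{k-1}}$, then combine two separate distortion estimates — one measuring how $F_{m_k}$ sits inside $A$, the other how $C$ sits inside $G_{k-1}$ — via the amalgam distortion Theorem~\ref{thm:amalgam distortion} advertised in the introduction.

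First I would establish that $\Dist_{F_{m_k}}^A \simeq \exp$. The upper bound is automatic from the hyperbolicity of $A$, since subgroups of hyperbolic groups have at most exponential distortion (a consequence of the exponential divergence property cited in Section~\ref{sec:intro}).  For the lower bound I would exhibit, for each $n$, an element $c \in F_{m_{k-1}}$ with $|c| = n$ whose action on a fixed generator $x$ of $F_{m_k}$ produces $c^{-1} x c \in F_{m_k}$ of $F_{m_k}$-length growing like $\lambda^{n}$ for some $\lambda > 1$. Such a $c$ must exist: otherwise every element of $F_{m_{k-1}}$ would act on $F_{m_k}$ with subexponential growth, ruling out hyperbolicity of $A$ (by the usual Bestvina--Feighn/Brinkmann obstruction for mapping-torus-like hyperbolic groups).

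Second, I would apply Theorem~\ref{thm:amalgam distortion} to the amalgam $G_k = A \ast_C G_{k-1}$ with the subgroup $F_{m_k} \leq A$, whose expected output in this setting is
$$\Dist_{F_{m_k}}^{G_k}(n) \ \simeq \ \Dist_{F_{m_k}}^A\bigl(\,\Dist_{C}^{G_{k-1}}(n)\,\bigr).$$
Plugging in the inductive hypothesis $\Dist_{F_{m_{k-1}}}^{G_{k-1}} \simeq \exp^{k-2}\!\circ f$ and the first step $\Dist_{F_{m_k}}^A \simeq \exp$, together with Lemma~\ref{lem:exponents equiv} to absorb additive and multiplicative constants inside the outer exponential, gives
$$\Dist_{F_{m_k}}^{G_k}(n) \ \simeq \ \exp\bigl(\exp^{k-2}(f(n))\bigr) \ = \ \exp^{k-1}(f(n)),$$
closing the induction. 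Geometrically, a geodesic $G_k$-word for an element $g \in F_{m_k}$ decomposes across the amalgam into excursions into $G_{k-1}$ that return to $C$; each excursion replaces a block of length $n$ by a $C$-word of length up to $\Dist_C^{G_{k-1}}(n)$, and these stacked $C$-elements conjugate $F_{m_k}$ inside $A$ with one further exponential blow-up. Superadditivity of $f$ (and of $\exp^{k-2}\!\circ f$, which is preserved inductively) is what lets the lower and upper bounds match up to $\simeq$.

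The main obstacle will be matching the hypotheses of Theorem~\ref{thm:amalgam distortion} to this setting: one needs $C$ to be quasiconvex and (essentially) malnormal in both $A$ and $G_{k-1}$ so that the amalgam is hyperbolic by Theorem~\ref{thm:combination} and so that the composition formula holds cleanly, and one needs to check how $F_{m_k}$ interacts with conjugates of $C$ in $A$ — here $F_{m_k} \cap C = \{1\}$, and $C = F_{m_{k-1}}$ is quasiconvex and malnormal in $A$ by Lemma~\ref{lem:malnornal qc}, so these hypotheses are available. A secondary concern is verifying hyperbolicity of $G_k$ at each inductive stage (so that the exponential-distortion framework applies), which again reduces to Theorem~\ref{thm:combination} together with the quasiconvexity/malnormality of $F_{m_{k-1}}$ as an edge group.
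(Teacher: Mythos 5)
This is one of the places where the paper does not supply a proof: Proposition~\ref{BT3} is quoted directly from Brady--Tran \cite[Proposition 4.4]{BrT}, so there is no in-paper argument to compare against. Evaluating your sketch on its own terms, the scaffolding (induction on $k$, base case trivial, inductive step via the amalgam $G_k = A \ast_C G_{k-1}$ with $A = F_{m_k} \rtimes F_{m_{k-1}}$ and $C = F_{m_{k-1}}$) is sound, and your treatment of $\Dist_{F_{m_k}}^A \simeq \exp$ is plausible: the upper bound is automatic from hyperbolicity of $A$, and the lower bound follows by conjugating a generator of $F_{m_k}$ by powers of any nontrivial $b \in F_{m_{k-1}}$, which must act as an exponentially growing automorphism or else $A$ would contain a $\mathbb{Z}^2$.

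The central step, however, misstates Theorem~\ref{thm:amalgam distortion}. That theorem controls $\Dist_A^\Gamma$ and $\Dist_B^\Gamma$ for a vertex group $A$ or $B$ of the amalgam $\Gamma = A \ast_C B$; it says nothing about a further subgroup $F_{m_k} < A$. So the displayed formula
$\Dist_{F_{m_k}}^{G_k}(n) \simeq \Dist_{F_{m_k}}^A\bigl(\Dist_{C}^{G_{k-1}}(n)\bigr)$
is not an output of the theorem. What the theorem actually gives, with the hypotheses you correctly check ($C$ undistorted in $A$ as a retract, $\Dist_C^{G_{k-1}} \simeq \exp^{k-2}\!\circ f$ by induction, and $\exp^{k-2}\!\circ f$ superadditive and $\geq n$), is $\Dist_A^{G_k} \simeq \exp^{k-2}\!\circ f$. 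From there the standard tower inequality $\Dist_H^G(n) \preceq \Dist_H^K(\Dist_K^G(n))$ for $H \le K \le G$, applied to $F_{m_k} \le A \le G_k$ together with $\Dist_{F_{m_k}}^A \preceq \exp$, recovers the \emph{upper} bound $\Dist_{F_{m_k}}^{G_k} \preceq \exp^{k-1}\!\circ f$. But the tower inequality is one-sided; you cannot extract a matching lower bound from it.

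The lower bound is where the genuine gap lies. Your argument is that an element $c \in C$ realizing $\Dist_C^{G_{k-1}}$ (so $|c|_{G_{k-1}} \le n$ and $|c|_C \gtrsim \exp^{k-2}(f(n))$) should, upon conjugating a fixed generator $x \in F_{m_k}$, produce $c^{-1}xc \in F_{m_k}$ with $F_{m_k}$-length roughly $\exp(|c|_C)$. That is not automatic. The annuli-flare/hyperbolicity of $A$ does not give a uniform exponential \emph{lower} bound on $|\phi_c(x)|_{F_{m_k}}$ across all $c$ of a given $C$-length; it gives flaring of the sequence $i \mapsto |\phi_{c_1\cdots c_i}(x)|$, whose maximum need not occur at the endpoint when the seed $x$ is short. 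In general, the elements of $C$ that realize distortion in $G_{k-1}$ are specific and there is no abstract guarantee that they act with uniform exponential stretch on $F_{m_k}$. Brady--Tran's proof presumably closes this by building the witnesses concretely inside their chosen $F_{m_{i+1}} \rtimes F_{m_i}$, so that the distorted $C$-element is (or contains) a high power of a distinguished stable letter whose action is manifestly expanding. Your abstract route, using only Theorem~\ref{thm:amalgam distortion} plus hyperbolicity, does not supply this, and so the claimed lower bound remains unjustified.

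A minor further point: Lemma~\ref{lem:exponents equiv} concerns functions of the form $2^{n^\alpha}$; it is not a general tool for absorbing additive and multiplicative constants into an iterated exponential, though the absorption you need is true and provable directly from the definition of $\simeq$.
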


To complete the proof of Theorem~\ref{main}, we will take $G_1$ and $F_{m_1}$ to be our groups $G$ and $H \cong F_3$, respectively, from Section~\ref{sec:the defn}. 
We will then use the following two results to conclude that $G_k$ is hyperbolic when $k >1$.  
 
 \begin{thm} \textbf{(Hyperbolicity of amalgams)} \label{thm:combination}
If a finitely generated group $C$ is a subgroup of two hyperbolic groups $A$ and $B$, and $C$ is quasi-convex and malnormal in $A$, then $$\Gamma \ = \   A \ast_{C} B$$ is hyperbolic.  (We make no assumption on how $C$ sits in $B$.) 
\end{thm}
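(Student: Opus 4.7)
The plan is to route the argument through relative hyperbolicity, in three steps that correspond to the three named contributions (Bowditch, Dahmani, Osin) mentioned in the introduction.

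First I would apply Bowditch's theorem on quasiconvex malnormal subgroups of hyperbolic groups: since $C$ is quasiconvex and malnormal in the hyperbolic group $A$, the group $A$ is hyperbolic relative to the collection $\{C\}$. On the $B$ side, we invoke the observation that any group is hyperbolic relative to itself (trivially: the coned-off Cayley graph is a point), so $B$ is hyperbolic relative to $\{B\}$, with $C \hookrightarrow B$ a (possibly arbitrary) inclusion into a peripheral subgroup.

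Next I would apply Dahmani's combination theorem for amalgams along peripheral subgroups: if two relatively hyperbolic groups $A$ and $B$ are amalgamated along a common subgroup $C$ that is peripheral in each, then the amalgam $\Gamma = A \ast_C B$ is relatively hyperbolic, with peripheral structure obtained by replacing the two copies of $C$ by the group $B$ (which itself contains $C$) and keeping any remaining peripheral subgroups. In our setting, $A$'s only peripheral subgroup is $C$ and $B$'s only peripheral subgroup is $B$ itself, so the resulting peripheral structure on $\Gamma$ is $\{B\}$, up to conjugacy. Thus $\Gamma$ is hyperbolic relative to $\{B\}$.

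Finally, I would invoke the result (due to Osin, and implicit already in Farb's formulation) that a group which is hyperbolic relative to a finite family of (word-)hyperbolic subgroups is itself hyperbolic: relative hyperbolicity coincides with ordinary hyperbolicity once the peripheral subgroups are themselves hyperbolic, since one can replace the horoballs in the cusped space by thin quasiconvex pieces. Because $B$ is hyperbolic by assumption, this yields hyperbolicity of $\Gamma$.

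The main obstacle I anticipate is checking the hypotheses of Dahmani's combination theorem in the generality required here: the theorem is usually stated for acylindrical graphs of relatively hyperbolic groups, and we must verify that the edge group $C$, which is peripheral in $A$ and contained in a peripheral subgroup of $B$ (namely $B$ itself), fits the setup without needing any control on how $C$ embeds in $B$. The malnormality of $C$ in $A$ (giving acylindricity on the $A$-side) together with the fact that the $B$-vertex has its entire stabilizer as a peripheral subgroup (so the $B$-side is automatically compatible) should suffice; articulating this carefully, and citing the precise form of the combination and peeling results, will be the technical crux.
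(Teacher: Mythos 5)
Your proposal is correct and takes essentially the same route as the paper: Bowditch (quasiconvex malnormal $\Rightarrow$ relatively hyperbolic) to get $A$ hyperbolic relative to $C$, then Dahmani's amalgamation theorem (specifically \cite[Theorem~0.1(2)]{Dahmani2003}, which covers exactly the case of amalgamating over a peripheral subgroup of one factor with an arbitrary finitely generated group, so the detour through ``$B$ is hyperbolic relative to $\{B\}$'' is unnecessary) to get $\Gamma$ hyperbolic relative to $B$, and finally Osin (\cite[Corollary~2.41]{Osin2006}) to pass from relative hyperbolicity with hyperbolic peripherals to ordinary hyperbolicity.
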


\begin{proof}
 Since $C$ is finitely generated and is  quasi-convex and malnormal in the hyperbolic group $A$, \cite[Theorem 7.11]{Bowditch2012} tells us that $A$ is hyperbolic relative to $C$. We then get that $\Gamma$ is hyperbolic relative to $B$ by \cite[Theorem 0.1(2)]{Dahmani2003}. A group that is hyperbolic relative to a hyperbolic subgroup is itself hyperbolic by \cite[Corollary 2.41]{Osin2006}.  So $\Gamma$ is hyperbolic.    
  \end{proof}

  \begin{lemma}\label{lem:malnornal qc}
 If $A$ and $B$ are finitely generated free groups and $G = A \rtimes  B$ is a hyperbolic group,   then $B$ is quasiconvex and malnormal in $G$.  
 \end{lemma}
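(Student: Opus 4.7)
The plan is to prove quasiconvexity and malnormality separately, both leveraging the retraction $r: G = A \rtimes B \to B$ that collapses $A$ to the identity and restricts to the identity on $B$.

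For quasiconvexity, I would take a finite generating set $S = S_A \cup S_B$ for $G$, where $S_A$ and $S_B$ generate $A$ and $B$ respectively. With respect to these generators $r$ is $1$-Lipschitz, since it sends $S_B$ to itself and $S_A$ to $1$. Hence for $b \in B$ we have $|b|_B = |r(b)|_B \leq |b|_G$, and combining with the trivial inequality $|b|_G \leq |b|_B$ shows that $B$ is isometrically embedded in $G$, so in particular undistorted. Since $G$ is hyperbolic and $B$ is finitely generated, undistorted is equivalent to quasiconvex by the standard characterization (see e.g.\ \cite[III.$\Gamma$]{BrH}).

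For malnormality, I first observe that $G$ is torsion-free: it is an extension of the torsion-free group $A$ by the torsion-free group $B$ (any torsion element of $G$ maps to a torsion element of $B$, so lies in $A$, hence is trivial). Any $g \in G$ can be written as $g = ab$ with $a \in A$ and $b \in B$; since $b$ normalizes $B$, we get $gBg^{-1} = aBa^{-1}$, so it suffices to show $aBa^{-1} \cap B = \{1\}$ for every nontrivial $a \in A$. A direct computation in the semidirect product shows that if $aBa^{-1} \cap B$ contains a nontrivial element, then there is a nontrivial $b \in B$ commuting with $a$ in $G$.

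The key step is then to exploit hyperbolicity: in a torsion-free hyperbolic group, any two commuting nontrivial elements lie in a common infinite cyclic subgroup. Thus $a = c^m$ and $b = c^n$ for some $c \in G$ and integers $m, n \neq 0$. Applying the retraction $r$: since $a \in A = \ker r$, we get $r(c)^m = 1$, and since $B$ is a free group (hence torsion-free), this forces $r(c) = 1$. But then $b = r(b) = r(c)^n = 1$, a contradiction. Therefore $aBa^{-1} \cap B = \{1\}$ for all nontrivial $a \in A$, proving malnormality. No part of the argument is really an obstacle; the main thing to check carefully is torsion-freeness of $G$ and the availability of the centralizer-is-cyclic fact, both of which are standard.
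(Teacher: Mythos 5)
Your proof is correct and follows the paper's overall strategy: quasiconvexity via the retraction $r\colon G \to B$, and malnormality by showing that a failure would produce a nontrivial element of $A$ commuting with a nontrivial element of $B$, then invoking hyperbolicity. Your reduction to $g=a\in A$ (using that elements of $B$ normalize $B$) is a small streamlining; the paper instead computes directly in coordinates with a general $(c,d)\notin B$, but both land on the same commuting pair. The one genuine difference is how you finish. The paper ends with ``$b$ and $c$ are commuting elements of infinite order in a hyperbolic group, a contradiction,'' which is terse: one still has to observe that $a$ and $b$ are not powers of a common element (equivalently, that $\langle a,b\rangle\cong\mathbb{Z}^2$, which is what is really forbidden in a hyperbolic group). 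You supply exactly that missing step, and by a slightly different route: using that centralizers of infinite-order elements in a torsion-free hyperbolic group are infinite cyclic, you write $a=c^m$, $b=c^n$ with $m,n\neq 0$, and then kill $c$ via the retraction $r$ to force $b=1$. This is a clean way to close the gap the paper glosses over, and it reuses the retraction rather than appealing separately to the non-existence of $\mathbb{Z}^2$ subgroups.
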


\begin{proof}
For quasiconvexity, observe that $B$ is a retract of $G$, so it is in fact convex in $G$ (with respect to standard generating sets).  

To see that $B$ is malnormal, recall that the group $G$ can be identified with the Cartesian product $A\times B$ endowed with the multiplication $(a, b)(c, d) = (a \varphi_b(c), bd)$, where $\varphi_b(x) = bxb^{-1}$ for all $x \in A$.  
Note that for all $(c,d)\in G$ we have $(c, d)^{-1} = (\varphi_{d^{-1}}( c^{-1}), d^{-1})$. We identify $B$ with $\{1\} \times B$.

Now if $B$ is not malnormal, then there exists  some $(c, d) \in G\setminus B$ such that $(c, d)^{-1} B (c,d) \cap B$ is non-trivial.  Thus, there exists $b\in B$ with $b\neq 1$, such that 
\begin{align*}
(c, d)^{-1} (1, b) (c,d)  \   &= \  (\varphi_{d^{-1}}(c^{-1}), d^{-1}) (\varphi_b(c), bd)  \\  
&= \  (\varphi_{d^{-1}}(c^{-1}) 
\varphi_{d^{-1}}( \varphi_b(c)), d^{-1}bd) \in B.
\end{align*}

In particular, we must have $1 = \varphi_{d^{-1}}(c^{-1}) 
\varphi_{d^{-1}}( \varphi_b(c)) =\varphi_{d^{-1}}(c^{-1} \varphi_b(c))$, and since $\varphi_{d^{-1}}$ is an automorphism, we have $c^{-1} \varphi_b(c)=1$, or equivalently $c^{-1}bcb^{-1} = 1$.  
Observe that $c \neq 1$ as  $(c, d) \in G\setminus B$.  So $b$ and $c$ are commuting elements of infinite order (since $A$ and $B$ are free and inject into $G$) in a hyperbolic group, a contradiction.  We conclude that $B$ is malnormal. 
\end{proof}

\begin{proof}[Proof of Theorem~\ref{main}]
Given $p>q\ge 1$, let $G$ and $H$ be the groups we constructed in Section~\ref{sec:the defn} and proved in Sections~\ref{sec:lower}--\ref{sec:p/q} to have $\Dist_H^G (n) \simeq \exp({n^{p/q}})$.
Define  $G_1=G$ and $m_1 =3$, so that $F_{m_1} = F_3 \cong H$.  Define the groups  $G_k$ for $k>1$ as in Definition~\ref{BT2}.  Then, since $G_1$ is hyperbolic, and $F_{m_{i+1}} \rtimes F_{m_i}$ is hyperbolic for each $i$, we inductively conclude that each $G_i$ is hyperbolic, using Theorem~\ref{thm:combination} and Lemma~\ref{lem:malnornal qc}.   Finally, since 
$\exp(n^{p/q})$ is a non-decreasing superadditive function, Proposition~\ref{BT3} implies 
$$
 \Dist_{F_{m_{k}}}^{G_k} (n) \ \simeq \  \exp^k(n^{p/q}),
$$
as desired. 
\end{proof}

\begin{remark} \label{lem:CAT0 and CAT-1 structures}  \textbf{($\CAT(0)$ and $\CAT(-1)$ structures for the groups $G_k$})
For all $p>q\ge 1$, our group $G$ of Section~\ref{sec:the defn} satisfies a uniform $C'(1/6)$ condition, so can be given a $\CAT(0)$ structure by~\cite{WiseCubulating} or even a $\CAT(-1)$ structure by~\cite{BrownS, Gromov7, Martin}.  
The $F_l \rtimes F_m$ groups constructed by Brady--Tran have a piecewise Euclidean $\CAT(0)$ structure and furthermore, $F_m$ is 
\emph{ultra-convex} in $F_l \rtimes F_m$---a property they use to show that if the Gromov link condition holds in the complex associated to a group $\Gamma$, then it continues to hold for  an amalgamated product of the form $(F_l \rtimes F_m) \ast_{F_m} \Gamma$. 
See~\cite[Lemma 5.10]{BrT} for the precise statement. 
Moreover, the strategy  used in~\cite{BrownS, Gromov7} to obtain $\CAT(-1)$ structures  
by changing  each Euclidean polygon to a hyperbolic one by slightly shrinking each angle  can be applied to the Brady--Tran groups to obtain $\CAT(-1)$  groups for the form $F_l\rtimes F_m$. Thus, we expect that by choosing $\CAT(0)$ or $\CAT(-1)$ structures on the building blocks and using the ultra-convexity as in~\cite{BrT}, the groups $G_k$ in Definition~\ref{BT2} can be shown to be $\CAT(0)$ or $\CAT(-1)$ for all $k$. 
\end{remark}

 \subsection{Distortion of hyperbolic subgroups of hyperbolic groups} \label{sec:Realizing others}

Here we use ideas originating in  I.~Kapovich's \cite{Kap99} to prove Theorem~\ref{thm:distorted hyp grp}, which, in particular, extends our main result (Theorem~\ref{main}) in that it allows the distorted subgroup $H$ to be any  non-elementary torsion-free hyperbolic group rather than $F_3$.

For each of the   functions $f$ listed in Theorem~\ref{thm:distorted hyp grp}, there are constructions in the literature consisting of a hyperbolic group $K$ and a finite-rank free group $F \leq K$ such that $\Dist_F^K \simeq f$: see~\cite{Mitra, BBD} for \eqref{thm part:p/q} when $p=q$, this article for \eqref{thm part:p/q} when $p>q$, and \cite{BDR} for \eqref{thm part:Ackermann}.    We will prove the theorem by amalgamating $H$ with $K$ along a subgroup of $H$ that is isomorphic to $F$ and is supplied by the following lemma.

\begin{lemma} \label{lem:there are free subgroups}
Suppose $H$ is a non-elementary torsion-free  hyperbolic group.   For all $k \geq  2$,   $H$ contains  a malnormal quasiconvex  free subgroup $F$ of rank $k$.
\end{lemma}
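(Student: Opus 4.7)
The plan is to bootstrap from Kapovich's $k=2$ result to arbitrary rank $k \geq 2$. First, invoke Kapovich's theorem from \cite{Kap99} to obtain a malnormal quasiconvex rank-$2$ free subgroup $F_0 \leq H$. Then produce a malnormal free subgroup $F \leq F_0$ of rank $k$. Finally, transfer both quasiconvexity and malnormality from $F_0$ to $H$ via two short transitivity arguments.

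For the construction of $F \leq F_0 = F(a,b)$, the plan is to exhibit $k$ cyclically reduced words in $a,b$ that are sufficiently long and satisfy a strong enough small-cancellation condition (for instance, taking $x_i = a^{N_i} b a^{N_i} b^2 \cdots a^{N_i} b^{M}$ for rapidly growing $N_i$, or any Nielsen-like construction yielding a $C'(1/6)$ symmetrization). Standard small-cancellation theory then implies that the $x_i$ freely generate a rank-$k$ subgroup $F$ and that $F$ is malnormal in $F_0$: any non-trivial intersection of $F$ with a conjugate would force long pieces of relators in $F \cup F^{-1}$ to overlap, violating $C'(1/6)$. An alternative is to quote the Arzhantseva-style genericity result that a ``generic'' finitely generated subgroup of a free group is malnormal and quasiconvex.

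For the transfer step, note first that $F$, being finitely generated inside a free group, is automatically quasiconvex in $F_0$, and $F_0$ is quasiconvex in $H$ by Kapovich, so $F$ is quasiconvex in $H$ by transitivity of quasiconvexity for finitely generated subgroups. For malnormality, suppose $h \in H$ and there exists $g \in (hFh^{-1} \cap F) \setminus \{1\}$. Then $g \in F \leq F_0$ and $h^{-1} g h \in F \leq F_0$, so $h F_0 h^{-1} \cap F_0 \neq \{1\}$; malnormality of $F_0$ in $H$ gives $h \in F_0$. But then $h \in F_0$ together with $h F h^{-1} \cap F \neq \{1\}$ contradicts malnormality of $F$ in $F_0$, so no such $h \notin F$ exists.

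The main obstacle is the middle step: identifying an explicit family of $k$ words in $F(a,b)$ whose generated subgroup is demonstrably malnormal. The transitivity arguments on either side are essentially formal, but verifying malnormality by hand (as opposed to invoking a genericity theorem as a black box) requires either carrying out a small-cancellation analysis on the chosen words or appealing to a Stallings-foldings or Whitehead-graph criterion for malnormality in free groups. Either route is standard but occupies most of the real content of the lemma.
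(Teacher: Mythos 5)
Your three-step outline — Kapovich's rank-$2$ theorem, a malnormal quasiconvex rank-$k$ free subgroup inside $F(x,y)$, then transfer by transitivity of malnormality and quasiconvexity (plus the fact that finitely generated subgroups of $F_2$ are quasiconvex) — is precisely the structure of the paper's proof, and both transitivity arguments you give are correct. The one place you stop short is exactly where the paper supplies the concrete content: it writes down the explicit family
$$\left\langle\, x^{n},\ y^{n},\ (xy)^{n},\ (x^2y^2)^{n},\ \ldots,\ (x^{k-2}y^{k-2})^{n}\,\right\rangle$$
for sufficiently large $n$ and verifies malnormality via the Stallings-graph criterion of Kapovich–Myasnikov \cite[Theorem~10.9]{KM02} (no reduced word read from two distinct vertices of the subgroup graph closes up into a loop). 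Your small-cancellation family or an appeal to Arzhantseva-style genericity would equally do the job, but as you note yourself, you have only sketched that a suitable family exists rather than exhibited and verified one, so the core of the lemma is left unproved in your write-up. Pinning down a specific family and running the folding/Whitehead-graph check (or quoting a precise genericity statement) is what is needed to close the gap.
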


\begin{proof}
 Kapovich showed that such an $H$ has a malnormal  quasiconvex rank-2 free subgroup $F(x,y)$ \cite[Theorem~C]{Kap99}.    There are malnormal rank-3 free subgroups in  $F(x,y)$---for example $$\langle x^{10}, y^{10}, (xy)^{10} \rangle$$   is malnormal by the criterion of \cite[Theorem~10.9]{KM02}, which can be interpreted as being that there is no reduced word which read from two different vertices in the Stallings graph of the subgroup makes a loop.  Likewise, for all $k \geq 2$, for sufficiently large $n$,  the subgroup $$\left\langle \, x^{n}, \ y^{n}, \ (xy)^{n},  \ (x^2y^2)^{n}, \ \ldots, \   (x^{k-2}y^{k-2})^{n} \, \rule{0mm}{4mm} \right\rangle$$ of $F(x,y)$ is malnormal and  rank-$k$.   The result then follows from the following three facts.    If  $A \leq B \leq C$ are groups such that $A$ is malnormal in $B$ and   $B$ is malnormal in $C$, then   $A$ is malnormal in $C$.  Quasiconvexity is similarly transitive.  Finitely generated  subgroups  of $F_2$ are quasiconvex.   
\end{proof}

Now, given $H$ and $f$ as in Theorem~\ref{thm:distorted hyp grp}, let $F \leq K$ be as above so that $K$ is hyperbolic, $F$ is finite-rank free, and $\Dist_F^K \simeq f$.  By Lemma~\ref{lem:there are free subgroups}, $H$ has  a quasiconvex malnormal subgroup which is isomorphic to $F$.  We will also refer  this subgroup of $H$ as $F$, so that we can define
\begin{equation}\label{eq:G def}
G \ = \  H \ast_F K.
\end{equation}
The last ingredient we require for Theorem~\ref{thm:distorted hyp grp} is:

\begin{thm} \label{thm:amalgam distortion} 
Let $\Gamma= A \ast_C B$, where $A, B,$ and $C$ are finitely generated groups and let  $f$ be a superadditive function such that $n \le f(n)$ for all $n$. 
\begin{enumerate}
\item If $\Dist_C^A  \preceq f$ and $\Dist_C^B \preceq f$, then 
$\Dist_A^\Gamma  \preceq f$ and $\Dist_B^\Gamma  \preceq f$. \label{lem part: dist1}
\item  If $\Dist_C^A (n)\simeq n $ and $\Dist_C^B \simeq f$,  then $\Dist_A^\Gamma \simeq  f$. \label{lem part: dist2}
\end{enumerate}
\end{thm}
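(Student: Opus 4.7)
The plan is to analyze $\Gamma = A \ast_C B$ via its action on the Bass--Serre tree $T$, in which vertex stabilizers are conjugates of $A$ and $B$ and edge stabilizers are conjugates of $C$. Given $g \in A$ and a word $w$ of length $n$ in $S_\Gamma = S_A \cup S_B$ representing $g$, decompose $w$ into maximal alternating syllables $w_1 w_2 \cdots w_k$, each a word in $S_A$ or in $S_B$. The partial products $g_i = w_1 \cdots w_i$ trace a closed combinatorial walk in $T$ starting and ending at the vertex $v_A$ stabilized by $A$, since $g$ fixes $v_A$. Recall that an $A$-syllable always stabilizes the current vertex, while a $B$-syllable either stabilizes it (when it represents an element of $C$) or moves us to an adjacent vertex.

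For Part~(1), I would induct on the number of syllables $k$. The base case $k \le 1$ is direct: if $w$ is an $A$-word there is nothing to show, while if $w$ is a $B$-word representing $g \in A$ then $g \in A \cap B = C$, and the hypothesis $\Dist_C^B \preceq f$ together with the constant-factor inclusion $C \hookrightarrow A$ (each $S_C$-generator being a fixed $S_A$-word) yields $|g|_A \preceq f(n)$. For the inductive step $k \ge 2$, I would locate an innermost excursion in the Bass--Serre walk: a maximal subword $w_i w_{i+1} \cdots w_j$ with $w_i, w_j \in B \setminus C$ bounding a portion of the walk that leaves $v_A$, stays at a single neighbor, and returns. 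By the normal form theorem the intermediate content together with the bounding $B$-syllables must reduce to an element of $C$ (when conjugated appropriately), so the innermost excursion can be rewritten as an $A$-word of length $\preceq f(n)$ via $\Dist_C^B \preceq f$ followed by $C \hookrightarrow A$. Each such rewriting reduces the syllable count by at least $2$ and merges the result with the neighbouring $A$-syllables.

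The main technical obstacle is controlling the cumulative length blow-up across the iterated reductions: a naive induction gives $O(k \cdot f(n)) = O(n \cdot f(n))$ rather than $O(f(n))$. I would overcome this by carrying out reductions in the order dictated by the nested-excursion tree structure, processing innermost excursions first, so that each reduction is driven by a segment whose $B$-length is bounded by the total $B$-length of the original word. Superadditivity of $f$ and the fact that $n \le f(n)$ then absorb the contributions from the many sub-excursions into a single bound $|g|_A \preceq f(n)$, whence $\Dist_A^\Gamma \preceq f$. The bound $\Dist_B^\Gamma \preceq f$ follows symmetrically by swapping the roles of $A$ and $B$.

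For Part~(2), the upper bound $\Dist_A^\Gamma \preceq f$ is immediate from Part~(1). For the matching lower bound $\Dist_A^\Gamma \succeq f$, I would produce witnesses: since $\Dist_C^B \simeq f$, there exist $c_n \in C$ with $d_B(e, c_n) \le n$ but $d_C(e, c_n) \succeq f(n)$. The hypothesis $\Dist_C^A \simeq n$ means $C$ is undistorted in $A$, so $d_A(e, c_n) \succeq d_C(e, c_n) \succeq f(n)$. Meanwhile $c_n$ is represented in $\Gamma$ by a word of length $\le n$ in $S_B \subseteq S_\Gamma$, so $d_\Gamma(e, c_n) \le n$. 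Combining these, $\Dist_A^\Gamma(n) \ge d_A(e, c_n) \succeq f(n)$, completing the proof.
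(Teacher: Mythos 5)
Your framework (Bass--Serre tree, syllable decomposition, innermost excursions) is in the same spirit as the paper's graph-of-spaces setup, and your Part~(2) lower-bound argument via witnesses $c_n \in C$ matches the paper's closely and is correct. The problem is in Part~(1), and you have in fact put your finger on it yourself: controlling the cumulative blow-up. The difficulty is worse than your ``$O(k\cdot f(n))$'' estimate suggests, and your proposed remedy does not resolve it.

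Processing innermost excursions first does not keep the inputs to $f$ bounded by disjoint segments of the original word. When you rewrite an innermost excursion $w_i\cdots w_j$ (which represents some $c\in C$) as an $A$-word $\alpha$ with $|\alpha|_{S_A} \preceq f(|w_i\cdots w_j|)$, that word $\alpha$ then sits inside the next outer excursion. To combine the outer excursion into a single $B$-syllable (or $A$-syllable) you must re-express the accumulated $C$-element over $S_B$ or $S_C$, and its length there is bounded only by $\Dist^A_C(|\alpha|_{S_A}) \preceq f(f(|w_i\cdots w_j|))$. Each level of nesting composes $f$ once more, so the naive bound is $f^{(d)}(|w|)$ where $d$ is the nesting depth, which can be as large as the syllable count $k$; superadditivity bounds the number of applications of $f$, not the depth of composition, so it cannot absorb this.

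The paper avoids the compounding by not rewriting at all in the recursion. It inducts on a different complexity measure $n(u^{-1}w)$, the number of $A$- and $B$-vertex stars in the Bass--Serre tree met by the loop $\psi\circ\xi_{u^{-1}w}$, and performs a \emph{single top-level} decomposition $w = x_0 y_1 x_1 \cdots y_k x_k$, where each $x_i$ stays in the basepoint's star and each $y_i$ is a (possibly long, possibly deeply nested) subword of the \emph{original} geodesic $w$ that already represents an element $c_i\in C$. The inductive hypothesis, applied to the pairs $(z_i, y_i)$ with $z_i$ a $B$-geodesic for $c_i$, gives $|z_i| \le f(|y_i|)$ directly. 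Since the $y_i$ are disjoint subwords of $w$, $\sum_i |x_i| + \sum_i |y_i| = |w|$, and one application of superadditivity yields $\sum f(|y_i|) + \sum|x_i| \le f(|w|)$. No rewritten word is ever fed back into $f$, so $f$ is never composed with itself. To repair your proof you should abandon the syllable-count induction and the innermost-first order, adopt a complexity that drops when you strip off the basepoint's star (the number of vertex stars met, as in the paper), and apply the inductive hypothesis only to genuine subwords of the original geodesic.
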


\begin{proof}[Proof of Theorem~\ref{thm:distorted hyp grp} assuming Theorem~\ref{thm:amalgam distortion}]
Given $H$ and $f$ as in the theorem, let $G$ be the group defined in~\eqref{eq:G def}. Since $F$ is malnormal and quasiconvex in $H$,  Theorem~\ref{thm:combination} tells us that $G$ is hyperbolic.
Now $\Dist_F^K \simeq f$ by construction, and note that every function $f$ listed in the statement of Theorem~\ref{thm:distorted hyp grp} is superadditive and superlinear. 
  Since $F$ is quasiconvex in $H$ and $H$ is hyperbolic, we have  $\Dist_F^H (n)\simeq n \preceq f(n)$, and Theorem~\ref{thm:amalgam distortion}\eqref{lem part: dist2}  implies that $\Dist_H^G \simeq f$.
\end{proof}
\begin{proof}[Proof of Theorem~\ref{thm:amalgam distortion}] We begin with some setup. 
For $X= A, B, C$, let $S_X$ be a generating set for $X$, and let $K_X$ be a $K(X, 1)$ with 1-skeleton a rose on $|S_X|$ petals.  We assume that $S_C \subset S_A$ and  $S_C \subset S_B$.   Then $\Gamma $ is generated by $S_\Gamma = S_A \cup S_B$. 
Let $K$ be the standard graph of spaces with fundamental group $\Gamma$, i.e., 
$$
K \ = \  (K_A \sqcup(K_C\times [0, 1 ] )\sqcup K_B)/\sim
$$
where $\sim$ identifies $K_C \times \{0\}$ and $K_C \times \{1\}$ with the images of the maps induced by the inclusion of $C$ in $A$ and $B$ respectively.     
For convenience, we subdivide the cell structure so that $K_C^{(1)}\times \{1/2\} \subset K^{(1)}$. 

Let $c$ be the unique vertex of $K_C$, and let $p =  \{c\} \times \{1/2\} \in K_C\times [0, 1] \subset K$.  We identify $\Gamma$ with  $\pi_1(K, p)$. More precisely, identify $S_C$ with 
the set of petals of $K_C^{(1)} \times \{1/2\}$
and  $S_A \setminus S_C$ with the collection of loops $\delta \alpha \bar \delta$, where $\delta$ and $\bar \delta$ are the interval 
$\{c\} \times [0, 1/2] \subset K_C \times [0,1]$ oriented towards and away from $K_A$ respectively, and $\alpha$ is a petal of $K_A^{(1)}$ representing an element of $S_A \setminus S_C$.  Identify $S_B \setminus S_C$ with the analogous set of loops, replacing $\{c\} \times [0, 1/2]$ with $\{c\} \times [1/2, 1]$.  
Let $\mathcal S_\Gamma$ be the set of the loops defined in this paragraph. 
Each element of $\mathcal S_\Gamma$ is contained in $K^{(1)}$. 

The associated Bass--Serre tree is obtained by collapsing each lift of $K_A$ or $K_B$ in $\widetilde K$ to a vertex (called the $A$- and $B$- vertices, respectively) and each lift of $K_C \times [0,1]$ to an edge.  We subdivide each edge by adding a midpoint, obtained by collapsing a lift of $K_C \times \{1/2\}$; we call each such midpoint a $C$-vertex. Let $T$ denote this subdivided tree, and let 
$\psi: \widetilde K \to T$ denote the collapsing map.  Given an $A$- or $B$- vertex $v$ of $T$,
define $s_v$ to be the star of $v$ in $T$.  Since $T$ is subdivided, every vertex of $s_v$ besides $v$ is a $C$-vertex.

 If $\gamma \in \mathcal S_\Gamma$ corresponds to $g \in S_\Gamma$, 
 then each lift $\tilde \gamma$ of $\gamma$ in $\widetilde K^{(1)}$ is considered to be labelled by $g$. 
 By construction, the image of $\psi \circ \tilde \gamma$ is a $C$-vertex if $g \in S_C$, and otherwise it is contained in a star $s_v$ for an  
 $A$- or $B$-vertex $v$.   More generally, if $w$ is any word over $S_\Gamma$, then for each lift $\tilde p$ of $p$, there is a path $\xi_w$ starting at $\tilde p$ with label $w$ in $\widetilde K^{(1)}$.  (We abuse notation by  suppressing $\tilde p$.)
 
 Now if, in addition, $w=1$ in $\Gamma$, then $\xi_w$ is a loop based at some (any) $\tilde p$ and  $\psi\circ \xi_w$ is a loop based at $\psi(\tilde p)$ in $T$.  
The image of $\psi\circ \xi_w$ is a subtree of $T$, which we denote $\tau_w$.  
We measure the complexity of $w$  by $n(w)$, which counts the number of $A$- or $B$-vertex stars intersecting $\tau_w$: 
$$n(w)  \ = \  \# \{ v \, \mid \, v \text{ is an } A\text{- or }B\text{-vertex and } s_v \cap \tau_w  \neq \emptyset \}. 
$$ 
Note that $n(w)$ is finite since $\tau_w$ is compact, and $n(w) \ge 1$ as $\psi(\tilde p) \in \tau_w$.

We are now ready to prove \eqref{lem part: dist1}.  In this proof, a \emph{geodesic word in $X$} or \emph{over $S_X$} will mean a word of minimal length over $S_X$ representing an element of $X$, where $X = A, B, $ or $\Gamma$. 
Let $u$ be a geodesic word in either $A$ or $B$ and let $w$ be a geodesic word in $\Gamma$ with 
$u^{-1}w =1$ in $\Gamma$.
We wish to show that $|u| \le f(|w|)$.  The proof is by induction on $n(u^{-1}w)$.

If $n(u^{-1}w)=1$, then $\tau_{u^{-1}w}$ is contained in some $s_v$, where $v$ is an $A$- or $B$-vertex, depending on whether $u$ is in $A$ or $B$. We assume without loss of generality that $v$ is an $A$-vertex.  By construction, $s_v = \psi(Y)$, where $Y \subset \widetilde K$ consists of some lift of $K_A$, and all the lifts of $K_C \times [0, 1/2]$ intersecting it. 
Now $\xi_{u^{-1}w}$ is contained $Y^{(1)}$ and it follows that 
its label $u^{-1}w$ is a word over $S_A$. Thus $u$ and $w$ are both geodesics over  $S_A$ representing the same element of $A$, so $|u| = |w|$.  This proves the base step of the induction.

For the induction step, assume that $|u'| \le f(|w'|)$ whenever $u'^{-1}w'=1$  with $u'$ a geodesic in $A$ or $B$
and   $w'$ a geodesic in $\Gamma$  and $n(u'^{-1}w') < n (u^{-1}w)$.   Again, assume 
without loss of generality that $u$ is a geodesic in $A$.  
Write $\xi_{u^{-1}w}$ as a concatenation $ \xi_{u^{-1}} \xi_w$.   Then $\psi(\xi_{u^{-1}}) \subset s_a$ for some $A$-vertex $a$ (since $u$ is a geodesic over $S_A$). 
Now, by considering $\psi^{-1} (\tau_{u^{-1}w}\setminus s_a^{\circ})$, where $s_a^{\circ}$ denotes the interior of $s_a$, we obtain a concatenation $\xi_w = \xi_{x_0} \xi_{y_1} \xi_{x_1} \cdots \xi_{y_k} \xi_{x_k}$  (so $w =  x_0 y_1 x_1  \cdots  y_k x_k$, as words),
such that
for each $i$, we have that $\psi(\xi_{x_i}) \subset s_a$  (so $x_i$ is a word over $S_A$) and that $\psi \circ \xi_{y_i} $  is a loop in $\tau_{u^{-1}w}\setminus s_a^{\circ}$ based at a $C$-vertex $p_i$ of $s_a$.  

By construction, each  $\xi_{y_i}$ has its endpoints in some lift of $K_C \times \{1/2\}$, and so $y_i$ represents an element of $C$, and therefore of $B$.   Let 
$z_i$ be a geodesic word over $S_B$ with $z_i = y_i$ in $\Gamma$, and let $\xi_{z_i}$ be the path in $\widetilde K$ with the same endpoints as $\xi_{y_i}$.  Then  $\psi(\xi_{z_i}) \subset s_{b_i}$ where $b_i$ is the unique $B$-vertex adjacent to $p_i$.  Now  consider $\xi_{z_i^{-1} y_i} = \overline \xi_{z_i} \xi_{y_i}$ and note that $\psi( \xi_{y_i})$ intersects $s_{b_i}$, since the endpoints of $\xi_{y_i}$ map to $p_i$.  It follows that $\tau_{z_i^{-1}y_i}$  intersects the same number of $A$- and $B$-vertex stars as $\psi ( \xi_{y_i})$,  
and, by construction, this number is less than $n(u^{-1}w)$ (since $\tau_{u^{-1}w}$ intersects the additional vertex star $s_a$).   
So $n(z_i^{-1} y_i) < n(u^{-1}w)$.  Since $y_i$ is a geodesic (being a subword of a geodesic) over $S_\Gamma$, we may apply the induction hypothesis to conclude that 
$|z_i| \le f(|y_i|)$.  
Moreover, in $\Gamma$ we have $u = w = x_0z_1x_1 \cdots z_k x_k$ (as elements). 
So the facts that $u$ is a geodesic and that  $n \le f(n)$ combined with 
the superadditivity of $f$ yield: 

$$\begin{aligned}
|u| \ & \le \  \sum_{i=0}^k|x_i| + \sum_{i=1}^k|z_i| \ \le \  \sum_{i=0}^k|x_i| + \sum_{i=1}^kf(|y_i|) \  \\ &  \le  \  f\left( 
\sum_{i=0}^k|x_i| + \sum_{i=1}^k|y_i|  \right) \  =   \  f(|w|).
\end{aligned}$$
This completes the induction step and proves \eqref{lem part: dist1}.  The bound $\Dist_A^\Gamma (n) \preceq f(n)$ of \eqref{lem part: dist2} immediately follows.  

For the reverse bound in \eqref{lem part: dist2}, by the definition  of $\Dist_C^B$, there exist for each $n \ge 1$, geodesic words $u_n$ and $w_n$ over $S_C$ and $S_B$, respectively, with  $u_n = w_n$ in $\Gamma$, such that  $|w_n|\le n$ and 
$|u_n| = \Dist_C^B(n)$.  Since $u_n$ is an element of $C$, it is also an element of $A$.  Let $v_n$ be 
a geodesic word over $S_A$ representing $u_n$.  Since $C$ is undistorted in $A$, there exists a 
constant $K\ge 1$ such that $|u_n| \le K |v_n|$.  Then, for each $n$, we have found a geodesic word 
$v_n$ in $A$ which represents the same element as the word  $w_n$ over $S_\Gamma$ of length at 
most $n$, and  
$|v_n| \ge \frac{1}{K} |u_n| = \frac{1}{K} \Dist_C^B(n)$.
It follows that $\Dist_A^\Gamma (n) \succeq \Dist_C^B (n)$.  Combined with the hypothesis $\Dist_C^B (n) \simeq f(n)$, this gives  $\Dist_A^\Gamma (n) \succeq f(n)$, which  completes our proof of \eqref{lem part: dist2}.  
\end{proof}

 \section{Height}  \label{ch:Height}   
 
 \subsection{Why our examples have infinite height}  \label{sec:Height}

An infinite subgroup $H$ of a group $G$ has \emph{infinite height} when, for all $n$, there exist $g_1, \ldots, g_n \in G$ such that $\bigcap_{i=1}^n {g_i}^{-1} H g_i$ is infinite and $H g_i \neq  H g_j$ for all $i \neq j$.  Otherwise it has \emph{finite height}.  New constructions of non-quasiconvex subgroups of hyperbolic groups are natural test cases for 
this longstanding question attributed to Swarup in \cite{Mitra2}:  if   a finitely presented subgroup $H$ of a hyperbolic group $G$  has \emph{finite height}, is $H$ necessarily quasiconvex in $G$?

So we note here that our examples do not speak to Swarup's question:   
\begin{prop}
If $H$ is the non-quasiconvex subgroup of the hyperbolic group $G$ we construct to prove  Theorem~\ref{main}  or, more generally, to prove Theorem~\ref{thm:distorted hyp grp} in case \eqref{thm part:p/q}  with $p>q$,    then  $H$ has infinite height.
\end{prop}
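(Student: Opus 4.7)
The plan is, for each $n \geq 1$, to produce witnesses $g_1, \ldots, g_n \in G$ with $\bigcap_{i=1}^n g_i^{-1} H g_i$ infinite and the cosets $Hg_i$ pairwise distinct. I would first handle the two varieties of hyperbolic group appearing in Theorem~\ref{main}---our Section~\ref{sec:the defn} group $G$ itself, and the Brady--Tran iterates $G_k$ from Section~\ref{sec:general k}---and then promote the result to the general Theorem~\ref{thm:distorted hyp grp}\eqref{thm part:p/q} construction.

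For the base case where $G$ is the group of Section~\ref{sec:the defn} and $H = \langle t, y_1, y_2\rangle$, I would exploit the HNN structure $G_{-1} = F \HNN_{a_1, a_2}$ of Proposition~\ref{prop:hnn}\eqref{prop part:G1HNN}. The stable letter $a_1$ conjugates the initial group $H$ to the first terminal group $\langle \Yb t \Yb,\ \Yb t^{-1}\Yb t \Yb,\ \Yb t^{-1}\Yb t \Yb\rangle$, whose listed generators are themselves words in $t^{\pm 1}, y_1^{\pm 1}, y_2^{\pm 1}$ and hence lie in $H$. So $a_1^{-1} H a_1 \subseteq H$, and iterating produces a descending chain $H \supset a_1^{-1} H a_1 \supset a_1^{-2} H a_1^2 \supset \cdots$ of subgroups each isomorphic to $H$, in particular infinite. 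Taking $g_i = a_1^i$ gives $\bigcap_{i=1}^n g_i^{-1} H g_i = a_1^{-n} H a_1^n$, which is infinite, while distinctness of the cosets $H a_1^i$ reduces to $a_1^k \notin H$ for $k \neq 0$; this in turn follows from HNN normal form in $G_{-1}$, since no nonzero power of $a_1$ lies in $F$ and $H \subseteq F$. For $G = G_k$ a Brady--Tran iterate with $k > 1$ and $H = F_{m_k}$, the argument is simpler still: $F_{m_k}$ is normal in the factor $F_{m_k} \rtimes F_{m_{k-1}}$ of $G_k$, so picking any nontrivial $a \in F_{m_{k-1}}$ and setting $g_i = a^i$ gives $g_i^{-1} F_{m_k} g_i = F_{m_k}$ for every $i$, with distinctness of the cosets following from $F_{m_k} \cap F_{m_{k-1}} = \{1\}$ in the semidirect product together with $a$ having infinite order in the free group $F_{m_{k-1}}$.

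To pass to Theorem~\ref{thm:distorted hyp grp}\eqref{thm part:p/q} with $p > q$---where $G = H \ast_F K$ with $K$ one of the hyperbolic groups just handled and $F < H$ a malnormal quasiconvex free subgroup identified with the distorted free subgroup of $K$---I would reuse the same witnesses $g_i \in K \subseteq G$ from the previous paragraph. Since $F \subseteq H$, we have $\bigcap g_i^{-1} H g_i \supseteq \bigcap g_i^{-1} F g_i$, which is infinite by construction. Distinctness of the cosets $H g_i$ reduces to $g_i g_j^{-1} \notin H$ for $i \neq j$; as $g_i g_j^{-1} \in K$ and the amalgamated product forces $H \cap K = F$, this is in turn equivalent to $g_i g_j^{-1} \notin F$, already verified above. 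The proof presents no serious obstacle: its only subtle ingredient is locating, in each base case, a conjugator that carries $H$ into itself---via HNN contraction in one case and normality in a semidirect factor in the other---and the transfer across the amalgam then drops out formally from malnormality of the edge group.
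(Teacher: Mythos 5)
Your proof is correct and, for the $k=1$ base case and the amalgam step, matches the paper's route: witnesses $g_i = a_1^i$, the contraction $a_1^{-1}Ha_1\subset H$ (so that $\bigcap_{i\le n}g_i^{-1}Hg_i = a_1^{-n}Ha_1^n$), HNN normal form for coset distinctness, and then $F\subseteq\bigcap g_i^{-1}Hg_i$ together with $H\cap K=F$ in $H\ast_F K$. Where you diverge is the treatment of the Brady--Tran iterates $G_k$ with $k>1$. The paper keeps $g_i=a_1^i$ and cites the chain $F(t,y_1,y_2)<\Gamma<G_k$, but for $k>1$ the distorted subgroup the proposition names is $F_{m_k}$, which is \emph{not} contained in $\Gamma$ --- indeed $F_{m_k}\cap G_{k-1}=\set{1}$ in the amalgam $G_k=(F_{m_k}\rtimes F_{m_{k-1}})\ast_{F_{m_{k-1}}}G_{k-1}$, so the $a_1$-conjugators do not directly apply to it. Your alternative --- exploiting $F_{m_k}\triangleleft F_{m_k}\rtimes F_{m_{k-1}}$ and taking $g_i=a^i$ for a nontrivial $a\in F_{m_{k-1}}$, with coset distinctness coming from $F_{m_k}\cap F_{m_{k-1}}=\set{1}$ --- targets exactly the subgroup in question and yields $\bigcap g_i^{-1}F_{m_k}g_i=F_{m_k}$ outright rather than as a strictly descending chain. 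This is a genuine improvement over what the paper writes for the $k>1$ case, and it feeds into the amalgam step precisely as you describe, since $g_ig_j^{-1}\in K$ and $g_ig_j^{-1}\notin F=H\cap K$.
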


\begin{proof}
Consider $\Gamma= F(t, x_1, x_2, y_1, y_2) \HNN_{a_1,a_2}$ with the HNN-structure from Proposition~\ref{prop:hnn}, the defining relators being those specified by the $r_{4, \ast}$-cells of Figure~\ref{fig:relations}.  

We first show that $F=F(t, y_1, y_2)$ has infinite height in $ \Gamma$.  
 It is evident from the defining relators for $\Gamma$ that $a_{1}^{-1} F a_{1} \subset  F$.  So, for $i=0, 1,  \ldots$, we define $g_i = a_1^i$, and conclude that $g_{i+1}^{-1} F g_{i+1} \subset g_i^{-1} F g_i$.   Then,  for all $n \geq 0$, we have  $\bigcap_{i=1}^n g_i^{-1} F g_i =  g_n^{-1} F g_n$, which is a non-trivial subgroup of the free group $F$ and so is infinite.  And  $F g_i \neq  F g_j$ for all $i \neq j$ because, by virtue of the HNN-structure of $\Gamma$,  we find that $a_1^k \in F$  only when $k=0$.  So $F$ has infinite height in $\Gamma$. 

For the $G$ of Section~\ref{sec:the defn} constructed to prove Theorem~\ref{main} when $k=1$, we have 
$H =F( t, y_1, y_2) =F < \Gamma < G$ as a consequence of the HNN structure discussed in Section~\ref{sec:hnn}. When $k>1$, the same is true because of the graph of groups structure of Definition~\ref{BT2}.  Since $H$ has infinite height in $\Gamma$, it has infinite height in $G$ also.  

For the groups $G$ we  constructed to prove Theorem~\ref{thm:distorted hyp grp}\eqref{thm part:p/q}  when $p>q$, we have $G = H \ast_F K$, where  $K$ is one of the groups we constructed to prove Theorem~\ref{main}.  So $F<H$ and $F<\Gamma<K$. 
Moreover, the amalgamated product structure implies that $a_1^k \in H$ only when $k=0$, so, using the same group elements $g_i$  as before,  $H g_i \neq  H g_j$ when $i \neq j$.  And, for all $n \ge 0$, 
$$
g_n^{-1} F g_n  \ = \  \bigcap_{i=1}^n g_i^{-1} F g_i  \ \subset  \ \bigcap_{i=1}^n g_i^{-1} H g_i. 
$$
As $g_n^{-1} F g_n$ is infinite, we conclude that $H$ has infinite height. 
\end{proof}

\bibliographystyle{alpha}
\bibliography{bibli}



\end{document}